\newtheorem{theorem}{Theorem}
\newtheorem{proposition}[theorem]{Proposition}
\newtheorem{lemma}[theorem]{Lemma}
\newtheorem{corollary}[theorem]{Corollary}
\theoremstyle{definition}
\newtheorem{remark}[theorem]{Remark}
\numberwithin{equation}{section}
\numberwithin{theorem}{section}
\def\R{\mathbb{R}}
\def\ep{\varepsilon}
\def\vp{\varphi}
\def\cale{\mathcal{E}}
\def\calp{\mathcal{P}}
\def\calh{\mathcal{H}}
\def\dels{\delta}
\def\ub{u_{\text{\rm b}}}
\def\rt{\tilde{\rho}}
\def\ut{\tilde{u}}
\def\tt{\tilde{\theta}}
\def\div{\mathop{\rm div}}
\def\pd{\partial}
\def\dt{\partial_t}
\def\lpeasp#1#2{L^{#1}_{\text{\rm e},#2}}
\def\lteasp#1{\lpeasp{2}{#1}}
\def\lp#1#2{\| #2 \|_{L^{#1}}}
\def\lpo#1#2#3{\| #2 \|_{L^{#1}(#3)}}
\def\lt#1{\|#1\|}
\def\li#1{\lp{\infty}{#1}}
\def\hs#1#2{\| #2 \|_{H^{#1}}}
\def\ho#1{\hs{1}{#1}}
\begin{document}

\abovedisplayskip=8pt plus 2pt minus 4pt
\belowdisplayskip=\abovedisplayskip


\thispagestyle{plain}

\title{Stationary flows for viscous
heat-conductive fluid \\
in a perturbed half-space} 

\author{%
{\large\sc Mingjie Li${}^1$,}
{\large\sc Masahiro Suzuki${}^2$}
{\normalsize and}
{\large\sc Katherine Zhiyuan Zhang${}^3$}
}

\date{%
\normalsize
${}^1$%
College of Science, Minzu University of China,
\\
 Beijing 100081, P. R. China
\\
${}^2$%
Department of Computer Science and Engineering, 
Nagoya Institute of Technology,
\\
Gokiso-cho, Showa-ku, Nagoya, 466-8555, Japan
\\ [7pt]
${}^3$%
Department of Mathematics, Northeastern University, \\
Boston, MA 02115, USA
}

\maketitle

\begin{abstract}
We consider the non-isentropic compressible Navier--Stokes equation in a perturbed half-space with an outflow boundary condition as well as the supersonic condition. This equation models a compressible viscous, heat-conductive, and Newtonian polytropic fluid. We show the unique existence of stationary solutions for the perturbed half-space. The stationary solution constructed in this paper depends on all directions and has multidirectional flow. We also prove the asymptotic stability of this stationary solution.
\end{abstract}


\begin{description}
\item[{\it Keywords:}]
compressible Navier--Stokes equation, stationary solution, unique existence, asymptotic stability, multidirectional flow

\item[{\it 2020 Mathematics Subject Classification:}]
35B35; 
35B40; 
76N06; 
76N10; 
\end{description}


\newpage

\section{Introduction}

The motion of a compressible viscous, heat-conductive, and Newtonian polytropic fluid is governed by the following non-isentropic compressible Navier--Stokes system:
\begin{equation}\label{ns1}
\left\{\begin{array}{lll}
\rho_t+\textrm{div}(\rho {u})=0, \\ \vspace{3mm}
\displaystyle (\rho{u})_t+\textrm{div}(\rho{u}\otimes{u})+\nabla
P=\mu \Delta u+(\mu+\lambda)\nabla(\textrm{div}{u}),\\ \vspace{3mm}
\displaystyle (\rho E)_t+\textrm{div}\big({u}(\rho{E}+P)\big)
\displaystyle= \Delta(\kappa\theta+\frac{1}{2}\mu| u|^2)+\textrm{div} \big(\mu
 u\cdot\nabla{u}+\lambda u\textrm{div} u\big).
\end{array}\right.
\end{equation}
 Here $t \geq 0$ is time, $ x =(x_{1},x')=(x_{1},x_{2},x_{3}) \in\mathbb{ R}^3$ is the spatial coordinate, and the unknowns $\rho, {u}=(u_1,u_2,u_3)^{tr}\in \mathbb{ R}^3$, and $\theta$ denote the fluid density, velocity, and  absolute temperature, respectively. 
 The pressure $P=P(\rho, \theta)$ is a function of $\rho$ and $\theta$, and the specific total energy ${E}$ is given by 
 \begin{equation*}
 {E}:=(e+|{u}|^2/2)
 \end{equation*}
  with $e$ being the internal energy and $|{u}|^2/2$ the kinetic energy. The constant viscosity coefficients of the flow $\mu$ and $\lambda$  satisfy the physical restrictions 
  \begin{equation*}
   \mu > 0,\quad   2\mu + 3\lambda \geq 0.
  \end{equation*}
The parameter $\kappa >0$ is the ratio of the heat conductivity coefficient over the heat capacity.
The first equation is the conservation of mass, the second one
is the equation of momentum in which the pressure gradient
and viscosity as well as the convection effect are taken into account,
and the third equation is the balance of energy.
We study the ideal polytropic fluids
so that $P$ and $e$ are given by the state equations
\begin{equation*}
P(\rho, \theta) = R\rho\theta,\quad e = c_v\theta,
\end{equation*}
where $R$ is a positive constant and $c_v:=\frac{R}{\gamma-1}$ is the specific heat at constant volume with $\gamma>1$ being the adiabatic exponent. We study an initial--boundary value problem of \eqref{ns1}
in a perturbed half-space
\[
\Omega:=\{x=(x_1,x')\in \mathbb R^3 \, | 
\, x_1>M(x')\}
\quad \text{for $M \in \cap_{k=1}^\infty H^k(\mathbb R^2)$}.
\]
Note that for classical solutions, the initial--boundary value problem of  \eqref{ns1} can be rewritten
as
\begin{subequations}
\label{ns}
\begin{gather}
\left\{\begin{array}{lll}
\rho_t+\textrm{div}(\rho {u})=0, \\ \vspace{3mm}
\displaystyle \rho{u}_t+\rho{u}\cdot\nabla{u}+\nabla
P=\mu \Delta u+(\mu+\lambda)\nabla(\textrm{div}{u}),\\ \vspace{3mm}
\displaystyle c_v\rho\big(\theta_t+{u}\cdot\nabla\theta\big)+P\textrm{div}{u}
\displaystyle= \kappa\Delta\theta+2\mu|\mathcal{D}( u)|^2+\lambda(\textrm{div}u)^2,
\end{array}\right.
\label{cnse}
\\
  (\rho,{u},\theta)(0,x)=(\rho_0,{u}_0,\theta_{0})(x), 
  \label{ini1}
  \\
\lim_{x_1\to\infty}(\rho,u_1,u_2,u_3,\theta)(t,x_1,x')=(\rho_+,u_+,0,0,\theta_{+}),
 \label{bc1}
 \\
   u(t,M(x'),x')={u}_b (x') \quad \text{for} \
  x' \in \mathbb R^2,
  \label{bc2}
  \\
  \theta(t,M(x'),x')=\theta_b (x')\quad \text{for} \
  x' \in \mathbb R^2,
  \label{bc3}
 \end{gather}
 \end{subequations}
 where $\mathcal{D}(u) = (\nabla  u + (\nabla  u)^{tr})/2$ is the deformation tensor,
and $\rho_{+}>0$, $u_+<0$, $\theta_{+}>0$ are constants.
It is assumed that
\begin{equation}\label{outf}
{u}_b\cdot n\geq c_1>0,\quad \theta_b\geq c_2>0,
\end{equation}
with $c_1, c_2$ are constants.
The unit outer normal vector of the boundary 
$\partial\Omega=\{x\in \mathbb R^3 \, | \, x_1=M(x')\}$ is represented by 
\begin{equation} \label{nvector}
{n}(x')=(n_1,n_2,n_3)(x')
:=\left(\frac{-1}{\sqrt{1+|\nabla M|^2}},
\frac{\partial_{x_2}M}{\sqrt{1+|\nabla M|^2}},
\frac{\partial_{x_3}M}{\sqrt{1+|\nabla M|^2}} \right)(x').
\end{equation}

We assume that  the initial density and initial   absolute temperature are uniformly positive:
\begin{equation}
\quad
\inf_{x \in \Omega} \rho_0 (x) > 0,
\quad
\inf_{x \in \Omega} \theta_0 (x) > 0.
\quad
\nonumber
\end{equation}
We will construct solutions whose density is positive everywhere.
The outflow boundary condition $ u_b\cdot  n>0$ guarantees that no boundary condition is suitable for the density $\rho$.
The compatibility conditions are also necessary for the initial data $u_0$ and $\theta_{0}$. 
We will state clearly the conditions in Section \ref{sec2}.


Furthermore, we assume that the Mach number for the end states 
satisfies the supersonic condition:
\begin{equation}\label{super1}
\frac{|u_+|}{\sqrt{\gamma R\theta_+}}>1.
\end{equation}

The initial--boundary value problems of the compressible Navier--Stokes equation have been studied extensively. We are interested in the long-time behavior of the solutions. In this direction, Matsumura and Nishida \cite{m-n83} made a pioneering progress on the non-isentropic compressible Navier--Stokes equation \eqref{ns1} by considering the initial--boundary value problems over an exterior domain and a half-space. In \cite{m-n83}, the existence of the time-global solution is established, and the solution is proved to converge to the stationary solution as time goes to infinity, with the assumption that the initial perturbation from the stationary solution belongs to $H^m$ and its $H^m$-norm is small enough. Many researchers followed \cite{m-n83}, but they focused mainly on the isentropic compressible Navier--Stokes equation which does not include  the equation of entropy, while a pressure term $P=K \rho^{\gamma}$ with constants $K>0$ and $\gamma \geq 1$ is involved in the equation of motion to make up for the missing equation of entropy, since detailed studies on the non-isentropic equation were still hard in that time. Kagei and Kobayashi \cite{kagei05} further analyzed the isentropic compressible Navier--Stokes equation over the half-space in the case that the stationary solution is a constant state, and obtained an accurate convergence rate of the time global solutions toward the constant state by assuming the initial perturbation belongs to $H^m\cap L^1$. Valli \cite{Va83} considered the case when the equation of motion has an small external forcing term, and then proved the unique existence and stability of stationary solution over an bounded domain with the non-slip boundary condition. The velocity of the stationary solution in \cite{Va83} is nonzero but small due to the perturbative nature of the argument and the smallness of the external forcing term. Matsumura \cite{matu-01} classified all possible time-asymptotic states for a one-dimensional half-space problem of the isentropic compressible Navier--Stokes equation with no external force. It is also conjectured in \cite{matu-01} that one of time-asymptotic states for an {\it outflow problem} is a stationary solution of which end state satisfying the {\it supersonic condition} \eqref{super1}. 

An outflow problem in our context is an initial--boundary value problem with an {\it outflow boundary condition} in \eqref{outf}. The asymptotic stability of the stationary solution (Matsumura's conjecture) was proved in Kawashima, Nishibata and Zhu \cite{knz03}. After that, Nakamura, Nishibata and Yuge \cite{nny07} verified an exponential convergence rate toward the stationary solution by assuming that the initial perturbation belongs to some weighted Sobolev space. For a three-dimensional half-space $\mathbb R^3_+$ (that is, the case $M\equiv 0$), Kagei and Kawashima \cite{kg06} showed that a {\it planar stationary solution}, that is, a special solution independent of tangential direction $x'$, with its tangential velocities $(u_2,u_3)$ being zero, is time asymptotically stable. It has been also known from \cite{nn09} that the convergence rate is exponential if the initial perturbation decays exponentially fast at an infinite distance. Suzuki and Zhang \cite{SZ1} extended the results in \cite{kg06,nn09} by considering the problem on a {\it perturbed half-space} with a curved boundary, and established the unique existence and asymptotic stability of stationary solution of the isentropic compressible Navier--Stokes equation. Remarkably, the stationary solution constructed in \cite{SZ1} depends on all directions $x=(x_1,x')$ and has multidirectional flow, in contrast to the planar stationary solution studied in \cite{kg06,nn09}, which is independent of tangential $x'$ and has unidirectional flow along the $x_{1}$-axis.

In this paper, we investigate viscous heat-conductive fluid of which motion is governed by the non-isentropic compressible Navier--Stokes equation \eqref{ns1}. As mentioned above, the outflow problem for the isentropic compressible Navier--Stokes equation has been well studied. It is of greater interest to study the outflow problem for the {\it nonisentropic} compressible Navier--Stokes equation.
Kawashima, Nakamura, Nishibata and Zhu \cite{knnz-10} showed the unique existence and asymptotic stability of {\it the planer stationary solution} over a half line $\mathbb R_{+}$ assuming the outflow condition in \eqref{outf} and the supersonic condition \eqref{super1}. 
They also obtained the convergence rate of the time-global solution towards the planer stationary solution.
Wang \cite{w22} studied the planer stationary solution over a three-dimensional half-space $\mathbb R_{+}^{3}$, and then established the asymptotic stability as well as obtained the convergence rate.
It would be worth extending the results \cite{knnz-10,w22} to that over the perturbed half space $\Omega$. In this paper, we show the unique existence and asymptotic stability of the stationary solutions of \eqref{ns1} over the perturbed half space $\Omega$ with a curved boundary.


\begin{flushleft}
\textbf{Acknowledgements. } M. S. was supported by JSPS KAKENHI Grant Numbers 21K03308. 
\end{flushleft}

\subsection{Notations} \label{ss-Notation}
We introduce some frequently used notations in this paper.
Let $\pd_i := \frac{\pd}{\pd x_i}$ and $\dt := \frac{\pd}{\pd t}$.
The operators $\nabla := (\pd_1 ,\pd_2,\pd_3)$
and $\Delta := \sum_{i=1}^n \pd_i^2$ denote the standard gradient and Laplacian with 
respect to $x = (x_1,x_2,x_3)$.
We also define a standard divergence by
 $\div u := \nabla \cdot u := \sum_{i=1}^3 \pd_i u_i$.
The operator $\nabla_{x'} := (\pd_2,\pd_3)$ denotes the tangential gradient with respect to $x'=(x_2,x_3)$.

For a non-negative integer $k$,
we denote by $\nabla^k$ and $\nabla_{x'}^k$ the totality of 
all $k$-th order derivatives with respect to $x$ and $x'$, respectively.
For a domain $\Sigma \subset \R^n$ and $1 \le p \le \infty$,
the space $L^p(\Sigma)$ denotes the standard Lebesgue space
equipped with the norm $\lpo{p}{\cdot}{\Sigma}$.
For a non-negative integer $m$, $H^m = H^m(\Sigma)$ denotes
the $m$-th order Sobolev space over $\Sigma$ in the $L^2$ sense
with the norm $\|\cdot\|_{H^m(\Sigma)}$.
For any $\beta \ge 0$, the space $\lteasp{\beta}(\Sigma)$ denotes
the weighted $L^2$ space that is exponentially weighted in the normal direction. It is defined as $\lteasp{\beta}(\Sigma) := \{ u \in L^2(\Sigma)
  \, ; \, \|u\|_{L^{2}_{{\rm e},\beta}(\Sigma)} < \infty \}$ equipped with the norm %
\[
\|u\|_{L^{2}_{{\rm e},\beta}(\Sigma)} :=
\Bigl(
\int_{\Sigma} e^{\beta x_1} |u(x)|^2 \, dx
\Bigr)^{1/2}.
\]
In the case $\Sigma = \Omega$, 
the spaces $L^p(\Omega)$, $H^m(\Omega)$, and $\lteasp{\beta}(\Omega)$ 
are sometimes abbreviated by $L^p$, $H^m$, and $\lteasp{\beta}$, respectively. 
Note that $L^2 = H^0 = \lteasp{0}$, and we denote $\lt{\cdot} := \lp{2}{\cdot}$.

We also define the following solution spaces
\begin{align*}
X_m(0,T)
& :=
\{
(\vp,\psi,\zeta) \in C([0,T] ; H^m)
\; ; \,
\nabla \vp \in L^2([0,T] ; H^{m-1}), \
\nabla \psi,\nabla \zeta \in L^2([0,T] ; H^m)
\},
\\
X^{\text{\rm e}}_{m,\beta} (0,T)
& :=
\{
(\vp,\psi,\zeta) \in X_m(0,T)
\; ; \,
(\vp,\psi,\zeta) \in C([0,T] ; \lteasp{\beta}), \
\nabla \psi,\nabla \zeta \in L^2([0,T] ; \lteasp{\beta})
\},
\end{align*}
where $T>0$ and $\beta \ge 0$ are constants. Moreover, we define
\begin{equation*}
\mathcal{X}^{\text{\rm e}}_{m,\beta} (0,T)
=X^{\text{\rm e}}_{m-1,\beta} (0,T)
\cap L^\infty(0,T ; H^m).
\end{equation*}

We use $c$ and $C$ to denote generic positive constants 
depending on $\mu$, $\lambda$, $\kappa$, $c_{v}$, $\rho_+$, $u_+$, $\|M\|_{H^9(\mathbb R^2)}$ and $\alpha$
but independent of $t$, $\beta$, $\delta$, $\sigma$ and any further information of $\Omega$.
We note that the positive constants $\alpha$, $\beta$, $\delta$ and $\sigma$ 
will be given in the next subsection.
Let us also denote a generic positive constant depending additionally
on other parameters $a$, $b$, $\ldots$ by $C(a,\, b,\, \ldots)$, and a generic positive constant depending additionally
on some further information of $\Omega$ (other than $\|M\|_{H^9(\mathbb R^2)}$) by $C(\Omega)$.  
Furthermore, $A \lesssim B$ means $A \leq C B$ for the generic constant $C$ given above, and $A \lesssim_\Omega B$ means $A \leq C(\Omega) B$ for the generic constant $C(\Omega)$ given above.

\subsection{Main results}
Before mentioning our main results, we introduce a result in \cite{knnz-10}
which showed the unique existence of planar stationary solutions
$(\rt,\ut,\tt)(x_1)=(\rt,\ut_1,0,0,\tt)(x_1)$
over the half-space 
$\mathbb R_+^3:=\{ x \in \mathbb R^3 \,;\, x_1>0\}$.
The planar stationary solution $(\rt(x_1), \ut_1(x_1), \tt(x_1))$ solves the ordinary differential equations
\begin{subequations}
\label{ste}
\begin{gather}
(\rt \ut_1)_{x_1} = 0,
\label{ste1}
\\
(\rt \ut_1^2 + R\rt\tt)_{x_1} = \mu_1 \ut_{1 x_1 x_1},
\\
\left(\rt \ut_1\left(c_v\tt+\frac{\ut_1^2}{2}\right)+ R \rt \tt \ut_1\right)_{x_1} = \kappa \tt_{ x_1 x_1}+
\mu_1 (\ut_1\ut_{1 x_1})_{ x_1}
\label{ste2}
\end{gather}
with $\tilde p=R\rt\tt$
and $\mu_1$ is a positive constant defined by $\mu_1:=2\mu+\lambda$.
The boundary data are prescribed as
 \begin{gather}
  (\ut_1,\tt)(0)=(\ut_b,\tt_{b}), 
  \label{bb1}
  \\
\lim_{x_1\to\infty}(\rt,\ut_1,\tt)(x_1)=(\rho_+,u_+,\theta_{+}).
 \label{bb2}
 \end{gather}
 \end{subequations}

The following quantity $\tilde{\delta}$ plays an important role in stability analysis:
\[
\tilde{\delta} := |\tilde{u}_{b}-u_+|+|\tilde\theta_{b}-\theta_+|.
\]
We call it the {\it boundary strength}.

\begin{proposition}[\cite{knnz-10}] \label{ex-st}
For the supersonic case, i.e. when \eqref{super1} holds, if $\tilde{\delta}<\varepsilon_0$ for a certain positive constant $\varepsilon_0$, there exists a unique smooth solution $(\rt,\ut_1,\tt)$ to the problem \eqref{ste}.
Moreover, there exist a positive constant $\alpha$
such that the stationary solution $(\rt,\ut_1,\tt)$ satisfies
\begin{equation}
|\pd_{x_1}^k (\rt(x_1) - \rho_+, \ut_1(x_1) - u_+,\tt(x_1) - \theta_+)|
\lesssim \tilde{\dels} e^{- \alpha x_1}
 \ \; \text{for} \ \;
k = 0,1,2,\dots.
\label{stdc1}
\end{equation}
\end{proposition}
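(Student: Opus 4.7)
The plan is to convert \eqref{ste} into the search for a trajectory on the stable manifold of the equilibrium $P_+ := (u_+,0,\theta_+,0)$ for an associated first-order autonomous ODE system in $\mathbb R^4$. I would begin by integrating \eqref{ste1} from $x_1$ to $\infty$, which together with \eqref{bb2} yields the conservation law $\tilde\rho\tilde u_1 \equiv \rho_+ u_+$. Substituting $\tilde\rho = \rho_+ u_+/\tilde u_1$ into the momentum and energy equations reduces the system to two second-order ODEs for $(\tilde u_1,\tilde\theta)$; setting $v := \pd_{x_1}\tilde u_1$ and $w := \pd_{x_1}\tilde\theta$ recasts them as a first-order autonomous system for $Y := (\tilde u_1,v,\tilde\theta,w)$ with equilibrium $P_+$, boundary condition $(\tilde u_1,\tilde\theta)|_{x_1=0} = (\tilde u_b,\tilde\theta_b)$, and the decay requirement $Y(x_1) \to P_+$ as $x_1 \to \infty$.

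Next I would linearize the system at $P_+$ and compute its spectrum. A direct calculation, using the ideal-gas pressure $P = R\rho\theta$ and the sound-speed identity $c_+^2 = \gamma R \theta_+$, shows that the Jacobian's characteristic polynomial factors so that the signs of the eigenvalues' real parts are governed by the Mach number $|u_+|/c_+$. Under the supersonic hypothesis \eqref{super1}, exactly two eigenvalues have negative real part and two have positive real part. This spectral computation is where I expect the main technical work to lie, since one must track how convection, pressure, viscosity $\mu_1$, and heat conduction $\kappa$ combine to produce the correct $2$-$2$ splitting; without \eqref{super1} the splitting would be $1$-$3$, and the boundary-value problem would be overdetermined.

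With the $2$-$2$ splitting in hand, the stable manifold theorem furnishes a smooth two-dimensional local stable manifold $\mathcal W^s$ through $P_+$, on which trajectories converge to $P_+$ at an exponential rate $\alpha > 0$ strictly less than the smallest real part of the stable eigenvalues. A short algebraic check confirms that the projection of the stable eigenspace onto the $(\tilde u_1,\tilde\theta)$-coordinates is an isomorphism; then by the implicit function theorem there exists, for each boundary datum $(\tilde u_b,\tilde\theta_b)$ within a distance $\varepsilon_0$ of $(u_+,\theta_+)$, a unique pair $(v(0),w(0))$ so that the corresponding trajectory lies in $\mathcal W^s$. That trajectory provides the unique smooth solution of \eqref{ste}.

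Finally, the decay estimate \eqref{stdc1} for $k=0$ is exactly the exponential contraction of the stable manifold, and the bounds for $k \ge 1$ follow inductively by solving the ODE system algebraically for $\pd_{x_1}^{k}(\tilde u_1,\tilde\theta)$ in terms of $(\tilde\rho-\rho_+,\tilde u_1-u_+,\tilde\theta-\theta_+)$ and lower-order derivatives, while $\pd_{x_1}^{k}\tilde\rho$ is obtained by differentiating $\tilde\rho = \rho_+ u_+/\tilde u_1$. Smoothness is automatic from the smoothness of the vector field, and the perturbative smallness $\tilde\delta<\varepsilon_0$ ensures we remain within the domain of validity of the stable manifold chart throughout.
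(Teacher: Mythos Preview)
The paper does not supply its own proof of this proposition: it is quoted as a result from \cite{knnz-10} and used as a black box. Your sketch follows exactly the stable-manifold approach that \cite{knnz-10} employs (reduce to a first-order autonomous system via the mass-flux conservation $\tilde\rho\tilde u_1=\rho_+u_+$, linearize at the end state, use the supersonic condition \eqref{super1} to obtain a two-dimensional stable subspace, and read off exponential decay from the stable manifold theorem), so your proposal is consistent with the original source.
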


From now on we discuss our main results.
We first show the unique existence of stationary solutions 
$(\rho^s, u^s,\theta^s)=(\rho^s,u^s_1,u^s_2,u^s_3,\theta^s)$ over the domain $\Omega$
by regarding $(\rho^s, u^s,\theta^s)(x)$ as a perturbation of $(\rt,\ut,\tt)(\tilde{M}(x))$, where
\begin{equation}\label{tM1}
\tilde{M}(x):=x_1-M(x'). 
\end{equation}
The stationary solutions satisfy the equations
\begin{subequations}
\label{snse}
\begin{gather}
\div (\rho^s  u^s) = 0,
\label{snse1}
\\
\rho^s \{ ( u^s \cdot \nabla) u^s \}
= \mu \Delta  u^s + (\mu + \lambda) \nabla (\div  u^s) - \nabla (R\rho^{s}\theta^{s}),
\label{snse2}
\\
c_{v}\rho^s{u}^s\cdot\nabla\theta^s+R\rho^{s}\theta^{s}\textrm{div}{u}^s
=\kappa\Delta\theta^s+2\mu|\mathcal{D}( u^s)|^2+\lambda(\textrm{div} u^s)^2.
\end{gather}
with the conditions
\begin{gather}
 u^s(M(x'),x') = { u}_b(x'),~~\theta^s(M(x'),x') = \theta_b(x'),
\\
\lim_{x_1 \to \infty}  \rho^s(x)
= \rho_+,
\quad
\lim_{x_1 \to \infty}  u^s(x)
=  (u_+,0,0),\quad
\lim_{x_1 \to \infty}  \theta^s(x)
= \theta_+,
\\
\inf_{x\in\Omega}\rho^s(x)>0,
\quad
\inf_{x\in\Omega}\theta^s(x)>0.
\end{gather}
\end{subequations}
To state the existence theorem, we use the notation
\[
\dels := \|\ub - (\tilde{u}_b,0,0)\|_{H^{13/2}(\partial \Omega)}+\|\theta_b - \tilde\theta_b\|_{H^{13/2}(\partial \Omega)}+\tilde{\delta}.
\]
We also employ an extension $U=U(x)$ of $u_b-(\tilde{u}_b,0,0)$, which satisfies 
\begin{subequations}\label{ExBdry0}
\begin{gather}
U(M(x'),x')=u_b(x')-(\tilde{u}_b,0,0),
\label{ExBdry1} \\
U(x_1,x')=0 \quad \text{if $x_1 >M(x')+1$},
\label{ExBdry2} \\
\|U\|_{H^7(\Omega)} \lesssim  \delta,
\label{ExBdry3}
\end{gather}
\end{subequations}
and an extension $\Theta=\Theta(x)$ of $\theta_b-\tilde{\theta}_b$, which satisfies 
\begin{subequations}\label{ExBd0}
\begin{gather}
\Theta(M(x'),x')=\theta_b(x')-\tilde{\theta}_b,
\label{ExBd1} \\
\Theta(x_1,x')=0 \quad \text{if $x_1 >M(x')+1$},
\label{ExBd2} \\
\|\Theta\|_{H^7(\Omega)} \lesssim  \delta.
\label{ExBd3}
\end{gather}
\end{subequations}
The existence result is summarized in the following theorem.
\begin{theorem}\label{th1}
Let \eqref{outf} and \eqref{super1} hold, and $m=3,4,5$.
There exist positive constants $\beta \leq \alpha/2$, 
where $\alpha$ is defined in Proposition \ref{ex-st},
and $\ep_0=\ep_0(\beta,\Omega)$ depending on $\beta$ and $\Omega$, such that if $\dels \le \ep_0$, then
the stationary problem \eqref{snse} has a unique solution
$(\rho^s,u^s,\theta^s)$ that satisfies
\begin{gather*}
(\rho^s-\rt\circ\tilde{M},u^s-\ut\circ\tilde{M}-U,\theta^s-\tt\circ\tilde{M}-\Theta) 
\in \lteasp{\beta}(\Omega)\cap H^m(\Omega),
\\
\|(\rho^s-\rt\circ\tilde{M},u^s-\ut\circ\tilde{M}-U,\theta^s-\tt\circ\tilde{M}-\Theta)\|_{\lteasp{\beta}}^2
\leq C_0\delta,
\\
\|(\rho^s-\rt\circ\tilde{M},u^s-\ut\circ\tilde{M}-U,\theta^s-\tt\circ\tilde{M}-\Theta)\|_{H^m}^2 \leq C_0\delta,
\end{gather*}
where $C_0=C_0(\beta,\Omega)$ is a positive constant depending on $\beta$ and $\Omega$.
\end{theorem}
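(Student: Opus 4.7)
The plan is to recast \eqref{snse} as a perturbation problem around the planar stationary solution composed with $\tilde M$. I would decompose
\[
\rho^s = \rt\circ\tilde M + \sigma,\qquad u^s = \ut\circ\tilde M + U + v,\qquad \theta^s = \tt\circ\tilde M + \Theta + \eta,
\]
so that $(v,\eta)$ vanish on $\partial\Omega$ (by the properties \eqref{ExBdry0} and \eqref{ExBd0} of the extensions) and $(\sigma,v,\eta) \to 0$ as $x_1\to\infty$. Substituting into \eqref{snse} produces a system of the schematic form $\mathcal{L}(\sigma,v,\eta) = \mathcal{N}(\sigma,v,\eta) + \mathcal{F}$, where $\mathcal{L}$ is the linearization around $(\rt,\ut,\tt)\circ\tilde M$, $\mathcal{N}$ gathers the quadratic and higher nonlinearities, and $\mathcal{F}$ is an inhomogeneity measuring the failure of the planar composition to solve the curved-domain equations together with the contributions of $U$ and $\Theta$. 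Because the planar state depends only on $\tilde M(x)=x_1-M(x')$, Proposition \ref{ex-st} forces every term in $\mathcal{F}$ that is generated by the nontriviality of $M$ to inherit the decay rate $e^{-\alpha x_1}$, and its size is controlled by $\delta$ through $\|M\|_{H^9}$ and the extension bounds \eqref{ExBdry3}, \eqref{ExBd3}.

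The existence statement would then be established by a contraction-mapping scheme on the complete metric space
\[
\mathcal{Y} := \bigl\{(\sigma,v,\eta) \in \lteasp{\beta}(\Omega)\cap H^m(\Omega) \; : \; v|_{\partial\Omega}=0,\ \eta|_{\partial\Omega}=0,\ \|(\sigma,v,\eta)\|_{\lteasp{\beta}\cap H^m}\le C_0\delta^{1/2}\bigr\},
\]
in which a given $(\sigma^\flat,v^\flat,\eta^\flat)$ is mapped to the solution of the linear problem $\mathcal{L}(\sigma,v,\eta) = \mathcal{N}(\sigma^\flat,v^\flat,\eta^\flat) + \mathcal{F}$. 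Solvability of this linear system splits naturally: the momentum and energy equations for $(v,\eta)$ form a second-order elliptic system and can be handled by a weak formulation followed by elliptic regularity up to the boundary, while the continuity equation is a transport equation
\[
(\ut\circ\tilde M + U + v^\flat)\cdot\nabla\sigma + (\text{lower-order in }\sigma) = (\text{terms involving }v,\mathcal{F}).
\]
The supersonic condition \eqref{super1}, via \eqref{stdc1}, ensures that the first component of $\ut\circ\tilde M$ stays uniformly negative, so the characteristics enter $\Omega$ from $x_1=+\infty$ and exit through the outflow boundary, delivering a unique solution together with the needed weighted estimates without a boundary condition on $\sigma$.

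The hard part is the \emph{uniform weighted a priori estimate} for the coupled linear system, which must be strong enough to close the contraction for $\delta\le\ep_0(\beta,\Omega)$. Three points deserve attention. First, the basic $\lteasp{\beta}$ energy estimate requires exploiting the sign created by the outflow condition $u_b\cdot n\ge c_1>0$: the boundary terms produced when one integrates by parts against the exponential weight $e^{\beta x_1}$ are nonnegative on $\partial\Omega$, and by choosing $\beta\le \alpha/2$ the weight is dominated by the decay in \eqref{stdc1}, so the convective contributions from $\ut\circ\tilde M$ are absorbed. Second, higher-order estimates in $H^m$ have to contend with the commutators $[\nabla^k,\,\cdot\circ\tilde M]$ and with the tangential derivatives of $M$ that appear in the equations after composition; these are controlled using $\|M\|_{H^9}$ and are responsible for the loss of derivatives absorbed by the assumption $m\le 5$. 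Third, the transport structure for $\sigma$ apparently loses a derivative relative to the elliptic regularity for $(v,\eta)$, but this is recovered by using the momentum equation to express $\nabla\sigma$ in terms of $\nabla^2 v$ and lower-order quantities, a trick typical in the analysis of stationary compressible flows. Once these estimates are in place, smallness of $\delta$ makes the map a contraction on $\mathcal{Y}$, and uniqueness in the class specified in Theorem \ref{th1} follows by applying the same linear estimates to the difference of two solutions and absorbing the quadratic difference on the left.
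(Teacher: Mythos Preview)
Your approach is genuinely different from the paper's. The paper does \emph{not} attack the stationary problem directly; instead it reformulates \eqref{snse} in the perturbation variable $\Phi=(\vp,\psi,\zeta)$ (this is Theorem \ref{th4}) and then proves existence by going through the \emph{time-dependent} problem \eqref{eq-pv}: Theorem \ref{global1} supplies a time-global solution $\Phi$ with the decay \eqref{bound1}; the translated sequence $\Phi^k(t,x):=\Phi(t+kT^*,x)$ is shown to be Cauchy and to converge to a time-periodic solution $\Phi^*$ (Proposition \ref{5.3}); finally, uniqueness of periodic solutions (Proposition \ref{5.1}) together with the arbitrariness of $T^*$ forces $\Phi^*$ to be independent of $t$, producing $\Phi^s$. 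The paper explicitly motivates this detour by noting that the stationary system ``is not categorized as elliptic equations'' and is ``hard to solve directly.'' The payoff of the paper's route is that local solvability is already available for the evolution problem (Lemma \ref{local1}), so one never has to build an existence theory for the coupled stationary linear operator.

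Your contraction scheme is plausible in spirit, but the sentence ``solvability of this linear system splits naturally'' into an elliptic problem for $(v,\eta)$ and a transport equation for $\sigma$ is where it would break. The linearized momentum equation carries $R\theta\,\nabla\sigma$, and the continuity equation carries $\rho\,\div v$, so the two blocks are genuinely coupled: you cannot solve one without the other. Your remark about recovering the lost derivative by expressing $\nabla\sigma$ through the momentum equation is an \emph{a priori estimate} device (and indeed the paper uses exactly this cancellation in Lemma \ref{lm4-hat} for the time-dependent estimates), but it does not by itself furnish \emph{existence} for the coupled hyperbolic--elliptic linear stationary system on the unbounded domain $\Omega$. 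To make your map well defined you would still need an existence theory for that linear system---for instance via an artificial-time regularization or a Galerkin approximation adapted to the outflow boundary---and that is precisely the construction the paper avoids by passing through the evolution problem. As written, the proposal gives estimates but not a solution operator, so the contraction map is not yet defined.
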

We also state the stability theorem.
\begin{theorem}\label{th2}
Let  \eqref{outf} and \eqref{super1} hold.
There exist positive constants $\beta \leq \alpha/2$, 
where $\alpha$ is defined in Proposition \ref{ex-st},
and $\ep_0=\ep_0(\beta,\Omega)$ depending on $\beta$ and $\Omega$ such that if 
$\|(\rho_0-\rho^s, u_0-u^s,\theta_0-\theta^s)\|_{\lteasp{\beta}}
+\hs{3}{(\rho_0-\rho^s, u_0-u^s,\theta_0-\theta^s)} + \dels \le \ep_0$
and $(\rho_0,u_0,\theta_0)$ satisfies the compatibility conditions of order $0$ and $1$,
then the initial-boundary value problem \eqref{ns} has 
a unique time-global solution $(\rho, u,\theta)$ such that
$(\rho-\rho^s,u-u^s,\theta-\theta^s) \in X^{\text{\rm e}}_{3,\beta} (0,T)$.
Moreover, it holds
\begin{equation*}
\|(\rho-\rho^s,u-u^s,\theta-\theta^s)(t)\|_{L^\infty}
\leq C_0 e^{-\sigma t},
\end{equation*}
where $C_0=C_0(\beta,\Omega)$ and $\sigma=\sigma(\beta,\Omega)$ are 
positive constants depending on $\beta$ and $\Omega$
but independent of $\delta$ and $t$.
\end{theorem}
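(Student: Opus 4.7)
The plan is to pair a local well-posedness result with a uniform-in-time weighted a priori estimate closed in $X^{\text{\rm e}}_{3,\beta}(0,T)$, and then to extract exponential decay directly from that estimate. Set the perturbation $(\vp,\psi,\zeta) := (\rho - \rho^s, u - u^s, \theta - \theta^s)$. Subtracting \eqref{snse} from \eqref{cnse} gives a quasilinear mixed hyperbolic--parabolic system for $(\vp,\psi,\zeta)$ with coefficients built from $(\rho^s,u^s,\theta^s)$, zero Dirichlet data for $(\psi,\zeta)$ on $\pd\Omega$, and decay to zero at infinity. A short-time solution $(\vp,\psi,\zeta) \in \mathcal{X}^{\text{\rm e}}_{3,\beta}(0,T_0)$ is produced by a standard linearization--iteration argument, using the compatibility conditions of orders $0$ and $1$ to match the regularity at $t=0$. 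The heart of the proof is then the bootstrap estimate
\[
\mathcal{N}(t)^2 := \|(\vp,\psi,\zeta)(t)\|_{\lteasp{\beta}}^2 + \hs{3}{(\vp,\psi,\zeta)(t)}^2 \le C\,\mathcal{N}(0)^2 \, e^{-2\sigma t},
\]
valid under the smallness hypothesis $\mathcal{N}(t) \le \ep_0$. A continuation argument then yields the global solution, and the $L^\infty$ decay follows from $H^3 \hookrightarrow L^\infty$.

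For the basic weighted energy estimate I would multiply the perturbed continuity, momentum, and temperature equations respectively by $R\tt \vp / \rt$, $\rt \psi$, and $c_v \rt \zeta / \tt$, where I write $(\rt,\ut,\tt) = (\rho^s,u^s,\theta^s)$ for brevity, add them, and integrate against the weight $e^{\beta x_1}$. This produces an identity of the shape
\[
\tfrac12 \tfrac{d}{dt}\|\sqrt{w}(\vp,\psi,\zeta)\|_{\lteasp{\beta}}^2 + D = B + \mathcal{R},
\]
with $w > 0$ a positive weight built from $(\rt,\tt)$. The dissipation $D$ contains the parabolic part $\|\nabla\psi\|_{\lteasp{\beta}}^2 + \|\nabla\zeta\|_{\lteasp{\beta}}^2$ together with a zeroth-order term of the form $\int_\Omega \beta|u^s_1| e^{\beta x_1}\vp^2\,dx$ arising from $\pd_1 e^{\beta x_1}$ integrated by parts against the convection in $\vp$; its sign is correct precisely because the outflow condition \eqref{outf} and the supersonic condition \eqref{super1} together force $u^s_1 < 0$ uniformly in $\Omega$. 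The boundary term $B$ only sees $\vp$, because $\psi$ and $\zeta$ vanish on $\pd\Omega$, and the quadratic-form argument of \cite{knnz-10} combined with \eqref{super1} makes $B$ a favorable term. The cubic remainder $\mathcal{R}$ is absorbed under $\mathcal{N}(t)\le \ep_0$.

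The higher-order part of the estimate is where the main work lies. Because the continuity equation is purely hyperbolic, $\nabla\vp$ has no direct parabolic dissipation and must be recovered from the momentum equation: solving for $R\tt\nabla\vp$ in terms of $\pd_t\psi$, $\nabla^2\psi$, $\nabla\zeta$ and lower-order quantities gives $\|\nabla\vp\|_{\lteasp{\beta}} \lesssim \|(\pd_t\psi,\nabla^2\psi,\nabla\zeta)\|_{\lteasp{\beta}} + \text{l.o.t.}$, and iterating at the levels $\nabla^k$ for $k \le 2$ closes the weighted $H^3$ bound. The main obstacle, in my view, is the curved boundary $\pd\Omega = \{x_1 = M(x')\}$: higher-order normal derivatives near $\pd\Omega$ cannot be bounded purely by energy methods and must be replaced using the equations themselves, while commutators generated by the flattening $\tilde{M}(x) = x_1 - M(x')$ and by the composition $(\rt,\ut,\tt)\circ\tilde{M}$ produce extra terms localized near the boundary. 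These extra terms decay exponentially in $x_1$ by \eqref{stdc1}, so choosing $\beta \le \alpha/2$ makes the weight $e^{\beta x_1}$ compatible with the background decay, and choosing $\ep_0(\beta,\Omega)$ small enough absorbs the remaining contributions.

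Finally, the upgrade from $\tfrac{d}{dt}E + D \le 0$ to $\tfrac{d}{dt}E + 2\sigma E \le 0$, and hence to exponential decay, relies on the crucial fact that $D$ already controls the full weighted $L^2$ norm of $(\vp,\psi,\zeta)$: the parabolic dissipation controls $\psi$ and $\zeta$ via weighted Poincar\'e-type inequalities in $x_1$ (since $\psi,\zeta$ vanish on $\pd\Omega$), while the weight-induced dissipation controls $\vp$ directly through $\int_\Omega \beta|u^s_1|e^{\beta x_1}\vp^2\,dx$. Gronwall's inequality then yields $E(t) \le E(0) e^{-2\sigma t}$, and Sobolev embedding converts this energy bound into the $L^\infty$ estimate asserted in the theorem.
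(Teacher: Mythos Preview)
Your overall strategy---perturb directly around $(\rho^s,u^s,\theta^s)$, close a weighted $L^2\cap H^3$ energy inequality with no inhomogeneous remainder, and deduce exponential decay by Gronwall---is sound, but it is \emph{not} the route the paper takes. In the paper, Theorem~\ref{th2} is reduced to Theorem~\ref{th5}, whose proof does \emph{not} rerun the full $H^3$ machinery around $\Phi^s$. Instead, the paper (i) keeps the perturbation $\Phi$ measured from the \emph{planar} background $(\rt,\ut,\tt)\circ\tilde M+(0,U,\Theta)$, for which Theorem~\ref{global1} already gives a global solution satisfying the uniform-but-nondecaying bound \eqref{bound1}; (ii) establishes only a \emph{weighted $L^2$} contraction estimate for differences of solutions (Lemma~\ref{5.2} and Proposition~\ref{5.1}), which after passing to the limit $k\to\infty$ yields $\|(\Phi-\Phi^s)(t)\|_{\lteasp{\beta}}\lesssim e^{-\sigma t}$; and (iii) obtains $L^\infty$ decay by Gagliardo--Nirenberg interpolation between this weighted $L^2$ decay and the uniform $H^3$ bound. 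What the paper's route buys is economy: the heavy Cattabriga/elliptic hierarchy is run once (Section~\ref{sec3}), and the stability proof reduces to a single weighted $L^2$ computation. What your route buys is directness: no time-translation trick, no interpolation step, and full $H^3$ exponential decay rather than merely $L^\infty$.

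Two details in your sketch deserve correction. First, you have the roles of \eqref{outf} and \eqref{super1} swapped: the boundary term is favorable because of the \emph{outflow} condition $u_b\cdot n>0$ (cf.\ \eqref{ea3} and \eqref{5.1-eq5}), while the \emph{interior} weight-induced dissipation $\beta\|\Phi\|_{\lteasp{\beta}}^2$ comes from the positive-definiteness of the full quadratic form $F_1(\vp,\psi_1,\zeta)$ in $\nabla w\cdot G_1$, and that positive-definiteness requires the \emph{supersonic} condition \eqref{super1} to dominate the cross terms $-R\theta_+\vp\psi_1-R\rho_+\zeta\psi_1$ (cf.\ \eqref{ea10} and the analogous computation in \eqref{5.1-eq6}). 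Second, your claim that weighted $L^2$ control of $\psi$ and $\zeta$ follows from a ``weighted Poincar\'e-type inequality in $x_1$'' is not the correct mechanism on the unbounded domain $\Omega$ with the growing weight $e^{\beta x_1}$: the control of $\|\psi\|_{\lteasp{\beta}}$ and $\|\zeta\|_{\lteasp{\beta}}$ comes from the same quadratic form $F_1$ (and from $\frac{\rho_+|u_+|}{2}|\psi'|^2$ for the tangential components), not from Poincar\'e. These are misattributions of mechanism rather than gaps in the argument; the conclusion that $D_{3,\beta}\gtrsim E_{3,\beta}$ still holds, and your Gronwall step is valid once the inhomogeneous terms $F,G,H$ have been eliminated by centering at $\Phi^s$.
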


Theorem \ref{th2} requires 
the condition $(\rho_0-\rho^s, u_0-u^s,\theta_0-\theta^s) \in \lteasp{\beta}(\Omega)$.
Without assuming this condition, we can prove the following stability theorem:

\begin{theorem}\label{th3}
Let  \eqref{outf} and \eqref{super1} hold.
There exists a positive constant $\ep_0=\ep_0(\beta,\Omega)$ 
depending on $\beta$ and $\Omega$ such that if 
$\hs{3}{(\rho_0-\rho^s, u_0-u^s,\theta_0-\theta^s)} + \dels  \le \ep_0$
and $(\rho_0,u_0,\theta_0)$ satisfies the compatibility conditions of order 0 and 1,
then the initial-boundary value problem \eqref{ns} has 
a unique time-global solution $(\rho, u,\theta)$ such that
$(\rho-\rho^s,u-u^s,\theta-\theta^s) \in X_{3} (0,T)$.
Moreover, 
\begin{equation*}
\|(\rho-\rho^s,u-u^s,\theta-\theta^s)(t)\|_{L^\infty}\to 0 \quad \text{as $t\to\infty$}.
\end{equation*}
\end{theorem}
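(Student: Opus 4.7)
The strategy is to adapt the energy method used for Theorem~\ref{th2} to the unweighted setting: I would establish a uniform-in-time $H^{3}$ a priori estimate together with integrability of a suitable dissipation, then convert this integrability into $L^{\infty}$-convergence to zero through a Gagliardo--Nirenberg and compactness argument (which replaces the Gronwall step that yielded exponential decay in the weighted case). Introducing the perturbation $(\vp,\psi,\zeta):=(\rho-\rho^{s},u-u^{s},\theta-\theta^{s})$ and subtracting \eqref{snse} from \eqref{ns} yields a quasilinear hyperbolic--parabolic system; since $u^{s}=u_{b}$ and $\theta^{s}=\theta_{b}$ on $\pd\Omega$, one has $(\psi,\zeta)|_{\pd\Omega}=0$, and the outflow condition $u^{s}\cdot n>0$ is inherited by the continuity equation. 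Local existence in $X_{3}(0,T_{0})$ for data of $H^{3}$-norm at most $\ep_{0}$ follows from a standard linearise--iterate--contract construction, with the compatibility conditions of order $0$ and $1$ providing the regularity of $\psi_{t}(0),\zeta_{t}(0)$ needed to initialise the iteration.

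Next I would derive the uniform-in-time $H^{3}$ estimate. Working in flattened variables $y_{1}:=\tilde{M}(x),\, y':=x'$ so that $y_{1}=0$ is flat, I would perform $L^{2}$ energy estimates on $(\vp,\psi,\zeta)$ and on their tangential derivatives up to order three, then recover the normal derivatives algebraically from the equations. The crucial interior dissipation of $\nabla\vp$ is generated by the standard cross estimate on the momentum equation (multiply by $\nabla\vp$ and use $\inf\rho^{s},\inf\theta^{s}>0$ to extract the coercive term $\int\theta^{s}|\nabla\vp|^{2}$); the $H^{m}$-smallness $\dels$ delivered by Theorem~\ref{th1} absorbs all commutator terms involving derivatives of the stationary background. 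Under a bootstrap ansatz $\sup_{t\le T}\hs{3}{(\vp,\psi,\zeta)(t)}\le\eta\ll 1$, the outcome is an inequality of the form
\begin{equation*}
\mathcal{E}(t)+\int_{0}^{t}\mathcal{D}(s)\,ds \le C\mathcal{E}(0)+C\dels,
\end{equation*}
with $\mathcal{E}(t)\sim\hs{3}{(\vp,\psi,\zeta)(t)}^{2}$ and $\mathcal{D}(t)\gtrsim\hs{2}{\nabla\vp(t)}^{2}+\hs{3}{\nabla(\psi,\zeta)(t)}^{2}+\lpo{2}{\sqrt{u_{b}\cdot n}\,\vp(t)}{\pd\Omega}^{2}$. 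A continuation argument then promotes the local solution to a global one in $X_{3}(0,\infty)$ with $\mathcal{D}\in L^{1}(0,\infty)$.

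To conclude $L^{\infty}$-convergence, the equations together with the uniform $H^{3}$ bound give $|\tfrac{d}{dt}\mathcal{D}(t)|\lesssim 1$, so $\mathcal{D}\in L^{1}$ forces $\mathcal{D}(t)\to 0$, and in particular $\lt{\nabla^{k}(\vp,\psi,\zeta)(t)}\to 0$ for $k=1,2,3$. Combining this with the uniform $H^{3}$ bound through a Gagliardo--Nirenberg interpolation of the form $\li{f}\lesssim\hs{1}{f}^{3/4}\hs{3}{f}^{1/4}$ (valid on $\Omega$ after a bounded Sobolev extension to $\R^{3}$), supplemented by an auxiliary argument showing $\lt{(\vp,\psi,\zeta)(t)}\to 0$ --- which I would obtain either from $L^{2}$ energy identities exploiting the outflow boundary dissipation of $\vp$ and the homogeneous Dirichlet trace of $(\psi,\zeta)$, or from the compactness observation that any weak $H^{3}$-limit along $t_{n}\to\infty$ has vanishing gradient and vanishing boundary trace, hence is identically zero because $\Omega$ is unbounded --- then yields $\li{(\vp,\psi,\zeta)(t)}\to 0$ as $t\to\infty$.

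The main obstacle will be the uniform $H^{3}$ a priori estimate in the absence of the exponential weight. In Theorem~\ref{th2} the weight $e^{\beta x_{1}}$ interacts with the advection by $u_{+}<0$ to produce a coercive $\beta$-multiple of the weighted energy on the dissipation side, closing both the energy estimate and the exponential $L^{\infty}$-decay in one Gronwall step; without that weight, the lower-order commutators generated by the multidirectional, non-constant convection $u^{s}\cdot\nabla$ and by the curved boundary $\{x_{1}=M(x')\}$ must be controlled purely through the smallness $\dels$ and through the outflow boundary trace of $\vp$. Recovering the interior $\nabla\vp$-dissipation despite the absence of any diffusive term in the continuity equation, and then propagating this control to all third-order derivatives on the curved geometry described only by $M\in H^{9}$, is the principal technical difficulty.
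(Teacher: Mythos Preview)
Your overall strategy matches the paper's Section~\ref{S6}: perturb around the exact stationary solution $(\rho^{s},u^{s},\theta^{s})$, derive an unweighted $H^{3}$ a~priori estimate with integrable dissipation (the paper's Proposition~\ref{nowe-apriori1}), continue the local solution globally, and then upgrade $\mathcal{D}\in L^{1}(0,\infty)$ to $L^{\infty}$-decay. One small point: since you subtract the \emph{exact} stationary equations \eqref{snse}, the resulting system is homogeneous in $(\vp,\psi,\zeta)$, so the a~priori bound should read $\mathcal{E}(t)+\int_{0}^{t}\mathcal{D}\le C\mathcal{E}(0)$ with no additive $C\delta$; the $\delta$-smallness enters only as a coefficient in front of dissipation terms that you absorb.

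There is, however, a genuine gap in your decay step. You propose to show $\lt{(\vp,\psi,\zeta)(t)}\to 0$, but neither of your suggested mechanisms delivers this. The outflow dissipation only gives $\int_{0}^{\infty}\|\vp|_{\pd\Omega}\|_{L^{2}_{x'}}^{2}\,dt<\infty$, which says nothing about interior $L^{2}$-decay. The compactness argument fails for the same reason: from $\nabla\Phi(t_{n})\to 0$ in $L^{2}$ and $\Phi(t_{n})$ bounded in $H^{3}$ you get at best weak convergence $\Phi(t_{n})\rightharpoonup 0$, which does not imply $\lt{\Phi(t_{n})}\to 0$. In fact $L^{2}$-decay of the perturbation is not claimed (and need not hold) in the unweighted setting. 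The fix is to bypass the $L^{2}$-norm entirely: once $\mathcal{D}(t)\to 0$ you have $\lt{\nabla\Phi(t)}+\lt{\nabla^{2}\Phi(t)}\to 0$, and combining the Sobolev inequality \eqref{sobolev1} with the embedding $W^{1,6}(\Omega)\hookrightarrow L^{\infty}(\Omega)$ gives $\li{\Phi}\lesssim\|\nabla\Phi\|_{H^{1}}$, which tends to zero. This is the route taken in \cite{kg06}, to which the paper defers for the decay argument.
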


If the domain $\Omega$ is sufficiently flat, in the above theorems,
we can take the constants $\ep_0$, $C_0$, and $\sigma$ independent of $\Omega$.
Namely, the following corollary holds:

\begin{corollary}\label{cor}
Suppose that $\|M\|_{H^9} \leq \eta_0$ holds for $\eta_0$ being in Lemma \ref{CattabrigaEst}.
Then Theorems \ref{th1}--\ref{th3} hold with constants $\ep_0$, $C_0$, and $\sigma$
independent of $\Omega$. %
\end{corollary}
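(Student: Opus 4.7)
The plan is to revisit the proofs of Theorems \ref{th1}--\ref{th3} and verify that, under the flatness hypothesis $\|M\|_{H^9} \le \eta_0$, every constant denoted $C(\Omega)$ may in fact be replaced by a constant depending only on the fixed physical parameters, $\alpha$, and $\|M\|_{H^9(\mathbb R^2)}$. Since $\|M\|_{H^9}$ is then bounded by $\eta_0$, this yields constants uniform over the entire class of such $\Omega$, and in particular $\epsilon_0$, $C_0$, and $\sigma$ inherit that uniformity.

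The key structural observation is that the only source of genuine additional $\Omega$-dependence in the arguments (beyond what is already absorbed into the dependence on $\|M\|_{H^9}$ through standard trace, Sobolev, and change-of-variables arguments on the curved boundary) is the elliptic regularity estimate for the Stokes-type systems that appears both in the construction of the stationary solution and in closing the higher-order energy estimates for the perturbation. This estimate is exactly Lemma \ref{CattabrigaEst}, whose constant is independent of $\Omega$ precisely when $\|M\|_{H^9}\le \eta_0$.

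With this in hand, I would first walk through the iteration scheme that proves Theorem \ref{th1}: each linearized subproblem yields an a priori bound in which the only nontrivial elliptic gain comes from Cattabriga applied to the momentum/energy equations, while the continuity equation recovers density regularity by a transport-type argument whose constants depend only on $\|M\|_{H^9}$. Contraction in the iteration therefore closes with $\epsilon_0$ and $C_0$ uniform in $\Omega$. For Theorems \ref{th2} and \ref{th3}, the energy method relies on three families of estimates---trace inequalities on $\partial\Omega$, commutator and interpolation bounds for derivatives near the curved boundary, and Cattabriga-type elliptic recovery---of which the first two are already controlled purely in terms of $\|M\|_{H^9}$, and the third becomes $\Omega$-independent under the flatness hypothesis. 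The weighted exponential decay in Theorem \ref{th2} uses only $\beta \le \alpha/2$ together with the favourable sign of the convective boundary flux arising from \eqref{outf}, and carries no further geometric dependence.

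The main obstacle is bookkeeping: one must confirm that every auxiliary estimate invoked in the previous arguments---Korn-type inequalities on $\Omega$, weighted div--curl decompositions, extension operators realizing \eqref{ExBdry0}--\eqref{ExBd0}, and interpolation between tangential and normal derivatives on $\partial\Omega$---carries a constant already controlled in terms of $\|M\|_{H^9}$ alone, so that replacing the generic Cattabriga constant by its flat-domain-stable version upgrades every $C(\Omega)$ to $C$. No new analytic ingredient is required; the corollary follows by inspecting the existing proofs with the uniform Cattabriga constant inserted at each occurrence.
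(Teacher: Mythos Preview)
Your proposal is correct and matches the paper's approach: the only estimates carrying a genuine $\Omega$-dependence (flagged by $\lesssim_\Omega$) are the Cattabriga estimates in Lemmas \ref{lm1-C}, \ref{lm2-C}, \ref{nowe-lm1-C}, \ref{nowe-lm2-C} and the derived Lemmas \ref{lm3-comp}, \ref{nowe-lm3-comp}, and under $\|M\|_{H^9}\le\eta_0$ Lemma \ref{CattabrigaEst} makes these constants uniform, after which the proofs of Proposition \ref{apriori1}, Proposition \ref{nowe-apriori1}, and the construction in Section \ref{S5} go through verbatim with $\Omega$-independent constants. The auxiliary Korn and div--curl ingredients you flag are not actually used in the paper, so no further bookkeeping is needed beyond tracing the Cattabriga constant through.
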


\begin{remark}
It is worth noticing that in Theorems \ref{th1}--\ref{th3}, the existence and stability are established as long as the boundary of domain is given by a graph, so a domain boundary with a large curvature is allowed. The works \cite{kg06,nn09} adopted the boundary condition as $u_b(x')=(\tilde{u}_b,0,0)$ for the half-space $\mathbb R^3_+$, which is covered in our theorems. 

It is also worth pointing out that Corollary \ref{cor} covers the boundary condition $u_b(x')=\tilde{u}_b n(x')$, where $n(x')$ is the unit outer normal vector given in \eqref{nvector}. Indeed, if $\varepsilon_0$ is independent of $\Omega$, there is no issue to take $n(x')$ depending on $\Omega$ so that $\|\tilde{u}_b n-(\tilde{u}_b,0,0)\|_{H^{13/2}(\partial\Omega)} + |\tilde{u}_b-u_+|=\delta\leq \varepsilon_0/2$ holds. This boundary condition describes the situation where the fluid is flowing out of the domain from only the normal direction of the boundary, which seems more reasonable from a physical point of view. 
\end{remark}

The stationary problem \eqref{snse} is hard to solve directly, in contrast to the case when the spatial domain $\Omega = \mathbb{R}^3_+$ and a planar stationary solution only depending on $x_1$ is considered, since in the latter case the system \eqref{snse} reduces to an ODE \eqref{ste}. It is also different from the stationary incompressible Navier-Stokes equation, in which the system is elliptic. Our stationary equations are not categorized as elliptic equations. To overcome these difficulties and solve \eqref{snse}, we first prove the existence of a time-global solution to the problem \eqref{ns}, and then construct a stationary solution making use of this time-global solution. 

Let us further specify the idea to construct the time-global solution. We use a continuous argument combining time-local solvability and an a priori estimate. The key of the proof is the dissipative structure which makes solutions of the corresponding homogeneous equations decay exponentially fast as time tends to infinity. On the other hand, we expect from the stability theorem in \cite{nn09}, \cite{knnz-10} introduced above that the solution $(\rho,u, \theta)$ to the problem \eqref{ns} with $u_b=(u_+,0,0)$ will converge to the constant state $(\rho_+,u_+,\theta_+)$ at an exponential rate as time tends to infinity. When $u_b\neq(u_+,0,0)$, after suitable reformulation, all effects coming from $u_b\neq(u_+,0,0)$ are represented by inhomogeneous terms in the equations. We define a perturbation 
$$\Phi (t, x):=(\vp,\psi,\zeta)(t,x) :=(\rho, u, \theta)(t,x)-(\rt, \ut, \tt)(\tilde{M}(x)) - (0, U, \Theta) (x)$$ 
and reformulate \eqref{ns} into a problem for $\Phi$. We are then able to derive a priori estimates of the solutions $\Phi$ to the reformulated problem utilizing the dissipative structure. For the construction of stationary solutions, a similar approach as in \cite{SZ1,Va83}  is taken. More precisely, we define the translated time-global solutions $\Phi^k(t,x):=\Phi(t+kT^*,x)$ for any $T^* > 0$ and $k=1,2,3,\ldots$. It can then be proved that the sequence $\{\Phi^k\}$ converges to a certain time-periodic solution with a period $T^*$. After that, the uniqueness of time-periodic solution and the arbitrariness of $T^*$ allow us to show that the time-periodic solution is actually time-independent, yielding a stationary solution to our problem. 

Before closing this section, we sketch the outline of this paper.
In Section \ref{sec2}, we reformulate the initial-boundary value problem \eqref{ns} into an initial-boundary value problem for a perturbation 
from the stationary solution $(\rt\circ\tilde{M},\ut\circ\tilde{M}, \tt\circ\tilde{M})$ in the half-space, as stated in \eqref{eq-pv}. In Section \ref{sec3}, we show the unique existence of the time-global solution to the reformulated problem \eqref{eq-pv} (see Theorem \ref{global1}) by proving an a priori estimate in Proposition \ref{apriori1}. The derivation of the a priori estimate is based on the energy form in \cite{kagei05,knz03}, combined with the Matsumura--Nishida energy method in \cite{m-n83} and the weighted energy method in \cite{nny07,nn09}. The stationary solutions are constructed in Section \ref{S5} (in particular, Subsections \ref{S5.1} and \ref{S5.2}), by using the method mentioned just above. Subsection \ref{S5.3} is devoted to the proof of the asymptotic stability of the stationary solution in the weighted space $L^2_{e, \beta} (\Omega)$, where the exponential convergence rate is obtained. For initial data not in $L^2_{e, \beta} (\Omega)$, the asymptotic stability of the stationary solution is shown in Section \ref{S6}. In Appendix \ref{Appenx0}, we give the derivation of the equations of the energy forms \eqref{ea1} and \eqref{sea1}. 
Appendix \ref{BasciIneq} provides basic inequalities and estimates, which are often used in this paper.

\section{Reformulation}\label{sec2}
For the proof of Theorems \ref{th1} and \ref{th2},
we begin by reformulating the initial-boundary value problem \eqref{ns}. 
Let us introduce the perturbations
\begin{gather*}
(\vp,\psi,\zeta)(t,x)
:=
(\rho, u,\theta)(t,x)
-
(\rt, \ut,\tt)(\tilde{M}(x)) - (0, U, \Theta) (x),
\ \; \text{where} \ \;
\psi = (\psi_1,\psi_2,\psi_3). 
\end{gather*}
Here $\tilde{M}(x)$ is defined in \eqref{tM1}.

Owing to equations in (\ref{ns}) and (\ref{ste}),
the perturbation $(\vp,\psi)$ satisfies the system of
equations
\begin{subequations}
\label{eq-pv}
\begin{gather}
\vp_t + u \cdot \nabla \vp + \rho \div \psi
= f+F,
\label{eq-pv1}
\\
\rho \{ \psi_t + (u \cdot \nabla) \psi \}
- L \psi + R\theta\nabla \vp+R\rho\nabla\zeta
= g + G,
\label{eq-pv2}
\\
c_v\rho\big(\zeta_t+{u}\cdot\nabla\zeta\big)+R\rho\theta\textrm{div}\psi
-\kappa\Delta\zeta=2\mu|\mathcal{D}(\psi)|^2+\lambda(\textrm{div}\psi)^2+h+H.
\label{eq-pv3}
\end{gather}
The boundary and initial conditions for $(\vp,\psi,\zeta)$ 
follow from (\ref{ini1})--(\ref{bc3}) and (\ref{bb1})--(\ref{bb2}) as
\begin{gather}
\psi(t,M(x'),x') = 0,~~\zeta(t,M(x'),x') = 0,
\label{pbc}
\\
(\vp,\psi,\zeta)(0,x)
=(\vp_0, \psi_0, \zeta_0)(x)
:= (\rho_0, u_0,\theta_0)(x) - (\rt, \ut, \tt)(\tilde{M}(x)) - (0, U, \Theta) (x).
\label{pic}
\end{gather}
\end{subequations}
Here $L \psi$, $f$, $F$, $g$, $G$, $h$, and $H$ are defined by
\begin{align*}
L \psi &:= \mu \Delta \psi + (\mu + \lambda) \nabla \div \psi,
\\
f &:= - \nabla\tilde{\rho} \cdot \psi  -  \tilde{u}_1'\varphi - \vp \div U ,
\\
F &:=  - \nabla \tilde{\rho} \cdot U - \tilde{\rho} \div U,
\\
g&:= - \rho (\psi \cdot \nabla) (\tilde{u}+U) - \varphi ((\tilde{u}+U) \cdot \nabla) (\tilde{u}+U)
-R \zeta\nabla \tilde{\rho}-R\vp\nabla(\tt+ \Theta),
\\
G&:=-\rt ((\tilde{u}+U) \cdot \nabla) U-\rt (U \cdot \nabla)\tilde{u} + LU-R \rt\nabla \Theta-R \Theta\nabla \tilde{\rho} 
\\
&\qquad +\begin{bmatrix}
\mu \tilde{u}''_1 \sum^3_{j=2} (\pd_{j} M)^2  - \mu \tilde{u}'_1 \sum^3_{j=2}  \pd^2_{j} M
\\
\left(R\tt\tilde{\rho}' +R\tilde{\rho}\tt'-(\mu+\lambda) \tilde{u}''_1 \right)\pd_{2} M
\\
\left(R\tt\tilde{\rho}' +R\tilde{\rho}\tt'-(\mu+\lambda) \tilde{u}''_1 \right) \pd_{3} M
\end{bmatrix},
\\
h&:= - c_v\rho (\psi \cdot \nabla) (\tt+\Theta) - c_v\vp ((\ut+U)\cdot \nabla) (\tt+\Theta)- R\rt\zeta\div(\ut+U)
\\
&\qquad-R\vp\theta\div(\ut+ U)
+4\mu\mathcal{D}(\psi):\mathcal{D}(\ut+U)+2\lambda\div\psi\div(\ut+U),
\\
H&:=-c_v\rt (U \cdot \nabla)\tilde{\theta}-c_v\rt ((\tilde{u}+U) \cdot \nabla) \Theta-R \rt \Theta\div\ut-R \tilde{\rho} (\tt+\Theta)\div U
\\
&\qquad+\kappa \tt''\sum^3_{j=2} (\pd_{j} M)^2  - \kappa \tt' \sum^3_{j=2}  \pd^2_{j} M+\kappa\Delta \Theta+2\mu|\mathcal{D}(U)|^2+4\mu\mathcal{D}(\ut):\mathcal{D}(U)
\\
&\qquad+\lambda(\div U)^2+2\lambda\div\ut:\div U+\mu(\tilde{u}'_1)^2\left((\pd_{2} M)^2+(\pd_{3} M)^2 \right).
\end{align*}
Note that $L$ is a differential operator; $f$, $g$, and $h$ are homogeneous terms for $(\vp,\psi,\zeta)$; 
$F$,  $G$, and $H$ are inhomogeneous terms independent of $t$.
Furthermore, $F$,  $G$, and $H$
can be estimated by using $M \in H^9(\mathbb R^2)$, \eqref{stdc1}, and \eqref{ExBdry0} as
\begin{equation}\label{h1}
\| (F,G,H) \|_{L^2_{e, 3\alpha/2}} \lesssim  \delta, \quad 
\|(F,G,H)\|_{H^5} \lesssim \delta.
\end{equation}


We denote the perturbation as
\[
\Phi := (\vp,\psi,\zeta),
\quad
\Phi_0 := (\vp_0,\psi_0,\zeta_0).
\]
In order to establish the local existence of the solution
in strong sense,
we assume compatibility conditions for the initial data.
It is necessary to assume the compatibility conditions of order 0, 1, and 2:
\begin{subequations}\label{cmpa0}
\begin{gather} 
\psi_0|_{x_1=M(x')} = 0,
\quad
\zeta_0|_{x_1=M(x')} = 0,
\label{cmpa1}
\\
\left\{
\rho_0 (u_0 \cdot \nabla) \psi_0
- L \psi_0
+R\theta_0\nabla \vp_0+R\rho_0\nabla\zeta_0
- (g+G)|_{t=0}
\right\}|_{x_1 = M(x')}
= 0,
\label{cmpa3} \\
\left\{
c_v\rho_0 u_0\cdot\nabla\zeta_0+P_0\textrm{div}\psi_0
-\kappa\Delta\zeta_0-2\mu|\mathcal{D}(\psi_0)|^2-\lambda(\textrm{div}\psi_0)^2-(h+H)|_{t=0}
\right\}|_{x_1 = M(x')}
= 0,
\label{cmpa44} \\
\left[\partial_t\left\{
\rho (u \cdot  \nabla) \psi
- L \psi
+ R\theta\nabla \vp+R\rho\nabla\zeta
- g
\right\}|_{t=0}\right]_{x_1 = M(x')}
= 0,
\label{cmpa2}
\\
\left[\partial_t\left\{
c_v\rho u\cdot\nabla\zeta+P\textrm{div}\psi
-\kappa\Delta\zeta-2\mu|\mathcal{D}(\psi)|^2-\lambda(\textrm{div}\psi)^2-h
\right\}|_{t=0}\right]_{x_1 = M(x')}
= 0.
\label{cmpa55}
\end{gather}
\end{subequations}
Note that the equations \eqref{cmpa2}--\eqref{cmpa55}, which are of order 2, can be written into a form which only contains spatial-derivatives of the initial data by using \eqref{eq-pv} 

It suffices to show Theorems \ref{th4}--\ref{th5} and Corollary \ref{cor2} below for the completion of the
proof of Theorems \ref{th1}--\ref{th2} and the claims corresponding to Theorems \ref{th1}--\ref{th2} in Corollary \ref{cor}, respectively. 

\begin{theorem}\label{th4}
Let  \eqref{outf} and \eqref{super1} hold, and $m=3,4,5$.
There exist positive constants $\beta \leq \alpha/2$, 
where $\alpha$ is defined in Proposition \ref{ex-st},
and $\ep_0=\ep_0(\beta,\Omega)$ depending on $\beta$ and $\Omega$
such that if $\dels \le \ep_0$, 
the stationary problem corresponding to \eqref{eq-pv} has a unique solution
$\Phi^s \in \lteasp{\beta}(\Omega)\cap H^m(\Omega)$ with
\begin{gather*}
\|\Phi^s\|_{\lteasp{\beta}}^2+\|\Phi^s\|_{H^m}^2\leq C_0 \delta,
\end{gather*}
where $C_0=C_0(\beta,\Omega)$ is a positive constant depending on $\beta$ and $\Omega$.
\end{theorem}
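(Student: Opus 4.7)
The plan is to follow the strategy indicated in the introduction: first construct a time-global solution to \eqref{eq-pv} that remains uniformly small in $t$, and then extract a stationary solution by studying its time-translates. Accordingly, I would split the argument into (i) a uniform-in-time a priori estimate in the weighted space $\lteasp{\beta} \cap H^m$, combined with local-in-time well-posedness, to obtain a time-global solution $\Phi(t)$; (ii) a convergence-of-time-translates argument producing a time-periodic solution of arbitrary period $T^*$; and (iii) a rigidity argument forcing this periodic solution to be time-independent.

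For step (i), I would first set up local existence in the class $\mathcal{X}^{\text{\rm e}}_{m,\beta}(0,T)$ on a short interval using the compatibility conditions \eqref{cmpa0}, treating the momentum and temperature equations as parabolic with the density propagated through the transport equation \eqref{eq-pv1}. The heart of the matter is the a priori estimate. Following the Matsumura--Nishida energy form used in \cite{m-n83,kagei05,knz03} for the non-isentropic linearization, and combining it with the weighted energy method of \cite{nny07,nn09}, one multiplies \eqref{eq-pv1}--\eqref{eq-pv3} by the appropriate combination (proportional to $(R\theta/\rho)\vp$, $\psi$, and $(c_v/\theta)\zeta$ respectively) with the weight $e^{\beta x_1}$ and integrates by parts. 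The supersonic condition \eqref{super1} and the outflow boundary hypothesis \eqref{outf} ensure both that the normal boundary flux at $\partial\Omega$ has a favorable sign and that the weighted bulk term $\beta \ut_1 e^{\beta x_1} \vp^2$ is coercive for $\beta \leq \alpha/2$, yielding exponential dissipation. The inhomogeneities $F, G, H$ contribute a fixed right-hand side of size $\delta$ via \eqref{h1}. Higher derivatives up to $m=5$ are handled by differentiating, commuting derivatives with the curved-boundary coordinate system, and absorbing the resulting lower-order commutator terms into the dissipation after controlling them by the boundary regularity $H^{13/2}(\partial\Omega)$. Closing the estimate by continuity yields
\[
\|\Phi(t)\|_{\lteasp{\beta}}^2 + \|\Phi(t)\|_{H^m}^2
+ \int_0^t \text{(dissipation)}\,ds
\le C e^{-\sigma t}\|\Phi_0\|^2_{\lteasp{\beta}\cap H^m} + C\delta,
\]
uniformly in $t\ge 0$.

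For step (ii), fix $T^*>0$ and set $\Phi^k(t,x):=\Phi(t+kT^*,x)$. Because the coefficients and inhomogeneous terms in \eqref{eq-pv} are time-independent, each $\Phi^k$ solves the same system on $[0,\infty)$, differing only in its initial datum $\Phi(kT^*,\cdot)$. The difference $\Phi^{k+1}-\Phi^k$ solves the \emph{linearization} of \eqref{eq-pv} around $\Phi^k$ with zero forcing, and applying the same weighted energy machinery to this linear equation gives
\[
\|\Phi^{k+1}-\Phi^k\|_{\mathcal{X}^{\text{\rm e}}_{m,\beta}(0,T^*)}
\lesssim e^{-\sigma k T^*}\|\Phi(T^*)-\Phi_0\|_{\lteasp{\beta}\cap H^m}.
\]
Hence $\{\Phi^k\}$ is Cauchy, converging to some $\Phi^*$, which is automatically a $T^*$-periodic solution of \eqref{eq-pv} satisfying the stated $\lteasp{\beta}\cap H^m$ bound of order $\delta^{1/2}$. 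For step (iii), uniqueness of small periodic solutions, obtained again from the linearized energy estimate applied to the difference of two such solutions, implies that the periodic solution constructed for any two periods $T^*$ and $T^*/2$ must coincide; this forces $\Phi^*(t+T^*/2)\equiv\Phi^*(t)$ for every $T^*$, so $\Phi^*$ is independent of $t$. Setting $\Phi^s:=\Phi^*$ yields the desired stationary solution, and the $\lteasp{\beta}$ and $H^m$ bounds follow from the a priori estimate in step (i).

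The hardest step will be step (i), specifically closing the weighted a priori estimate uniformly in $t$ while tracking the dependence of constants on $\beta$ and on the boundary profile $M$. Two technical difficulties stand out: first, the curved boundary $\partial\Omega = \{x_1=M(x')\}$ generates nontrivial commutator terms between tangential derivatives and the weight $e^{\beta x_1}$, which must be absorbed into the dissipation using the Sobolev regularity of $M$; second, the coupling between $\vp$ (which has only transport regularity) and $(\psi,\zeta)$ (which are parabolic) requires one to extract density dissipation from the momentum equation via the effective flux $R\theta\nabla\vp$, a step where the supersonic condition is absolutely essential and the smallness of $\tilde\delta$ and $\delta$ is used to close the bound.
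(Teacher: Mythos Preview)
Your three-step architecture (global solution via weighted energy estimates $\to$ convergence of time-translates to a periodic solution $\to$ periodic $=$ stationary by uniqueness and arbitrariness of $T^*$) is exactly the one the paper uses, and your identification of where the supersonic and outflow conditions enter is accurate.

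There is one overstated claim in step (ii) worth correcting. You assert that the linearized weighted energy estimate yields
\[
\|\Phi^{k+1}-\Phi^k\|_{\mathcal{X}^{\text{\rm e}}_{m,\beta}(0,T^*)}
\lesssim e^{-\sigma k T^*}\|\Phi(T^*)-\Phi_0\|_{\lteasp{\beta}\cap H^m},
\]
i.e.\ Cauchy convergence in the full $H^m$ topology. This does not close: commuting $\nabla^m$ through the variable coefficients $(\rho^k,u^k,\theta^k)$ in the difference equation produces terms requiring one more derivative of $\Phi^k$ than the uniform bound provides. The paper (Lemma~\ref{5.2}) only runs the linearized energy argument at the $\lteasp{\beta}$ level, obtaining $\|(\Phi-\Phi^k)(t)\|_{\lteasp{\beta}}\lesssim e^{-\sigma t}$, and then interpolates this against the uniform $H^m$ bound \eqref{bound1} via Gagliardo--Nirenberg to get Cauchy convergence in $C([0,T^*];H^{m-1})\cap C^1([0,T^*];H^{m-3})$. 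The $H^m$ regularity of the limit $\Phi^*$ is recovered by weak compactness, and the $O(\delta)$ bound on $\|\Phi^s\|_{H^m}$ is then obtained either by passing $k\to\infty$ in \eqref{bound1} or (as the paper does in Proposition~\ref{5.3}) by re-running the construction from specially chosen initial data $\Phi_0^\#$ with $\|\Phi_0^\#\|_{\lteasp{\beta}\cap H^5}\lesssim\delta$ satisfying the compatibility conditions, whose existence is nontrivial and is imported from \cite{SZ1}. This is a routine repair and does not alter your overall plan.
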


\begin{theorem}\label{th5}
Let \eqref{outf} and \eqref{super1} hold.
There exist positive constants $\beta \leq \alpha/2$, 
where $\alpha$ is defined in Proposition \ref{ex-st},
and $\ep_0=\ep_0(\beta,\Omega)$ depending on $\beta$ and $\Omega$
such that if 
$\|\Phi_0-\Phi^s\|_{\lteasp{\beta}}+\hs{3}{\Phi_0-\Phi^s} + \dels \le \ep_0$
and $\Phi_0$ satisfies the compatibility condition \eqref{cmpa1}--\eqref{cmpa44},
then the initial-boundary value problem \eqref{eq-pv} has 
a unique time-global solution 
$\Phi \in X^{\text{\rm e}}_{3,\beta} (0,\infty)$.
Moreover, it satisfies
\begin{equation*}
\|(\Phi-\Phi^s)(t)\|_{L^\infty}\leq C_0 e^{-\sigma t},
\end{equation*}
where $C_0=C_0(\beta,\Omega)$ and $\sigma=\sigma(\beta,\Omega)$
are positive constant depending on $\beta$ and $\Omega$.
\end{theorem}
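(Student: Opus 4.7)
The plan is to first invoke Theorem~\ref{th4} to produce the stationary solution $\Phi^{s}$ and then analyze the difference $\tilde\Psi:=\Phi-\Phi^{s}$. Subtracting the stationary version of \eqref{eq-pv} from \eqref{eq-pv} eliminates the inhomogeneous forcings $F,G,H$ entirely, leaving a quasilinear homogeneous system for $\tilde\Psi$ whose right-hand side is at most quadratic in $\tilde\Psi$ with coefficients depending on $\Phi^{s}$ and the reference profile $(\rt,\ut,\tt)\circ\tilde M$, both of size $O(\dels)$. The initial datum $\tilde\Psi_{0}:=\Phi_{0}-\Phi^{s}$ is small in $\lteasp{\beta}\cap H^{3}$ by hypothesis, and since $\Phi^{s}$ satisfies the boundary condition \eqref{pbc}, the compatibility conditions of orders $0$ and $1$ for $\tilde\Psi_{0}$ are inherited from those assumed on $\Phi_{0}$. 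A time-global solution $\tilde\Psi\in X^{\text{\rm e}}_{3,\beta}(0,\infty)$ (equivalently, $\Phi\in X^{\text{\rm e}}_{3,\beta}(0,\infty)$) is then produced by the standard continuation argument: a local-in-time existence result for symmetric hyperbolic--parabolic systems, obtained via the usual Banach-fixed-point iteration, is extended to all times by means of a global-in-time a priori estimate that closes in the same norm.

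The heart of the proof is that a priori estimate, which takes the schematic form
\[
\|\tilde\Psi(t)\|_{\lteasp{\beta}}^{2}+\|\tilde\Psi(t)\|_{H^{3}}^{2}+\int_{0}^{t}\!\mathcal{D}(\tau)\,d\tau \;\lesssim\;\|\tilde\Psi_{0}\|_{\lteasp{\beta}}^{2}+\|\tilde\Psi_{0}\|_{H^{3}}^{2},
\]
where $\mathcal{D}(\tau)$ collects $\|\nabla\tilde\vp\|_{H^{2}}^{2}$, $\|\nabla(\tilde\psi,\tilde\zeta)\|_{H^{3}}^{2}$ and the weighted dissipations $\|\nabla(\tilde\psi,\tilde\zeta)\|_{\lteasp{\beta}}^{2}$. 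The proof combines the Matsumura--Nishida scheme \cite{m-n83} with the entropy-type multipliers of \cite{kagei05,knz03} and the exponential-weight method of \cite{nny07,nn09}. Concretely, one multiplies the equations for $\tilde\Psi$ by the symmetrizing multipliers $(R\tt/\rho)\tilde\vp$, $\tilde\psi$, $(c_{v}/\tt)\tilde\zeta$, further weights by $e^{\beta x_{1}}$, and integrates by parts over $\Omega$. The viscous and heat-conductive terms produce dissipation of $\nabla\tilde\psi$ and $\nabla\tilde\zeta$; the boundary integrals carry a favourable sign thanks to $u_{b}\cdot n\ge c_{1}>0$ combined with \eqref{super1}; and the choice $\beta\le\alpha/2$ ensures that convective terms generated by the weight are dominated by the exponential decay \eqref{stdc1} of $(\rt,\ut,\tt)\circ\tilde M$. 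The missing dissipation of $\nabla\tilde\vp$ is recovered in Matsumura--Nishida fashion by testing the momentum equation against $\nabla\tilde\vp/\rho$. The same procedure is iterated on tangential derivatives $\dy^{k}$ and time derivatives $\dt^{k}$ for $k\le 3$; normal derivatives are retrieved a posteriori from the equations themselves, and all nonlinear terms and commutators (with the weight and with $\tilde M$) are absorbed via $H^{3}(\Omega)\hookrightarrow L^{\infty}(\Omega)$ and the smallness of $\ep_{0}$.

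Exponential decay follows because the exponential weight, combined with the recovered dissipation of $\tilde\vp$, makes the dissipation functional coercive over the energy functional modulo absorbable lower-order terms. A Gr\"onwall argument on a suitably weighted Lyapunov functional $\mathcal{E}(t)\sim\|\tilde\Psi(t)\|_{\lteasp{\beta}}^{2}+\|\tilde\Psi(t)\|_{H^{3}}^{2}$ then yields $\mathcal{E}(t)\le C_{0}e^{-2\sigma t}$ for some $\sigma=\sigma(\beta,\Omega)>0$, and the Sobolev embedding $H^{3}(\Omega)\hookrightarrow L^{\infty}(\Omega)$ converts this into the stated bound $\|(\Phi-\Phi^{s})(t)\|_{L^{\infty}}\le C_{0}e^{-\sigma t}$.

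The main obstacle is closing the weighted higher-order estimate in the presence of the curved boundary $\pd\Omega=\{x_{1}=M(x')\}$. The hyperbolic character of the density equation precludes any boundary dissipation for $\tilde\vp$, so the boundary contributions must be controlled entirely through the outflow sign and the supersonic condition, while the commutators generated by the weight $e^{\beta x_{1}}$ and by the reference-profile shift $\tilde M(x)=x_{1}-M(x')$ must be absorbed without spoiling the positivity of the dissipation. Balancing these three requirements is where most of the technical work relative to the flat-boundary results \cite{knnz-10,w22} is concentrated.
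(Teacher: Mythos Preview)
Your approach is correct in principle but takes a genuinely different route from the paper. You propose to subtract off $\Phi^{s}$, obtain a homogeneous quasilinear system for $\tilde\Psi=\Phi-\Phi^{s}$, and then run the entire weighted $H^{3}$ energy machinery (symmetrizing multipliers, tangential and temporal derivatives, recovery of normal derivatives, absorption of commutators) directly on $\tilde\Psi$, arriving at exponential decay of $\|\tilde\Psi\|_{\lteasp{\beta}}^{2}+\|\tilde\Psi\|_{H^{3}}^{2}$ via Gr\"onwall. This is essentially the strategy of Section~\ref{S6} transplanted to the weighted setting, and it works because the weighted dissipation $D_{3,\beta}$ dominates the energy $E_{3,\beta}$ (the unweighted $L^{2}$ part being controlled by the weighted one since $x_{1}>M(x')$ is bounded below).

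The paper, by contrast, avoids repeating the higher-order estimates on the difference system. It first invokes Theorem~\ref{global1} on the original inhomogeneous problem \eqref{eq-pv} to obtain the global solution $\Phi$ with the uniform $H^{3}$ bound \eqref{bound1} (the smallness of $\|\Phi_{0}\|$ follows from that of $\|\Phi_{0}-\Phi^{s}\|$ and Theorem~\ref{th4}). It then uses only the weighted $L^{2}$ contraction estimate of Lemma~\ref{5.2} on the difference $\Phi-\Phi^{k}$, passes to the limit $k\to\infty$ along the time-translate sequence used to construct $\Phi^{s}=\Phi^{*}$, and obtains $\|(\Phi-\Phi^{s})(t)\|_{\lteasp{\beta}}\lesssim e^{-\sigma t}$. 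The $L^{\infty}$ decay then follows by Gagliardo--Nirenberg interpolation between this decaying $L^{2}$ norm and the merely bounded $H^{3}$ norm, combined with the Sobolev embedding. Thus the paper trades a direct $H^{3}$ decay estimate on $\tilde\Psi$ for a much cheaper weighted $L^{2}$ estimate plus interpolation, leveraging the structures already built in Sections~\ref{sec3} and~\ref{S5.1}. Your route is conceptually more self-contained but requires reworking Subsections~\ref{Sptial-deriv}--\ref{ss-comp-apriori} with $\Phi^{s}$-dependent coefficients.
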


\begin{corollary}\label{cor2}
Suppose that $\|M\|_{H^9} \leq \eta_0$ for $\eta_0$ being in Lemma \ref{CattabrigaEst}.
Then Theorems \ref{th4} and \ref{th5} hold with constants $\ep_0$, $C_0$, and $\sigma$
independent of $\Omega$. 
\end{corollary}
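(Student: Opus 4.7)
The plan is to revisit the arguments establishing Theorems \ref{th4} and \ref{th5} and to pinpoint each appearance of a constant that depends on information of $\Omega$ beyond $\|M\|_{H^9}$. By the conventions set out in Subsection \ref{ss-Notation}, such constants are precisely the quantities written $C(\Omega)$ or absorbed in $\lesssim_\Omega$. Under the smallness hypothesis $\|M\|_{H^9} \le \eta_0$, I expect every such quantity to admit a bound depending only on the fixed parameters $\mu,\lambda,\kappa,c_v,\rho_+,u_+,\alpha$, and $\eta_0$, hence to be effectively universal.

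The $\Omega$-dependence enters the proofs through only a few ingredients. First, the elliptic regularity estimates for the linearized momentum/Stokes system (used to recover top-order normal derivatives of $\psi$ from lower-order data) carry a geometric constant; this is exactly the object controlled by Lemma \ref{CattabrigaEst}, which by assumption yields a uniform bound once $\|M\|_{H^9} \le \eta_0$. Second, the Sobolev, trace and extension constants in \eqref{ExBdry0}--\eqref{ExBd0} and in the estimate \eqref{h1} for $(F,G,H)$ a priori depend on $\Omega$; however, under the smallness of $\|M\|_{H^9}$ these can all be transported from the flat half-space via the diffeomorphism $x \mapsto (\tilde M(x),x') = (x_1 - M(x'),x')$, whose Jacobian and derivatives up to order $9$ are controlled by quantities involving $\eta_0$ only. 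Third, the commutator terms generated by differentiating $\tilde M$ in the source terms of \eqref{eq-pv} produce only factors of $\nabla^k M$ with $k \le 9$, which by the same smallness bound are again controlled in terms of $\eta_0$ alone.

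Carrying this bookkeeping through the a priori estimate of Proposition \ref{apriori1}, through the construction of the stationary solution in Subsections \ref{S5.1}--\ref{S5.2} from the translated sequence $\Phi^k(t,x):= \Phi(t+kT^*,x)$, and through the exponentially weighted stability argument in Subsection \ref{S5.3}, every instance of a $C(\Omega)$ factor is replaced by a constant depending only on the universal parameters above. The smallness threshold $\ep_0$, the size constant $C_0$, and the decay rate $\sigma$ are then fixed by the same chain of inequalities as before, but now with uniform coefficients; in particular one may choose $\beta \le \alpha/2$ independently of $\Omega$. The conclusions of Theorems \ref{th4} and \ref{th5} thus hold with $\ep_0$, $C_0$, and $\sigma$ independent of $\Omega$, which is the content of the corollary.

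The main obstacle is the uniformity of the elliptic regularity step, since that is the only place where a truly geometric constant on the curved domain is needed; once Lemma \ref{CattabrigaEst} supplies this uniform bound, the remaining domain dependence collapses to explicit powers of $\|M\|_{H^9} \le \eta_0$ that are harmless. The rest of the argument is a routine tracking exercise and requires no new ideas beyond those used to prove Theorems \ref{th4} and \ref{th5}.
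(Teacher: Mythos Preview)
Your proposal is correct and follows essentially the same route as the paper. The paper's proof is even more economical: by the conventions in Subsection~\ref{ss-Notation}, the symbol $\lesssim$ already absorbs dependence on $\|M\|_{H^9}$, so the Sobolev, trace, extension, and commutator constants you list are by definition \emph{not} $\Omega$-dependent in the paper's sense; the only genuine $\lesssim_\Omega$ occurrences trace back to the Cattabriga estimate (Lemmas~\ref{lm1-C}, \ref{lm2-C} and their consequences), and the paper simply invokes Corollary~\ref{cor1} to make those uniform, then reruns Subsections~\ref{S5.1}--\ref{S5.3} verbatim.
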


\section{Time-global solvability}\label{sec3}

This section provides the time-global solvability
of the initial--boundary value problem \eqref{eq-pv}.

\begin{theorem}\label{global1}
Let \eqref{outf} and \eqref{super1} hold, and $m=3,4,5$.
There exist positive constants $\beta \leq \alpha/2$, 
where $\alpha$ is defined in Proposition \ref{ex-st},
and $\ep_0=\ep_0(\beta,\Omega)$ depending on $\beta$ and $\Omega$ such that if 
$\|\Phi_0\|_{\lteasp{\beta}}+\hs{m}{\Phi_0} + \dels \le \ep_0$
and $\Phi_0$ satisfies the compatibility condition \eqref{cmpa1}--\eqref{cmpa44} for $m=3,4$,
\eqref{cmpa0} for $m=5$,
then the initial-boundary value problem \eqref{eq-pv} has 
a unique time-global solution 
$\Phi \in X^{\text{\rm e}}_{m,\beta} (0,T)$.
Moreover, it satisfies
\begin{equation}\label{bound1}
\|\Phi(t)\|_{\lteasp{\beta}}^2+\|\Phi(t)\|_{H^m}^2+\|\pd_t \Phi(t)\|_{H^{m-2}}^2
\\
\leq C_0(\|\Phi_0\|_{\lteasp{\beta}}^2+\hs{m}{\Phi_0}^2)e^{-\sigma t} + C_0\dels,
\quad t \in [0,\infty),
\end{equation}
where $C_0=C_0(\beta,\Omega)$ and $\sigma=\sigma(\beta,\Omega)$
are positive constant depending on $\beta$ and $\Omega$
but independent of $\delta$ and $t$.
\end{theorem}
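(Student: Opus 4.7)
The plan is to combine a local-in-time existence result for the hyperbolic-parabolic system \eqref{eq-pv} with a uniform-in-time a priori estimate, and then extend the local solution to all $t\ge 0$ by the standard continuation argument. Precisely, I would posit the a priori bound
\[
N(T)^2 := \sup_{t \in [0,T]} \bigl(\|\Phi(t)\|_{\lteasp{\beta}}^2 + \|\Phi(t)\|_{H^m}^2 + \|\pd_t \Phi(t)\|_{H^{m-2}}^2\bigr) \le \nu^2
\]
for some small $\nu>0$, and aim to improve it to the right-hand side of \eqref{bound1}. A local solution in $X^{\text{\rm e}}_{m,\beta}(0,T_*)$ is produced by linearizing \eqref{eq-pv} and iterating in a standard way: the transport equation \eqref{eq-pv1} for $\vp$ is solved along characteristics (the outflow condition in \eqref{outf} means no boundary data for $\vp$ is needed), while \eqref{eq-pv2}--\eqref{eq-pv3} are parabolic Dirichlet problems. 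The compatibility conditions \eqref{cmpa0} at $t=0$ furnish the required time regularity, and the weight $e^{\beta x_1}$ is propagated because the forcing $(F,G,H)$ lies in $\lteasp{3\alpha/2}\subset\lteasp{\beta}$ by \eqref{h1}.

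The heart of the proof is the a priori estimate, for which I would combine three ingredients: the ``energy form'' of Kagei--Kobayashi and Kawashima--Nishibata--Zhu, the Matsumura--Nishida higher-order energy method, and the exponentially weighted energy method of Nakamura--Nishibata--Yuge and Nakamura--Nishibata. Multiplying \eqref{eq-pv1}--\eqref{eq-pv3} by a carefully chosen linear combination whose coefficients turn the principal part into a positive-definite density $\cale(\vp,\psi,\zeta)$, integrating against $e^{\beta x_1}$ over $\Omega$, and integrating by parts, yields two favorable contributions: (i) the supersonic condition \eqref{super1} together with the outflow condition produces a nonnegative boundary dissipation on $\partial\Omega$, and (ii) the weight generates an interior dissipation $c\beta|u_+|\,\|\Phi\|_{\lteasp{\beta}}^2$ which controls the density part that the parabolic operators cannot see. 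Combined with $\mu\|\nabla\psi\|_{\lteasp{\beta}}^2+\kappa\|\nabla\zeta\|_{\lteasp{\beta}}^2$ from the second-order operators, this gives a full weighted dissipation. The cubic nonlinearities $f,g,h$ are absorbed by the smallness of $\nu$, and $F,G,H$ contribute $O(\delta)$ via Cauchy--Schwarz.

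For higher-order bounds I would split derivatives into tangential $\pd_{x'}$ and normal $\pd_1$ parts. Tangential derivatives are handled by applying $\pd_{x'}^k$ to the system and repeating the weighted energy method; since $\pd_{x'}$ is tangential to $\partial\Omega$ after the change of variable implicit in $\tilde M$ (see \eqref{tM1}), the commutators produce only variable-coefficient lower-order terms controlled by $\|M\|_{H^9(\mathbb R^2)}$ and smallness of $\delta$. Normal derivatives $\pd_1^k\psi,\pd_1^k\zeta$ are recovered from parabolic elliptic regularity for \eqref{eq-pv2}--\eqref{eq-pv3}, using a version of Cattabriga's estimate over the perturbed half-space (Lemma \ref{CattabrigaEst}); normal derivatives $\pd_1^k\vp$ are then obtained from \eqref{eq-pv1} by trading one spatial derivative against one time derivative. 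Time derivatives are estimated by differentiating the full system in $t$ and re-running the same energy method, which requires \eqref{cmpa0} at $m=5$ and \eqref{cmpa1}--\eqref{cmpa44} at $m=3,4$. Choosing $\beta\le\alpha/2$ small enough that commutators with $e^{\beta x_1}$ are absorbed into the dissipation, and summing with appropriate small multipliers, yields a differential inequality
\[
\frac{d}{dt}\cale(t) + \sigma\cale(t) \le C\delta
\]
with $\cale(t)$ equivalent to the left-hand side of \eqref{bound1}; Gronwall's inequality then gives \eqref{bound1}, and the continuation argument extends the solution to $[0,\infty)$.

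The principal obstacle is propagating the exponential weight through the higher-order estimates in the presence of the curved boundary. The graph boundary creates commutator terms of the form $(\nabla_{x'}M)\pd_1^j\Phi$ whose weighted $L^2$-norm, if handled naively, introduces a factor of $\|M\|_{H^9}$ into the leading dissipation coefficient and degrades $\sigma$ or even breaks coercivity. I expect this to be managed, as in \cite{SZ1}, by pairing each commutator with the parabolic dissipation on its top-order factor and the $\beta$-weighted density dissipation on its sub-principal factor, and by invoking Lemma \ref{CattabrigaEst} to absorb normal-direction losses of regularity coming from the Stokes-like block. The flatness hypothesis $\|M\|_{H^9}\le\eta_0$ of Corollary \ref{cor} is precisely what makes the resulting constants $\ep_0,C_0,\sigma$ independent of $\Omega$; without it, the dependence on the geometry enters through these commutators and through the constants in Lemma \ref{CattabrigaEst}.
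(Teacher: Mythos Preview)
Your overall strategy---local existence plus a uniform a~priori estimate closed by the continuation argument, with the a~priori estimate built from the energy form, time-derivative estimates, tangential estimates after flattening, and Cattabriga/elliptic estimates for normal derivatives---matches the paper's proof of Theorem~\ref{global1} (via Lemma~\ref{local1} and Proposition~\ref{apriori1}).

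Two points of difference are worth noting. First, what you call the ``principal obstacle''---propagating the spatial weight $e^{\beta x_1}$ through the higher-order estimates---is something the paper simply does not do: the exponential spatial weight enters only in the $L^2$ estimate (Lemma~\ref{lm1}), while all higher-order and time-derivative estimates are carried out in unweighted Sobolev norms (see the definitions of $E_{m,\beta}$ and $D_{m,\beta}$). The $e^{\sigma t}$ time weight is applied uniformly to all estimates, and the resulting $\sigma$-terms on the right are absorbed at the end by taking $\sigma$ small. This avoids the commutator difficulties you anticipate. Second, your recovery of normal derivatives of $\vp$ by ``trading one spatial derivative against one time derivative'' is too vague: the paper (Lemma~\ref{lm4-hat}) takes a carefully chosen linear combination of $\partial_1$ applied to the continuity equation and the momentum equations with coefficients $\mathcal{A}_j$ in~\eqref{Aj-def}, designed so that the second-order normal derivatives $\partial_1^2\hat\psi_j$ cancel exactly; only then does one obtain a closed estimate for $\partial^{\bm a}\mathfrak{D}\hat\vp$ without losing a derivative. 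Without this cancellation the scheme would not close at top order.
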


The time-global solution $\Phi$ with \eqref{bound1} can be constructed 
by a standard continuation argument (see \cite{m-n83})
using the time-local solvability in Lemma \ref{local1} 
and the a priori estimate in Proposition \ref{apriori1} below.

\begin{lemma}\label{local1}
Let $m=3,4,5$. 
Suppose that the initial data $\Phi_0 \in H^m(\Omega)$ satisfies
the compatibility condition \eqref{cmpa1}--\eqref{cmpa44} for $m=3,4$,
\eqref{cmpa0} for $m=5$.
Then there exists a positive constant $T$ depending on 
$\hs{m}{\Phi_0}$ such that
initial-boundary value problem \eqref{eq-pv}
has a unique solution $\Phi \in X_{m}(0,T)$.
Moreover, if the initial data satisfies
$\Phi_0 \in \lteasp{\beta} (\Omega)$,
it holds $\Phi \in X_{m,\beta}^{\text{\rm e}}(0,T)$.
\end{lemma}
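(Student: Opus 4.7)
\textbf{Proof proposal for Lemma \ref{local1}.} My plan is to construct the solution by a standard Picard iteration adapted to the quasilinear hyperbolic--parabolic structure of \eqref{eq-pv}. First I would flatten the boundary by the diffeomorphism $y=(\tilde M(x),x')$, which converts $\Omega$ to the half-space $\mathbb R^3_+$ and transforms the coefficients in a controlled way (using $M\in H^9(\mathbb R^2)$). All subsequent analysis can then be carried out on $\mathbb R^3_+$ with $H^m$ estimates, bearing in mind that the new coefficients pick up derivatives of $M$ but these are bounded in the relevant norms.

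Next I would set up an iteration scheme. Given $\Phi^{(n)}=(\vp^{(n)},\psi^{(n)},\zeta^{(n)})$, define $(\rho^{(n)},u^{(n)},\theta^{(n)}):=(\rt+\vp^{(n)},\ut+U+\psi^{(n)},\tt+\Theta+\zeta^{(n)})$ and solve the linearized system
\begin{gather*}
\vp^{(n+1)}_t+u^{(n)}\cdot\nabla\vp^{(n+1)}+\rho^{(n)}\div\psi^{(n+1)}=f^{(n)}+F,\\
\rho^{(n)}\psi^{(n+1)}_t+\rho^{(n)}(u^{(n)}\cdot\nabla)\psi^{(n+1)}-L\psi^{(n+1)}+R\theta^{(n)}\nabla\vp^{(n+1)}+R\rho^{(n)}\nabla\zeta^{(n+1)}=g^{(n)}+G,\\
c_v\rho^{(n)}\zeta^{(n+1)}_t+c_v\rho^{(n)}u^{(n)}\cdot\nabla\zeta^{(n+1)}+R\rho^{(n)}\theta^{(n)}\div\psi^{(n+1)}-\kappa\Delta\zeta^{(n+1)}=\mathcal N^{(n)}+h^{(n)}+H,
\end{gather*}
with $\psi^{(n+1)}|_{\partial\mathbb R^3_+}=0$, $\zeta^{(n+1)}|_{\partial\mathbb R^3_+}=0$, and the prescribed initial data. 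The first equation is a linear transport equation with given coefficients, solvable by the method of characteristics in $H^m$ provided the coefficients lie in $L^\infty(0,T;H^m)\cap \mathrm{Lip}$. The second and third equations, with $\vp^{(n+1)}$ now known, form a linear parabolic system for $(\psi^{(n+1)},\zeta^{(n+1)})$ with Dirichlet data; existence and $H^m$ regularity here follow from the standard theory for parabolic systems provided the compatibility conditions are satisfied.

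The heart of the argument is a uniform-in-$n$ a priori estimate. I would combine the usual energy identity (multiply the transport equation by $\vp^{(n+1)}$ weighted by $R\theta^{(n)}/\rho^{(n)}$, and the momentum/energy equations by $\psi^{(n+1)}$ and $\zeta^{(n+1)}$ respectively) with tangential-derivative estimates $\dy^\alpha$ for $|\alpha|\le m$; normal derivatives of $\vp^{(n+1)}$ are recovered via the transport equation, while normal derivatives of $\psi^{(n+1)},\zeta^{(n+1)}$ come from parabolic regularity. Standard Sobolev and Moser-type estimates on $f^{(n)},g^{(n)},h^{(n)}$ together with a Gronwall argument yield, on a time interval $[0,T]$ with $T=T(\|\Phi_0\|_{H^m})$, the bound $\sup_{[0,T]}\|\Phi^{(n+1)}\|_{H^m}^2+\int_0^T(\|\nabla\vp^{(n+1)}\|_{H^{m-1}}^2+\|\nabla\psi^{(n+1)},\nabla\zeta^{(n+1)}\|_{H^m}^2)\,ds\le C\|\Phi_0\|_{H^m}^2+1$. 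A parallel estimate for differences $\Phi^{(n+1)}-\Phi^{(n)}$ in a weaker norm (e.g.\ $X_{m-1}$) provides contraction after possibly shrinking $T$, giving convergence of the iterates to a solution $\Phi\in X_m(0,T)$; uniqueness follows from the same difference estimate. Throughout one must check that positivity of $\rho^{(n)}$ and $\theta^{(n)}$ is preserved by choosing $T$ small, and that the compatibility conditions \eqref{cmpa1}--\eqref{cmpa55} at $t=0$ match $\pd_t^k\Phi^{(n+1)}|_{\pd\Omega,t=0}$ for $k=0,\dots,\lfloor m/2\rfloor$ so that the parabolic theory applies cleanly. The main obstacle I anticipate is bookkeeping the mixed space--time regularity ($\pd_t\Phi\in H^{m-2}$ recovered from the equations) against the quasilinear coefficients and the curved-boundary corrections; this is essentially technical but forces care with the compatibility hierarchy, which is why the lemma distinguishes the cases $m=3,4$ from $m=5$.

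Finally, for the weighted statement, assuming $\Phi_0\in\lteasp{\beta}$ I would run exactly the same energy argument after multiplying by the weight $e^{\beta x_1}$. The extra commutator terms produced by the weight (of the form $\beta e^{\beta x_1}\rho^{(n)}u_1^{(n)}|\psi^{(n+1)}|^2$, etc.) are absorbed by the parabolic dissipation for $\beta$ small enough (in particular $\beta\le\alpha/2$ so that the background decay in \eqref{stdc1} controls them), and by \eqref{h1} the inhomogeneous terms $F,G,H$ lie in $L^2_{\mathrm e,3\alpha/2}\subset\lteasp{\beta}$. This yields a uniform bound on $\|\Phi^{(n+1)}\|_{\lteasp{\beta}}$ and on $\|\nabla\psi^{(n+1)},\nabla\zeta^{(n+1)}\|_{L^2(0,T;\lteasp{\beta})}$, so that the limit $\Phi$ belongs to $X^{\mathrm e}_{m,\beta}(0,T)$ as claimed.
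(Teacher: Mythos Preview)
Your sketch is a reasonable outline of the standard local-existence argument for quasilinear hyperbolic--parabolic systems, and it is essentially the route the paper has in mind: the paper does not give its own proof but simply writes ``Lemma \ref{local1} can be proved in much the same way as in \cite{kg06-loc}. Therefore, we omit the proof.'' The cited Kagei--Kawashima paper carries out precisely the kind of iteration you describe (flatten the boundary, linearize, obtain uniform $H^m$ bounds via tangential energy estimates plus recovery of normal derivatives, contract in a lower norm), so your proposal matches the intended approach.
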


For the notational convenience, 
we define a norm $E_{m,\beta}(t)$ and a dissipative norm $D_{m,\beta}(t)$ by
\begin{align} 
E_{m,\beta}(t) &:=\|\Phi\|_{\lteasp{\beta}}^2+\|\Phi\|_{H^m}^2 \quad \text{for $m\geq 0$,}  \label{Ekbeta-def}
\\
D_{m,\beta}(t)
&:=\left\{\begin{array}{ll}
\beta\|\Phi\|_{\lteasp{\beta}}^2
+ \|(\nabla\psi, \nabla\zeta)\|_{\lteasp{\beta}}^2
+ \|{\frac{d}{dt}} {\vp} \|^2 
+ \|\vp(\tau,M(\cdot),\cdot)\|_{L^2(\mathbb R^2)}^2 & \text{if $m=0$},
\\
D_{0,\beta}(t)^2+\|(\nabla\Phi,\nabla^2\psi,\nabla^2\zeta)\|_{H^{m-1}}^2
+\left\|{\frac{d}{dt}} {\vp} \right\|_{H^{m}}^2 & 
\\
\displaystyle \quad +\sum_{i=1}^{[(m+1)/2]} \lt{(\pd_t^{i} \psi,\pd_t^{i}\zeta)}^2_{H^{m+1-2i}}
& \text{if $m\geq 1$},
\end{array}\right.  \label{Dkbeta-def}
\end{align}
where $\frac{d}{dt}:=\partial_{t}+u\cdot\nabla$. Furthermore, we denote
\[
N_{\beta}(T):= \sup_{t\in[0,T]}(\|\Phi(t)\|_{\lteasp{\beta}}+\|\Phi(t)\|_{H^3}).
\]

\begin{proposition}\label{apriori1}
Let \eqref{outf} and \eqref{super1} hold, and $m=3,4,5$.
Suppose that $\Phi \in X^{\text{\rm e}}_{m,\beta} (0,T)$
is a solution to the initial-boundary value problem \eqref{eq-pv}
for some positive constant $T$.
Then there exist positive constants $\beta \leq \alpha/2$, 
where $\alpha$ is defined in Proposition \ref{ex-st},
and $\ep_0=\ep_0(\beta,\Omega)$ depending on $\beta$ and $\Omega$ such that if 
$\sup_{t\in[0,T]}(\|\Phi(t)\|_{\lteasp{\beta}}+\|\Phi(t)\|_{H^m}) + \dels \le \ep_0$, 
the following estimate holds
\begin{gather}
e^{\sigma t} E_{m, \beta} (t) + \int^t_0 e^{\sigma \tau} D_{m,\beta} (\tau ) \, d \tau
\leq C_0(\|\Phi_0\|_{\lteasp{\beta}}^2 + \| \Phi_0\|^2_{H^m})
+ C_0 \delta e^{\sigma \tau},
\label{apes0}\\
\|\Phi(t)\|_{\lteasp{\beta}}^2+\|\Phi(t)\|_{H^m}^2+\|\pd_t \Phi(t)\|_{H^{m-2}}^2
\leq C_0(\|\Phi_0\|_{\lteasp{\beta}}^2+\hs{m}{\Phi_0}^2)e^{-\sigma t} + C_0\dels
\label{apes1}
\end{gather}
for $t \in [0,T]$. Here $C_0=C_0(\beta,\Omega)$ and $\sigma=\sigma(\beta,\Omega)$
are positive constant depending on $\beta$ and $\Omega$
but independent of $\delta$ and $t$.
\end{proposition}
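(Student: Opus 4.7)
The plan is to prove the estimate by combining an energy-form computation with an exponentially weighted energy method and with Matsumura--Nishida style recovery of the dissipation of $\nabla\vp$, all under the a priori smallness assumption $N_\beta(T)+\dels\leq\ep_0$. At the zeroth order I follow \cite{kagei05,knz03} and introduce the entropy-type functional
\[
\cale := R(\tt\circ\tilde{M})\rho\,\ppsi\!\left(\tfrac{\rt\circ\tilde{M}+\vp}{\rt\circ\tilde{M}}\right) + \tfrac{1}{2}\rho|\psi|^2 + c_v(\tt\circ\tilde{M})\rho\,\ppsi\!\left(\tfrac{\tt\circ\tilde{M}+\zeta}{\tt\circ\tilde{M}}\right),\quad \ppsi(s):=s-\log s-1,
\]
so that $\cale\sim |\Phi|^2$. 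Multiplying \eqref{eq-pv1}--\eqref{eq-pv3} by suitable factors and using \eqref{ste1}--\eqref{ste2} I expect to obtain a pointwise identity $\dt\cale+\div\mathbf{Q}+\mathcal{D}=\mathcal{R}$, where $\mathcal{D}\gtrsim |\nabla\psi|^2+|\nabla\zeta|^2$ plus a good quadratic form produced by the derivatives of the planar stationary profile, and $\mathcal{R}$ gathers cubic terms and the contributions of $f,F,g,G,h,H$. Integrating over $\Omega$, the boundary integral reduces via \eqref{pbc} to $-\int_{\pd\Omega}(u_b\cdot n)|\vp|^2\,dS$, which by the outflow condition \eqref{outf} gives a positive boundary dissipation of $\vp$, and the nonlinear remainder is absorbed by $N_\beta(T)+\dels$.

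Next I redo this computation with the exponential weight $e^{\beta x_1}$ for $0<\beta\leq\alpha/2$. The weight generates an extra bulk term proportional to $\beta\int e^{\beta x_1}u_1|\Phi|^2\,dx$; since $u_1\approx u_+<0$ and $(\rt,\ut,\tt)$ decay to their end states at rate $\alpha$ by \eqref{stdc1}, once $\beta$ and $\ep_0$ are small enough this produces the crucial bulk dissipation $\beta\|\Phi\|_{\lteasp{\beta}}^2$, while the weighted versions of $\|\nabla\psi\|^2+\|\nabla\zeta\|^2$ and of the boundary term survive. Combined with the $\|\tfrac{d}{dt}\vp\|^2$ contribution read off directly from \eqref{eq-pv1}, this closes the bound on $D_{0,\beta}$. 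For higher orders I apply $\pd^\alpha$ with $1\leq|\alpha|\leq m$ to \eqref{eq-pv}, carefully commuting with $u\cdot\nabla$ and straightening the curved boundary locally via the graph of $M$; the resulting commutator and boundary terms are of order $\|\nabla M\|\cdot \|\Phi\|_{H^m}\cdot \|\nabla\psi,\nabla\zeta\|_{H^{m-1}}$ and are absorbed by smallness of $\delta$ and $N_\beta(T)$.

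The essential missing piece is dissipation of $\nabla^k\vp$, which the mass equation does not supply. Following \cite{m-n83}, I solve \eqref{eq-pv2} algebraically for $\nabla\vp$,
\[
R\theta\nabla\vp = -\rho\{\psi_t + (u\cdot\nabla)\psi\} + L\psi - R\rho\nabla\zeta + g + G,
\]
and, after applying $\nabla^{k-1}$ and squaring, obtain $\|\nabla^k\vp\|^2\lesssim \|\pd_t\psi\|_{H^{k-1}}^2+\|\nabla^2\psi\|_{H^{k-1}}^2+\|\nabla\zeta\|_{H^{k-1}}^2+\text{l.o.t.}$, with the analogous weighted version. Time-derivative estimates for $\pd_t^i(\psi,\zeta)$ are produced by differentiating \eqref{eq-pv2}--\eqref{eq-pv3} in $t$ and exploiting their parabolic character, while $\pd_t^i\vp$ is read off from \eqref{eq-pv1}; the compatibility conditions in \eqref{cmpa0} are used to make sense of the initial traces.

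Adding these estimates with small coefficients and summing over orders, I arrive at $\tfrac{d}{dt}E_{m,\beta}+cD_{m,\beta}\lesssim \dels$. Since $D_{m,\beta}$ controls all of $E_{m,\beta}$ (the $\beta\|\Phi\|_{\lteasp{\beta}}^2$ term covers $\vp$ at order zero, and the $\nabla^k\vp$-recovery covers $\vp$ at positive orders), there exists $\sigma=\sigma(\beta,\Omega)>0$ with $cD_{m,\beta}\geq \sigma E_{m,\beta}$; multiplying by $e^{\sigma t}$ and integrating gives \eqref{apes0}, and \eqref{apes1} then follows, with $\|\pd_t\Phi\|_{H^{m-2}}$ produced directly from \eqref{eq-pv}. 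The step I expect to be the main obstacle is the higher-order weighted analysis: one must simultaneously handle the absence of intrinsic dissipation for $\vp$, the commutator errors arising from the curved boundary $\{x_1=M(x')\}$, and the fact that the nonlinear terms $f,g,h$ couple $(\vp,\psi,\zeta)$ at every order, all while keeping every error below the dissipation in the $\lteasp{\beta}$-weighted norm; the constraint $\beta\leq\alpha/2$ is imposed precisely so that products of the form $e^{\beta x_1}(\rt-\rho_+,\ut-(u_+,0,0),\tt-\theta_+)$ remain integrable and can be absorbed as $O(\delta)$ inhomogeneities.
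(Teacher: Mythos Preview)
Your overall architecture---entropy form with exponential weight at order zero, then Matsumura--Nishida style energy for higher derivatives, then recovery of the missing $\nabla^k\vp$ dissipation---matches the paper. The gap is in the third step, and it is not minor.

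Your algebraic recovery reads $\|\nabla^k\vp\|^2\lesssim \|\psi_t\|_{H^{k-1}}^2+\|\nabla^{k+1}\psi\|^2+\cdots$, so at the top order $k=m$ you need $\int_0^t\|\nabla^{m+1}\psi\|^2\,d\tau$ already available. For \emph{tangential} derivatives this comes out of the energy method after flattening the boundary, because $\nabla_{y'}^l\hat\psi$ still vanishes on $\partial\mathbb R^3_+$. For \emph{normal} derivatives it does not: $\partial_{y_1}^l\hat\psi$ has no boundary condition, so integrating $L(\partial_{y_1}^l\hat\psi)\cdot\partial_{y_1}^l\hat\psi$ by parts produces uncontrolled boundary terms, and the straightforward ``apply $\partial^\alpha$ and integrate'' scheme you describe stalls. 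The paper closes this with two ingredients you do not mention. First, a special first-order operator $\mathfrak D=\tilde{\mathcal A}_1\partial_{y_1}+\mathcal A_2\partial_{y_2}+\mathcal A_3\partial_{y_3}$ is built (coefficients depending on $\nabla M$) so that, after combining $\partial_{y_1}$ of the mass equation with a linear combination of the momentum equations, all $\partial_{y_1}^2\hat\psi$ terms cancel; this yields a transport-type estimate for $\partial^{\bm a}\mathfrak D\hat\vp$ (hence $\partial^{\bm a}\partial_{y_1}\hat\vp$) that needs only tangential second derivatives of $\hat\psi$ on the right. Second, the Cattabriga estimate for the Stokes system, applied to the pair $(\psi,\vp)$ with source $(\div\psi,\ L\psi-R\theta_+\nabla\vp)$ read off from \eqref{eq-pv}, gives $\|\nabla^{k+2}\psi\|+\|\nabla^{k+1}\vp\|$ simultaneously in terms of $\|\tfrac{d}{dt}\vp\|_{H^{k+1}}$, $\|\psi_t\|_{H^k}$ and lower order; this is the $\Omega$-dependent elliptic input that finally produces the full $\|\nabla^{m+1}\psi\|^2$ dissipation and breaks the circularity in your scheme. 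Without one of these two devices (or an equivalent), the higher-order normal estimates do not close.

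A smaller point: your final sentence ``$cD_{m,\beta}\geq\sigma E_{m,\beta}$, multiply by $e^{\sigma t}$'' is morally right but oversimplified. The various lemmas have different constants and are combined inductively (level $l$ controlled by level $l-1$ plus small-multiple of $D_{3,\beta}$), with the $e^{\sigma t}$ weight inserted \emph{before} integration in each lemma and $\sigma$ kept small so the extra $\sigma\int e^{\sigma\tau}D_{3,\beta}$ terms are absorbed at the end. This is routine once the dissipation hierarchy is in place, but it is worth being aware that a single clean differential inequality $\frac{d}{dt}E_{m,\beta}+cD_{m,\beta}\lesssim\delta$ is not what is literally proved.
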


\begin{corollary}\label{cor1}
Suppose that $\|M\|_{H^9} \leq \eta_0$ for $\eta_0$ being in Lemma \ref{CattabrigaEst}.
Then Theorems \ref{global1} and Proposition \ref{apriori1} hold 
with constants $\ep_0$, $C_0$, and $\sigma$
independent of $\Omega$. 
\end{corollary}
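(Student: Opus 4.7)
The plan is to revisit the proofs of Theorem~\ref{global1} and Proposition~\ref{apriori1} and track every constant that depends on the geometry of $\Omega$ beyond $\|M\|_{H^9(\mathbb R^2)}$. By the notation convention, the generic constants $C$, $c$, $C(a,b,\dots)$ already depend only on $\mu,\lambda,\kappa,c_v,\rho_+,u_+,\alpha$ and $\|M\|_{H^9(\mathbb R^2)}$; the only constants that can grow with finer information of $\Omega$ are those written $C(\Omega)$ or appearing via $A\lesssim_\Omega B$. Thus the task is to show that under the smallness assumption $\|M\|_{H^9}\le\eta_0$, every $C(\Omega)$ used in the argument can be replaced by a universal constant $C$, after which $\ep_0$, $\sigma$, and $C_0$ automatically become independent of $\Omega$.

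The three places where $\Omega$-dependence enters the proof are: (i) the higher-order elliptic regularity needed to recover $\|\nabla^2\psi,\nabla^2\zeta\|_{H^{m-1}}$ from the elliptic part of \eqref{eq-pv2}--\eqref{eq-pv3} and, once integrated into $D_{m,\beta}$, needed to close the Matsumura--Nishida-type energy estimate; (ii) the trace and Sobolev embedding constants on the curved boundary, which appear both in the boundary contribution $\|\vp(t,M(\cdot),\cdot)\|_{L^2(\mathbb R^2)}^2$ entering $D_{0,\beta}$ and in handling nonlinear terms via Gagliardo--Nirenberg-type inequalities; and (iii) the construction of the extensions $U,\Theta$ in \eqref{ExBdry0}--\eqref{ExBd0} together with the resulting estimate \eqref{h1} on the inhomogeneous data $F,G,H$.

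For (i), Lemma~\ref{CattabrigaEst}, whose applicability is precisely guaranteed by $\|M\|_{H^9}\le\eta_0$, already provides an elliptic regularity constant independent of $\Omega$; this removes the $\Omega$-dependence from every step where the Stokes/Lam\'e-type estimate is invoked. For (ii), the graph diffeomorphism $y=(x_1-M(x'),x')$ maps $\Omega$ onto the half-space $\mathbb R^3_+$ with Jacobian close to the identity, since $\|M\|_{H^9}\le\eta_0$ yields $\|\nabla M\|_{L^\infty(\mathbb R^2)}\lesssim\eta_0$ by Sobolev embedding. Pulling back the trace inequality and Gagliardo--Nirenberg inequalities from $\mathbb R^3_+$ through this diffeomorphism produces constants that are universal up to a factor $1+\mathcal O(\eta_0)$, hence bounded by an absolute constant. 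For (iii), one can take $U,\Theta$ to be the pull-back under the same straightening map of a fixed extension constructed on $\mathbb R^3_+$ via a $t$-independent cut-off; then \eqref{ExBdry3}, \eqref{ExBd3} and \eqref{h1} hold with constants independent of $\Omega$.

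Once these three inputs are uniform, the remainder of the argument is mechanical: the a priori estimate \eqref{apes0}--\eqref{apes1} is assembled from weighted $L^2_{\mathrm e,\beta}$ energy estimates, tangential and normal derivative estimates, and the dissipative boundary/interior contribution produced by the energy form described in Section~\ref{S5.3}; each step uses only items (i)--(iii) together with the properties of the planar stationary solution from Proposition~\ref{ex-st}, which is independent of $\Omega$. The continuation argument then combines Lemma~\ref{local1}, whose local existence time is controlled purely by $\|\Phi_0\|_{H^m}$, with the newly uniform a priori estimate to extend globally. The main obstacle I anticipate is the bookkeeping when estimating terms like $\rho(\psi\cdot\nabla)(\tilde u+U)$ and the multiplier commutators arising on the curved boundary: these mix tangential derivatives with factors of $\nabla M$, and one must verify that the compounded constants scale as powers of $1+\|M\|_{H^9}$ rather than picking up hidden $\Omega$-dependence through, for instance, a Poincar\'e-type constant on tubular neighborhoods of $\partial\Omega$. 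Since Lemma~\ref{CattabrigaEst} and the straightening diffeomorphism together control all such geometric factors uniformly when $\|M\|_{H^9}\le\eta_0$, the corollary follows.
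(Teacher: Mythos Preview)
Your proposal is correct and follows essentially the same approach as the paper. The paper's proof is much shorter than yours because its notational convention (generic constants $C$ may depend on $\|M\|_{H^9(\mathbb R^2)}$) already takes care of your items (ii) and (iii): as the paper remarks at the start of Subsection~\ref{ss-CattabrigaEst}, the Hardy, Sobolev, Gagliardo--Nirenberg, and commutator inequalities, and the estimates \eqref{ExBdry3}, \eqref{ExBd3}, \eqref{h1}, all depend on $\Omega$ only through $\|M\|_{H^9}$, so the Cattabriga estimate is the \emph{only} place where a genuine $\lesssim_\Omega$ appears (namely in \eqref{ef0-C}, \eqref{eg0-C}, and hence \eqref{em0-comp}). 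Once Lemma~\ref{CattabrigaEst} upgrades these to $\lesssim$ under $\|M\|_{H^9}\le\eta_0$, the proof of Proposition~\ref{apriori1} goes through verbatim with $\Omega$-independent constants, and Theorem~\ref{global1} follows; your additional checks on traces and extensions are harmless but unnecessary.
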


Lemma \ref{local1} can be proved in much the same way as in \cite{kg06-loc}.
Therefore, we omit the proof. 
In the remainder of this section, we prove Proposition \ref{apriori1} 
only for the case $m=3$, since the cases $m=4,5$ can be shown similarly.
We derive the $L^2$-norm of $\Phi$ by following the method in \cite{kg06,knz03,nn09,w22}.
To estimate the derivatives of $\Phi$, 
we use essentially the Matsumura--Nishida energy method in \cite{m-n83}.

\subsection{$L^2$ estimate} \label{ss-L2}

This subsection is devoted to the derivation of the 
estimate of the perturbation $(\vp,\psi,\zeta)$ in $\lteasp{\beta}(\Omega)$.
To do this, we introduce an energy form $\cale$,
similarly as in \cite{knnz-10,w22}, by
\[
\rho\cale := R\rho\tt\eta \left(\frac{\tilde\rho}{\rho}\right) + \frac{\rho}{2} |\psi|^2
+ c_v\rho(\tt+\Theta)\eta \left(\frac{\theta}{\tilde\theta + \Theta}\right),
\quad
\eta(r) := r - \ln r-1.
\]
Under the smallness assumption on $N_\beta (T)$, 
we have $\li{\Phi(t)} \ll 1$ by Sobolev's inequality \eqref{sobolev2}.
Hence, the energy form $\cale$ is equivalent to the 
square of the perturbation $(\vp,\psi, \zeta)$:
\begin{equation}
c (\vp^2 + |\psi|^2+\zeta^2)
\le
\cale
\le
C (\vp^2 + |\psi|^2+\zeta^2).
\label{sqr}
\end{equation}

Moreover, using $N_\beta(T)+\dels \ll 1$, we can derive the following uniform bounds of solutions:
\begin{equation}
0 < c \le (\rho,\theta)(t,x) \le C,
\quad
|u(t,x)| \le C .
\label{bdd}
\end{equation}
Using the time and space weighted energy method,
we obtain the energy inequality in $L^2$ framework.

\begin{lemma}
\label{lm1}
Under the same conditions as in Proposition \ref{apriori1} with $m=3$, 
it holds that
\begin{align}
&e^{\sigma t} \|\Phi(t)\|_{\lteasp{\beta}}^2
+ \int_0^t e^{\sigma \tau} D_{0,\beta}(\tau) \, d \tau
\notag\\
&\lesssim \|\Phi_0\|_{\lteasp{\beta}}^2
+  \sigma \int_0^t e^{\sigma \tau}\|\Phi(\tau)\|_{\lteasp{\beta}}^2 \, d \tau
+  \dels \int_0^t e^{\sigma \tau} \lt{\nabla \vp(\tau)}^2 \, d \tau
+  \dels \int_0^t e^{\sigma \tau} d \tau
\label{ea0}
\end{align}
for $t \in [0,T]$ and $\sigma>0$.
\end{lemma}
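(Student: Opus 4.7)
The plan is to start from a pointwise identity for $\rho\cale$, multiply by the spatial weight $e^{\beta x_1}$ with $\beta \leq \alpha/2$ and the time weight $e^{\sigma t}$, and then use the outflow boundary condition together with the exponential decay of the background profile to produce every term of $D_{0,\beta}$ on the left-hand side. Using \eqref{eq-pv} and the stationary equations \eqref{ste}, a direct computation (carried out in Appendix~\ref{Appenx0}) yields
\begin{equation*}
\pd_t(\rho\cale) + \div J + Q(\nabla\psi,\nabla\zeta) = R_{\rm back} + R_{\rm cub} + R_{\rm inh},
\end{equation*}
where $J = \rho u\cale + (\text{viscous and heat fluxes})$, $Q$ is a pointwise nonnegative quadratic form in $(\nabla\psi,\nabla\zeta)$ coming from the viscous and Fourier dissipation (positivity relying on \eqref{bdd}), $R_{\rm back}$ collects all terms proportional to the derivatives of $(\tilde\rho,\tilde u_1,\tilde\theta)$ with the pointwise bound $\tilde\delta\,e^{-\alpha x_1}|\Phi|^2$ coming from \eqref{stdc1}, $R_{\rm cub}=O(|\Phi|^3)$ arises from the quadratic remainder of $\eta$, and $R_{\rm inh}$ is driven by $F,G,H$ and the boundary extensions $U,\Theta$.

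I would then multiply by $e^{\beta x_1}$ and integrate over $\Omega$. By the divergence theorem the flux term becomes
\begin{equation*}
\int_{\pd\Omega} e^{\beta M(x')}\,J\cdot n \, dS - \beta \int_\Omega e^{\beta x_1} J_1 \, dx.
\end{equation*}
Since $\psi = \zeta = 0$ on $\pd\Omega$, the viscous and heat pieces of $J$ vanish there and $\cale \sim \vp^2$ on $\pd\Omega$; the outflow condition \eqref{outf} then makes the boundary integral at least $c\|\vp(t,M(\cdot),\cdot)\|_{L^2(\mathbb R^2)}^2$. The interior $\beta$-term has leading part $-\beta\int e^{\beta x_1}\rho u_1\cale\,dx$, and since $\rho u_1 \approx \rho_+ u_+ < 0$ by the supersonic condition and smallness, this contributes $c\beta\|\Phi\|_{\lteasp{\beta}}^2$. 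The quadratic form $Q$ yields $\|(\nabla\psi,\nabla\zeta)\|_{\lteasp{\beta}}^2$ after integration. The fourth piece of $D_{0,\beta}$, $\|\tfrac{d}{dt}\vp\|^2$, is read directly from \eqref{eq-pv1} as $-\rho\,\div\psi + f + F$ and is dominated by $\|\nabla\psi\|^2 + \tilde\delta\|\Phi\|^2 + \delta^2$, hence by the quantities already on the left.

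For the remainders, $R_{\rm back}$ is bounded using $\beta\leq\alpha/2$ by $\tilde\delta\int e^{(\beta-\alpha)x_1}|\Phi|^2\,dx \lesssim \tilde\delta\|\Phi\|_{\lteasp{\beta}}^2$ and absorbed into the $\beta$-dissipation for $\tilde\delta \ll \beta$; $R_{\rm cub}$ gives $\li{\Phi}\,\|\Phi\|_{\lteasp{\beta}}^2 \lesssim N_\beta(T)\|\Phi\|_{\lteasp{\beta}}^2$, absorbed by the smallness of $N_\beta(T)$; $R_{\rm inh}$, paired against $\Phi$ by Cauchy--Schwarz and \eqref{h1}, is split into a small multiple of $\|(\nabla\psi,\nabla\zeta)\|_{\lteasp{\beta}}^2+\|\Phi\|_{\lteasp{\beta}}^2$ plus the source $\|(F,G,H)\|_{\lteasp{\beta}}^2\lesssim\delta^2\lesssim\delta$. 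One group of cross-terms involving $\nabla\vp$ (from the momentum-equation pressure contributions paired with $\psi$ together with background derivatives of $\tilde\rho,\tilde\theta$) cannot be closed at the $L^2$ level and is kept as $\dels\int_0^t e^{\sigma\tau}\|\nabla\vp\|^2\,d\tau$ on the right-hand side of \eqref{ea0}, to be closed at the higher-order level in later subsections. Multiplying by $e^{\sigma t}$, using $\tfrac{d}{d\tau}(e^{\sigma\tau}\|\Phi\|_{\lteasp{\beta}}^2) = \sigma e^{\sigma\tau}\|\Phi\|_{\lteasp{\beta}}^2 + e^{\sigma\tau}\tfrac{d}{d\tau}\|\Phi\|_{\lteasp{\beta}}^2$, and integrating in $[0,t]$ produce both the $\sigma$-term and the $\dels\int_0^t e^{\sigma\tau}\,d\tau$ source in \eqref{ea0}.

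The main obstacle is the careful bookkeeping of the boundary integrals on the curved graph $x_1=M(x')$: the nontrivial geometry forces the viscous/heat part of $J$ to create cross-terms involving $\nabla M$ which must either vanish thanks to $\psi|_{\pd\Omega}=\zeta|_{\pd\Omega}=0$ or be reabsorbed into the $R_{\rm inh}$ contributions of the extensions $U,\Theta$. The second delicate point is verifying that among the many remainder cross-terms only those of the form $\tilde\delta|\nabla\vp|^2$ need to be deferred to the higher-order estimates, and that every other term is either controlled by the $\beta$-dissipation (using $\beta\leq\alpha/2$ together with \eqref{stdc1}) or by the viscous/Fourier dissipation coming from $Q$.
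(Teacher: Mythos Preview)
Your overall strategy matches the paper's, but there is a genuine gap in the treatment of the $\beta$-dissipation. Your flux is stated as $J=\rho u\cale+(\text{viscous and heat fluxes})$, but the energy identity also forces the pressure--work fluxes $R\tt\vp\psi+R\rho\zeta\psi$ into $J$: these are precisely what cancel the $O(1)$ terms $R\theta\,\psi\cdot\nabla\vp$ and $R\rho\,\psi\cdot\nabla\zeta$ produced when the momentum equation is paired with $\psi$ (see the computation in Appendix~\ref{Appenx0}, where $G_1=-\rho u\cale-R\tt\vp\psi-R\rho\zeta\psi$). Without putting them into $J$, those pressure terms remain as $O(1)$ quadratic contributions that carry no $\delta$ prefactor and cannot be written as $\dels\|\nabla\vp\|^2$.

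Once the pressure fluxes are correctly in $J$, the interior contribution $-\beta\int e^{\beta x_1}J_1\,dx$ is \emph{not} simply $\beta\rho_+|u_+|\int e^{\beta x_1}\cale\,dx$. To leading order it equals $\beta\int e^{\beta x_1}\bigl(F_1(\vp,\psi_1,\zeta)+\tfrac{\rho_+|u_+|}{2}|\psi'|^2\bigr)\,dx$ with
\[
F_1(\vp,\psi_1,\zeta)=\frac{R\theta_+|u_+|}{2\rho_+}\vp^2+\frac{\rho_+|u_+|}{2}\psi_1^2+\frac{c_v\rho_+|u_+|}{2\theta_+}\zeta^2-R\theta_+\vp\psi_1-R\rho_+\zeta\psi_1,
\]
and the indefinite cross terms make positive definiteness of $F_1$ equivalent to the supersonic condition $|u_+|^2>\gamma R\theta_+$. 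Your invocation of ``the supersonic condition'' only to assert $\rho u_1\approx\rho_+u_+<0$ conflates it with the outflow condition and skips the step where \eqref{super1} is actually used; under outflow alone the $\beta$-term need not dominate $\|\Phi\|_{\lteasp{\beta}}^2$.

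A secondary remark: in the paper the $\dels\|\nabla\vp\|^2$ term on the right of \eqref{ea0} arises from applying Hardy's inequality \eqref{hardy} to the background contributions $\delta e^{-\alpha x_1}|\Phi|^2$ in $R_{11}$ (yielding $\delta(\|\nabla\Phi\|^2+\|\vp|_{\partial\Omega}\|^2)$) and from the residual $-R\Theta\,\psi\cdot\nabla\vp$ left after the pressure cancellation. Your alternative of absorbing $R_{\rm back}$ directly via $\tilde\delta\ll\beta$ is viable, but then you should explain separately where the $\dels\|\nabla\vp\|^2$ in the stated inequality comes from.
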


\begin{proof}
Following the computation in \cite{knnz-10,w22}, we see that
the energy form $\cale$ satisfies
\begin{equation}
(\rho \cale)_t
-\div (G_1 + B_1)
+ \mu |\nabla \psi|^2
+ (\mu + \lambda) (\div \psi)^2+\frac{\kappa}{\theta}|\nabla \zeta|^2
=
R_{11},
\label{ea1}
\end{equation}
where 
\begin{align*}
G_1
&:=
-\rho u \cale
-R\tt \vp\psi-R \rho \zeta\psi,
\nonumber
\\
B_1
&:=
\mu \nabla \psi \cdot \psi
+ (\mu + \lambda) \psi \div \psi
+\kappa\frac{\zeta}{\theta}\nabla\zeta, 
\nonumber
\\
R_{11}
&:=\!\!
\frac{R\tt\vp}{\rho}(f+F)+\psi \cdot (g+G)+\frac{\zeta}{\theta}(h+H)
+R\vp \psi \cdot \nabla\tt-R\Theta \psi \cdot\nabla \vp + R\zeta \psi \cdot \nabla \tilde\rho
\\
& \quad + R\rho\eta\left(\frac{\tilde\rho}{\rho}\right)u\cdot \nabla\tt
-  R\tt\frac{\vp^2}{\rho\tilde\rho}u\cdot \nabla\tilde\rho
 +c_v\rho\eta\left(\frac{\theta}{\tt+\Theta}\right)u\cdot \nabla(\tt+\Theta) 
 \\
& \quad  
+c_v\rho\frac{\tt+\Theta}{\theta} \zeta^{2} u\cdot \nabla \frac{1}{\tt+\Theta}
+\kappa\frac{\zeta}{\theta^2}\nabla\theta\cdot\nabla\zeta
+\frac{\zeta}{\theta}(2\mu|\mathcal{D}(\psi)|^2+\lambda(\textrm{div}\psi)^2).
\nonumber 
\end{align*}
For more details of the derivation, see Appendix \ref{Appenx0}.

Multiplying (\ref{ea1}) by a weight function 
$w = w(x_1,t) := e^{\beta x_1} e^{\sigma t}$, we get
\begin{multline}
(w \rho \cale)_t
- \div
\bigl\{
w (G_1 + B_1)
\bigr\}
+ \nabla w \cdot G_1
+ w 
(\mu  |\nabla \psi|^2 + (\mu + \lambda)  (\div \psi)^2)
+w\frac{\kappa}{\theta}|\nabla \zeta|^2
\\
=
w_t \rho \cale
- \nabla w \cdot B_1
+ w R_{11}.
\label{ea2}
\end{multline}

We integrate this equality over $\Omega$. 
The second term on the left hand side is estimated from below by
using the divergence theorem with \eqref{outf} and \eqref{pbc} 
as well as (\ref{sqr}) and (\ref{bdd}):
\begin{equation}
- \int_{\Omega}  \div \bigl\{ w (G_1 + B_1)\bigr\} \, dx  
 =  \int_{\pd \Omega}
(w \rho  \cale)(u_b\cdot n)  \, d \sigma \,
\gtrsim
e^{\sigma t}   \|\vp|_{\partial \Omega} \|^2_{L^2_{x'}}. 
\label{ea3}
\end{equation}
Next we derive the lower estimate of the third term on the
left hand side of (\ref{ea2}).
Taking into account the fact that
 $\eta(s) = \frac{1}{2} (s-1)^2 + O(|s-1|^3)$ for $|s-1| \ll 1$,
we compute the term $\rho u_1 \cale$  in $G_1$ as
\begin{gather}
\rho u_1 \cale
=
\frac{R\theta_{+}u_{+}}{2\rho_{+}} \vp^2
+ \frac{\rho_+ u_+}{2} |\psi|^2
+\frac{c_{v}\rho_{+}u_{+}}{2\theta_{+}} \zeta^{2}
+ R_{12},
\label{ea6}
\\
\begin{aligned}
R_{12}
&:=
(\rho u_1 - \rho_+ u_+) \cale
\\
&\quad + \rho_+ u_+
\left[
\frac{R\tilde\theta}{2} \Bigl( \frac{\rt}{\rho} - 1 \Bigr)^2
-\frac{R\theta_{+}}{2\rho_{+}^{2}} \vp^2
+ R\tilde\theta
\Bigl\{
\eta \Bigl( \frac{\rt}{\rho} \Bigr)
- \frac{1}{2} \Bigl( \frac{\rt}{\rho} - 1 \Bigr)^2
\Bigr\}
\right]
\\
&\quad + \rho_+ u_+
\left[
\frac{c_{v}\tilde\theta}{2} \Bigl( \frac{\theta}{\tilde\theta+\Theta} - 1 \Bigr)^2
-\frac{c_{v}}{2\theta_{+}} \zeta^{2}
+ c_{v}\tilde\theta
\Bigl\{
\eta \Bigl( \frac{\theta}{\tilde\theta+\Theta} \Bigr)
- \frac{1}{2} \Bigl( \frac{\theta}{\tilde\theta+\Theta} - 1 \Bigr)^2
\Bigr\}
\right].
\end{aligned}
\nonumber
\end{gather}
Thus, using (\ref{ea6}), the third term in \eqref{ea2} 
is rewritten as
\begin{gather*}
\nabla w \cdot G_1
=
w_{x_1}
\Bigl(
F_1(\vp,\psi_1, \zeta) + \frac{\rho_+ |u_+|}{2} |\psi'|^2
 + R_{12} - R(\tt - \theta_{+})\vp\psi_{1}-R (\rho -\rho_{+}) \zeta\psi_{1}
\Bigr),
\\
F_1(\vp,\psi_1,\zeta)
:=
\frac{R\theta_{+}|u_{+}|}{2\rho_{+}} \vp^2
+ \frac{\rho_+ |u_+|}{2} \psi_{1}^{2}
+\frac{c_{v}\rho_{+}|u_{+}|}{2\theta_{+}} \zeta^{2}
-R\theta_{+}\vp\psi_{1}-R \rho_{+} \zeta\psi_{1},
\nonumber
\end{gather*}
where $\psi'$ is the second and third components of $\psi$ defined by 
$\psi' := (\psi_2,\psi_3)$.
Owing to the supersonic condition \eqref{super1}, 
the quadratic form $F_1(\vp,\psi_1,\zeta)$ is positive definite.
On the other hand,
the remaining terms satisfy
\begin{equation*}
|R_{12} - R(\tt - \theta_{+})\vp\psi_{1}-R (\rho -\rho_{+}) \zeta\psi_{1}| 
\lesssim  (N_\beta(t) + \dels ) |\Phi|^2.
\end{equation*}
Therefore we obtain the following lower bound of the integration of
 $\nabla w \cdot G_1$ 
\begin{equation}
\int_{\Omega} \nabla w \cdot G_1 \, dx
\geq \beta e^{\sigma t} 
\bigl\{
 c - C (N_\beta(t) + \dels)
\bigr\}
\| \Phi \|^2_{L^2_{e, \beta} (\Omega)}.
\label{ea10}
\end{equation} 
The first and the second terms on the right hand side of (\ref{ea2})
are estimated by using (\ref{sqr}), (\ref{bdd}), and the
Schwarz inequality as
\begin{gather}
\int_{\Omega}
|w_t \rho \cale| \, dx
\lesssim  \sigma e^{\sigma t} \| \Phi \|^2_{L^2_{e, \beta}},  
\label{ea4}
\\
\int_{\Omega}
|\nabla w \cdot B_1| \, dx
\lesssim
\beta e^{\sigma t}
\big(\ep \| (\psi,\zeta) \|^2_{L^2_{e, \beta}} + \ep^{-1}  \| (\nabla \psi,\nabla \zeta) \|^2_{L^2_{e, \beta}} \big),
\label{ea5}
\end{gather}
where $\ep$ is a positive constant to be determined later.
For the term involving $R_{11}$, we observe 
\begin{equation*}
\begin{split}
|R_{11}| 
& \lesssim |(\nabla \tilde{\rho}, \nabla \tilde{u}, \nabla \tilde{\theta},\nabla U,\nabla \Theta)||(\Phi,\nabla \varphi,\nabla \zeta)|^2 + |\zeta||(\nabla \psi,\nabla \zeta)|^{2} +|\Phi||(F,G,H)|\\
&  \lesssim |(\nabla \tilde{\rho},\nabla \tilde{u}, \nabla \tilde{\theta}, \nabla U,\nabla \Theta)||(\Phi,\nabla \varphi,\nabla \zeta)|^2 + |\zeta||(\nabla \psi,\nabla \zeta)|^{2} \\
& \quad +\delta e^{-\alpha x_1}|\Phi|^2 +\delta^{-1}e^{\alpha x_1}|(F,G,H)|^2. 
\end{split}
\end{equation*}
We apply Hardy's inequality \eqref{hardy} to the first and third terms
with $\beta \leq \alpha/2$, \eqref{stdc1}, and \eqref{ExBdry0}, 
and estimate the last term by \eqref{h1}, there then holds that
\begin{equation}
\int_\Omega w |R_{11} | dx 
\lesssim e^{\sigma t}  (N_\beta(t) + \dels) ( \| \nabla \Phi \|^2 +  \|\vp|_{\partial \Omega} \|^2_{L^2_{x'}} +1).
\label{ea9}
\end{equation}

We integrate (\ref{ea2}) over $(0,t) \times \Omega$,
substitute the estimates (\ref{ea3}) and (\ref{ea10})--(\ref{ea9}) into the resulting equality
and then let $\ep$, \footnote{Hereafter we fix this $\beta$ in our whole proof.}{$\beta$}, and $N_\beta(T)+\dels$ be suitably small.
Furthermore, we use the fact that
\begin{align*}
\left\| \frac{d}{dt} \vp  \right\|^2 
= \| \rho \div \psi + f + F\|^2 
\lesssim \| \nabla \psi\|^2+\delta (\| \nabla \vp\|^2+ \|\vp|_{\partial \Omega} \|^2_{L^2_{x'}}+1),
\end{align*}
which follows from \eqref{eq-pv1}, \eqref{stdc1}, \eqref{ExBdry0}, and \eqref{hardy}.
These computations yield the desired inequality.
\end{proof}

\subsection{Time-derivative estimates}

In this section we derive time-derivative estimates.
To this end, we apply the differential operator $\partial_t^k$ for $k=0,1$ to 
(\ref{eq-pv1})--(\ref{eq-pv3}) and obtain the following equations:
\begin{gather}
\partial_t^k \vp_t
+ u \cdot \nabla \partial_t^k \vp
+ \rho \div \partial_t^k \psi
= f_{0,k},
\label{ec1}
\\
\rho \{
\partial_t^k \psi_t
+ (u \cdot \nabla) \partial_t^k \psi
\}
- L(\partial_t^k \psi)
+ R\theta\nabla \partial_t^k \vp+R\rho\nabla \partial_t^k\zeta
= g_{0,k},
\label{ec2}\\
c_v\rho\big(\partial_t^k\zeta_t+{u}\cdot\nabla\partial_t^k\zeta\big)+R\rho\theta\textrm{div}\partial_t^k\psi
-\kappa\Delta\partial_t^k\zeta=h_{0,k},
\label{ec3}
\end{gather}
where
\begin{align*}
f_{0,k}
&:=
\partial_t^k (f + F)
- [\partial_t^k, u] \nabla \vp
- [\partial_t^k, \rho] \div \psi,
\nonumber
\\
g_{0,k}
&:= \partial_t^k (g + G)
- [\partial_t^k, \rho] \psi_t
- [\partial_t^k, \rho u] \nabla \psi
- [\partial_t^k, R\theta] \nabla \vp
- [\partial_t^k, R\rho] \nabla \zeta,
\nonumber
\\
h_{0,k}
&:= \partial_t^k \left(2\mu|\mathcal{D}(\psi)|^2+\lambda(\textrm{div}\psi)^2+h+H\right)
- [\partial_t^k, c_v\rho] \zeta_t
- [\partial_t^k, c_v\rho u] \nabla \zeta
- [\partial_t^k, R\rho\theta] \div \psi.
\end{align*}
Here $[T,u]v := T(uv) - u T v$ is a commutator.
We also often use the following two inequalities:
\begin{gather}
\|\partial_t \varphi\|_{H^1} \lesssim  \sqrt{D_{2,\beta}} + \delta ,
\label{PhiH1}
\\
\|\partial_t\vp\|_{L^\infty} + \|\partial_t\Phi \|_{H^{1}} \lesssim  \|\Phi \|_{H^{3}} + \delta \lesssim  N_\beta(T) + \delta.
\label{PhiSup}
\end{gather}
Indeed, we can derive these from \eqref{eq-pv1}
by using \eqref{stdc1}, \eqref{ExBdry0}, \eqref{h1}, and Hardy's inequality \eqref{hardy}.

We first estimate $\pd_t \Phi$ in the next lemma. 
\begin{lemma}
\label{lm2}
Under the same conditions as in Proposition \ref{apriori1} with $m=3$, 
it holds that
\begin{multline}
e^{\sigma t} \|\pd_t \Phi(t)\|^2
+ \int_0^t e^{\sigma \tau} 
\|(\pd_t \nabla\psi,\pd_t \nabla\zeta)(\tau)\|^2
\, d \tau
\\
\lesssim \|\Phi_0\|_{H^3}^2
+ (N_\beta(T)+\dels+\sigma) \int_0^t e^{\sigma\tau} D_{3,\beta}(\tau) \, d \tau
+ \dels \int_0^t e^{\sigma \tau} d \tau  
\label{ec0}
\end{multline}
for $t \in [0,T]$ and $\sigma>0$, 
where $C$ is a positive constant 
independent of $\delta$, $t$, and $\sigma$.
\end{lemma}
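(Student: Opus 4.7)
The plan is to run a time-weighted $L^2$ energy estimate directly on the once-time-differentiated system \eqref{ec1}--\eqref{ec3} with $k=1$. Mimicking the symmetrizer that produces the identity \eqref{ea1} in Lemma~\ref{lm1}, I would multiply \eqref{ec1}$|_{k=1}$ by $R\theta\,\vp_t/\rho$, \eqref{ec2}$|_{k=1}$ by $\psi_t$, and \eqref{ec3}$|_{k=1}$ by $\zeta_t/\theta$, and sum the three resulting identities. Up to lower-order terms, the pressure cross-terms $R\theta\nabla\vp_t\cdot\psi_t$ and $R\rho\nabla\zeta_t\cdot\psi_t$ from the momentum equation cancel against $R\theta\div\psi_t\cdot\vp_t$ and $R\rho\div\psi_t\cdot\zeta_t$ from the continuity and energy equations (the residual being $\vp_t\psi_t\cdot\nabla(R\theta)$ and $\zeta_t\psi_t\cdot\nabla(R\rho)$, both of order $\dels + N_\beta$). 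The outcome is a differential identity whose leading terms are of the schematic form
\begin{equation*}
\tfrac{1}{2}\pd_t\bigl(\rho|\psi_t|^2+R\theta\rho^{-1}\vp_t^2+c_v\rho\theta^{-1}\zeta_t^2\bigr)-\div(\cdots)+\mu|\nabla\psi_t|^2+(\mu+\lambda)(\div\psi_t)^2+\kappa\theta^{-1}|\nabla\zeta_t|^2=\mathcal{R}_t,
\end{equation*}
where the dissipations come from integrating $-L\psi_t$ and $-\kappa\Delta\zeta_t$ by parts using $\psi_t|_{\pd\Omega}=0$ and $\zeta_t|_{\pd\Omega}=0$ (obtained by differentiating \eqref{pbc} in $t$). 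The convective boundary contribution from $-\div(\rho u\,|\Phi_t|^2)$ produces an additional nonnegative trace $\gtrsim\|\vp_t|_{\pd\Omega}\|_{L^2_{x'}}^2$ via $u_b\cdot n\ge c_1$, exactly as in \eqref{ea3}.

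Multiplying by $e^{\sigma t}$ and integrating over $[0,t]\times\Omega$, the $\pd_t e^{\sigma t}$ contribution yields $\sigma\int_0^t e^{\sigma\tau}\|\Phi_t\|^2\,d\tau$, which is absorbed into the $\sigma\int_0^t e^{\sigma\tau}D_{3,\beta}(\tau)\,d\tau$ term on the right-hand side of \eqref{ec0}. The initial value $\|\Phi_t(0)\|^2$ is controlled by solving \eqref{eq-pv1}--\eqref{eq-pv3} algebraically at $t=0$ for $\Phi_t|_{t=0}$ in terms of spatial derivatives of $\Phi_0$ and the time-independent forcings $(F,G,H)$, giving $\|\Phi_t(0)\|^2\lesssim\hs{3}{\Phi_0}^2+\dels$ via \eqref{h1} and Hardy's inequality \eqref{hardy}.

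For the remainder $\mathcal{R}_t$, I would expand the commutators in $f_{0,1},g_{0,1},h_{0,1}$, in particular $[\pd_t,\rho]\psi_t=\rho_t\psi_t$, $[\pd_t,\rho u]\nabla\psi$, $[\pd_t,R\theta]\nabla\vp$, $[\pd_t,c_v\rho u]\nabla\zeta$, and $[\pd_t,R\rho\theta]\div\psi$, by substituting $\rho_t,u_t,\theta_t$ from \eqref{eq-pv}. Each term is then a product of at most one component of $\Phi_t$ with first-order spatial derivatives of $\Phi$ and coefficient derivatives involving $\nabla\tilde\rho,\nabla\tilde u,\nabla\tilde\theta,\nabla U,\nabla\Theta$. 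Using \eqref{PhiH1}--\eqref{PhiSup}, Sobolev embedding, the exponential decay \eqref{stdc1}, and \eqref{hardy} whenever a decaying coefficient pairs against an undifferentiated factor of $\Phi$, every such term is bounded by $(N_\beta(T)+\dels)D_{3,\beta}(t)+\dels$. Crucially, $\pd_t(F,G,H)\equiv 0$ since these forcings are time-independent, so the only $\dels$ losses come from $\nabla(U,\Theta)$ paired with $\Phi_t\cdot\nabla\Phi$. The nonlinear source $\pd_t(2\mu|\mathcal{D}(\psi)|^2+\lambda(\div\psi)^2)$ expands to $\nabla\psi_t\cdot\nabla\psi$-type products controlled by $\|\nabla\psi\|_{L^\infty}\|\nabla\psi_t\|\|\nabla\psi\|$ and absorbed into the viscous dissipation by smallness of $N_\beta(T)$.

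I expect the main obstacle to be the careful bookkeeping of these commutators, ensuring that no genuine second-order time--space derivative of $\Phi$ escapes onto the right-hand side. In particular, the term $\rho_t\psi_t$ requires the $L^\infty$ bound $\|\rho_t\|_{L^\infty}\lesssim N_\beta(T)+\dels$ furnished by \eqref{PhiSup}, while in products of the form $u_t\cdot\nabla(\cdot)$ the factor $u_t$ must carry the $L^\infty_{x,t}$ weight and $\nabla(\cdot)$ must enter through the $D_{3,\beta}$ norm; otherwise one would be forced to control $\|\pd_t\Phi\|_{H^1}^2$ by itself, which the inequality does not allow. Provided all pairings are performed consistently and $N_\beta(T)+\dels$ is chosen small enough to absorb the quadratic remainder into the viscous/heat dissipations, summing the three weighted equations yields the stated inequality \eqref{ec0}.
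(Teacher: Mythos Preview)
Your approach is essentially the paper's: the paper multiplies \eqref{ec1}--\eqref{ec3} at $k=1$ by $R\vp_t/\rho$, $\psi_t/\theta$, and $\zeta_t/\theta^2$ respectively, which differs from your symmetrizer only by an overall factor of $\theta$ and produces the same cancellation and dissipation structure. One correction in the bookkeeping you flagged as the ``main obstacle'': for the commutator $[\pd_t,u]\nabla\vp=\psi_t\cdot\nabla\vp$ you cannot put $u_t=\psi_t$ in $L^\infty$, since \eqref{PhiSup} supplies $\|\vp_t\|_{L^\infty}$ and $\|\pd_t\Phi\|_{H^1}$ but not $\|\psi_t\|_{L^\infty}$; the pairing that works (and that the paper uses in bounding $R_{21}$) is $\|\nabla\vp\|_{L^\infty}\|\psi_t\|\,\|\vp_t\|\lesssim N_\beta(T)\,\|\Phi_t\|^2$, with $\|\Phi_t\|^2\lesssim D_{3,\beta}$.
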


\begin{proof}
Multiplying \eqref{ec1} with $k=1$ by $\frac{R}{\rho} \, \partial_t \vp$, and using the facts that 
${\rho}_t = - {\div} ({\rho} {u})$,
we get
\begin{gather}
\Bigl(
\frac{1}{2}\frac{R}{\rho} | \partial_t \vp |^2
\Bigr)_t
+ \div \Bigl(
\frac{1}{2} \frac{R}{\rho} u
|\partial_t \vp|^2
\Bigr)
+ R \div (\partial_t \psi) \partial_t \vp
=
\frac{R}{\rho} f_{0,1} \, \partial_t \vp
+
\frac{R\div u}{\rho} \, |\partial_t \vp|^2.
\label{ec31}
\end{gather}
Multiply (\ref{ec2}) by $\frac{\partial_t \psi}{\theta}$ successively to get
\begin{gather}
\begin{aligned}
&\Bigl(
\frac{1}{2} \frac{\rho}{\theta} |\partial_t \psi|^2
\Bigr)_t
+ \div B_2
+ \frac{\mu}{\theta}|\nabla \partial_t \psi|^2
+ \frac{\mu+\lambda}{\theta}|\div \partial_t \psi|^2+R\partial_t\psi\cdot\nabla \partial_t\varphi
+\frac{R\rho}{\theta}\partial_t\psi\cdot\nabla\partial_t \zeta
\\
&= g_{0,1} \cdot\frac{ \partial_t \psi}{\theta}
-\frac{\rho\partial_t\theta}{2\theta^2}  |\partial_t \psi|^2- 
\frac{\rho u\cdot\nabla\theta}{2\theta^2}  |\partial_t \psi|^2
+\frac{\mu \partial_t \psi}{\theta^2}\nabla \partial_t \psi\nabla\theta
+\frac{(\mu+\lambda)\div \partial_t \psi}{\theta^2}\partial_t \psi \cdot \nabla\theta,
\end{aligned}
\label{ec4}
\\
\begin{aligned}
B_2 &:=
\frac{1}{2} \frac{\rho u}{\theta} |\partial_t \psi|^2
- \frac{\mu}{\theta} \nabla (\partial_t \psi)  \cdot \partial_t \psi
-\frac{\mu+\lambda}{\theta}\div (\partial_t \psi) \partial_t \psi.
\end{aligned}
\nonumber
\end{gather}
Multiply (\ref{ec3}) by $\frac{\partial_t \zeta}{\theta^2}$ to obtain
\begin{align}
&\Bigl(
\frac{c_v}{2} \frac{\rho}{\theta^2} |\partial_t \zeta|^2\Bigr)_t
+\div\left(c_v|\partial_t\zeta|^2\frac{\rho u}{2\theta^2}-\kappa\frac{\nabla\partial_t\zeta}{\theta^2}\partial_t\zeta\right)
+\kappa\frac{|\nabla\partial_t\zeta|^2}{\theta^2}+\frac{R\rho}{\theta}\div(\partial_t\psi)\partial_t\zeta
\notag
\\
&= h_{0,1}\frac{\partial_t \zeta}{\theta^2}
-\frac{c_v\rho\partial_t\theta}{\theta^3}  |\partial_t \zeta|^2- 
\frac{c_v\rho u\cdot\nabla\theta}{\theta^3}  |\partial_t \zeta|^2
+\frac{2\kappa \partial_t \zeta}{\theta^3}\nabla \partial_t \zeta \cdot \nabla\theta.
\label{ec41}
\end{align}
Summing up  (\ref{ec31})--(\ref{ec41}) yields
\begin{gather}
\Bigl(
\frac{1}{2}\frac{R}{\rho} | \partial_t \vp |^2
+ \frac{1}{2} \frac{\rho}{\theta} |\partial_t \psi|^2
+\frac{c_v}{2} \frac{\rho}{\theta^2} |\partial_t \zeta|^2
\Bigr)_t
+ \div B_3
\mspace{250mu}
\nonumber
\\
\mspace{120mu}
{}+  \frac{\mu}{\theta}|\nabla (\partial_t \psi)|^2
+ \frac{\mu+\lambda}{\theta}|\div (\partial_t \psi)|^2
+\kappa\frac{|\nabla\partial_t\zeta|^2}{\theta^2}
=
R_{21}+R_{22}+R_{23},
\label{ec5}
\\
\begin{aligned}
B_3&:=
\frac{1}{2} \frac{R}{\rho} u
|\partial_t \vp|^2
+ R \partial_t \psi \partial_t \vp+ B_2+c_v|\partial_t\zeta|^2
\frac{\rho u}{2\theta^2}-\kappa\frac{\nabla\partial_t\zeta}{\theta^2}\partial_t\zeta
+\frac{R\rho}{\theta}\partial_t\psi\partial_t\zeta,
\\
R_{21}
&:=
\frac{R}{\rho} f_{0,1} \, \partial_t \vp
+
\frac{R\div u}{\rho} \, |\partial_t \vp|^2
+ g_{0,1} \cdot\frac{ \partial_t \psi}{\theta}
- \frac{\rho u\cdot\nabla\theta}{2\theta^2}  |\partial_t \psi|^2
\\
&\quad +\nabla(\frac{R\rho}{\theta})\cdot\partial_t\psi\partial_t\zeta
-\frac{c_v\rho u\cdot\nabla\theta}{\theta^3}  |\partial_t \zeta|^2+h_{0,1}\frac{\partial_t \zeta}{\theta^2},
\\
R_{22}
&:= -\frac{\rho\partial_t\theta}{2\theta^2}  |\partial_t \psi|^2
-\frac{c_v\rho\partial_t\theta}{\theta^3}  |\partial_t \zeta|^2,
\\
R_{23}
&:=\frac{\mu \partial_t \psi}{\theta^2}\nabla \partial_t \psi\nabla\theta
+\frac{(\mu+\lambda)\div( \partial_t \psi)}{\theta^2}\partial_t \psi \cdot \nabla\theta
+\frac{2\kappa \partial_t \zeta}{\theta^3}\nabla \partial_t \zeta \cdot \nabla\theta.
\end{aligned}
\nonumber
\end{gather}
Owing to \eqref{outf} and (\ref{bdd}),
we have the nonnegativity of the second term on the
left hand side of (\ref{ec5}) as
\begin{gather}
\int_{\Omega}
\div B_3  \, dx
=\int_{\partial\Omega}
\frac{R }{2\rho}
|\partial_t \vp|^2 (u_{b} \cdot n)
\, d\sigma
\geq 0.
\label{ec6}
\end{gather}
Notice that here we used $\partial_t \psi = \partial_t \zeta  = 0$ on $\partial \Omega$, which holds because of \eqref{pbc}. By \eqref{PhiSup} and Sobolev's inequality,
the term $R_{21}$ is estimated as
\begin{equation}
\int_{\Omega}|R_{21}|dx  \lesssim (N_{\beta}(T) + \dels)\|(\Phi_t,\nabla\Phi)\|^2.
\end{equation}
The term $R_{22}$ could be controlled as 
\begin{equation}
\int_{\Omega}|R_{22}|dx  \lesssim
\|\zeta_t\|_{L^2}\|\|(\psi_t,\zeta_{t})\|_{L^4}^2 
\lesssim (N_{\beta}(T) + \dels) \|(\psi_t,\zeta_{t})\|_{H^{1}}^2 ,
\end{equation}
where the last inequality holds due to (\ref{ec3}) and Sobolev's inequality. 
We estimate the term $R_{23}$ as
\begin{equation}
\int_{\Omega}|R_{23}|dx  \lesssim
\|\nabla \theta\|_{L^\infty} \|(\psi_t,\zeta_{t})\|_{L^2}\|(\nabla \partial_t \psi,\nabla\partial_t \zeta)\|_{L^2} 
\lesssim (N_{\beta}(T) + \dels) \|(\psi_t,\zeta_{t})\|_{H^1}^{2}.
\label{ec7}
\end{equation}
Now one can have the desired inequality (\ref{ec0}) as follows.
Multiply (\ref{ec5}) by a time weight function $e^{\zeta t}$, 
integrate the resulting equality over $(0,t) \times \Omega$,
and substitute in (\ref{ec6})--(\ref{ec7}).
Then applying \eqref{PhiH1} and \eqref{PhiSup}
yields the desired inequality (\ref{ec0}).
\end{proof}

Next we estimate $\pd_t^k\nabla\psi$ for $k=0,1$.

\begin{lemma}
\label{lm3}
Under the same conditions as in Proposition \ref{apriori1} with $m=3$, 
it holds that
\begin{align}
{}&
e^{\sigma t} \lt{(\pd_t^k\nabla\psi,\pd_t^k\nabla\zeta)(t)}^2
+ \int_0^t e^{\sigma \tau} \lt{(\pd_t^{k+1}\psi,\pd_t^{k+1}\zeta)(\tau)}^2 \, d \tau
\notag\\
&\lesssim \hs{3}{\Phi_0}^2
+ \ep \calh_{k}^\sigma(t)
+ \ep^{-1} \calp_{k}^\sigma(t)
+ (N_\beta(t)+\dels+\sigma) \int_0^t e^{\sigma \tau} D_{3,\beta}(\tau) \, d \tau
+  \dels \int_0^t e^{\sigma \tau} d \tau 
\label{ed0}
\end{align}
for $t \in [0,T]$, $\sigma>0$, $\ep \in (0,1)$, and $k=0,1$, where $C$ is a positive constant 
independent of $\delta$, $t$, and $\sigma$.
Furthermore, $\calh_{k}^\sigma(t)$ and $\calp_{k}^\sigma(t)$ are defined by
\begin{align*}
\calh_{k}^\sigma(t)
& :=
e^{\sigma t} \lt{\pd_t^k \nabla \vp(t)}^2
+ \int_0^t e^{\sigma \tau} \lt{\pd_t^k \nabla \vp(\tau)}^2 \, d \tau,
\\
\calp_{k}^\sigma(t)
& :=
e^{\sigma t} \lt{(\pd_t^k \psi, \pd_t^k \zeta)(t)}^2
+ \int_0^t e^{\sigma \tau} \lt{(\pd_t^k\nabla \psi, \pd_t^k\nabla \zeta)(\tau)}^2 \, d \tau.
\end{align*}
\end{lemma}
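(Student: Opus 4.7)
The plan is to test the differentiated momentum equation \eqref{ec2} against $\partial_t^{k+1}\psi$ and the differentiated energy equation \eqref{ec3} against $\partial_t^{k+1}\zeta$, sum the identities, and integrate over $\Omega$. Since $\psi$ and $\zeta$ vanish on $\partial\Omega$ by \eqref{pbc}, so do $\partial_t^{k+1}\psi$ and $\partial_t^{k+1}\zeta$, hence every boundary contribution produced by the spatial integration by parts of $L(\partial_t^k\psi)$ and $\kappa\Delta(\partial_t^k\zeta)$ vanishes.

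First I would extract the principal good terms. The viscous contributions $-L(\partial_t^k\psi)\cdot\partial_t^{k+1}\psi$ and $-\kappa\Delta(\partial_t^k\zeta)\cdot\partial_t^{k+1}\zeta$ produce, after spatial integration by parts, $\tfrac{1}{2}\tfrac{d}{d\tau}\{\mu\|\nabla\partial_t^k\psi\|^2+(\mu+\lambda)\|\text{div}\,\partial_t^k\psi\|^2+\kappa\|\nabla\partial_t^k\zeta\|^2\}$; multiplying by $e^{\sigma\tau}$ and integrating over $\tau\in(0,t)$ produces $e^{\sigma t}\|(\partial_t^k\nabla\psi,\partial_t^k\nabla\zeta)(t)\|^2$ on the LHS, up to the initial datum $\|\Phi_0\|_{H^3}^2$ and a $\sigma$-remainder $\sigma\int_0^t e^{\sigma\tau}\|(\partial_t^k\nabla\psi,\partial_t^k\nabla\zeta)\|^2\,d\tau$ that is absorbed into $(N_\beta(T)+\dels+\sigma)\int_0^t e^{\sigma\tau}D_{3,\beta}\,d\tau$. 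The inertia terms $\rho\,\partial_\tau(\partial_t^k\psi)\cdot\partial_t^{k+1}\psi$ and $c_v\rho\,\partial_\tau(\partial_t^k\zeta)\cdot\partial_t^{k+1}\zeta$ give the dissipative bulk $\int_0^t e^{\sigma\tau}\|(\partial_t^{k+1}\psi,\partial_t^{k+1}\zeta)\|^2\,d\tau$ required on the LHS of \eqref{ed0}.

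The delicate step is the pressure-gradient coupling $\mathcal{I}:=\int_0^t\!\int_\Omega e^{\sigma\tau}R\theta\,\nabla\partial_t^k\vp\cdot\partial_t^{k+1}\psi\,dx\,d\tau$, together with its analogues $\int R\rho\,\nabla\partial_t^k\zeta\cdot\partial_t^{k+1}\psi$ from \eqref{ec2} and $\int R\rho\theta\,\text{div}(\partial_t^k\psi)\,\partial_t^{k+1}\zeta$ from \eqref{ec3}. A bare Young inequality on $\mathcal{I}$ introduces $\ep^{-1}\|\partial_t^{k+1}\psi\|^2$, which cannot be reabsorbed into the LHS. The remedy is a double integration by parts: first in $x$ (the boundary vanishes), then in $\tau$. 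This rewrites $\mathcal{I}$ as a time-boundary contribution proportional to $e^{\sigma t}\int R\theta\,\partial_t^k\vp(t)\,\text{div}\,\partial_t^k\psi(t)\,dx$, plus errors involving $\partial_\tau\theta$, $\sigma$, $\partial_t^{k+1}\vp$, and $\nabla\theta$. One more spatial integration by parts on the boundary-in-time piece transforms it into $-e^{\sigma t}\int R\theta\,\nabla\partial_t^k\vp(t)\cdot\partial_t^k\psi(t)\,dx+\text{(lower order)}$, and Young's inequality bounds this exactly by $\ep\,\calh_k^\sigma(t)+\ep^{-1}\calp_k^\sigma(t)$. Whenever a higher time-derivative of $\vp$ reappears, it is eliminated via the continuity equation \eqref{ec1}, which converts $\partial_t^{k+1}\vp$ into divergences and convective combinations of $\psi$ that sit inside $D_{3,\beta}$.

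The remaining terms -- the convective $\rho(u\cdot\nabla)\partial_t^k\psi$ and $c_v\rho\,u\cdot\nabla\partial_t^k\zeta$ pieces, the $\theta$-coefficient corrections, the commutators $f_{0,k},g_{0,k},h_{0,k}$, and the inhomogeneities $F,G,H$ -- are controlled by Sobolev embedding in $H^3$, Hardy's inequality \eqref{hardy}, the stationary decay \eqref{stdc1}, the pointwise bounds \eqref{PhiH1}--\eqref{PhiSup}, and the forcing estimate \eqref{h1}; they generate the $(N_\beta(T)+\dels+\sigma)\int_0^t e^{\sigma\tau}D_{3,\beta}\,d\tau$ and $\dels\int_0^t e^{\sigma\tau}\,d\tau$ contributions in \eqref{ed0}. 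The main obstacle is the bookkeeping of this double integration by parts on $\mathcal{I}$: every term produced must land in $\calh_k^\sigma$, $\calp_k^\sigma$, $D_{3,\beta}$, or a pure $\dels$-remainder, with no uncontrolled higher time-derivative of $\vp$ left over.
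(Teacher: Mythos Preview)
Your proposal is correct and follows essentially the same approach as the paper: test \eqref{ec2}--\eqref{ec3} against $\partial_t^{k+1}\psi$ and $\partial_t^{k+1}\zeta$, extract the viscous energy and the dissipation, and handle the dangerous pressure coupling $R\theta\,\nabla\partial_t^k\vp\cdot\partial_t^{k+1}\psi$ by a time integration by parts followed by substitution of the continuity equation \eqref{ec1}. The paper is marginally more direct---it performs the time-IBP pointwise, writing $R\theta\,\nabla\partial_t^k\vp\cdot\partial_t^k\psi_t = (R\theta\,\nabla\partial_t^k\vp\cdot\partial_t^k\psi)_t - \ldots$ before any spatial integration, which avoids your redundant space--time--space sequence, and it applies this trick only to the $\nabla\partial_t^k\vp$ coupling, treating the $R\rho\,\nabla\partial_t^k\zeta\cdot\partial_t^{k+1}\psi$ and $R\rho\theta\,\div\partial_t^k\psi\cdot\partial_t^{k+1}\zeta$ terms by a direct Young split since those gradient factors already sit in $\calp_k^\sigma$.
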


\begin{proof}
Multiplying (\ref{ec2}) by $\partial_t^k \psi_t$, we get
\begin{align}
&
\rho |\partial_t^k \psi_t|^2
+ \rho (u \cdot \nabla)  \partial_t^k \psi \cdot \partial_t^k \psi_t
- L (\partial_t^k \psi) \cdot \partial_t^k \psi_t\nonumber\\
&
+ R\theta \nabla \partial_t^k \vp \cdot \partial_t^k \psi_t
+ R\rho\nabla \partial_t^k \zeta \cdot \partial_t^k \psi_t
= g_{0,k} \cdot \partial_t^k \psi_t.
\label{ed3}
\end{align}
The third and the forth terms on the left hand side of (\ref{ed3}) 
are rewritten as
\begin{align}
&
- L (\partial_t^k \psi) \cdot \partial_t^k \psi_t
=
\Bigl(
\frac{\mu}{2} |\nabla \partial_t^k \psi|^2
+ \frac{\mu+\lambda}{2} |\div \partial_t^k \psi|^2
\Bigr)_t
\nonumber
\\
&
\mspace{150mu}
- \div
\Bigl(
\mu \nabla \partial_t^k \psi \cdot \partial_t^k \psi_t
+ (\mu+\lambda) (\div \partial_t^k \psi )\partial_t^k \psi_t
\Bigr),
\label{ed4}
\end{align}
and
\begin{align}
R\theta\nabla \partial_t^k \vp \cdot \partial_t^k \psi_t
&=
( R\theta \nabla \partial_t^k \vp \cdot \partial_t^k \psi )_t
-  R\partial_t\theta \nabla \partial_t^k \vp \cdot \partial_t^k \psi
- \div( R\theta  \partial_t^k \vp_t  \partial_t^k \psi )
\nonumber
\\
&\quad +(R\nabla\theta   \cdot \partial_t^k \psi
+R\theta   \partial_t^k \div\psi)\cdot(-u \cdot \nabla \partial_t^k \vp-
\rho \div \partial_t^k \psi
+f_{0,k}).\label{ed5}
\end{align}
Multiplying (\ref{ec3}) by $\partial_t^k \zeta_t$, we get
\begin{align}
&
c_v\rho |\partial_t^k \zeta_t|^2-\kappa\div(\nabla\partial_t^k\zeta\partial_t^k\zeta_t)
+\kappa\partial_t\frac{|\nabla\partial_t^k\zeta|^2}{2}\nonumber\\
&
=-c_v\rho {u}\cdot\nabla\partial_t^k\zeta\partial_t^k \zeta_t-R\rho\theta\textrm{div}\partial_t^k\psi\partial_t^k \zeta_t+h_{0,k}\partial_t^k \zeta_t.
\label{ed33}
\end{align}
Substituting (\ref{ed4}) and (\ref{ed5}) into (\ref{ed3}) and adding up the resulting equation and (\ref{ed33}) yield
\begin{equation}
\dt E_{3}
- \div B_4
+ \rho |\partial_t^k \psi_t|^2+c_v\rho |\partial_t^k \zeta_t|^2
= G_3 + R_3,
\label{ed1}
\end{equation}
where $E_3$, $B_4$, $G_3$, and $R_3$ are defined by
\begin{align*}
E_3 := {}
&
\frac{\mu}{2} |\nabla \partial_t^k \psi|^2
+ \frac{\mu + \lambda}{2} |\div \partial_t^k \psi|^2
+  R\theta \nabla \partial_t^k \vp \cdot \partial_t^k \psi 
+\kappa\frac{|\nabla\partial_t^k\zeta|^2}{2} ,
\\
B_4 := {}
&
\mu \nabla \partial_t^k \psi \cdot \partial_t^k \psi_t
+ (\mu + \lambda) \partial_t^k \psi_t \div \partial_t^k \psi
+  R\theta  \partial_t^k \vp_t  \partial_t^k \psi
-\kappa\nabla\partial_t^k\zeta\partial_t^k\zeta_t,
\\
G_3 := {}
&-\rho (u \cdot \nabla) \partial_t^k \psi \cdot \partial_t^k \psi_t
+R\theta   \partial_t^k \div\psi \, (u \cdot \nabla \partial_t^k \vp 
  + \rho \div \partial_t^k \psi) \\
  &
-R\rho\nabla \partial_t^k \zeta \cdot \partial_t^k \psi_t
-c_v\rho {u}\cdot\nabla\partial_t^k\zeta\partial_t^k \zeta_t-R\rho\theta\textrm{div}\partial_t^k\psi\partial_t^k \zeta_t,
\\
R_3 := {}
&
R\partial_t\theta \nabla \partial_t^k \vp \cdot \partial_t^k \psi+
R\nabla\theta   \cdot \partial_t^k \psi
 \, (u \cdot \nabla \partial_t^k \vp 
  + \rho \div \partial_t^k \psi)
  \\
&
-(R\nabla\theta   \cdot \partial_t^k \psi
+R\theta   \partial_t^k \div\psi)\cdot f_{0,k}
+  g_{0,k}  \cdot \partial_t^k \psi_t+h_{0,k}\partial_t^k \zeta_t.
\end{align*}

Owing to $\partial_t \psi =\partial_t \zeta=0$ on $\partial \Omega$, we have  
\begin{gather}
\int_{\Omega}
\div B_4
 \, dx= 0.
\label{ed10}
\end{gather}
For arbitrary positive constants $\ep\le 1$,
the integrals of $E_3$ and $G_3$ over $\Omega$ are estimated as
\begin{gather}
c( \lt{\nabla \partial_t^k \psi}^2+ \lt{\nabla \partial_t^k \zeta}^2)
- \ep \lt{\nabla \partial_t^k \vp}^2
- C \ep^{-1} \lt{\partial_t^k \psi}^2
\le
\int_{\Omega} E_3 \, d x
\lesssim
\ho{\partial_t^k \Phi}^2,
\label{ed6}
\\
\int_{\Omega} |G_3| \, d x
\lesssim
\ep(\lt{\nabla \partial_t^k \vp}^2
+ \lt{\partial_t^k \psi_t}^2+ \lt{\partial_t^k \zeta_t}^2)
+ C\ep^{-1}(\lt{\nabla \partial_t^k \psi}^2+\lt{\nabla \partial_t^k \zeta}^2),
\label{ed2}
\end{gather}
by using \eqref{bdd}.
Using \eqref{PhiSup}, it is straightforward to check that
\begin{equation}
|R_3| \lesssim \left\{
\begin{array}{ll}
(N_\beta(T)+\delta)|(\nabla\Phi,\pd_t \Phi)|^2+|(\rt',\ut',\tt', \nabla U, \nabla \Theta)||\Phi|^2 +\delta^{-1}|(F, G, H)|^2 & \text{if $k=0$,}
\\
(N_\beta(T)+\delta)|(\nabla\Phi,\pd_t \Phi,\nabla^2\Phi,\pd_t \nabla \Phi,\pd_{tt} \psi,\pd_{tt} \zeta)|^2+|\partial_t\theta \nabla \partial_t \vp \cdot \partial_t \psi| & \text{if $k=1$.}
\end{array} 
\right.
\label{ed9}
\end{equation}
It is also seen from \eqref{PhiSup} and Sobolev's inequality \eqref{sobolev0} that
\begin{align*}
\int_{\Omega} |\partial_t\theta \nabla \partial_t \vp \cdot \partial_t \psi| dx
&\lesssim \|\nabla \partial_t \vp\|_{L^{2}}(\|\partial_t\zeta\|_{L^{4}}^{2}+\|\partial_t \psi\|_{L^{4}}^{2})
\\
&\lesssim (N_\beta(T)+\delta)(\|\partial_t\zeta\|_{H^{1}}^{2}+\|\partial_t \psi\|_{H^{1}}^{2}).
\end{align*}

Finally, we multiply (\ref{ed1}) by $e^{\sigma t}$,
integrate the resulting equality over $(0,t) \times \Omega$ and
substitute in the estimates (\ref{ed10})--(\ref{ed9}). Making use of \eqref{PhiH1}, \eqref{PhiSup}, and \eqref{hardy} and letting $\ep$ be suitably small lead to the desired estimate (\ref{ed0}).
\end{proof}

\subsection{Spatial-derivative estimates}\label{Sptial-deriv}

In order to flatten the boundary and obtain tangential derivatives, we introduce the following change of variables:
\begin{equation}\label{CV1}
\Gamma : 
\begin{cases}
& x_1 = y_1 + M(y_2, y_3), \\
& x_2 = y_2, \\
& x_3 = y_3,  \\
\end{cases}
\end{equation} 
and its inverse 
\begin{equation} \label{CV11}
\hat{\Gamma} : 
\begin{cases}
& y_1 = x_1 - M(x_2, x_3), \\
& y_2 = x_2, \\
& y_3 = x_3.  \\
\end{cases}
\end{equation}
We notice that $\Gamma(\mathbb{R}_3^+)=\Omega$, where 
$\mathbb{R}_3^+ : = \{ (y_1, y_2, y_3) \in \mathbb{R}^3 : y_1 >0 \}$.
We set $y' =(y_2, y_3)$ and denote the matrix
\begin{equation}\label{CV2}
A(y') : = 
\left( \begin{array}{ccc} 
1 & 0 & 0   \\
-\partial_{y_2} M(y') & 1 & 0 \\
-\partial_{y_3} M(y') & 0 & 1 \\ 
 \end{array} \right).
\end{equation}

Let us define
\begin{gather*}
\hat{\vp} (t,y) := \vp(t,\Gamma(y)), \quad \hat{\psi} (t,y) : = \psi (t,\Gamma (y)), 
\quad \hat{\zeta} (t,y) : = \zeta (t,\Gamma (y)), 
\\ \hat{\rho} (t,y) := \rho(t,\Gamma(y)), \quad \hat{u} (t,y) := u(t,\Gamma(y)), \quad \hat{U} (t,y) := U(t,\Gamma(y)),
\\
\quad \hat{\theta} (t,y) := \theta(t,\Gamma(y)), \quad \hat{\Theta} (t,y) := \Theta(t,\Gamma(y)).
\end{gather*}
Note that $(\hat{\vp}, \hat{\psi}, \hat\zeta)(t,y)$ is a vector-valued function defined on $\{t \geq 0\} \times \overline{\mathbb{R}_3^+}$, 
and $(\rt,\ut,\tt)(\tilde{M}(\Gamma(y)))=(\rt,\ut,\tt)(y_1)$ holds. We now have
\begin{equation} \label{CV3}
\nabla_x \vp (t,\Gamma(y)) = \hat{\nabla} \hat{\vp}(t,y) := A \nabla_y \hat{\vp} (t,y) = \big(\sum_{j=1}^3 A_{kj} \partial_{y_j} \hat{\vp}  \big)_{k=1, 2, 3} (t,y), 
\end{equation}
\begin{equation} \label{CV7}
\mathcal{D}_x(\psi) (t,\Gamma(y)) = \hat{\mathcal{D}}(\hat\psi)(t,y) := \frac{ \hat{\nabla}\hat\psi+( \hat{\nabla}\hat\psi)^{tr}}{2} (t,y), 
\end{equation}
\begin{equation} \label{CV4}
\div \!{}_x  \psi (t,\Gamma(y)) = \hat{\div} \  \hat{\psi}(t,y) := (A \nabla_y) \cdot \hat{\psi} (t,y) =  \sum_{k=1}^3 \sum_{j=1}^3 A_{kj} \partial_{y_j} \hat{\psi}_k (t,y), 
\end{equation}
\begin{equation} \label{CV5}
\Delta_x  \psi (t,\Gamma(y)) = \hat{\Delta} \hat{\psi}(t,y) := (A \nabla_y) \cdot (A \nabla_y \hat{\psi}) (t,y) =  \sum_{k=1}^3 \big( \sum_{j=1}^3 A_{kj} \partial_{y_j} \big)^2 \hat{\psi} (t,y),
\end{equation}
\begin{equation} \label{CV6}
\frac{d}{dt} \vp (t,\Gamma(y)) = \hat{\frac{d}{dt} } \hat{\vp} (t,y) := \partial_t \hat{\vp}(t,y) + \hat{u} \cdot \hat{\nabla} \hat{\vp} (t,y).
\end{equation}

From \eqref{eq-pv}, we obtain the equation for $(\hat{\vp}, \hat{\psi}, \hat\zeta)$
\begin{subequations}
\label{eq-pv-hat}
\begin{gather}
 \hat{\vp}_t + \hat{u} \cdot \hat{\nabla} \hat{\vp} + \hat{\rho} \hat{\div} \hat{\psi}
= \hat{f} + \hat{F},
\label{eq-pv1-hat}
\\
\hat{\rho} \{ \hat{\psi}_t  + (\hat{u} \cdot \hat{\nabla}) \hat{\psi} \}
- \hat{L} \hat{\psi} + R\hat\theta\hat{\nabla} \hat\vp+R\hat\rho\hat{\nabla}\hat\zeta 
= \hat{g} + \hat{G},
\label{eq-pv2-hat}
\\
c_v\hat\rho\big(\hat\zeta_t+\hat{u}\cdot\hat{\nabla}\hat\zeta\big)+R\hat\rho\hat\theta\hat{\textrm{div}}\hat\psi
-\kappa\hat{\Delta}\hat\zeta=2\mu|\hat{\mathcal{D}}(\hat\psi)|^2+\lambda(\hat{\textrm{div}}\hat\psi)^2+\hat h+\hat H.
\label{eq-pv3-hat}
\end{gather}
and the initial and boundary conditions
\begin{gather}
(\hat{\vp},\hat{\psi}, \hat\zeta)(0,y)
=(\hat{\vp}_0, \hat{\psi}_0, \hat\zeta_0)(y)
= (\rho_0, u_0, \theta_0)(\Gamma(y)) - (\rt, \ut, \tt)(y_1) - (0, U(\Gamma (y)), \Theta(\Gamma(y))),
\label{pic-hat}
\\
\hat{\psi}(t,0,y') = 0, \quad \hat{\zeta}(t,0,y') = 0.
\label{pbc-hat}
\end{gather}
\end{subequations}
Here $\hat{L} \hat{\psi}$, $\hat{f}$, $\hat{F}$, $\hat{g}$, $\hat{G}$, $\hat{h}$ and $\hat{H}$ are defined by
\begin{gather*}
\hat{L} \hat{\psi}(t,y) := \mu \hat{\Delta} \hat{\psi} (t,y) + (\mu + \lambda) \hat{\nabla} \hat{\div} \hat{\psi} (t,y),
\\
\hat{f} (t,y) :=f(t,\Gamma(y)),
\quad
\hat{F} (y) :=F(\Gamma(y)),
\quad
\hat{g} (t,y) := g(t,\Gamma(y)), 
\quad
\hat{G} (y) := G (\Gamma(y)),
\\
\hat{h} (t,y) :=h(t,\Gamma(y)),
\quad
\hat{H} (y) :=H(\Gamma(y)).
\end{gather*}

We now derive the estimate on the spatial-derivatives for the tangential directions. 
\begin{lemma}
\label{lm2-hat}
Under the same conditions as in Proposition \ref{apriori1} with $m=3$, 
it holds that 
\begin{align}
{}&e^{\sigma t} \|\nabla^l_{y'} \hat{\Phi}(t)\|_{L^2(\mathbb R^3_+)}^2
+ \int_0^t e^{\sigma \tau} 
\left(\|  (\nabla\nabla^l_{y'} \hat{\psi}, \nabla\nabla^l_{y'} \hat{\zeta})(\tau)\|_{L^2(\mathbb R^3_+)}^2
+ \left\| \nabla^l_{y'} \hat{\frac{d}{dt}} \hat{\vp} (\tau) \right\|_{L^2(\mathbb R^3_+)}^2 
\right) \, d \tau 
\notag \\
&\lesssim\|\Phi_0\|_{H^3}^2
+ \epsilon \int^t_0 e^{\sigma \tau}\left( \| \nabla \vp(\tau) \|^2_{H^{l-1}} + \|  (\nabla \psi, \nabla\zeta)(\tau) \|^2_{H^{l}} \right) d \tau 
+ \epsilon^{-1} \int^t_0 e^{\sigma \tau}\|  (\nabla \psi, \nabla \zeta)(\tau) \|^2_{H^{l-1}} d \tau 
\notag \\
&\quad + (N_\beta(T)+\dels+\sigma) \int_0^t e^{\sigma \tau} D_{3,\beta}(\tau) \, d \tau
+  \dels   \int_0^t e^{\sigma \tau} d \tau
\label{ec0-hat}
\end{align}
for $t \in [0,T]$, $\sigma>0$, $\epsilon \in (0,1)$, and $l = 1, 2, 3$.
\end{lemma}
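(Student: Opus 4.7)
The plan is to mimic the energy estimate of Lemma \ref{lm1} on the flattened half-space $\mathbb R^3_+$, but applied to the $y'$-differentiated system \eqref{eq-pv-hat}. Since the boundary $\{y_1=0\}$ is invariant under $\partial_{y'}$, applying $\nabla^l_{y'}$ to \eqref{pbc-hat} preserves the Dirichlet conditions $\nabla^l_{y'}\hat\psi|_{y_1=0}=\nabla^l_{y'}\hat\zeta|_{y_1=0}=0$, so that the viscous and heat-conduction terms integrate by parts without producing boundary integrals, and the only boundary contribution comes from the convective flux in the differentiated continuity equation, which is non-negative by \eqref{outf}. No exponential spatial weight is needed here, since the target inequality is in the unweighted $L^2(\mathbb R^3_+)$.

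Concretely, apply $\nabla^l_{y'}$ to \eqref{eq-pv1-hat}--\eqref{eq-pv3-hat}, multiply the three resulting equations by $(R\hat\theta/\hat\rho)\nabla^l_{y'}\hat\vp$, $\nabla^l_{y'}\hat\psi$, and $(1/\hat\theta)\nabla^l_{y'}\hat\zeta$ respectively, and sum to obtain a differential identity of the form
\[
\partial_t \hat{\mathcal E}_l + \hat{\div}(\mathrm{flux}_l) + \mu |\hat{\nabla} \nabla^l_{y'}\hat\psi|^2 + (\mu+\lambda)|\hat{\div}\,\nabla^l_{y'}\hat\psi|^2 + \frac{\kappa}{\hat\theta}|\hat{\nabla}\nabla^l_{y'}\hat\zeta|^2 = \hat R_l,
\]
where $\hat{\mathcal E}_l$ is a quadratic form equivalent to $|\nabla^l_{y'}\hat\Phi|^2$ by \eqref{bdd}. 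The remainder $\hat R_l$ collects the commutators $[\nabla^l_{y'}, \hat u\cdot\hat{\nabla}]$ and $[\nabla^l_{y'},\hat\rho\,\hat{\div}]$, those originating from the $y'$-dependence of $A(y')$, together with $\nabla^l_{y'}(\hat f+\hat F)$, $\nabla^l_{y'}(\hat g+\hat G)$, and $\nabla^l_{y'}$ applied to the viscous quadratic dissipation and $\hat h+\hat H$. Integrating against $e^{\sigma t}$ over $(0,t)\times \mathbb R^3_+$, the boundary term from the convective flux is nonnegative and discarded. The cross pressure terms of mixed sign, typically of the form $R\hat\theta(\nabla^l_{y'}\hat\vp)\cdot\hat{\nabla}\nabla^l_{y'}\hat\psi$ and $R\hat\rho(\nabla^l_{y'}\hat\zeta)\cdot\hat{\nabla}\nabla^l_{y'}\hat\psi$, are bounded by Cauchy--Schwarz with a small parameter $\ep$: the $\ep$-factor gives the $\ep\int\|(\nabla\psi,\nabla\zeta)\|^2_{H^l}$ contribution and the $\ep\int\|\nabla\vp\|^2_{H^{l-1}}$ contribution (the latter from the one-lower-order derivative of $\hat\vp$ produced when $\nabla^l_{y'}$ hits the pressure gradient), while the $\ep^{-1}$-factor falls on one-lower-order tangential derivatives, producing the $\ep^{-1}\int\|(\nabla\psi,\nabla\zeta)\|^2_{H^{l-1}}$ piece. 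The missing dissipation $\int e^{\sigma\tau}\|\nabla^l_{y'}\hat{\frac{d}{dt}}\hat\vp\|^2$ is recovered afterwards directly from \eqref{eq-pv1-hat} via $\hat{\frac{d}{dt}}\hat\vp=-\hat\rho\,\hat{\div}\hat\psi+\hat f+\hat F$ and the estimates just obtained.

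The remaining terms in $\hat R_l$ are controlled by the Sobolev embedding $H^2\hookrightarrow L^\infty$, the decay \eqref{stdc1}, the extension bounds \eqref{ExBdry0}--\eqref{ExBd0}, the $L^\infty$ bound \eqref{PhiSup}, and the smallness $N_\beta(T)+\dels\ll 1$: products of derivatives of $\hat\Phi$ with themselves contribute at most $(N_\beta(T)+\dels)\int e^{\sigma\tau}D_{3,\beta}(\tau)\,d\tau$, while the inhomogeneous pieces $\hat F,\hat G,\hat H$ are absorbed into $\dels\int e^{\sigma\tau}\,d\tau$ via \eqref{h1}. The $e^{\sigma t}$-derivative yields an extra $\sigma\int\|\nabla^l_{y'}\hat\Phi\|^2\,d\tau\le \sigma\int D_{3,\beta}\,d\tau$. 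The main obstacle is the commutator bookkeeping produced by the variable coefficients $A(y'),\hat u,\hat\rho,\hat\theta$: top-order derivatives of $A(y')$ are controlled through $M\in H^9(\mathbb R^2)$, while derivatives of $\hat\Phi$ up to order $l\le 3$ are split between $L^\infty$ and $L^2$ by Sobolev embedding so that every commutator is absorbed either into the dissipation via a small parameter or into $D_{3,\beta}$ via the smallness of $N_\beta(T)+\dels$. Fixing $\ep$ small after collecting all contributions produces \eqref{ec0-hat}.
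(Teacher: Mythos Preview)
Your overall strategy---differentiate \eqref{eq-pv-hat} tangentially, use symmetrizing multipliers, exploit the outflow sign on the boundary, and estimate commutators---is exactly the paper's. But your account of the principal pressure--velocity--temperature cross terms is wrong, and this is not just cosmetic.

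With the multipliers you name (or the paper's $R/\hat\rho$, $1/\hat\theta$, $1/\hat\theta^2$), the principal cross terms coming from the continuity, momentum, and energy equations are
\[
R\hat\theta\,(\hat{\div}\,\nabla^l_{y'}\hat\psi)\,\nabla^l_{y'}\hat\vp
\;+\;
R\hat\theta\,\nabla^l_{y'}\hat\psi\cdot\hat\nabla\,\nabla^l_{y'}\hat\vp
\quad\text{and}\quad
R\hat\rho\,(\hat{\div}\,\nabla^l_{y'}\hat\psi)\,\nabla^l_{y'}\hat\zeta
\;+\;
R\hat\rho\,\nabla^l_{y'}\hat\psi\cdot\hat\nabla\,\nabla^l_{y'}\hat\zeta.
\]
Each pair combines \emph{exactly} into $\hat{\div}(\cdots)$ plus lower-order terms of size $O(|\hat\nabla\hat\theta|+|\hat\nabla\hat\rho|)$; after integration the divergence vanishes because $\nabla^l_{y'}\hat\psi=\nabla^l_{y'}\hat\zeta=0$ on $\{y_1=0\}$. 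This cancellation is the whole point of the symmetrizing multipliers. If instead you try to bound these terms by Cauchy--Schwarz as you propose, the momentum-side term $R\hat\theta\,\nabla^l_{y'}\hat\psi\cdot\hat\nabla\,\nabla^l_{y'}\hat\vp$ carries an $(l{+}1)$th-order derivative of $\hat\vp$ with an $O(1)$ coefficient, and there is no place in \eqref{ec0-hat} to put $\|\nabla^{l+1}\hat\vp\|^2$ (for $l=3$ this would even exceed $D_{3,\beta}$). So that step fails.

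The $\epsilon$ and $\epsilon^{-1}$ terms in \eqref{ec0-hat} do \emph{not} come from the principal cross terms. They come from the commutators $[\nabla^l_{y'},\hat L]\hat\psi$, $[\nabla^l_{y'},\kappa\hat\Delta]\hat\zeta$, and $[\nabla^l_{y'},R\hat\theta\,\hat\nabla]\hat\vp$ in $\hat g_{l,0}$ and $\hat h_{l,0}$, which are generated by the $y'$-dependence of $A(y')$ and carry \emph{no} smallness. Pairing, say, $[\nabla^l_{y'},\hat L]\hat\psi$ (top order $l{+}1$ in $\hat\psi$) against $\nabla^l_{y'}\hat\psi$ and splitting by Cauchy--Schwarz produces precisely $\epsilon\|(\nabla\psi,\nabla\zeta)\|_{H^l}^2+\epsilon^{-1}\|(\nabla\psi,\nabla\zeta)\|_{H^{l-1}}^2$; similarly the $A$-part of $[\nabla^l_{y'},R\hat\theta\,\hat\nabla]\hat\vp$ against $\nabla^l_{y'}\hat\psi$ yields $\epsilon\|\nabla\vp\|_{H^{l-1}}^2+\epsilon^{-1}\|(\nabla\psi,\nabla\zeta)\|_{H^{l-1}}^2$. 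Once you redirect the $\epsilon$-splitting to these commutators and use the divergence cancellation for the principal cross terms, the rest of your outline (boundary sign from \eqref{outf}, recovery of $\|\nabla^l_{y'}\hat{\tfrac{d}{dt}}\hat\vp\|^2$ from \eqref{eq-pv1-hat}, handling of $\hat f,\hat g,\hat h,\hat F,\hat G,\hat H$ via \eqref{stdc1}, \eqref{ExBdry0}, \eqref{h1}) is correct and matches the paper.
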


\begin{proof}
Applying the differential operator $\nabla^l_{y'}$ to 
(\ref{eq-pv1-hat}) and (\ref{eq-pv2-hat}), we have the following
two equations:
\begin{gather}
\nabla^l_{y'} \hat{\vp}_t
+ \hat{u} \cdot \hat{\nabla} \nabla^l_{y'} \hat{\vp}
+ \hat{\rho} \hat{\div} \nabla^l_{y'} \hat{\psi}
= \hat{f}_{l,0},
\label{ec1-hat}
\\
\hat{\rho} \{
\nabla^l_{y'} \hat{\psi}_t
+ (\hat{u} \cdot \hat{\nabla}) \nabla^l_{y'} \hat{\psi}
\}
- \hat{L}(\nabla^l_{y'} \hat{\psi})
 + R\hat\theta\hat{\nabla}\nabla^l_{y'} \hat\vp+R\hat\rho\hat{\nabla}\nabla^l_{y'}\hat\zeta 
= \hat{g}_{l,0},
\label{ec2-hat}
\\
c_v\hat\rho\big(\nabla^l_{y'}\hat\zeta_t+(\hat{u}\cdot\hat{\nabla})\nabla^l_{y'}\hat\zeta\big)+R\hat\rho\hat\theta\hat{\textrm{div}}\nabla^l_{y'}\hat\psi
-\kappa\hat{\Delta}\nabla^l_{y'}\hat\zeta=\hat h_{l,0},
\label{ec03-hat}
\end{gather}
where
\begin{align}
\hat{f}_{l,0}
&:=
\nabla^l_{y'} (\hat{f} + \hat{F})
- [\nabla^l_{y'}, \hat{u} \cdot \hat{\nabla}] \hat{\vp}
- [\nabla^l_{y'},  \hat{\rho} \hat{\div}] \hat{\psi},\label{fl0-hat}
\\
\hat{g}_{l,0}
&:= \nabla^l_{y'} (\hat{g} + \hat{G})
- [ \nabla^l_{y'}, \hat{\rho}] \hat{\psi}_t  
- [ \nabla^l_{y'}, \hat{\rho} ( \hat{u} \cdot \hat{\nabla})] \hat{\psi}
- [\nabla^l_{y'},\hat{L}] \hat{\psi}
 - [ \nabla^l_{y'} , R\hat\theta\hat{\nabla}] \hat\vp 
- [ \nabla^l_{y'},  R\hat\rho\hat{\nabla}] \hat\zeta ,\label{gl0-hat}
\\
\hat h_{l,0}
&:= \nabla^l_{y'}  \left(2\mu|\hat{\mathcal{D}}(\hat{\psi})|^2+\lambda(\hat{\textrm{div}}\hat{\psi})^2+\hat h+\hat H\right)
- [ \nabla^l_{y'} , c_v\hat{\rho}] \hat{\zeta}_t
- [ \nabla^l_{y'} , c_v\hat{\rho}(\hat u \cdot\hat{\nabla})] \hat{\zeta}\nonumber\\
& \qquad - [ \nabla^l_{y'} , R\hat{\rho}\hat{\theta} \hat{\div}]\hat{ \psi}
+[\nabla^l_{y'},\kappa\hat{\Delta}]\hat\zeta.\label{hl0-hat}
\end{align}
Multiplying \eqref{ec1-hat} by $\frac{R}{\hat{\rho}} \, \nabla^l_{y'} \hat{\vp}$, and using the facts that 
 $\hat{\rho}_t = - \hat{\div} (\hat{\rho} \hat{u})$,
we get
\begin{multline}
\Bigl(
\frac{1}{2}\frac{R}{\hat{\rho}} | \nabla^l_{y'} \hat{\vp} |^2
\Bigr)_t
+ \hat\div \Bigl(
\frac{1}{2} \frac{R}{\hat{\rho}} \hat{u}
|\nabla^l_{y'} \hat{\vp}|^2
\Bigr)
+ R \hat{\div} (\nabla^l_{y'} \hat{ \psi}) \nabla^l_{y'} \hat{\vp}
=
\frac{R}{\hat{\rho}} \, \nabla^l_{y'} \hat{\vp} \hat{f}_{l,0}
+
\frac{R \hat{\div} \hat{ u}}{ \hat{\rho}} \, |\nabla^l_{y'} \hat{\vp}|^2.
\label{ec31hat}
\end{multline}
Multiply (\ref{ec2-hat}) by $\frac{\nabla^l_{y'} \hat{\psi}}{ \hat{\theta}}$ successively to get
\begin{gather}
\Bigl(
\frac{1}{2} \frac{ \hat{\rho}}{ \hat{\theta}} |\nabla^l_{y'} \hat{\psi}|^2
\Bigr)_t
+ \hat{\div}  \hat{B}_2
+ \frac{\mu}{ \hat{\theta}}| \hat{\nabla} (\nabla^l_{y'} \hat{\psi})|^2
+ \frac{\mu+\lambda}{ \hat{\theta}}|\hat{\div} (\nabla^l_{y'} \hat{\psi})|^2
\mspace{200mu} \notag
\\ 
\mspace{200mu}
+R\nabla^l_{y'} \hat{\psi}\cdot\hat{\nabla} \nabla^l_{y'} \hat{\varphi}
+\frac{R\hat{\rho}}{\hat{\theta}}\nabla^l_{y'} \hat{\psi}\cdot\hat{\nabla}\nabla^l_{y'} \hat{ \zeta}
=\hat{R}_{2},
\label{ec32hat}
\\
\begin{aligned}
\hat{B}_2 &:=
\frac{1}{2} \frac{\hat{\rho}\hat{ u}}{\hat{\theta}} |\nabla^l_{y'} \hat{\psi}|^2
- \frac{\mu}{\hat{\theta}} \hat{\nabla} (\nabla^l_{y'} \hat{\psi})  \cdot \nabla^l_{y'} \hat{\psi}
-\frac{\mu+\lambda}{\hat{\theta}}\hat{\div} (\nabla^l_{y'} \hat{\psi}) \nabla^l_{y'} \hat{\psi},
\\
\hat{R}_{2}&:=\hat{g}_{l,0} \cdot \frac{ \nabla^l_{y'} \hat{\psi}}{\hat{\theta}}
 -\frac{\hat{\rho}\partial_t\hat{\theta}}{2\hat{\theta}^2}  |\nabla^l_{y'} \hat{\psi}|^2- 
\frac{\hat{\rho}\hat{ u}\cdot\hat{\nabla}\hat{\theta}}{2\hat{\theta}^2}  |\nabla^l_{y'} \hat{\psi}|^2
+\frac{\mu \nabla^l_{y'} \hat{\psi}}{\hat{\theta}^2}\hat{\nabla}\nabla^l_{y'} \hat{\psi}\hat{\nabla}\hat{\theta}
\\
& \quad +\frac{(\mu+\lambda)\hat{\div}( \nabla^l_{y'} \hat{\psi})}{\hat{\theta}^2}\nabla^l_{y'} \hat{\psi}\cdot \hat{\nabla}\hat{\theta}.
\end{aligned}
\nonumber
\end{gather}
Multiply (\ref{ec03-hat}) by $\hat{\theta}^{-2}{\nabla^l_{y'}\hat\zeta}$ to obtain
\begin{align}
&\Bigl(
\frac{c_v}{2} \frac{\hat{\rho}}{\hat{\theta}^2} |\nabla^l_{y'}\hat\zeta|^2\Bigr)_t+\hat{\div}\left(c_v|\nabla^l_{y'}\hat\zeta|^2
\frac{\hat{\rho}\hat{ u}}{2\hat{\theta}^2}-\kappa\frac{\hat\nabla\nabla^l_{y'}\hat\zeta}{\hat{\theta}^2}\nabla^l_{y'}\hat\zeta\right)+\kappa\frac{|\hat\nabla\nabla^l_{y'}\hat\zeta|^2}{\hat{\theta}^2}+\frac{R\hat{\rho}}{\hat{\theta}}\hat{\div}(\nabla^l_{y'}\hat\psi)\nabla^l_{y'}\hat\zeta
\notag
\\
&= \hat{h}_{l,0}\frac{\nabla^l_{y'}\hat\zeta}{\hat{\theta}^2}-\frac{c_v\hat{\rho}\partial_t\hat{\theta}}{\hat{\theta}^3}  |\nabla^l_{y'}\hat\zeta|^2- 
\frac{c_v\hat{\rho}\hat{ u}\cdot\hat{\nabla}\hat{\theta}}{\hat{\theta}^3}  |\nabla^l_{y'}\hat\zeta|^2+\frac{2\kappa \nabla^l_{y'}\hat\zeta}{\hat{\theta}^3}\hat{\nabla} \nabla^l_{y'}\hat\zeta\hat{\nabla}\hat{\theta}.
\label{ec33hat}
\end{align}
Summing up (\ref{ec31hat})--(\ref{ec33hat}) yields
\begin{gather}
\Bigl(
\frac{1}{2}\frac{R}{\hat{\rho}} | \nabla^l_{y'} \hat{\vp} |^2
+ \frac{1}{2} \frac{ \hat{\rho}}{ \hat{\theta}} |\nabla^l_{y'} \hat{\psi}|^2
+\frac{c_v}{2} \frac{\hat{\rho}}{\hat{\theta}^2} |\nabla^l_{y'}\hat\zeta|^2
\Bigr)_t
+ \hat\div \hat{B}_3
\mspace{270mu}
\nonumber
\\
\mspace{100mu}
{}+  \frac{\mu}{ \hat{\theta}}| \hat{\nabla} (\nabla^l_{y'} \hat{\psi})|^2
+ \frac{\mu+\lambda}{ \hat{\theta}}|\hat{\div} (\nabla^l_{y'} \hat{\psi})|^2
+\kappa\frac{|\hat{\nabla}\nabla^l_{y'}\hat\zeta|^2}{\hat{\theta}^2}
=
\hat{R}_{3}, 
\label{ec5hat}
\\
\begin{aligned}
\hat{B}_3&:=
\frac{1}{2} \frac{R}{\hat{\rho}} \hat{u}
|\nabla^l_{y'} \hat{\vp}|^2
+ R \nabla^l_{y'} \hat{ \psi} \nabla^l_{y'} \hat{\vp}+ \hat{B}_2+c_v|\nabla^l_{y'}\hat\zeta|^2
\frac{\hat{\rho}\hat{ u}}{2\hat{\theta}^2}-\kappa\frac{\hat{\nabla}\nabla^l_{y'}\hat\zeta}{\hat{\theta}^2}\nabla^l_{y'}\hat\zeta
+\frac{R\hat{\rho}}{\hat{\theta}}\nabla^l_{y'} \hat{\psi}\cdot\nabla^l_{y'} \hat{ \zeta},\nonumber
\\
\hat{R}_{3}
&:=
\frac{R}{\hat{\rho}} \, \nabla^l_{y'} \hat{\vp} \hat{f}_{l,0}
+
\frac{R \hat{\div} \hat{ u}}{ \hat{\rho}} \, |\nabla^l_{y'} \hat{\vp}|^2 + \hat{R}_{2}
+\hat{h}_{l,0}\frac{\nabla^l_{y'}\hat\zeta}{\hat{\theta}^2}\nonumber
\\
&\quad -\frac{c_v\hat{\rho}\partial_t\hat{\theta}}{\hat{\theta}^3}  |\nabla^l_{y'}\hat\zeta|^2
-\frac{c_v\hat{\rho}\hat{ u}\cdot\hat{\nabla}\hat{\theta}}{\hat{\theta}^3}  |\nabla^l_{y'}\hat\zeta|^2
+\frac{2\kappa \nabla^l_{y'}\hat\zeta}{\hat{\theta}^3}\hat{\nabla} \nabla^l_{y'}\hat\zeta\hat{\nabla}\hat{\theta}
+\hat{\nabla}\left(\frac{R\hat{\rho}}{\hat{\theta}}\right)\nabla^l_{y'} \hat{\psi}\cdot\nabla^l_{y'} \hat{ \zeta}.
\end{aligned}
\end{gather}
Let us look at the left hand side of \eqref{ec5hat}.
Owing to the divergence theorem with \eqref{outf} and \eqref{pbc-hat},
we have the nonnegativity of the second terms on the
left hand side of (\ref{ec5hat}) as
\begin{align}
\int_{\mathbb{R}^3_+}
\hat{\div} \Bigl(
\frac{1}{2} \frac{R}{\hat{\rho}} \hat{u}
|\nabla^l_{y'} \hat{\vp}|^2 + \hat{B}_2
\Bigr) \, dy
=\int_{\mathbb{R}^2}
\frac{1}{2} \frac{R}{\hat{\rho}}
|\nabla^l_{y'} \hat{\vp}|^2
 (\ub \cdot n) \sqrt{1+|\nabla M|^2}  \, dy' \geq 0.
\label{ec6-hat}
\end{align}
Using the fact that 
$|\nabla \nabla^l_{y'} \hat{\psi}| \lesssim |\hat{\nabla} \nabla^l_{y'} \hat{\psi}|$,
we also have the good contribution from the third and fourth terms in \eqref{ec5hat} as
\begin{equation}\label{ec10-hat}
\int_{\mathbb{R}^3_+}  \mu |\hat{\nabla} \nabla^l_{y'} \hat{\psi}|^2
+ (\mu + \lambda) |\hat{\div} \nabla^l_{y'} \hat{\psi}|^2 \,dy
\gtrsim \int_{\mathbb{R}^3_+} |\nabla \nabla^l_{y'} \hat{\psi}|^2 \,dy.
\end{equation}
Similarly, 
\begin{gather}
\int_{\mathbb{R}^3_+}  \kappa\frac{|\hat\nabla\nabla^l_{y'}\hat\zeta|^2}{\hat{\theta}^2} \,dy
\gtrsim \int_{\mathbb{R}^3_+} |\nabla \nabla^l_{y'} \hat{\zeta}|^2 \,dy.
\end{gather}

We are going to show that $\hat{R}_{3}$ satisfies
\begin{multline}
\int_{\Omega}|\hat{R}_{3}|dx  \lesssim (N_{\beta}(T) + \dels)D_{3,\beta} 
+ \epsilon \left(\|\nabla_y \hat{\vp}\|_{H^{l-1}(\mathbb R^3_+)}^2
+ \|(\nabla_y \hat{\psi}, \nabla_y \hat{\zeta})\|_{H^l(\mathbb R^3_+)}^2\right)
\\
+ \epsilon^{-1} \|(\nabla_y \hat{\psi}, \nabla_y \hat{\zeta})\|_{H^{l-1}(\mathbb R^3_+)}^2 
+ \delta^{-1}\|(\hat{F},\hat{G},\hat{H})\|_{H^3(\mathbb R^3_+)}^2.
\label{ec7-hat}
\end{multline}
Let us first estimate the integrals of 
$\hat{f}_{l,0} \, \nabla^l_{y'} \hat{\vp},~$
$\hat{g}_{l,0} \cdot \nabla^l_{y'} \hat{\psi},~$ and 
$\hat{h}_{l,0} \nabla^l_{y'} \hat{\zeta}$ in $\hat{R}_{3}$, 
where $\hat{f}_{l,0},~$ $\hat{g}_{l,0},~$  and $\hat{h}_{l,0}$ are defined in \eqref{fl0-hat}--\eqref{hl0-hat}.
Noting that $(\rt',\ut')(\tilde{M}(\Gamma(y)))=(\rt',\ut')(y_1)$ and applying
Sobolev's inequalities \eqref{sobolev0}--\eqref{sobolev2} with \eqref{stdc1} and \eqref{ExBdry3},
we have
\begin{align}
&\|(\nabla_{y'}^l \hat{f},\nabla_{y'}^l \hat{g},\nabla_{y'}^l \hat{h})\|_{L^2(\mathbb R^3_+)}\notag\\
& \lesssim \delta  \left( \|\nabla_y \hat{\vp}\|_{H^{2}(\mathbb R^3_+)} + \|(\nabla_y \hat{\psi}, \nabla_y \hat{\zeta})\|_{H^3(\mathbb R^3_+)} \right) 
\notag \\
& \quad + \left\||(\hat{\vp}, \hat{\psi}, \hat{\zeta})| (|(\rt',\ut',\tt')||\nabla^l_{y'} \nabla M| + |\nabla^l_{y'} \nabla U| + |\nabla^l_{y'} \nabla \Theta|)\right\|_{L^2(\mathbb R^3_+)}
\notag\\
& \lesssim \delta  \left( \|\nabla_y \hat{\vp}\|_{H^{2}(\mathbb R^3_+)} + \|(\nabla_y \hat{\psi}, \nabla_y \hat{\zeta})\|_{H^3(\mathbb R^3_+)} \right) 
\notag \\
& \quad + \| (\hat{\vp}, \hat{\psi},\hat\zeta)\|_{L^6(\mathbb R^3_+)} \left( \left\||(\rt',\ut',\tt')| |\nabla^l_{y'} \nabla M| \right\|_{L^3(\mathbb R^3_+)} + \|\nabla^l_{y'} \nabla U \|_{L^3(\mathbb R^3_+)} 
 + \|\nabla^l_{y'} \nabla \Theta \|_{L^3(\mathbb R^3_+)}\right)
\notag\\
&  \lesssim \delta  D_{3,\beta}. 
\label{ec8-hat}
\end{align}
Using \eqref{ec8-hat} and Schwarz's inequality, one can see that the integrals of
$| (\nabla_{y'}^l \hat{f} + \nabla_{y'}^l \hat{F}) \nabla^l_{y'} \hat{\vp}|$,
$|(\nabla_{y'}^l \hat{g} + \nabla_{y'}^l \hat{G})  \cdot \nabla^l_{y'} \hat{\psi}|,$  and 
$|(\nabla_{y'}^l \hat{h} + \nabla_{y'}^l \hat{H})  \nabla^l_{y'} \hat{\zeta}|$ are bounded 
from above by the right hand side of \eqref{ec7-hat}.
Notice that the other terms in $ \hat{f}_{l,0} \, \nabla^l_{y'} \hat{\vp}$,
$\hat{g}_{l,0} \cdot \nabla^l_{y'} \hat{\psi},$ and 
$\hat{h}_{l,0} \nabla^l_{y'} \hat{\zeta}$ are just commutator terms.
Using suitably Lemma \ref{CommEst} with the facts that 
\begin{gather*}
\hat{\rho}(y)= \tilde{\rho}(y_1) + \hat{\vp}(y), \quad 
\nabla^l_{y'} (\rt(y_1) \, \cdot \, ) =\rt(y_1)(\nabla^l_{y'} \, \cdot\, ),
\\
\hat{u}(y)= \tilde{u}(y_1) + \hat{\psi}(y) + \hat{U}(y), \quad
\nabla^l_{y'} (\ut(y_1) \, \cdot \, ) =\ut(y_1)(\nabla^l_{y'} \, \cdot \, ),
\\
\hat{\theta}(y)= \tilde{\theta}(y_1) + \hat{\zeta}(y) + \hat{\Theta}(y), \quad
\nabla^l_{y'} (\tt(y_1) \, \cdot \, ) =\tt(y_1)(\nabla^l_{y'} \, \cdot \, ),
\end{gather*}
we can see that the commutator terms are bounded by the right hand side of \eqref{ec7-hat}.
Now we have completed the estimation of all terms 
in $ \hat{f}_{l,0} \, \nabla^l_{y'} \hat{\vp}$ ,
$\hat{g}_{l,0} \cdot \nabla^l_{y'} \hat{\psi},$ and 
$\hat{h}_{l,0} \nabla^l_{y'} \hat{\zeta}$. 
It is quite straightforward to handle the other terms in $\hat{R}_{3}$ 
with the aid of \eqref{stdc1} and \eqref{ExBdry3}.
Therefore we conclude \eqref{ec7-hat}.

Applying $\nabla^l_{y'}$ to \eqref{eq-pv1-hat}, we arrive at
\begin{equation*}
\nabla^l_{y'} \hat{\frac{d}{dt}} \hat{\vp}=
-\nabla^l_{y'} \big( \hat{\rho} \hat{\div} \hat{\psi} \big)
+ \nabla^l_{y'} (\hat{f} + \hat{F}). 
\end{equation*}
We take the $L^2$-norm and estimate the terms on the right hand side 
with the aid of \eqref{ec8-hat} as

\begin{equation}
\left\| \nabla^l_{y'} \hat{\frac{d}{dt}} \hat{\vp}  \right\|_{L^2(\mathbb R^3_+)}^2 
\lesssim \| {\nabla} \nabla_{y'}^l \hat{\psi}\|_{L^2(\mathbb R^3_+)}^2  
+\delta D_{3,\beta}+\|\nabla_y \hat{\psi}\|_{H^{l-1}(\mathbb R^3_+)}^2
+\|\hat{F}\|_{H^{l}(\mathbb R^3_+)}^2.
\label{ec9-hat}
\end{equation}

We multiply (\ref{ec5hat}) by the time weight function $e^{\sigma t}$, 
integrate the resulting equality over $(0,t) \times \mathbb{R}^3_+$,
and substitute (\ref{ec6-hat})--(\ref{ec7-hat}) into the result. 
Using \eqref{h1} and \eqref{ec9-hat} and
performing the change of variables $y \rightarrow x$ on the right hand side, 
we arrive at the desired inequality (\ref{ec0-hat}). 
This completes the proof of the lemma. 
\end{proof}

Next we estimate the spatial-derivatives for the normal direction.
To simplify the notations, we denote $\partial_j := \partial_{y_j}$ for $j =1,2,3$. 
Applying $\partial_1$ to \eqref{eq-pv1-hat} and multiplying the result 
by $\mu_1 := 2 \mu + \lambda$ yields
\begin{equation}  
\mu_1 \partial_1  \hat{\frac{d}{dt}} \hat{\vp}  + \mu_1 \partial_1 \hat{\rho} \hat{\div} \hat{\psi} + \mu_1 \hat{\rho} \partial_1 \hat{\div} \hat{\psi} = \mu_1 \partial_1 (\hat{f} + \hat{F}) . 
\label{ee1-hat}
\end{equation}
We need to make some cancellation on the term $\mu_1 \hat{\rho} \partial_1 \hat{\div} \hat{\psi}$ so as to avoid the highest order derivative in the normal direction $y_1$. Denote
\begin{align} 
& \mathcal{A}_1 := \frac{\mu_1}{\mu (1 + |\nabla M|^2) + \mu + \lambda}, \quad
 \mathcal{A}_j := -\frac{\partial_j M}{\mu (1 + |\nabla M|^2)}\{\mu_1-(\mu+ \lambda)\mathcal{A}_1\},\quad  j=2,3,
\notag \\
& \tilde{\mathcal{A}}_1:=\mathcal{A}_1-\mathcal{A}_2\partial_2 M-\mathcal{A}_3\partial_3 M>0, \quad
\mathfrak{D} := \tilde{\mathcal{A}}_1\partial_1 + {\mathcal{A}}_2\partial_2 + {\mathcal{A}}_3\partial_3.  
\label{Aj-def}
\end{align}
Taking an inner product of \eqref{eq-pv2-hat} with 
$(\hat{\rho} \mathcal{A}_1, \hat{\rho}\mathcal{A}_2,\hat{\rho} \mathcal{A}_3 )^{tr}$, we obtain
\begin{multline}
\hat{\rho}^2 \left( \sum_{j=1}^3 \mathcal{A}_j \hat{\psi}_{jt} + \sum_{j=1}^3 \mathcal{A}_j (\hat{u} \cdot \hat{\nabla}) \hat{\psi}_j  \right) - \mu \hat{\rho} \sum_{j=1}^3 \mathcal{A}_j \hat{\Delta} \hat{\psi}_j 
\\
- (\mu + \lambda) \hat{\rho} \sum_{j=1}^3 \mathcal{A}_j \partial_j \hat{\div} \hat{\psi} + R\hat{\rho} \hat{\theta} \mathfrak{D}\hat{\vp} + R\hat{\rho}^2 \mathfrak{D}\hat{\zeta}  = \hat{\rho} \sum_{j=1}^3 \mathcal{A}_j (\hat{g}_j + \hat{G}_j), 
\label{ee2-hat}
\end{multline}
where $\hat{g}_j$ and $ \hat{G}_j$ are the $j$-th components of $\hat{g}$ and $\hat{G}$, respectively.
Adding \eqref{ee1-hat} and \eqref{ee2-hat} together gives
\begin{multline}
\mu_1 \partial_1  \hat{\frac{d}{dt}} \hat{\vp}  + \mu_1 \partial_1 \hat{\rho} \hat{\div} \hat{\psi}  + \hat{\rho}^2 \left( \sum_{j=1}^3 \mathcal{A}_j \hat{\psi}_{jt} + \sum_{j=1}^3 \mathcal{A}_j (\hat{u} \cdot \hat{\nabla}) \hat{\psi}_j \right) \\
+ R\hat{\rho} \hat{\theta} \mathfrak{D}\hat{\vp} + R\hat{\rho}^2 \mathfrak{D}\hat{\zeta}  + I + II + III 
 = \mu_1 \partial_1 (\hat{f} + \hat{F} ) + \hat{\rho} \sum_{j=1}^3 \mathcal{A}_j (\hat{g}_j + \hat{G}_j), 
\label{ee3-hat}
\end{multline}
where
\begin{align*}
I &: = \mu_1 \hat{\rho} \partial_1 \hat{\div} \hat{\psi} = \mu_1 \hat{\rho} (\partial_1^2 \hat{\psi}_1
-\partial_2 M \partial_1^2 \hat{\psi}_2 - \partial_3 M \partial_1^2 \hat{\psi}_3)+I', 
\\
II &:= - \mu \hat{\rho} \sum_{j=1}^3 \mathcal{A}_j \hat{\Delta} \hat{\psi}_j = - \mu \hat{\rho} \sum_{j=1}^3 \mathcal{A}_j (1 + |\nabla M|^2)\partial_1^2 \hat{\psi}_j+II',
\\
III &:= - (\mu + \lambda) \hat{\rho} \sum_{j=1}^3 \mathcal{A}_j \partial_j \hat{\div} \hat{\psi} = - (\mu + \lambda) \hat{\rho} \mathcal{A}_1 (\partial_1^2 \hat{\psi}_1
-\partial_2 M \partial_1^2 \hat{\psi}_2 - \partial_3 M \partial_1^2 \hat{\psi}_3) + III',  
\end{align*}
where $I'$, $II'$, and $III'$ do not have terms 
with second-order normal derivative $\partial_{1}^2$.
Due to the choice of $\mathcal{A}_j$, it is straightforward to check that
\begin{equation}
I+II+III= I' + II' + III' = \hat{\rho}\sum_{\substack{1\leq |\bm{b}|\leq 2, \ b_1\neq2, \\ j=1,2,3}} a_{\bm{b}}\partial^{\bm{b}}_y\hat{\psi}_j,
\label{ee4-hat}
\end{equation}
where $a_{\bm{b}}$ denotes scalar-valued functions 
$a_{\bm{b}} = a_{\bm{b}} (\mu, \lambda, \nabla M, \nabla^2 M)$.
Substituting \eqref{ee4-hat} into \eqref{ee3-hat}, multiplying the result by $\tilde{\mathcal{A}}_1$,
and using $\tilde{\mathcal{A}}_1\partial_1=\mathfrak{D}-\mathcal{A}_2\partial_2-\mathcal{A}_3\partial_3$,
we arrive at
\begin{align}
& \mu_1 \mathfrak{D}  \hat{\frac{d}{dt}} \hat{\vp}  - \mu_1(\mathcal{A}_2\partial_2+\mathcal{A}_3\partial_3)  \hat{\frac{d}{dt}} \hat{\vp}  + \mu_1 \tilde{\mathcal{A}}_1 \partial_1 \hat{\rho} \hat{\div} \hat{\psi} + \tilde{\mathcal{A}}_1 \hat{\rho}^2 \left( \sum_{j=1}^3 \mathcal{A}_j \hat{\psi}_{jt} + \sum_{j=1}^3 \mathcal{A}_j (\hat{u} \cdot \hat{\nabla}) \hat{\psi}_j  \right)\notag \\
& \quad  
+ \tilde{\mathcal{A}}_1 (R\hat{\rho} \hat{\theta} \mathfrak{D}\hat{\vp} + R\hat{\rho}^2 \mathfrak{D}\hat{\zeta}) 
 + \tilde{\mathcal{A}}_1 \hat{\rho}\sum_{\substack{1\leq |\bm{b}|\leq 2, \ b_1\neq2, \\ j=1,2,3}} a_{\bm{b}} \partial^{\bm{b}}_y\hat{\psi}_j
\notag \\
& = \mu_1 \tilde{\mathcal{A}}_1 \partial_1 (\hat{f} + \hat{F} ) + \tilde{\mathcal{A}}_1 \hat{\rho} \sum_{j=1}^3 \mathcal{A}_j (\hat{g}_j + \hat{G}_j). 
\label{ee5-hat}
\end{align}

\begin{lemma}
\label{lm4-hat}
Suppose that the same conditions as in Proposition \ref{apriori1} with $m=3$ hold. Define the index $\bm{a} = (a_1, a_2, a_3)$ with $a_1, a_2, a_3 \geq 0$ and $|\bm{a}| := a_1 + a_2 + a_3$. Let $\partial^{\bm{a}} := \partial^{a_1}_{y_1} \partial^{a_2}_{y_2} \partial^{a_3}_{y_3}$. Then it holds that 
\begin{align}
& e^{\sigma t} \| \partial^{\bm{a}} \partial_1 \hat{\vp} (t) \|_{L^2(\mathbb R^3_+)}^2 
+ \int^t_0 e^{\sigma \tau}\left(  \|\partial^{\bm{a}} \mathfrak{D} \hat{\vp} (\tau) \|_{L^2(\mathbb R^3_+)}^2 
+ \left\| \partial^{\bm{a}} \partial_1 \hat{\frac{d}{dt}} \hat{\vp}(\tau) \right\|_{L^2(\mathbb R^3_+)}^2  \right) \, d\tau 
\notag\\
&  \lesssim  \|\vp_0\|_{H^3}^2 
+ \int_0^t e^{\sigma \tau} \left(
\left\|\partial^{\bm{a}} \nabla_{y'} \hat{\frac{d}{dt}} \hat{\vp}(\tau) \right\|_{L^2(\mathbb R^3_+)}^2 
+ \|\partial^{\bm{a}} \nabla_y \nabla_{y'} \hat{\psi}(\tau)\|_{L^2(\mathbb R^3_+)}^2 \right)\, d \tau
\notag\\
&\quad + \int_0^t e^{\sigma \tau} \left(|\bm{a}|\|\nabla {\vp}(\tau) \|^2_{H^{|\bm{a}| -1}}
+|\bm{a}|\left\|\nabla {\frac{d}{dt}} {\vp}(\tau) \right\|_{H^{|\bm{a}|-1}}^2
+\|{\psi}_t(\tau) \|^2_{H^{|\bm{a}|}} 
+\| (\nabla{\psi},\nabla\zeta)(\tau)\|^2_{H^{|\bm{a}|}} \right)\, d \tau
\notag\\
& \quad 
+ (N_\beta (T)+\dels+\sigma) \int_0^t e^{\sigma \tau} D_{3, \beta}(\tau) \, d \tau 
+ \dels\int_0^t e^{\sigma \tau} d \tau
\label{ee0-hat}
\end{align}
for $t \in [0, T]$, $\sigma > 0$, and $0\leq |\bm{a}| \leq 2$.
\end{lemma}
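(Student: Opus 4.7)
The plan is to treat \eqref{ee5-hat} as a first-order relation in $\hat{\vp}$ in which the highest normal derivative enters only through $\partial_1\hat{\tfrac{d}{dt}}\hat{\vp}$ and $\mathfrak{D}\hat{\vp}$, and to extract simultaneously the energy $\|\partial^{\bm a}\partial_1\hat{\vp}\|^2$ and the two dissipations $\|\partial^{\bm a}\partial_1\hat{\tfrac{d}{dt}}\hat{\vp}\|^2$, $\|\partial^{\bm a}\mathfrak{D}\hat{\vp}\|^2$ from a single multiplier paired with a direct $L^2$ read-off of the equation. I apply $\partial^{\bm a}$ with $|\bm a|\le 2$ to \eqref{ee5-hat}. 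Writing $\mathfrak{D}=\tilde{\mathcal A}_1\partial_1+\mathcal A_2\partial_2+\mathcal A_3\partial_3$, the resulting equation has the schematic form
\[
\mu_1\tilde{\mathcal A}_1\,\partial^{\bm a}\partial_1\hat{\tfrac{d}{dt}}\hat{\vp}+\tilde{\mathcal A}_1 R\hat{\rho}\hat{\theta}\,\partial^{\bm a}\mathfrak{D}\hat{\vp}+\tilde{\mathcal A}_1 R\hat{\rho}^2\,\partial^{\bm a}\mathfrak{D}\hat{\zeta}=\mathcal R_{\bm a},
\]
where $\mathcal R_{\bm a}$ collects the tangential-only piece $\mu_1(\mathcal A_2\partial_2+\mathcal A_3\partial_3)\partial^{\bm a}\hat{\tfrac{d}{dt}}\hat{\vp}$, the auxiliary $\hat\psi$-terms from $I',II',III'$ which by \eqref{ee4-hat} involve only spatial derivatives of order $\le|\bm a|+2$ on $\hat\psi$ and never $\partial_1^{|\bm a|+2}$, the commutators produced by $\partial^{\bm a}$ acting on coefficients (controlled in $L^\infty$ via $M\in H^9$, \eqref{stdc1}, \eqref{ExBdry0}, \eqref{ExBd0} and Lemma \ref{CommEst}), the nonlinear contributions carrying a factor $N_\beta(T)+\delta$ by Sobolev embedding and \eqref{PhiSup}, and the inhomogeneous pieces $\partial^{\bm a}(\hat F,\hat G,\hat H)$ bounded via \eqref{h1}.

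Next I take the $L^2(\mathbb R^3_+)$ inner product of this equation with $\mu_1\tilde{\mathcal A}_1\,\partial^{\bm a}\partial_1\hat{\tfrac{d}{dt}}\hat{\vp}$. The first term provides the desired dissipation $\mu_1^2\tilde{\mathcal A}_1^{\,2}\|\partial^{\bm a}\partial_1\hat{\tfrac{d}{dt}}\hat{\vp}\|^2$. The pivotal cross term with $\partial^{\bm a}\mathfrak{D}\hat{\vp}$ is handled by substituting $\mathfrak{D}=\tilde{\mathcal A}_1\partial_1+\mathcal A_2\partial_2+\mathcal A_3\partial_3$: the $\tilde{\mathcal A}_1\partial_1$ contribution is reorganized through $\partial_1\hat{\tfrac{d}{dt}}=\hat{\tfrac{d}{dt}}\partial_1+[\partial_1,\hat u\cdot\hat\nabla]$ and an integration by parts in $y$ into
\[
\frac{1}{2}\frac{d}{dt}\int_{\mathbb R^3_+}\mu_1 R\hat\rho\hat\theta\,\tilde{\mathcal A}_1^{\,2}\,|\partial^{\bm a}\partial_1\hat{\vp}|^2\,dy+(\text{l.o.t.})+(\text{boundary term at }y_1=0),
\]
where the boundary term inherits the favorable outflow sign $u_b\cdot n\ge c_1>0$ through \eqref{CV11} and can be discarded, while the volume remainder either carries coefficient $N_\beta(T)+\delta$ or is controlled by Hardy's inequality \eqref{hardy}, \eqref{stdc1}, and \eqref{ExBdry0}. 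The tangential parts $\mathcal A_j\partial_j\partial^{\bm a}\hat{\vp}$ for $j=2,3$ and the cross term with $\partial^{\bm a}\mathfrak{D}\hat\zeta$ are absorbed by Young's inequality with small $\epsilon$ into $\epsilon(\|\nabla\hat\vp\|_{H^{|\bm a|-1}}^2+\|(\nabla\hat\psi,\nabla\hat\zeta)\|_{H^{|\bm a|}}^2)$ and into the tangential dissipation of Lemma \ref{lm2-hat}.

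The remaining dissipation $\|\partial^{\bm a}\mathfrak{D}\hat{\vp}\|^2$ is recovered directly from the equation: since $\tilde{\mathcal A}_1 R\hat\rho\hat\theta\ge c>0$ by \eqref{bdd} once $\delta+N_\beta(T)$ is small, I isolate $\partial^{\bm a}\mathfrak{D}\hat{\vp}$ and take $L^2$, bounding it by $\|\partial^{\bm a}\partial_1\hat{\tfrac{d}{dt}}\hat\vp\|$, $\|\mathcal R_{\bm a}\|$, and $\|\partial^{\bm a}\mathfrak{D}\hat\zeta\|\lesssim \|\nabla\hat\zeta\|_{H^{|\bm a|}}$. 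Multiplying the combined identity by $e^{\sigma t}$, integrating over $(0,t)$, and choosing $\epsilon$ small transfers all $\epsilon$-terms to the left; the nonlinear and commutator pieces fit into the $(N_\beta(T)+\delta+\sigma)\int_0^t e^{\sigma\tau}D_{3,\beta}(\tau)\,d\tau$ term of \eqref{ee0-hat} and the inhomogeneities into $\delta\int_0^t e^{\sigma\tau}\,d\tau$ via \eqref{h1}. A change of variables back to $x$ (recalling that $\Gamma$ has Jacobian $1$) completes the estimate.

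The main obstacle I expect is the bookkeeping in the cross-term step: every quantity produced after the integration by parts in $y$ must fall in exactly one admissible category (the left-hand dissipation, the tangential dissipation from Lemma \ref{lm2-hat} carrying no more than $|\bm a|+1$ derivatives on $\hat\psi,\hat\zeta$, an $\epsilon$-absorbable term, a nonlinear term with prefactor $N_\beta(T)+\delta$, or a boundary term with the outflow sign). The cancellation engineered in \eqref{ee4-hat} is what prevents spurious $\partial_1^{|\bm a|+2}\hat\psi$ contributions and keeps the right-hand side of \eqref{ee0-hat} self-consistent; a secondary delicate point is estimating commutators of $\partial^{\bm a}$ with coefficients that depend on high derivatives of $M$, which is handled by Lemma \ref{CommEst} under $M\in H^9$.
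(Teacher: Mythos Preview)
Your approach is correct and reaches the same estimate, but the paper organizes the multiplier step differently. After applying $\partial^{\bm a}$ to \eqref{ee5-hat}, the paper keeps the principal part in $\mathfrak{D}$-form,
\[
\mu_1\,\partial^{\bm a}\mathfrak{D}\,\hat{\tfrac{d}{dt}}\hat{\vp}+\tilde{\mathcal A}_1 R\hat\rho\hat\theta\,\partial^{\bm a}\mathfrak{D}\hat{\vp}=\hat I_1,
\]
and multiplies it by the \emph{two} test functions $\partial^{\bm a}\mathfrak{D}\hat{\vp}$ and $\partial^{\bm a}\mathfrak{D}\,\hat{\tfrac{d}{dt}}\hat{\vp}$ simultaneously; adding the two identities produces in one stroke the energy $\tfrac12(\mu_1+\tilde{\mathcal A}_1R\hat\rho\hat\theta)|\partial^{\bm a}\mathfrak{D}\hat\vp|^2$ and both dissipations $\mu_1|\partial^{\bm a}\mathfrak{D}\,\hat{\tfrac{d}{dt}}\hat\vp|^2$ and $\tilde{\mathcal A}_1R\hat\rho\hat\theta|\partial^{\bm a}\mathfrak{D}\hat\vp|^2$. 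The conversion $\mathfrak{D}\to\partial_1$ is done only at the end via $\partial_1=\tilde{\mathcal A}_1^{-1}\mathfrak{D}-\tilde{\mathcal A}_1^{-1}(\mathcal A_2\partial_2+\mathcal A_3\partial_3)$. You instead first pass to $\partial_1$, use the single multiplier $\partial^{\bm a}\partial_1\hat{\tfrac{d}{dt}}\hat\vp$ to obtain the energy for $\partial^{\bm a}\partial_1\hat\vp$ and the dissipation for $\partial^{\bm a}\partial_1\hat{\tfrac{d}{dt}}\hat\vp$, and then recover $\|\partial^{\bm a}\mathfrak{D}\hat\vp\|^2$ by isolating it algebraically from the equation. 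Both strategies rely on the same cancellation \eqref{ee4-hat}, the same outflow sign at $y_1=0$, and the same commutator bookkeeping; the paper's two-multiplier version has the advantage that all coefficients stay paired with $\mathfrak{D}$, so fewer commutators with $\tilde{\mathcal A}_1$ appear explicitly, while your version makes the role of $\partial_1$ more transparent at the cost of an extra algebraic step. A minor inaccuracy: only $\hat F$ and $\hat G$ enter \eqref{ee5-hat}, not $\hat H$.
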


\begin{proof}
Applying $\partial^{\bm{a}}$ to \eqref{ee5-hat} yields
\begin{equation}
\mu_1 \partial^{\bm{a}} \mathfrak{D} \hat{\frac{d}{dt}} \hat{\vp} +\tilde{\mathcal{A}}_1 R\hat{\rho} \hat{\theta}\partial^{\bm{a}} \mathfrak{D}\hat{\vp} 
= \hat{I}_1 ,
\label{ee6-hat}
\end{equation}
where 
\begin{equation*}
\begin{split}
 \hat{I}_1 := 
&  -[\partial^{\bm{a}},\tilde{\mathcal{A}}_1 R\hat{\rho} \hat{\theta}] \mathfrak{D}\hat{\vp}
\\
& + \partial^{\bm{a}} \Bigg\{\mu_1(\mathcal{A}_2\partial_2+\mathcal{A}_3\partial_3) \hat{\frac{d}{dt}} \hat{\vp} - \mu_1 \tilde{\mathcal{A}}_1 \partial_1 \hat{\rho} \hat{\div} \hat{\psi} - \tilde{\mathcal{A}}_1 \hat{\rho}^2 \left( \sum_{j=1}^3 \mathcal{A}_j \hat{\psi}_{jt} + \sum_{j=1}^3 \mathcal{A}_j \hat{u} \cdot \hat{\nabla} \hat{\psi}_j \right) 
\\
&  \qquad + \tilde{\mathcal{A}}_1 (R\hat{\rho}^2 \mathfrak{D}\hat{\zeta})  + \tilde{\mathcal{A}}_1 \hat{\rho}\sum_{\substack{1\leq |\bm{b}|\leq 2, \ b_1\neq2, \\ j=1,2,3}} a_{\bm{b}} \partial^{\bm{b}}_y\hat{\psi}_j + \mu_1 \tilde{\mathcal{A}}_1 \partial_1 (\hat{f} + \hat{F} ) + \tilde{\mathcal{A}}_1 \hat{\rho} \sum_{j=1}^3 \mathcal{A}_j (\hat{g}_j + \hat{G}_j) \Bigg\} . \\
\end{split}
\end{equation*}
Multiplying \eqref{ee6-hat} by $\partial^{\bm{a}}\mathfrak{D} \hat{\vp}$ and $\partial^{\bm{a}} \mathfrak{D} \hat{\frac{d}{dt}} \hat{\vp}$, respectively, and adding up the two resulting equalities, we obtain
\begin{multline}
 \left(\frac{1}{2} \mu_1 |\partial^{\bm{a}}\mathfrak{D} \hat{\vp}|^2 + \frac{1}{2} \tilde{\mathcal{A}}_1 R\hat{\rho} \hat{\theta} |\partial^{\bm{a}} \mathfrak{D} \hat{\vp}|^2  \right)_t + \mu_1 \left|\partial^{\bm{a}} \mathfrak{D} \hat{\frac{d}{dt}} \hat{\vp}\right|^2 + \tilde{\mathcal{A}}_1 R\hat{\rho} \hat{\theta} |\partial^{\bm{a}} \mathfrak{D} \hat{\vp}|^2 \\
+ \hat{\div}\left(\frac{1}{2} \mu |\partial^{\bm{a}}\mathfrak{D} \hat{\vp}|^2 \hat{u} + \frac{1}{2} \tilde{\mathcal{A}}_1 R\hat{\rho} \hat{\theta} |\partial^{\bm{a}} \mathfrak{D} \hat{\vp}|^2\hat{u}  \right)
= \hat{R}_3, 
\label{ee7-hat}
\end{multline}
where
\begin{equation*}
\begin{split}
\hat{R}_3&:=\big\{(\hat{u} \cdot \hat{\nabla} \partial^{\bm{a}} \mathfrak{D} \hat{\vp} ) - \partial^{\bm{a}}\mathfrak{D} (\hat{u} \cdot \hat{\nabla} \hat{\vp}) \big\} (\mu_1 \partial^{\bm{a}}\mathfrak{D} \hat{\vp} + \tilde{\mathcal{A}}_1 R\hat{\rho} \hat{\theta}\partial^{\bm{a}} \mathfrak{D} \hat{\vp})
\\
& \quad + \frac{1}{2}\mu_1 |\partial^{\bm{a}}\mathfrak{D} \hat{\vp}|^2 \hat{\div} \hat{u}  
+ \frac{1}{2} \tilde{\mathcal{A}}_1 R(\hat{\rho}\hat{\theta} )_t|\partial^{\bm{a}} \mathfrak{D} \hat{\vp}|^2 
+ \frac{1}{2} |\partial^{\bm{a}} \mathfrak{D} \hat{\vp}|^2 \hat{\div} \left(\tilde{\mathcal{A}}_1 R\hat{\rho} \hat{\theta} \hat{u}\right) \\
& \quad + \hat{I}_1 \left(\partial^{\bm{a}} \mathfrak{D} \hat{\vp} + \partial^{\bm{a}} \mathfrak{D} \hat{\frac{d}{dt}} \hat{\vp} \right) . 
\end{split}
\end{equation*}

Let us estimate the integrals of some terms in \eqref{ee7-hat}. 
We first find the good contribution for $\partial^{\bm{a}} \partial_1 \hat{\frac{d}{dt}} \hat{\vp}$
from the second and third terms on the left hand side. 
Indeed, using the fact $\partial_1=\tilde{\mathcal{A}}_1^{-1}\mathfrak{D}-\tilde{\mathcal{A}}_1^{-1}\mathcal{A}_2\partial_2-\tilde{\mathcal{A}}_1^{-1}\mathcal{A}_3\partial_3$, 
we see that
\begin{gather} 
\int_{\mathbb R^3_+} \left|\partial^{\bm{a}} \partial_1 \hat{\frac{d}{dt}} \hat{\vp}\right|^2 \!dy 
\lesssim \int_{\mathbb R^3_+}
 \mu_1\left|\partial^{\bm{a}}\mathfrak{D} \hat{\frac{d}{dt}} \hat{\vp}\right|^2 \!dy  
+\left\|\partial^{\bm{a}} \nabla_{y'} \hat{\frac{d}{dt}} \hat{\vp}\right\|_{L^2(\mathbb R^3_+)}^2 
\!+|\bm{a}| \left\|\hat{\frac{d}{dt}} \hat{\vp} \right\|_{H^{|\bm{a}|}(\mathbb R^3_+)}^2.
\label{ee10-hat}
\end{gather}
Owing to the {Divergence} Theorem with \eqref{outf} and \eqref{pbc},
we have the nonnegativity of the fourth term on the left hand side as
\begin{align}
&{}
\int_{\mathbb{R}^3_+}
\hat{\div}\left(\frac{1}{2} \mu_1 |\partial^{\bm{a}}\mathfrak{D} \hat{\vp}|^2 \hat{u} + \frac{1}{2} \tilde{\mathcal{A}}_1 R\hat{\rho} \hat{\theta} |\partial^{\bm{a}}\mathfrak{D} \hat{\vp}|^2\hat{u}  \right) dy
\notag \\
&=\int_{\mathbb{R}^2}
\frac{\ub \cdot n}{2}
\left(\mu_1 |\partial^{\bm{a}}\mathfrak{D} \hat{\vp}|^2+\tilde{\mathcal{A}}_1 R\hat{\rho} \hat{\theta} |\partial^{\bm{a}}\mathfrak{D} \hat{\vp}|^2\right)
 \sqrt{1+|\nabla M|^2} \, dy' \geq 0.
\label{ee8-hat}
\end{align}
Furthermore, we claim that the integral of $\hat{R}_3$ in \eqref{ee7-hat} is estimated as
\begin{align}
\int_{\mathbb R^3_+} |\hat{R}_3 | \,dy 
& \lesssim (\epsilon + N_\beta(T) + \delta )  \| \partial^{\bm{a}} \partial_1 \hat{\vp} \|_{L^2(\mathbb R^3_+)}^2 
+( \epsilon + N_\beta(T) + \delta ) \left\|\partial^{\bm{a}} \partial_1 \hat{\frac{d}{dt}} \hat{\vp}  \right\|_{L^2(\mathbb R^3_+)}^2
\notag\\
&\quad +( N_\beta(T) + \delta ) D_{3,\beta} 
+\epsilon^{-1}\left\|\partial^{\bm{a}} \nabla_{y'} \hat{\frac{d}{dt}} \hat{\vp} \right\|_{L^2(\mathbb R^3_+)}^2
+\epsilon^{-1}\|\partial^{\bm{a}} \nabla_y \nabla_{y'} \hat{\psi}\|_{L^2(\mathbb R^3_+)}^2
\notag\\
&\quad +\epsilon^{-1}|\bm{a}|\|\nabla_y \hat{\vp} \|_{H^{|\bm{a}| -1}(\mathbb R^3_+)}^{2}
+\epsilon^{-1}|\bm{a}|\left\|\hat{\frac{d}{dt}} \hat{\vp}\right\|_{H^{|\bm{a}|}(\mathbb R^3_+)}^{2}
+\epsilon^{-1}\| \hat{\psi}_t \|_{H^{|\bm{a}|}(\mathbb R^3_+)}^{2}
\notag\\
&\quad +\epsilon^{-1}\| (\nabla_y\hat{\psi},\nabla_y\hat\zeta)\|_{H^{|\bm{a}|}(\mathbb R^3_+)}^{2}
+\epsilon^{-1}\delta,
\label{ee11-hat}
\end{align}
where $\epsilon$ is a positive constant to be determined later.
To show this, we start from the estimation of $\hat{I}_1$:
\begin{align}
& \| \hat{I}_1\|_{L^2(\mathbb R^3_+)} 
\notag \\
& \lesssim
( N_\beta(T) + \delta )  D_{3,\beta}^{1/2} 
+\left\|\partial^{\bm{a}} \nabla_{y'} \hat{\frac{d}{dt}} \hat{\vp} \right\|_{L^2(\mathbb R^3_+)}\!
+\|\partial^{\bm{a}} \nabla_y \nabla_{y'} \hat{\psi}\|_{L^2(\mathbb R^3_+)}
+|\bm{a}|\|\nabla_y \hat{\vp} \|_{H^{|\bm{a}| -1}(\mathbb R^3_+)}
\notag \\
&\quad +|\bm{a}|\left\|\hat{\frac{d}{dt}} \hat{\vp}\right\|_{H^{|\bm{a}|}(\mathbb R^3_+)} \!
+\| \hat{\psi}_t \|_{H^{|\bm{a}|}(\mathbb R^3_+)} 
+\|(\nabla_y\hat{\psi},\nabla_y\hat\zeta)\|_{H^{|\bm{a}|}(\mathbb R^3_+)}
+\|(\hat{F},\hat{G})\|_{H^2(\mathbb R^3_+)}.
\label{ee12-hat}
\end{align}
It is straightforward to check that all terms except those involving $\hat{f}$ and $\hat{g}$
can be estimated by the right hand side of \eqref{ee12-hat}.
Let us handle the terms involving $\hat{f}$ and $\hat{g}$.
Using Hardy's inequality \eqref{hardy} and
Sobolev's inequalities \eqref{sobolev1} and \eqref{sobolev2}
together with \eqref{stdc1} and \eqref{ExBdry3}, we have
\begin{align}
{}& \| \mu_1 \partial^{\bm{a}}  \tilde{\mathcal{A}}_1 \partial_1 \hat{f}\|_{L^2(\mathbb R^3_+)}
+ \left\| \partial^{\bm{a}} \tilde{\mathcal{A}}_1  \hat{\rho} \sum_{j=1}^3 \mathcal{A}_j \hat{g}_j \right\|_{L^2(\mathbb R^3_+)} 
\notag \\
&\lesssim \delta \left( \|\nabla\hat{\vp} \|_{H^2(\mathbb R^3_+)} 
+ \|(\nabla\hat{\psi},\nabla\hat\zeta) \|_{H^3(\mathbb R^3_+)}\right)
\notag \\
&\quad + \sum_{i=1}^{|\bm{a}|+1}\left\||(\rt^{(i)},\ut^{(i)},\tt^{(i)})||\hat{\Phi}|\right\|_{L^2(\mathbb R^3_+)}\!
+ \|\hat{\Phi}\|_{L^6(\mathbb R^3_+)} \sum_{i=0}^{|\alpha|+1} \|(\nabla^i \hat{U},\nabla^i \hat{\Theta})\|_{L^3(\mathbb R^3_+)}
\notag \\
&\lesssim \delta D_{3,\beta}^{1/2}. 
\label{ee13-hat}
\end{align}
Therefore we conclude that \eqref{ee12-hat} holds.
We now can estimate the integral of $\hat{R}_3$.
It is easy to show by using \eqref{ee12-hat}, Schwarz's inequality, and Sobolev's inequality \eqref{sobolev2} that the last four terms in $\hat{R}_3$ are bounded by the right hand side of \eqref{ee11-hat}. It remains to handle only the first term, that is, the commutator term.
The $L^2$-norm of the commutator $\hat{u} \cdot \hat{\nabla} \partial^{\bm{a}} \mathfrak{D} \hat{\vp} - \partial^{\bm{a}} \mathfrak{D} (\hat{u} \cdot \hat{\nabla} \hat{\vp} )$ can be estimated as
\begin{align*}
&\|\hat{u} \cdot \hat{\nabla} \partial^{\bm{a}} \mathfrak{D} \hat{\vp} - \partial^{\bm{a}} \mathfrak{D} (\hat{u} \cdot \hat{\nabla} \hat{\vp} )\| 
\notag\\
&\leq \|\ut_1 \partial_1 \partial^{\bm{a}} \mathfrak{D} \hat{\vp} - \partial^{\bm{a}} \mathfrak{D}(\ut_1\partial_1 \hat{\vp} )\|
+\|(\hat{\psi}+\hat{U}) \cdot \hat{\nabla} \partial^{\bm{a}} \mathfrak{D} \hat{\vp} - \partial^{\bm{a}} \mathfrak{D} ((\hat{\psi}+\hat{U}) \cdot \hat{\nabla} \hat{\vp} )\| 
\notag\\
&= \|(\ut_1-u_+) \partial^{\bm{a}}\mathfrak{D} \partial_1 \hat{\vp} - \partial^{\bm{a}} \mathfrak{D} ((\ut_1-u_+)\partial_1 \hat{\vp} )\|
+\|(\hat{\psi}+\hat{U}) \cdot \hat{\nabla} \partial^{\bm{a}} \mathfrak{D} \hat{\vp} - \partial^{\bm{a}} \mathfrak{D} ((\hat{\psi}+\hat{U}) \cdot \hat{\nabla} \hat{\vp} )\| 
\notag\\
&\lesssim (N_\beta(T)+\delta)D_{3,\beta}^{1/2},
\end{align*}
where we have written explicitly $\hat{u}$ and used the triangular inequality in deriving the first inequality; we have expanded the derivative operators and applied Sobolev's inequalities \eqref{sobolev0} and \eqref{sobolev2} to derive the last inequality.
Using this, one can check that the integrals of the first two terms in $\hat{R}_3$ 
is also bound by the right hand side of \eqref{ee11-hat}.
Therefore we conclude that \eqref{ee11-hat} holds.


We multiply (\ref{ee7-hat}) by the time weight function $e^{\sigma t}$, 
integrate the resulting equality over $(0,t) \times \mathbb{R}^3_+$,
substitute (\ref{ee8-hat}) and (\ref{ee11-hat}) into the result,
let $\epsilon + N_\beta(T) + \delta$ be small enough,
and use (\ref{ee10-hat}).
Performing the change of variables $y \rightarrow x$ for some terms on right hand side, 
we arrive at the desired inequality (\ref{ee0-hat}). 
This completes the proof of the lemma.
\end{proof}

\subsection{Cattabriga estimates} \label{ss-CattabrigaEst}

To complete the a priori estimate, we apply the Cattabriga estimate in Lemma \ref{CattabrigaEst}.
We remark that the Cattabriga estimate has crucial dependence on $\Omega$. The other estimates rely on Hardy's inequality, Sobolev's inequalities, Gagliardo-Nirenberg inequality and the commutator estimates, which depend on Sobolev's norms of $M$. Recall that in Subsection \ref{ss-Notation} all the constants depend on the Sobolev's norms of $M$. 

\begin{lemma} \label{lm1-C}
Under the same assumption as in Proposition \ref{apriori1} with $m=3$,
it holds, for $k=0, 1, 2$, 
\begin{align}
&\| \nabla^{k+2} \psi \|^2 + \| \nabla^{k+1} \vp \|^2 
\notag\\
& \lesssim_{\Omega} \| \psi_t \|^2_{H^{k}} 
+ \| (\nabla \psi,\nabla \zeta)\|^2_{H^{k}} 
+ \left\| \frac{d}{dt} \vp \right\|^2_{H^{k+1}} \!
+ (N_\beta (T) + \delta ) D_{3,\beta}+\delta.
\label{ef0-C}
\end{align}
\end{lemma}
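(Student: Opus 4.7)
The plan is to recast the momentum equation \eqref{eq-pv2} as a perturbed Stokes system for the pair $(\psi,R\theta\vp)$ and to apply the Cattabriga estimate of Lemma \ref{CattabrigaEst}. Using the continuity equation \eqref{eq-pv1}, the divergence of $\psi$ can be written as
\[
\div\psi=\frac{1}{\rho}\Bigl(f+F-\frac{d}{dt}\vp\Bigr)=:h.
\]
Expanding $L\psi=\mu\Delta\psi+(\mu+\lambda)\nabla\div\psi$ and using $R\theta\nabla\vp=\nabla(R\theta\vp)-R\vp\nabla\theta$, the equation \eqref{eq-pv2} takes the Stokes form
\[
-\mu\Delta\psi+\nabla(R\theta\vp)=\tilde F,\qquad \div\psi=h,\qquad \psi|_{\partial\Omega}=0,
\]
with
\[
\tilde F:=R\vp\nabla\theta-\rho\psi_t-\rho(u\cdot\nabla)\psi-R\rho\nabla\zeta+(\mu+\lambda)\nabla h+g+G.
\]

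With this reformulation in hand, I would invoke Lemma \ref{CattabrigaEst} on the Stokes problem above to obtain
\[
\|\nabla^{k+2}\psi\|+\|\nabla^{k+1}(R\theta\vp)\|\lesssim_{\Omega}\|\tilde F\|_{H^{k}}+\|h\|_{H^{k+1}}.
\]
Since $R\theta\ge c>0$ by \eqref{bdd}, the identity $\nabla(R\theta\vp)=R\theta\nabla\vp+R\vp\nabla\theta$ together with the smallness of $\nabla\theta=\nabla(\tt+\Theta+\zeta)$ (from \eqref{stdc1}, \eqref{ExBd0}, and the smallness of $\Phi$) allows me to replace $\|\nabla^{k+1}(R\theta\vp)\|$ by $\|\nabla^{k+1}\vp\|$ modulo terms already appearing on the right of \eqref{ef0-C}. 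Each piece of $\tilde F$ is then bounded in the natural way: the linear terms $\rho\psi_t$, $\rho(u\cdot\nabla)\psi$ and $R\rho\nabla\zeta$ contribute $\|\psi_t\|_{H^{k}}+\|(\nabla\psi,\nabla\zeta)\|_{H^{k}}$ via \eqref{bdd}; the divergence data satisfy $\|h\|_{H^{k+1}}+\|(\mu+\lambda)\nabla h\|_{H^{k}}\lesssim\|\tfrac{d}{dt}\vp\|_{H^{k+1}}+\|(f,F)\|_{H^{k+1}}$; the homogeneous nonlinearity $g$ and the $f$-part of $h$, together with the commutator $R\vp\nabla\theta$, are dominated by $(N_\beta(T)+\delta)D_{3,\beta}^{1/2}$ thanks to Sobolev's inequality, Hardy's inequality \eqref{hardy}, and the exponential decay \eqref{stdc1}; finally the inhomogeneous sources $F,G$ contribute $\delta$ by \eqref{h1} and \eqref{ExBdry0}--\eqref{ExBd0}. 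Squaring and summing yields \eqref{ef0-C}.

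The main obstacle is the presence of the full compressible viscosity operator $L$ rather than a pure Laplacian: if $(\mu+\lambda)\nabla\div\psi$ were kept on the left, the standard Cattabriga bound would produce $\|\nabla^{k+1}\div\psi\|$, which has the same order as the target $\|\nabla^{k+2}\psi\|$ and cannot be absorbed. This is precisely circumvented by substituting the continuity equation so that $(\mu+\lambda)\nabla\div\psi$ becomes $(\mu+\lambda)\nabla h$, whose $H^k$-norm is controlled by $\|\tfrac{d}{dt}\vp\|_{H^{k+1}}$, a quantity already allowed in \eqref{ef0-C}. A secondary subtlety is the variable coefficient $R\theta$ multiplying $\nabla\vp$; peeling it off generates a commutator of order $(N_\beta(T)+\delta)$ times a lower-order norm of $\Phi$, which is absorbed into the $(N_\beta(T)+\delta)D_{3,\beta}$ summand on the right of \eqref{ef0-C}.
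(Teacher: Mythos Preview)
Your proposal is correct and follows essentially the same route as the paper: rewrite \eqref{eq-pv1}--\eqref{eq-pv2} as a Stokes system, substitute the continuity equation to convert $(\mu+\lambda)\nabla\div\psi$ into a source term controlled by $\|\tfrac{d}{dt}\vp\|_{H^{k+1}}$, and apply the Cattabriga estimate of Lemma~\ref{CattabrigaEst}. The only cosmetic difference is that the paper freezes the coefficients at the end states, writing $\rho_+\div\psi=V$ and $-\mu\Delta\psi+R\theta_+\nabla\vp=W$ with the variable parts $(\rho-\rho_+)\div\psi$ and $R(\theta-\theta_+)\nabla\vp$ pushed into $V,W$; you instead keep the variable coefficient $R\theta$ as part of the ``pressure'' $R\theta\vp$ and peel it off afterwards via a commutator bound. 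Both choices produce the same small-coefficient error term absorbed into $(N_\beta(T)+\delta)D_{3,\beta}$, so the arguments are equivalent. (Note also that the Cattabriga bound \eqref{Cattabriga} carries an extra $\|\nabla\psi\|^2$ on the right, which is harmlessly absorbed into $\|(\nabla\psi,\nabla\zeta)\|_{H^k}^2$.)
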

\begin{proof}
From \eqref{eq-pv}, and recalling $\frac{d}{dt} = \partial_t + u \cdot \nabla $, we obtain
a boundary value problem of the Stokes equation:
\begin{align}
 \rho_+\div\psi = V , \quad
 - \mu \Delta \psi + R\theta_+\nabla \vp 
 = W,\quad
 \psi|_{\partial \Omega} =0 , \quad \lim_{|x| \rightarrow \infty} |\psi| =0,\label{ef7-C} 
\end{align}
where
\begin{align*}
V : =& f + F - \frac{d}{dt}\vp - (\rho-\rho_+)\div \psi , \\
W : =& - \rho \{ \psi_t + ( u \cdot \nabla ) \psi \} + (\mu + \lambda) \nabla \div \psi + g + G -R (\theta-\theta_+) \nabla \vp- R\rho \nabla \zeta
\\
=& - \rho \{ \psi_t + ( u \cdot \nabla ) \psi \} + (\mu + \lambda) \rho_+^{-1} \nabla V + g + G -R (\theta-\theta_+) \nabla \vp- R\rho\nabla \zeta.
\end{align*}

Applying the Cattabriga estimate \eqref{Cattabriga} to problem \eqref{ef7-C}, we have 
\begin{equation*}
\| \nabla^{k+2} \psi \|^2 + \| \nabla^{k+1} \vp \|^2 \lesssim_{\Omega} \| V \|^2_{H^{k+1}} + \| W \|^2_{H^k} + \| \nabla \psi \|^2.
\end{equation*}
It is straightforward to show that $\| V \|^2_{H^{k+1}}$ and $\| W \|^2_{H^k}$ are bound by the right hand side of \eqref{ef0-C}. Indeed, we can use the same method as in the derivation of \eqref{ee13-hat} to estimates the terms $f$ and $g$. The other terms can be estimated by using \eqref{h1} and Sobolev's inequalities \eqref{sobolev0} and \eqref{sobolev2}. Therefore we conclude \eqref{ef0-C}.
\end{proof}

We also show similar estimates for $(\hat{\vp},\hat{\psi})(t,y)$, where $y \in \mathbb R^3_+$.
For notational convenience, we denote
\begin{equation}\label{checkOp}
\check{\partial}_{y_j}: = \sum_{i=1}^3 (A^{-1}(x'))_{ij} \partial_{x_i}. 
\end{equation}
where $A$ is defined in \eqref{CV2}; $(A^{-1}(x'))_{ij}$ means the $(i,j)$-component of $A^{-1}(x')$;
$\check{\partial}_{y_j} \vp (t,\Gamma(y)) = \partial_{y_j} \hat{\vp}(t,y)$ holds.
Furthermore, $\check{\nabla}^l_{y'}$ means the totality of all $l$-times tangential derivatives 
$\check{\partial}_{y_j}$ only for $j=2,3$.
Then applying $\check{\nabla}^l_{y'}$ to \eqref{ef7-C}, we obtain a boundary value problem of the Stokes equation: 
\begin{equation}\label{checkEq}
\rho_+\div \check{\nabla}^l_{y'} \psi = \check{V}, \ \
- \mu \Delta \check{\nabla}^l_{y'} \psi +R\theta_+\nabla \check{\nabla}^l_{y'} \vp = \check{W}, \ \  
\check{\nabla}^l_{y'} \psi|_{\partial \Omega} =0 , \ \ \lim_{|x| \rightarrow \infty} | \check{\nabla}^l_{y'} \psi| =0,
\end{equation}
where
\begin{align*}
\check{V} : = 
& \check{\nabla}^l_{y'} (f+F) - \check{\nabla}^l_{y'} \frac{d}{dt}\vp - \check{\nabla}^l_{y'} \Big((\rho-\rho_+)\div \psi \Big) - \rho_{+}[\check{\nabla}^l_{y'},\div] \psi, \\
\check{W} : = 
&- \check{\nabla}^l_{y'}  \Big(\rho \{ \psi_t + ( u \cdot \nabla ) \psi \} \Big) + (\mu + \lambda) \check{\nabla}^l_{y'}  \nabla \div \psi + \check{\nabla}^l_{y'} g + \check{\nabla}^l_{y'} G \\
& - \check{\nabla}^l_{y'} \Big( R (\theta-\theta_+) \nabla \vp + R\rho\nabla \zeta\Big) 
+ \mu [\check{\nabla}^l_{y'},\Delta]\psi 
- R \theta_+ [\check{\nabla}^l_{y'}, \nabla]\vp.
\end{align*}

\begin{lemma} \label{lm2-C}
Under the same assumption as in Proposition \ref{apriori1} with $m=3$,
it holds, for $k=0,  1$, $k+l = 1, 2$, 
\begin{align}
&\| \nabla^{k+2}  \nabla^l_{y'} \hat{\psi} \|_{L^2(\mathbb R^3_+)}^2 
+ \|\nabla^{k+1} \nabla^l_{y'} \hat{\vp} \|_{L^2(\mathbb R^3_+)}^2 
\notag \\
& \lesssim_{\Omega} 
\left\|{\nabla}^l_{y'} \hat{\frac{d}{dt}} \hat{\vp} \right\|^2_{H^{k+1}(\mathbb R^3_+)} \!
+ \| \psi_t \|^2_{H^{k+l}} 
+ \| (\nabla \psi,\nabla \zeta) \|^2_{H^{k+l}} 
+ \| \nabla \vp \|^2_{H^{k+l-1}}
+ (N_\beta (T) + \delta ) D_{3,\beta}+ \delta.
\label{eg0-C}
\end{align}
\end{lemma}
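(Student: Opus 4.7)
The plan is to mirror the proof of Lemma \ref{lm1-C} but apply the Cattabriga estimate to the tangentially differentiated Stokes system \eqref{checkEq} rather than to \eqref{ef7-C} itself. The key observation is that applying $\check{\nabla}_{y'}^l$ to the boundary-value problem \eqref{ef7-C} preserves the vanishing boundary condition on $\check{\nabla}_{y'}^l\psi|_{\pd\Omega}=0$ and the decay at infinity, since $\check{\partial}_{y_j}$ for $j=2,3$ is a tangential operator (combination of $\pd_{x_i}$ with coefficients depending only on $x'$). Thus \eqref{checkEq} is again a genuine Stokes system for $(\check{\nabla}_{y'}^l\vp,\check{\nabla}_{y'}^l\psi)$ on $\Omega$, and Lemma \ref{CattabrigaEst} yields
\begin{equation*}
\| \nabla^{k+2} \check{\nabla}_{y'}^l \psi \|^2
+ \| \nabla^{k+1} \check{\nabla}_{y'}^l \vp \|^2
\lesssim_\Omega \|\check{V}\|_{H^{k+1}}^2 + \|\check{W}\|_{H^k}^2
+ \|\nabla \check{\nabla}_{y'}^l \psi\|^2.
\end{equation*}

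Next I would estimate $\|\check V\|_{H^{k+1}}^2$ and $\|\check W\|_{H^k}^2$ term by term. The purely nonlinear pieces $\check{\nabla}_{y'}^l(f+F)$ and $\check{\nabla}_{y'}^l(g+G)$ are handled exactly as in \eqref{ee13-hat}: Sobolev/Hardy inequalities together with the exponential bounds \eqref{stdc1} on $(\rt,\ut,\tt)$ and the bounds \eqref{ExBdry0}, \eqref{ExBd0}, \eqref{h1} on $(U,\Theta,F,G)$ produce contributions bounded by $(N_\beta(T)+\delta)D_{3,\beta}+\delta$. The quasilinear pieces such as $\check{\nabla}_{y'}^l\big((\rho-\rho_+)\div\psi\big)$, $\check{\nabla}_{y'}^l\big(\rho(u\cdot\nabla)\psi\big)$, and $\check{\nabla}_{y'}^l\big(R(\theta-\theta_+)\nabla\vp\big)$ fit into $(N_\beta(T)+\delta)D_{3,\beta}$ by standard product estimates, which also absorb $\check{\nabla}_{y'}^l(R\rho\nabla\zeta)$. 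The linear terms $\check{\nabla}_{y'}^l\,\tfrac{d}{dt}\vp$, $\check{\nabla}_{y'}^l\psi_t$, $\check{\nabla}_{y'}^l\nabla\div\psi$, $\check{\nabla}_{y'}^l\nabla\vp$, and $\check{\nabla}_{y'}^l\nabla\zeta$ go directly into the quantities already on the right-hand side of \eqref{eg0-C}.

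The remaining ingredients are the commutators $[\check{\nabla}_{y'}^l,\div]\psi$, $[\check{\nabla}_{y'}^l,\Delta]\psi$, and $[\check{\nabla}_{y'}^l,\nabla]\vp$. Since $\check{\partial}_{y_j}=\sum_i (A^{-1}(x'))_{ij}\pd_{x_i}$ with coefficients $A^{-1}(x')$ depending smoothly on $x'$ via $M$, each commutator has the schematic form (polynomial in derivatives of $M$) times (lower-order spatial derivative of $\psi$ or $\vp$). Using $M\in H^9(\R^2)$, Sobolev embedding, and the commutator estimate in Lemma \ref{CommEst}, these commutators are controlled by $\|(\nabla\psi,\nabla\zeta)\|_{H^{k+l}}+\|\nabla\vp\|_{H^{k+l-1}}+\delta D_{3,\beta}^{1/2}$, which lands on the right-hand side of \eqref{eg0-C}.

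Finally, one converts the conclusion to $y$ coordinates via the change of variables $\Gamma$: since $\check{\partial}_{y_j}\vp(\Gamma(y))=\pd_{y_j}\hat\vp(y)$ and the Jacobian of $\Gamma$ is bounded from above and below, the $L^2(\Omega)$ norms of $\nabla^{k+1}\check{\nabla}_{y'}^l\vp$ and $\nabla^{k+2}\check{\nabla}_{y'}^l\psi$ are equivalent, up to terms involving derivatives of $A$ (hence of $M$) acting on lower-order derivatives, to the $L^2(\R^3_+)$ norms of $\nabla^{k+1}\nabla_{y'}^l\hat\vp$ and $\nabla^{k+2}\nabla_{y'}^l\hat\psi$. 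The extra commutator terms produced by this change are again lower-order derivatives with $M$-dependent coefficients and are absorbed in the right-hand side of \eqref{eg0-C}. The main obstacle in carrying this out is bookkeeping the commutators between $\check{\nabla}_{y'}^l$ and $\nabla$, $\div$, $\Delta$: one must verify that only derivatives of $\psi$ and $\vp$ strictly lower than $\nabla^{k+2}\nabla_{y'}^l\psi$ and $\nabla^{k+1}\nabla_{y'}^l\vp$ appear, so that they can be absorbed into the norms already accounted for in the statement rather than into the left-hand side.
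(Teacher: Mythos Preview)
Your proposal is correct and follows essentially the same route as the paper: apply the Cattabriga estimate \eqref{Cattabriga} to the tangentially differentiated Stokes system \eqref{checkEq}, bound $\|\check V\|_{H^{k+1}}$ and $\|\check W\|_{H^k}$ as in Lemma~\ref{lm1-C}, then pass to $y$-coordinates via $\check{\partial}_{y_j}\vp(\Gamma(y))=\partial_{y_j}\hat\vp(y)$.

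One point to tighten: the term $(\mu+\lambda)\,\check{\nabla}_{y'}^l\nabla\div\psi$ in $\check W$ does \emph{not} go directly into $\|(\nabla\psi,\nabla\zeta)\|_{H^{k+l}}$, since its $H^k$ norm is of order $k+l+2$ in $\psi$. As in Lemma~\ref{lm1-C} you must use the constraint $\rho_+\div\psi=V$ to write $\nabla\div\psi=\rho_+^{-1}\nabla V$, so that this contribution is absorbed into $\|\check V\|_{H^{k+1}}$ and thereby into $\big\|\nabla_{y'}^l\,\hat{\tfrac{d}{dt}}\hat\vp\big\|_{H^{k+1}(\mathbb R^3_+)}$ on the right-hand side of \eqref{eg0-C}.
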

\begin{proof}
We apply the Cattabriga estimate \eqref{Cattabriga} to \eqref{checkEq} and obtain 
\begin{equation}
\| \nabla^{k+2} \check{\nabla}^l_{y'} \psi \|^2 + \|\nabla^{k+1} \check{\nabla}^l_{y'} \vp \|^2 \lesssim_{\Omega} \|\check{V} \|^2_{H^{k+1}} + \|\check{W}\|^2_{H^k} + \|\nabla \check{\nabla}^l_{y'} \psi \|^2.
\label{eg1-C}
\end{equation}
The terms $\|\check{V} \|^2_{H^{k+1}}$ and $\|\check{W}\|^2_{H^k}$ can be estimated 
in the same way as in the proof of Lemma \ref{lm1-C}. Now we conclude from \eqref{eg1-C} that  
\begin{align*}
&\| \nabla^{k+2} \check{\nabla}^l_{y'} \psi \|^2 
+ \|\nabla^{k+1} \check{\nabla}^l_{y'} \vp \|^2  
\\
&\lesssim_{\Omega} 
\left\|\check{\nabla}^l_{y'} \frac{d}{dt} \vp \right\|^2_{H^{k+1}} \!
+ \| \psi_t \|^2_{H^{k+l}} 
+ \| (\nabla \psi,\nabla \zeta) \|^2_{H^{k+l}} 
+ \| \nabla \vp \|^2_{H^{k+l-1}}
+ (N_\beta (T) + \delta ) D_{3,\beta}+ \delta.
\end{align*}
Then, changing the coordinate $x \in \Omega$ to the coordinate $y \in \mathbb R_+^3$ 
in the left hand side of this inequality 
and also using $\check{\partial}_{y_j} \vp (t,\Gamma(y)) = \partial_{y_j} \hat{\vp}(t,y)$, 
we arrive at \eqref{eg0-C}.
\end{proof}

\subsection{Elliptic estimates} \label{EllipticEst}

Using the elliptic estimate (Lemmas \ref{ellipticEst} and \ref{ellipticEst2}),
we rewrite some terms for the time-derivatives into terms for the spatial-derivatives.

\begin{lemma} \label{lm1-E}
Under the same assumption as in Proposition \ref{apriori1} with $m=3$, it holds that
\begin{align}
\| \nabla^{k+2} \psi \| & \lesssim  \| \psi_t \|_{H^{k}} + E_{k+1,\beta}+ \delta, \quad k=0,1,
\label{eh0-E} \\
\| \nabla^{k+2} \zeta \| & \lesssim  \| \zeta_t \|_{H^{k}} + E_{k+1,\beta}+ \delta, \quad k=0,1,
\label{eh0-Ez} \\
\| \nabla^{k+2} \zeta \| & \lesssim  \| \zeta_t \|_{H^{k}} + D_{k,\beta}+ \delta, \quad k=0,1,2,
\label{eh0-Ezz} \\
\| \nabla^{2}   \psi_t \| & \lesssim  \|\psi_{tt} \| +D_{2,\beta},
\label{ej0-E} \\
\| \nabla^{2}  \zeta_t \| & \lesssim  \|\zeta_{tt} \| +D_{2,\beta}.
\label{ej0-Ez}
\end{align}
\end{lemma}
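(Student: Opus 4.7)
My plan is to view the second and third equations in \eqref{eq-pv} as elliptic boundary-value problems with $\psi_t$ and $\zeta_t$ treated as source terms, and then invoke the elliptic regularity results in Lemmas \ref{ellipticEst} and \ref{ellipticEst2}. Concretely, rewrite \eqref{eq-pv2} in the form
\[
- L\psi = -\rho\psi_t - \rho(u\cdot\nabla)\psi - R\theta\nabla\varphi - R\rho\nabla\zeta + g + G,
\]
a Lam\'e system for $\psi$ with homogeneous Dirichlet data on $\partial\Omega$ (by \eqref{pbc}). Similarly rewrite \eqref{eq-pv3} as
\[
-\kappa\Delta\zeta = -c_v\rho\zeta_t - c_v\rho(u\cdot\nabla)\zeta - R\rho\theta\,\mathrm{div}\,\psi + 2\mu|\mathcal{D}(\psi)|^2 + \lambda(\mathrm{div}\,\psi)^2 + h + H,
\]
a Poisson-type equation for $\zeta$ with homogeneous Dirichlet data on $\partial\Omega$.

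To derive \eqref{eh0-E}, I would apply the elliptic estimate for $L$ to get $\|\nabla^{k+2}\psi\|\lesssim \|L\psi\|_{H^k}+\|\nabla\psi\|$, and then bound the right-hand side termwise: the $\rho\psi_t$ term contributes $\|\psi_t\|_{H^k}$ using the uniform bounds \eqref{bdd}; the convection and pressure-gradient terms contribute $H^k$ norms of $\Phi$ and $\nabla\Phi$ controlled by $E_{k+1,\beta}$; the quadratic/product nonlinearities in $g$ are bounded using Sobolev embeddings together with the smallness of $N_\beta(T)$, the decay \eqref{stdc1}, Hardy's inequality \eqref{hardy}, and \eqref{ExBdry0}, contributing the $E_{k+1,\beta}$ and $\delta$ factors; the inhomogeneous $G$ gives $\delta$ via \eqref{h1}. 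The estimates \eqref{eh0-Ez} and \eqref{eh0-Ezz} follow the same pattern applied to the $\zeta$-equation, the only subtlety being the quadratic term $|\mathcal{D}(\psi)|^2$, which I would handle by $\|\,|\mathcal{D}(\psi)|^2\|_{H^k}\lesssim \|\nabla\psi\|_{L^\infty}\|\nabla\psi\|_{H^k}\lesssim N_\beta(T)\|\nabla\psi\|_{H^k}$; this bound fits inside either $E_{k+1,\beta}$ or, for the sharper second line, inside $D_{k,\beta}$, which already contains $\|\nabla\psi\|_{H^k}^2$ for $k\geq 1$ and for $k=0$ provides $\|\nabla\psi\|_{L^2_{e,\beta}}$ together with the $E$-type contributions.

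For the time-differentiated estimates \eqref{ej0-E} and \eqref{ej0-Ez}, I would differentiate \eqref{eq-pv2} and \eqref{eq-pv3} once in $t$, producing the elliptic problems
\[
-L\psi_t = -\rho\psi_{tt} + (\text{commutators involving }\rho_t,u_t,\theta_t,\ldots),
\]
and an analogous equation for $\zeta_t$, with the boundary conditions $\psi_t|_{\partial\Omega}=0$ and $\zeta_t|_{\partial\Omega}=0$ inherited by differentiating \eqref{pbc} in time. Then I apply the same elliptic estimate to obtain $\|\nabla^2\psi_t\|\lesssim \|\psi_{tt}\|+\|\text{commutators}\|_{L^2}+\|\nabla\psi_t\|$, and every commutator term is a product of a $\partial_t$-quantity with a bounded factor; using \eqref{PhiH1}, \eqref{PhiSup}, and the fact that $D_{2,\beta}$ directly contains $\|\nabla\psi_t\|^2$, $\|\nabla\zeta_t\|^2$, $\|\nabla^2\psi\|^2_{H^1}$, $\|\nabla^2\zeta\|^2_{H^1}$, and $\|\frac{d}{dt}\varphi\|_{H^2}^2$, these are absorbed into $D_{2,\beta}$.

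The main obstacle, as I see it, is not the elliptic regularity itself (which is classical once the boundary data is homogeneous) but the bookkeeping required to ensure that every nonlinear term on the right-hand side actually fits inside the targeted energy quantity ($E_{k+1,\beta}$ or $D_{k,\beta}$) with the correct power and weight. In particular, the distinction between the two bounds for $\nabla^{k+2}\zeta$—the weaker one in \eqref{eh0-Ez} with $E_{k+1,\beta}$ valid for $k=0,1$, and the stronger one in \eqref{eh0-Ezz} with $D_{k,\beta}$ valid for $k=0,1,2$—reflects precisely how one chooses to dispose of $\|\nabla\psi\|_{H^{k+1}}$ arising from $|\mathcal{D}(\psi)|^2$ and $R\rho\theta\,\mathrm{div}\,\psi$; the first simply estimates it by $E_{k+1,\beta}$, while the second uses smallness of $N_\beta(T)$ to allow the extra derivative to be accommodated by the dissipation $D_{k,\beta}$.
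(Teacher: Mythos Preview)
Your proposal is correct and matches the paper's proof: both rewrite \eqref{eq-pv2} and \eqref{eq-pv3} as elliptic boundary-value problems with $\psi_t,\zeta_t$ on the right, apply Lemmas \ref{ellipticEst} and \ref{ellipticEst2}, estimate the source terms via \eqref{bdd}, \eqref{stdc1}, \eqref{ExBdry0}, \eqref{h1}, Sobolev and Hardy, and then time-differentiate for \eqref{ej0-E}--\eqref{ej0-Ez}. One small correction to your commentary: the distinction between \eqref{eh0-Ez} and \eqref{eh0-Ezz} is not really about disposing of $\|\nabla\psi\|_{H^{k+1}}$ (the right-hand side only produces $\|\nabla\psi\|_{H^k}$), but about the lower-order term in the elliptic estimate itself---Lemma \ref{ellipticEst} returns $\|\zeta\|$ (which sits in $E_{k+1,\beta}$), whereas Lemma \ref{ellipticEst2} for the scalar Poisson equation returns $\|\nabla\zeta\|$ (which sits in $D_{k,\beta}$), and this is what permits the dissipation-norm version needed in Lemma \ref{lm3-comp}.
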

\begin{proof}
Let us first show \eqref{eh0-E} and \eqref{eh0-Ez} together.
From \eqref{eq-pv}, we have the elliptic boundary value problem:
\begin{subequations}
\label{eh1-E}
\begin{align} 
&-\mu \Delta \psi -(\mu + \lambda) \nabla \div \psi = - \rho \psi_t  + \widetilde{ G}, \quad 
\psi|_{\partial\Omega}=0, \quad \lim_{|x|\to\infty}\psi=0,\label{eh1-E1}\\
&-\kappa\Delta\zeta=-c_v\rho \zeta_t+\widetilde H,\quad
\zeta|_{\partial\Omega}=0, \quad \lim_{|x|\to\infty}\zeta=0,\label{eh1-E2}
\end{align}
\end{subequations}
where
\begin{align*}
&\widetilde{ G} := - R\theta \nabla \vp -\rho (u \cdot \nabla) \psi-
R\rho  \nabla\zeta +g +G, \\
&\widetilde H:=-c_v\rho{u}\cdot\nabla\zeta-R\rho\theta\textrm{div}\psi
+2\mu|\mathcal{D}(\psi)|^2+\lambda(\textrm{div}\psi)^2+h+H.
\end{align*}
Applying Lemmas \ref{ellipticEst}  to \eqref{eh1-E1} and \eqref{eh1-E2}, respectively, we obtain 
\begin{align*}
\| \nabla^{k+2} \psi \| & \lesssim  \|\psi_t \|_{H^{k}}  +\|\widetilde{ G} \|_{H^k} + \|  \psi \|  
\quad \text{for $k=0, 1$},
\\
\| \nabla^{k+2} \zeta \| & \lesssim  \|\zeta_{t} \|_{H^{k}}  +\|\widetilde H \|_{H^k} + \|  \zeta \|  
\quad \text{for $k=0, 1$}.
\end{align*}
It is straightforward to see that the terms $\|\widetilde{ G} \|_{H^k}$ and $\|\widetilde{ H} \|_{H^k}$ are bounded by the right hand sides of \eqref{eh0-E} and \eqref{eh0-Ez}, respectively.
Therefore, we conclude \eqref{eh0-E} and \eqref{eh0-Ez}.
Similarly, applying Lemma \ref{ellipticEst2} to \eqref{eh1-E2} leads to \eqref{eh0-Ezz}.

Finally we show \eqref{ej0-E} and \eqref{ej0-Ez} together.
We apply $\partial_t$ to \eqref{eh1-E} to obtain
\begin{align*} 
&-\mu \Delta \psi_t -(\mu + \lambda) \nabla \div \psi_t = - (\rho \psi_{t})_{t}  + \widetilde{ G}_t, \quad 
\psi_t|_{\partial\Omega}=0, \quad \lim_{|x|\to\infty}\psi_t=0,
\\
&-\kappa\Delta\zeta_t =-c_v(\rho \zeta_{t})_{t}+{\widetilde H}_t,\quad
\zeta_t|_{\partial\Omega}=0, \quad \lim_{|x|\to\infty}\zeta_t =0,
\end{align*}
Applying Lemma \ref{ellipticEst} with $k=0$ to these boundary value problems gives
\begin{gather*}
\| \nabla^{ 2} \psi_t \|  \lesssim  \|\psi_{tt} \|    +\|\widetilde{ G}_{t}\| + \|  \psi_t \|,
\\
\| \nabla^{ 2} \zeta_t \|  \lesssim  \| \zeta_{tt} \|    +\| \widetilde{ H}_t \| + \| \zeta_t \|.
\end{gather*}
Furthermore, $\|\widetilde{ G}_{t} \|_{H^k}$ and $\|\widetilde{ H}_{t} \|_{H^k}$ are bounded by the right hand sides of \eqref{ej0-E} and \eqref{ej0-Ez}, respectively.
From the discussion above, we obtain \eqref{ej0-E} and \eqref{ej0-Ez}.
\end{proof}

\subsection{Completion of the a priori estimates} \label{ss-comp-apriori}

In this subsection, we complete the derivation of the a priori estimate.
To this end, we show the following lemma.

\begin{lemma} \label{lm1-comp}
Under the same assumption as in Proposition \ref{apriori1} with $m=3$,
it holds that
\begin{align}
&   e^{\sigma t} \| \nabla^{p+1}  \vp (t) \|^2 + \int^t_0 e^{\sigma \tau} \left\| \nabla^{p+1}   \frac{d}{dt} \vp (\tau) \right\|^2  d \tau   
\notag \\
&\lesssim_{\Omega} \| \Phi_0\|^2_{H^3} + e^{\sigma t} \| \vp (t) \|_{H^p}^2 + \int^t_0 e^{\sigma \tau} \|\psi_t(\tau) \|^2_{H^p} \, d \tau  
+ \epsilon^{-1}\int^t_0 e^{\sigma \tau} D_{p, \beta} (\tau) \, d \tau 
\notag \\
& \qquad + \epsilon \int^t_0  e^{\sigma \tau}(\| \nabla \vp (\tau) \|^2_{H^{p}} +\| (\nabla \psi,\nabla \zeta) (\tau) \|^2_{H^{p+1}} ) d \tau 
\notag \\
& \qquad + (N_\beta (T) + \delta + \sigma) \int^t_0 e^{\sigma \tau} D_{3, \beta} (\tau) \, d \tau
+ \delta  \int^t_0 e^{\sigma \tau}  \, d \tau
\label{el0-comp}
\end{align}
for $\epsilon \in (0,1)$ and $p=0,1,2$.
\end{lemma}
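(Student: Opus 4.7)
The plan is to apply the spatial derivative $\nabla^{p+1}$ to the material-derivative form of the continuity equation \eqref{eq-pv1}, namely $\frac{d}{dt}\vp = -\rho\div\psi + f + F$, which gives
\[
\partial_t\nabla^{p+1}\vp + u\cdot\nabla\nabla^{p+1}\vp = -\nabla^{p+1}(\rho\div\psi) + \nabla^{p+1}(f+F) - [\nabla^{p+1},u\cdot\nabla]\vp .
\]
We multiply by $\nabla^{p+1}\vp$ and by the time weight $e^{\sigma t}$, integrate over $(0,t)\times\Omega$, and rewrite the convective term as $\div\bigl(\tfrac{1}{2}u|\nabla^{p+1}\vp|^2\bigr)-\tfrac{1}{2}(\div u)|\nabla^{p+1}\vp|^2$. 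The divergence theorem together with $u|_{\partial\Omega}=u_b$ and the outflow condition \eqref{outf} produces a nonnegative boundary contribution $\tfrac{1}{2}\int_0^t e^{\sigma\tau}\int_{\partial\Omega}|\nabla^{p+1}\vp|^2(u_b\cdot n)\,d\sigma\,d\tau$, while the time-derivative yields $e^{\sigma t}\|\nabla^{p+1}\vp(t)\|^2-\|\nabla^{p+1}\vp_0\|^2$ up to the lower-order remainder $\sigma\int e^{\sigma\tau}\|\nabla^{p+1}\vp\|^2 d\tau$, which folds into the $(N_\beta+\delta+\sigma)\int D_{3,\beta}$ slot since $\|\nabla^{p+1}\vp\|^2\le D_{3,\beta}$ for $p=0,1,2$.

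The $\int_0^t e^{\sigma\tau}\|\nabla^{p+1}\tfrac{d}{dt}\vp\|^2\,d\tau$ contribution on the left is then extracted directly from the equation $\nabla^{p+1}\tfrac{d}{dt}\vp=-\nabla^{p+1}(\rho\div\psi)+\nabla^{p+1}(f+F)$: taking $L^2$-norms and expanding via Leibniz, the top-order piece $\rho\nabla^{p+2}\psi$ is bounded in $L^2$ by $\|(\nabla\psi,\nabla\zeta)\|_{H^{p+1}}$, which for $p=2$ reaches $\|\nabla^4\psi\|^2$---a quantity controlled inside $D_{3,\beta}$ via its viscous dissipation piece $\|\nabla^2\psi\|_{H^2}^2$ appearing in \eqref{Dkbeta-def}. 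The intermediate-order products $(\nabla^j\rho)(\nabla^{p+2-j}\psi)$ for $1\le j\le p+1$ carry an extra $\nabla\rho$ factor, so Sobolev's inequalities \eqref{sobolev0}--\eqref{sobolev2} place them inside $D_{p,\beta}$ with coefficient $N_\beta+\delta$; the cross term $\int\nabla^{p+1}(\rho\div\psi)\cdot\nabla^{p+1}\vp$ from the first paragraph is then split by Schwarz so that the $\|\nabla^{p+1}\vp\|^2$ side lands in the $\epsilon\int\|\nabla\vp\|_{H^p}^2$ slot on the right of \eqref{el0-comp} and the $\|\nabla^{p+1}(\rho\div\psi)\|^2$ side is merged with the bound on $\|\nabla^{p+1}\tfrac{d}{dt}\vp\|^2$ just derived.

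The inhomogeneous terms $\nabla^{p+1}(f+F)$ and the commutator $[\nabla^{p+1},u\cdot\nabla]\vp$ are handled as in Lemmas \ref{lm1}, \ref{lm2-hat} and \ref{lm4-hat}: splitting $u=\ut+\psi+U$ and $\rho=\rt+\vp$, the profile pieces are bounded by combining the exponential decay \eqref{stdc1} with Hardy's inequality \eqref{hardy}; the extension pieces are controlled through \eqref{ExBdry0}--\eqref{ExBd0}; the perturbation pieces generate the small factor $N_\beta+\delta$; the commutators are dispatched by Lemma \ref{CommEst}. Finally, $F$ itself contributes the $\delta\int e^{\sigma\tau}\,d\tau$ remainder via \eqref{h1}. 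The hard part will be the bookkeeping around the top-order derivatives: because $\vp$ enjoys no viscous regularization, one must deploy Schwarz so that the $\nabla^{p+1}\vp$ factor lands in the $\epsilon$-term of \eqref{el0-comp} while the $\nabla^{p+2}\psi$ factor---which reaches $\nabla^4\psi$ when $p=2$---is absorbed into the small-coefficient $(N_\beta+\delta)\int D_{3,\beta}$ slot through the viscous dissipation available in $D_{3,\beta}$. Coordinating the free parameter $\epsilon$ with the a priori smallness of $N_\beta+\delta$ and $\sigma$, so that every top-order derivative is absorbed at the correct scale, is the most delicate part of the argument.
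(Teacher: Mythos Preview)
Your approach has a genuine gap: it uses only the continuity equation, which is a pure transport equation and supplies no bulk dissipation on $\nabla^{p+1}\vp$. Concretely, in your energy identity the cross term
\[
\int_0^t e^{\sigma\tau}\!\int_\Omega \rho\,(\nabla^{p+2}\psi)\cdot\nabla^{p+1}\vp\,dx\,d\tau
\]
must, after Schwarz, produce either $\epsilon^{-1}\!\int e^{\sigma\tau}\|\nabla^{p+1}\vp\|^2$ or $\epsilon^{-1}\!\int e^{\sigma\tau}\|\nabla^{p+2}\psi\|^2$. Neither fits on the right of \eqref{el0-comp}: the slot $\epsilon^{-1}\!\int D_{p,\beta}$ contains only $\|\nabla\Phi\|_{H^{p-1}}^2$ and $\|\nabla^2\psi\|_{H^{p-1}}^2$, hence $\|\nabla^{p}\vp\|^2$ and $\|\nabla^{p+1}\psi\|^2$ at most; the slot $\epsilon\!\int\|(\nabla\psi,\nabla\zeta)\|_{H^{p+1}}^2$ carries the small coefficient $\epsilon$, not $\epsilon^{-1}$; and the $D_{3,\beta}$-slot carries only the small coefficient $N_\beta+\delta+\sigma$. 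The same obstruction arises in your direct bound $\|\nabla^{p+1}\tfrac{d}{dt}\vp\|^2\lesssim\|\nabla^{p+2}\psi\|^2+\cdots$, which places $\int e^{\sigma\tau}\|\nabla^{p+2}\psi\|^2$ on the right with an $O(1)$ coefficient. The boundary outflux you highlight is nonnegative but lives on $\partial\Omega$ only; it cannot absorb these bulk terms.

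The paper's proof is structurally different and is built precisely to avoid this loss. It works in the flattened coordinates and combines the differentiated continuity equation with the momentum equation (the passage \eqref{ee1-hat}--\eqref{ee5-hat}), so that two things happen simultaneously: (i) the pressure term supplies a genuine damping $\tilde{\mathcal A}_1 R\hat\rho\hat\theta\,|\partial^{\bm a}\mathfrak D\hat\vp|^2$ in \eqref{ee7-hat}, giving bulk dissipation on $\mathfrak D\hat\vp$; and (ii) the cancellation \eqref{ee4-hat} removes the pure normal second derivative $\partial_1^2\hat\psi$, so the right-hand side of Lemma~\ref{lm4-hat} carries only $\|\partial^{\bm a}\nabla_y\nabla_{y'}\hat\psi\|^2$, i.e.\ the top $\psi$-derivative always has at least one tangential component. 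The proof of Lemma~\ref{lm1-comp} then runs an induction on the number of normal derivatives: the base case is Lemma~\ref{lm2-hat} (purely tangential), and each inductive step trades a tangential derivative for a normal one using the Cattabriga estimate Lemma~\ref{lm2-C}, which converts the mixed $\psi$-derivative into $\|\psi_t\|_{H^{p}}$ plus lower order. That is the missing mechanism in your argument.
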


\begin{proof}
As in Lemma \ref{lm4-hat}, we define the index $\bm{a} = (a_1, a_2, a_3)$ with $a_1, \ a_2, \ a_3 \geq 0$ and $|\bm{a}| := a_1 + a_2 + a_3$. Let $\partial^{\bm{a}} := \partial^{a_1}_{y_1} \partial^{a_2}_{y_2} \partial^{a_3}_{y_3}$. It suffices to prove that for $j = 1, 2, \ldots , p+1$ and $\bm{a} = (a_1, a_2, a_3)$,
\begin{equation}
\sum_{|\bm{a}| = p+1, a_1 \leq j} \left( e^{\sigma t} \| \partial^{\bm{a}}_y \hat{\vp} (t) \|_{L^2(\mathbb R^3_+)}^2 
+ \int^t_0 e^{\sigma \tau} \left\| \partial^{\bm{a}}_y \hat{\frac{d}{dt}}\hat{\vp} (\tau) \right\|_{L^2(\mathbb R^3_+)}^2 d \tau \right)
\lesssim_{\Omega} \mathcal{R}_{p,\epsilon},
\label{ek0}
\end{equation}
where 
\begin{equation*}
\begin{split}
\mathcal{R}_{p,\epsilon} :=& \| \Phi_0\|^2_{H^3} + \int^t_0 e^{\sigma \tau} \|\psi_t (\tau) \|^2_{H^p} \, d \tau  
+ \epsilon^{-1}\int^t_0 e^{\sigma \tau} D_{p, \beta} (\tau) \, d \tau \\
& + \epsilon \int^t_0  e^{\sigma \tau}(\| \nabla \vp (\tau) \|^2_{H^{p}} +\| (\nabla \psi,\nabla \zeta) (\tau) \|^2_{H^{p+1}} ) d \tau \\
& + (N_\beta (T) + \delta + \sigma) \int^t_0 e^{\sigma \tau} D_{3, \beta} (\tau) \, d \tau
+ \delta  \int^t_0 e^{\sigma \tau}  \, d \tau. 
\end{split}
\end{equation*}
Indeed the desired estimate \eqref{el0-comp} follows from changing the coordinate $y \in \mathbb R_+^3$ to the coordinate $x \in \Omega$ in the left hand side of \eqref{ek0} with $j=p+1$.

To obtain \eqref{ek0} with $j=1$, we add up \eqref{ec0-hat} with $l = p+1$ 
and \eqref{ee0-hat} with $a_1=0$, $a_2 + a_3 = p$, 
and estimate 
$\int^t_0 e^{\sigma \tau} \| \partial^{\bm{a}} \nabla_{y'} \hat{\frac{d}{dt}} \hat{\vp} \|_{L^2}^2  \, d \tau$ 
and 
$\int^t_0 e^{\sigma \tau} \| \partial^{\bm{a}} \nabla_y \nabla_{y'} \hat{\psi} \|_{L^2}^2  \, d \tau$ 
by using \eqref{ec0-hat}.
Now, assuming \eqref{ek0} holds for $j=q$, 
we show that it holds for $j=q+1$. 
We take the weighted-in-time integral of \eqref{eg0-C} with $l = p+1-q$ and $k=q-1$, 
and use \eqref{ek0} with $j = q$ 
to estimate the highest-order term in $\|{\nabla}^{p+1-q}_{y'} \hat{\frac{d}{dt}} \hat{\vp} \|^2_{H^q} $. 
Then we arrive at
\begin{equation}
\int^t_0 e^{\sigma \tau}  \left( \| \nabla^{q+1}  \nabla^{p+1-q}_{y'} \hat{\psi}(\tau) \|_{L^2(\mathbb R^3_+)}^2 + \| \nabla^q  \nabla^{p+1-q}_{y'} \hat{\vp} (\tau) \|_{L^2(\mathbb R^3_+)}^2 \right) \, d \tau 
\lesssim_{\Omega} \mathcal{R}_{p,\epsilon}.
\label{ek2}
\end{equation}
Taking $a_1 =q$ and $ a_2 + a_3 = p-q$ in \eqref{ee0-hat}, and using \eqref{ek0} with $j = q$ and \eqref{ek2} to estimate the second term on the right hand side of \eqref{ee0-hat},
we obtain
\begin{equation}
e^{\sigma t} \| \partial^{q+1}_1 \nabla^{p-q}_{y'} \hat{\vp} (t) \|_{L^2(\mathbb R^3_+)}^2 + \int^t_0 e^{\sigma \tau}  \left\|\partial^{q+1}_1 \nabla^{p-q}_{y'} \hat{\frac{d}{dt}} \hat{\vp}(\tau)  \right\|_{L^2(\mathbb R^3_+)}^2  \, d \tau 
\lesssim_{\Omega} \mathcal{R}_{p,\epsilon}.
\label{ek3}
\end{equation} 
Combining \eqref{ek3} and \eqref{ek0} with $j=q$, we obtain \eqref{ek0} with $j=q+1$. The proof is completed by induction. 
\end{proof}

Moreover, we have the following lemma.
\begin{lemma} \label{lm3-comp}
Under the same assumption as in Proposition \ref{apriori1} with $m=3$,
it holds, for $p=0,1,2$, 
\begin{align}
&   e^{\sigma t} \| \nabla^{p+1}  \vp (t) \|^2 + \int^t_0 e^{\sigma \tau}  \|( \nabla^{p+1} \vp, \nabla^{p+2} \psi, \nabla^{p+2} \zeta )(\tau)\|^2   \, d \tau  + \int^t_0 e^{\sigma \tau} \left\| \nabla^{p+1}   \frac{d}{dt} \vp (\tau) \right\|^2 d \tau   
\notag \\
&\lesssim_{\Omega} \| \Phi_0\|^2_{H^3} + e^{\sigma t} \| \vp (t) \|_{H^p}^2 + \int^t_0 e^{\sigma \tau} \|(\psi_t,\zeta_t)(\tau) \|^2_{H^p} \, d \tau  + \int^t_0 e^{\sigma \tau} D_{p, \beta} (\tau) \, d \tau 
\notag\\
& \qquad + (N_\beta (T) + \delta + \sigma) \int^t_0 e^{\sigma \tau} D_{3, \beta} (\tau) \, d \tau
+ \delta  \int^t_0 e^{\sigma \tau}  \, d \tau. 
\label{em0-comp}
\end{align}
\end{lemma}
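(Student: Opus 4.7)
The plan is to upgrade Lemma \ref{lm1-comp} by combining it with the Cattabriga estimate \eqref{ef0-C} and the elliptic estimate \eqref{eh0-Ezz} for $\zeta$. Lemma \ref{lm1-comp} already produces the terms $e^{\sigma t}\|\nabla^{p+1}\vp(t)\|^2$ and $\int_0^t e^{\sigma\tau}\|\nabla^{p+1}\tfrac{d}{dt}\vp(\tau)\|^2\,d\tau$ on the left-hand side of \eqref{em0-comp}; what remains is to gain control of $\int_0^t e^{\sigma\tau}(\|\nabla^{p+1}\vp\|^2+\|\nabla^{p+2}\psi\|^2+\|\nabla^{p+2}\zeta\|^2)\,d\tau$, and to close the argument by absorbing the $\epsilon$-defect inherent in Lemma \ref{lm1-comp}.

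First I would apply Lemma \ref{lm1-comp} with parameter $\epsilon\in(0,1)$ to obtain the $\vp$-only part of the left-hand side, carrying along the unwanted remainder $\epsilon\int_0^t e^{\sigma\tau}(\|\nabla\vp\|_{H^p}^2+\|(\nabla\psi,\nabla\zeta)\|_{H^{p+1}}^2)\,d\tau$. Next I would apply the Cattabriga estimate \eqref{ef0-C} with $k=p$, multiply by $e^{\sigma\tau}$, and integrate over $[0,t]$, yielding a bound on $\int_0^t e^{\sigma\tau}(\|\nabla^{p+2}\psi\|^2+\|\nabla^{p+1}\vp\|^2)\,d\tau$. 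The right-hand side of this integrated Cattabriga bound contains $\int_0^t e^{\sigma\tau}\|\tfrac{d}{dt}\vp\|_{H^{p+1}}^2\,d\tau$; its top-order piece $\int_0^t e^{\sigma\tau}\|\nabla^{p+1}\tfrac{d}{dt}\vp\|^2\,d\tau$ is exactly what Lemma \ref{lm1-comp} delivers, while the lower-order pieces $\|\nabla^j\tfrac{d}{dt}\vp\|^2$ for $j\le p$ are controlled by $D_{p,\beta}$ by construction of the dissipation norm \eqref{Dkbeta-def}. Similarly, $\int_0^t e^{\sigma\tau}\|(\nabla\psi,\nabla\zeta)\|_{H^p}^2\,d\tau$ on the right of Cattabriga is absorbed into $\int_0^t e^{\sigma\tau}D_{p,\beta}\,d\tau$.

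For the $\zeta$-component, I would invoke the elliptic estimate \eqref{eh0-Ezz} with $k=p$, which yields $\|\nabla^{p+2}\zeta\|^2\lesssim\|\zeta_t\|_{H^p}^2+D_{p,\beta}+\delta^2$; multiplying by $e^{\sigma\tau}$ and integrating produces exactly the last missing LHS term of \eqref{em0-comp}, with the right-hand contributions already present on the target right-hand side. Adding the three resulting bounds (Lemma \ref{lm1-comp}, integrated Cattabriga, integrated \eqref{eh0-Ezz}) produces every left-hand quantity of \eqref{em0-comp}, modulo the $\epsilon$-defect from Lemma \ref{lm1-comp}.

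The final step, which I expect to be the chief technical point, is the absorption of this $\epsilon$-defect. Since $\|\nabla\vp\|_{H^p}^2$ decomposes as $\|\nabla^{p+1}\vp\|^2$ plus pieces contained in $\|\vp\|_{H^p}^2$, and $\|(\nabla\psi,\nabla\zeta)\|_{H^{p+1}}^2$ decomposes as $\|\nabla^{p+2}(\psi,\zeta)\|^2$ plus pieces contained in $D_{p,\beta}$, choosing $\epsilon$ small enough relative to the Cattabriga constant (permissible since the target allows $\lesssim_\Omega$) lets the top-order pieces be absorbed into the newly obtained LHS terms $\int_0^t e^{\sigma\tau}(\|\nabla^{p+1}\vp\|^2+\|\nabla^{p+2}\psi\|^2+\|\nabla^{p+2}\zeta\|^2)\,d\tau$, while the lower-order pieces are swept into the $D_{p,\beta}$ and $\|\vp\|_{H^p}^2$ terms already present on the right of \eqref{em0-comp}. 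The main obstacle is therefore the careful bookkeeping of this absorption when the Cattabriga constant depends on $\Omega$: one must verify that the $\epsilon$-choice is consistent across the three values $p=0,1,2$ and does not inadvertently consume any of the $\sigma$-improvable terms needed for decay. A straightforward induction on $p$, mirroring the one used in the proof of Lemma \ref{lm1-comp}, closes the argument.
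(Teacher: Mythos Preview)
Your proposal is correct and follows essentially the same route as the paper: combine Lemma \ref{lm1-comp} with the time-weighted integrals of the Cattabriga estimate \eqref{ef0-C} (at $k=p$) and the elliptic estimate \eqref{eh0-Ezz} (at $k=p$), then choose $\epsilon$ small to absorb the defect. The only superfluous remark is the final sentence about an induction on $p$; no induction is needed since each value of $p$ is handled independently with its own choice of $\epsilon$.
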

\begin{proof}
Take the weighted-in-time integral of \eqref{ef0-C} with $k =p$ and \eqref{eh0-Ezz} with $k =p$, and 
combine them together with \eqref{el0-comp}. 
Then, letting $\epsilon$ be suitably small, we arrive at \eqref{em0-comp}.
\end{proof}

Now we can complete the a priori estimates. 

\begin{proof}[Proof of Proposition \ref{apriori1}]
From \eqref{ea0}, we know that
\begin{align}
& e^{\sigma t} E_{0, \beta} (t) + \int^t_0 e^{\sigma \tau} D_{0,\beta} (\tau ) \, d \tau  
\notag \\
& \lesssim \|\Phi_0\|_{\lteasp{\beta}}^2 + \| \Phi_0\|^2_{H^3} 
+ (N_\beta (T) + \delta +\sigma) \int^t_0 e^{\sigma\tau} D_{3, \beta} (\tau) \, d \tau
+ \delta  \int^t_0 e^{\sigma \tau}  \, d \tau.   
\label{en0-comp}
\end{align}
We next show that for $l=1,2,3$,
\begin{align}
& e^{\sigma t} E_{l, \beta} (t) + \int^t_0 e^{\sigma \tau} D_{l,\beta} (\tau ) \, d \tau  
\notag \\
& \lesssim_{\Omega}  \| \Phi_0\|^2_{H^3} 
+ e^{\sigma t} E_{l-1, \beta} (t) + \int^t_0 e^{\sigma \tau} D_{l-1,\beta} (\tau ) \, d \tau
\notag \\
& \qquad + (N_\beta (T) + \delta +\sigma) \int^t_0 e^{\sigma \tau} D_{3, \beta} (\tau) \, d \tau
+ \delta  \int^t_0 e^{\sigma \tau}  \, d \tau + (l-1) \delta e^{\sigma \tau} .   
\label{en1-comp}
\end{align}
Hereafter $\nu$ denotes a positive constant.

Let us first treat the case $l=1$. 
We multiply \eqref{em0-comp} with $p =0$ by $\nu$ and add it to \eqref{ed0} with $k=0$. 
Taking $\ep$ and $\nu$ suitably small yields
\eqref{en1-comp} with $l=1$.

For the case $l=2$, we multiply \eqref{em0-comp} with $p =1$, 
\eqref{eh0-E} with $k=0$, and \eqref{eh0-Ez} with $k=0$ by $\nu$, $\nu e^{\sigma t}$, and $\nu e^{\sigma t}$, respectively.
Adding up the three results and \eqref{ec0}
and then taking $\nu$ small, we have \eqref{en1-comp} with $l=2$.

Finally, for the case $l=3$, we multiply \eqref{ej0-E} and \eqref{ej0-Ez} by $\nu e^{\sigma \tau}$, respectively, and integrate it over $(0,t)$.
We then multiply \eqref{eh0-E} with $k=1$, \eqref{eh0-Ez} with $k=1$, 
and \eqref{em0-comp} with $p =2$ by $\nu$, $\nu$, and $\nu^2$, respectively.
Add up the five results and \eqref{ed0} with $k=1$. 
Then taking $\ep$ and $\nu$ suitably small yields \eqref{en1-comp} with $l=3$.

The estimates \eqref{en0-comp} and \eqref{en1-comp} imply that
\begin{equation*}
\begin{split}
& e^{\sigma t} E_{3, \beta} (t) + \int^t_0 e^{\sigma \tau} D_{3,\beta} (\tau ) \, d \tau  \\
& \lesssim_{\Omega}  \|\Phi_0\|_{\lteasp{\beta}}^2 + \| \Phi_0\|^2_{H^3} 
+ (N_\beta (T) + \delta +\sigma) \int^t_0 e^{\sigma \tau} D_{3, \beta} (\tau) \, d \tau
+ \delta  \int^t_0 e^{\sigma \tau}  \, d \tau + \delta e^{\sigma \tau}.   \\  
\end{split}
\end{equation*}
Letting $N_\beta (T) + \delta +\sigma$ be small enough, one can have \eqref{apes0}.
Dividing \eqref{apes0} by $e^{\sigma \tau}$ and using \eqref{PhiSup}, we conclude \eqref{apes1}.
\end{proof}

We discuss briefly the proof of Corollary \ref{cor1}.

\begin{proof}[Proof of Corollary \ref{cor1}]
If $\|M\|_{H^{9}(\mathbb R^2)} \leq \eta_{0}$ holds for $\eta_{0}$ being in Lemma \ref{CattabrigaEst},
we can replace $\lesssim_{\Omega}$ by $\lesssim$ 
in inequalities \eqref{ef0-C}, \eqref{eg0-C}, and \eqref{em0-comp}.
Then following the proof of Proposition \ref{apriori1} with these improved inequalities, 
we conclude Corollary \ref{cor1}.
\end{proof}

\section{Construction of stationary solutions}\label{S5}
For the construction of stationary solutions, 
we make use of the time-global solution $\Phi$ in Theorem \ref{global1}. 
We first prove a unique result Proposition \ref{5.1} 
for the time-periodic solutions to \eqref{eq-pv1}--\eqref{pbc}. 
Then we consider $\Phi$ and its translated version $\Phi^k(t,x):=\Phi(t+kT^*,x)$ 
for any $T^* > 0$ and $k=1,2,3,\ldots$. 
We prove in Proposition \ref{5.3} that $\{\Phi^k\}$ is a Cauchy sequence 
in the Banach space $C([0,T^*];H^{m-1}(\Omega)) \cap C^1([0,T^*];H^{m-3}(\Omega))$ 
and obtain a limit $\Phi^*$ from it, 
then we show in Proposition \ref{5.3} 
that $\Phi^*$ is a time-periodic solution to problem \eqref{eq-pv1}--\eqref{pbc} with period $T^*>0$. 
In Subsection \ref{S5.2}, 
using uniqueness of time-periodic solutions, 
we prove that $\Phi^*$ is actually time-independent and 
therefore gives a stationary solution to \eqref{eq-pv1}--\eqref{pbc}.

We remark that it is also possible to show directly that
$\Phi^k_t$ converges to zero as $k\to\infty$ by differentiating equations \eqref{eq-pv}
with respect to $t$ and then applying the energy method used in Section \ref{sec3} 
to the resulting equations. 
However the computations are very long.
Therefore we adopt the method mentioned above to construct the stationary solution.

\subsection{Time-periodic solutions}\label{S5.1}
\subsubsection{Uniqueness}\label{S5.1.1}
In this subsection, we show the uniqueness of time-periodic solutions to the problem 
of equations \eqref{eq-pv1}--\eqref{eq-pv3} 
with boundary condition \eqref{pbc} in the solution space 
\begin{equation*}
\mathcal{X}^{\text{\rm e}}_{m,\beta} (0,T)
=X^{\text{\rm e}}_{m-1,\beta} (0,T)
\cap L^\infty(0,T ; H^m(\Omega)).
\end{equation*}

\begin{proposition}\label{5.1}
Let \eqref{outf} and \eqref{super1} hold.
For $\beta>0$ being in Theorem \ref{global1},
there exists $\epsilon>0$ depending on $\|M\|_{H^9}$ but independent of $\Omega$ such that 
if a time-periodic solution $\Phi^* \in \mathcal{X}^{\text{\rm e}}_{3,\beta} (0,T)$
with a period $T^*>0$ to the problem \eqref{eq-pv1}--\eqref{pbc}
exists and satisfies the following inequality, then it is unique:
\begin{equation}\label{uniasp1}
\sup_{t\in[0,T^*]}(\|\Phi^*(t)\|_{H^3}
+\|\partial_t \vp^*(t)\|_{H^{2}}
+\| (\partial_t \psi^*,\partial_t\zeta^*)(t)\|_{H^{1}})
+\delta \leq \epsilon.
\end{equation}
\end{proposition}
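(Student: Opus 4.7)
The plan is standard: set $\tilde{\Phi} := \Phi^*_1 - \Phi^*_2$ for two hypothetical time-periodic solutions, show that $\tilde{\Phi}$ decays exponentially in time via an $\lteasp{\beta}$ energy estimate, and then use the time-periodicity to force $\tilde{\Phi} \equiv 0$.

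First, denote the two solutions as $\Phi^*_i = (\vp_i,\psi_i,\zeta_i)$ and set $u_i := \ut + \psi_i + U$, $\rho_i := \rt + \vp_i$, $\theta_i := \tt + \zeta_i + \Theta$ for $i=1,2$. Subtracting the systems \eqref{eq-pv1}--\eqref{eq-pv3} satisfied by $\Phi^*_1$ and $\Phi^*_2$, and noting that the time-independent forcings $F$, $G$, $H$ cancel exactly, I obtain a linear system for $\tilde{\Phi}$. Its principal part has the same structure as \eqref{eq-pv} with coefficients given by $(u_1,\rho_1,\theta_1)$, while the remaining terms are products of $\tilde{\Phi}$ or $\nabla\tilde{\Phi}$ with coefficients involving $\Phi^*_2$, $\nabla\Phi^*_i$, the derivatives $\rt',\ut',\tt',\nabla U,\nabla\Theta$, and $\pd_t\Phi^*_i$. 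By \eqref{uniasp1} and the smallness of $\delta$, all of these coefficients are $O(\ep)$ in $L^\infty$ or $L^3$ via Sobolev embedding of $H^3$ and $H^2$. The boundary conditions $\tilde{\psi}|_{\pd\Omega}=\tilde{\zeta}|_{\pd\Omega}=0$ are inherited, and $\tilde{\Phi}$ remains $T^*$-periodic in time.

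Next, I would run the energy argument of Lemma \ref{lm1} essentially verbatim for this linear system. Multiplying an appropriate energy form (equivalent to $|\tilde{\Phi}|^2$) by the weight $w = e^{\beta x_1}e^{\sigma t}$ and integrating over $\Omega$, the outflow boundary integral contributes a positive dissipation $\gtrsim e^{\sigma t}\|\tilde{\vp}|_{\pd\Omega}\|^2_{L^2_{x'}}$ as in \eqref{ea3}; the gradient of the weight combined with the supersonic condition \eqref{super1} produces a positive-definite quadratic form bounding $\beta e^{\sigma t}\|\tilde{\Phi}\|^2_{\lteasp{\beta}}$ from below as in \eqref{ea10}; and the viscosity and heat-conduction terms give $e^{\sigma t}\|(\nabla\tilde{\psi},\nabla\tilde{\zeta})\|^2_{\lteasp{\beta}}$. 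All remaining error terms are products of $\tilde{\Phi}$ (and its first-order derivatives) with coefficients of size $O(\ep)$ and can be absorbed into either the dissipation or $\sigma e^{\sigma t}\|\tilde{\Phi}\|^2_{\lteasp{\beta}}$ with the help of Hardy's inequality; crucially, the absence of $F, G, H$ means no $C_0\delta$ remainder appears. For $\sigma$ and $\ep$ sufficiently small this yields
\begin{equation*}
e^{\sigma t}\|\tilde{\Phi}(t)\|^2_{\lteasp{\beta}} + \int_0^t e^{\sigma\tau}D_{0,\beta}(\tilde{\Phi})(\tau)\,d\tau \le C\|\tilde{\Phi}(0)\|^2_{\lteasp{\beta}}, \qquad t\ge 0,
\end{equation*}
where $C$ is independent of $t$ and $T^*$.

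Setting $t = NT^*$ for an integer $N$ and invoking the periodicity $\tilde{\Phi}(NT^*)=\tilde{\Phi}(0)$ gives $e^{\sigma NT^*}\|\tilde{\Phi}(0)\|^2_{\lteasp{\beta}}\le C\|\tilde{\Phi}(0)\|^2_{\lteasp{\beta}}$. Choosing $N$ so large that $e^{\sigma NT^*}>C$ forces $\tilde{\Phi}(0)=0$, and then the energy inequality yields $\tilde{\Phi}(t)\equiv 0$ on $[0,T^*]$. The main obstacle will be checking carefully that every error term produced by the difference mechanism, in particular those involving $\pd_t\psi^*_i$ and $\pd_t\zeta^*_i$ in the momentum and energy equations as well as those arising from differences $\theta_1-\theta_2$ multiplying $\nabla\vp_2$, fits within the limited regularity guaranteed by $\mathcal{X}^{\text{\rm e}}_{3,\beta}$ combined with \eqref{uniasp1}. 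Since the whole computation stays at the $\lteasp{\beta}$ level, no higher-order estimate on $\Phi^*$ is required, and \eqref{uniasp1} is enough to close the argument.
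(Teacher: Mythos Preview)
Your proposal is correct and follows essentially the same route as the paper: both derive a weighted $L^{2}_{e,\beta}$ energy estimate for the difference of two periodic solutions, relying on the outflow boundary for the $\overline{\vp}$-trace, the supersonic condition \eqref{super1} for positive definiteness of the flux quadratic form, and the smallness bound \eqref{uniasp1} to absorb all remainder terms (including those with $\partial_t\psi^\#$, $\partial_t\zeta^\#$). The only difference is cosmetic: the paper omits the time weight $e^{\sigma t}$, obtains directly the differential inequality $\frac{d}{dt}[\text{weighted energy}]+c\beta\|\overline{\Phi}\|_{\lteasp{\beta}}^{2}\le 0$, and integrates over a single period to conclude, whereas you pass through an exponential-decay estimate and evaluate at $t=NT^{*}$ with $N$ large; both conclusions are equivalent.
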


Let $\Phi^*=(\vp^*,\psi^*,\zeta^*)$ and $\Phi^\#=(\vp^\#,\psi^\#,\zeta^\#)$ 
be time-periodic solutions to \eqref{eq-pv1}--\eqref{pbc}.
It is straightforward to see that
$\overline{\Phi} = (\overline{\vp}, \overline{\psi} , \overline{\zeta} )= \Phi^* - \Phi^\#$ 
satisfies the system   
\begin{subequations} \label{5.1-eq1}
\begin{gather}
\overline{\vp}_{t} + (\tilde{u} + U + \psi^* ) \cdot \nabla \overline{\vp}  + (\tilde{\rho} + \vp^* ) \div \overline{\psi}  
=\overline{f}, \label{5.1-eq1-1}
\\
(\tilde{\rho} + \vp^*) \{ \overline{\psi}_{t} + (\tilde{u} + U + \psi^*) \cdot \nabla \overline{\psi}  \} - L \overline{\psi}  + R(\tilde\theta+\Theta+\zeta^*)\nabla \overline{\vp}+R(\tilde{\rho} + \vp^*)\nabla\overline\zeta 
= \overline{g}, \label{5.1-eq1-2}
\\
c_v(\tilde{\rho} + \vp^*)\big( \overline\zeta_t+(\tilde{u} + U + \psi^*)\cdot\nabla \overline\zeta\big)+R(\tilde{\rho} + \vp^*)(\tilde\theta+\Theta+\zeta^*)\textrm{div} \overline\psi
-\kappa\Delta \overline\zeta= \overline{h}.
\label{5.1-eq1-3}
\end{gather}
The boundary condition for $(\overline{\psi}, \overline{\zeta} )$ 
follows from (\ref{pbc}) as
\begin{gather*}
\overline{\psi} (t,M(x'),x') = 0,\quad\overline{\zeta} (t,M(x'),x') = 0.
\end{gather*}
\end{subequations}
Here $L  \overline\psi$,  $\overline{f}$, $\overline{g}$ and $ \overline{h}$ are defined by
\begin{align*}
L  \overline\psi &:= \mu \Delta  \overline\psi + (\mu + \lambda) \nabla \div  \overline\psi,
\\
\overline{f} &:=
- \nabla\tilde{\rho} \cdot \overline{\psi}  - \ut'_1 \overline{\vp}  - \overline{\vp} \div U - \overline{\psi} \cdot \nabla \vp^\# - \overline{\vp} \div \psi^\#,
\\
\overline{g} &:= - \left(\{(\rt\!+\!\vp^*)\overline{\psi}+\overline{\vp} \psi^\# \}\!\cdot\! \nabla \right) (\tilde{u}\!+\!U) - \overline{\varphi} ((\tilde{u}\!+\!U) \cdot \nabla) (\tilde{u}\!+\!U) -R \overline\zeta\nabla\rt\!-R \overline\vp\nabla (\tilde{\theta}+\Theta)
\\
&\qquad - \overline{\vp}  \psi^\#_t - \{(\rt+\vp^*)(\ut+U+\psi^*)-(\rt+\vp^\#)(\ut+U+\psi^\#) \}\cdot \nabla \psi^\# 
\\ 
&\qquad - R \overline\zeta\nabla\vp^\#-R \overline\vp\nabla \zeta^\#,
\\
\overline{h}&:= - c_v(\rt\!+\!\vp^*) (\overline\psi \cdot \nabla) (\tt+\Theta) - c_v\overline\vp\! (\psi^\# \cdot \nabla) (\tt+\Theta)- c_v\overline\vp ((\ut+U)\cdot \nabla) (\tt+\Theta)
\\ 
&\qquad - R\rt\overline\zeta\div(\ut+U)
-R\overline\vp(\tilde\theta+\Theta+\zeta^*)\div(\ut+ U)-R\vp^\#\overline\zeta\div(\ut+ U)
\\ 
&\qquad +4\mu\mathcal{D}(\overline\psi):\mathcal{D}(\ut+U)+2\lambda\div\overline\psi\div(\ut+U)-c_v\overline{\vp}  \zeta^\#_t
\\ 
&\qquad - c_v\{(\rt+\vp^*)(\ut+U+\psi^*)-(\rt+\vp^\#)(\ut+U+\psi^\#) \}\cdot \nabla \zeta^\# 
\\ 
&\qquad -R\{(\rt+\vp^*)(\tilde\theta+\Theta+\zeta^*)-(\rt+\vp^\#)
(\tilde\theta+\Theta+\zeta^\#) \}\textrm{div} \psi^\# 
\\ 
&\qquad +2\mu\left(\mathcal{D}( \psi^*)+\mathcal{D}(\psi^\#)\right):\mathcal{D}(\overline\psi)+\lambda\left(\textrm{div} \psi^*+\div\psi^\#\right)\div(\overline\psi).
\end{align*}

It is easy to check from \eqref{uniasp1} that 
\begin{equation*}
|\overline{f}| \lesssim \epsilon |\overline{\Phi}|, \quad |\overline{g}| \lesssim \epsilon |\overline{\Phi}| + |\overline{\vp}| |\psi^\#_t|
, \quad |\overline{h}| \lesssim \epsilon (|\overline{\Phi}|+|\nabla\overline{\psi}|) + |\overline{\vp}| |\zeta^\#_t| . 
\end{equation*}

\begin{proof}[Proof of Proposition \ref{5.1}]

Multiplying \eqref{5.1-eq1-1} by $e^{\beta x_1}R\tilde\theta \overline{\vp}/(\rt+\vp^*)$ and using $\rho_{t}=-{\rm div}(\rho u)$, we have
\begin{align}\label{5.1-eq2}
&\left(\frac{1}{2} e^{\beta x_1} \frac{R\tilde\theta}{\rt+\vp^*} \overline{\vp}^2\right)_t  
+ \div\left(\frac{1}{2}  e^{\beta x_1} R\tilde\theta \overline{\vp}^2 \frac{\tilde{u} + U + \psi^*}{\rt+\vp^*} \right)
- \frac{\beta}{2}   e^{\beta x_1}  R\tilde\theta \frac{\tilde{u}_1 + U_1 + \psi^*_1}{\rt+\vp^*} \overline{\vp}^2 
\notag \\
&= - e^{\beta x_1} R\tilde\theta \overline{\vp} \div\overline{\psi}
+e^{\beta x_1} R\tilde\theta\frac{ \overline{\vp}}{\rt+\vp^*} \overline{f}
+\frac{1}{2}  e^{\beta x_1}  R \overline{\vp}^2 \frac{(\tilde{u} + U + \psi^* )\cdot\nabla\tilde\theta}{\rt+\vp^*}
\notag \\
&\quad + e^{\beta x_1}R\tilde\theta \frac{\div (\tilde{u} + U + \psi^* )}{\rt+\vp^*} \overline{\vp}^2.
\end{align}
Multiplying \eqref{5.1-eq1-2} by $e^{\beta x_1} \overline{\psi}$ gives
\begin{align}
&\left(\frac{1}{2}  e^{\beta x_1} (\tilde{\rho} + \vp^*) |\overline{\psi}|^2\right)_t 
\notag \\
&\quad+ \div \left( \frac{1}{2} e^{\beta x_1} |\overline{\psi}|^2 (\tilde{\rho} + \vp^*) (\tilde{u} + U  + \psi^*)
+e^{\beta x_1}  R(\tilde\theta+\Theta+\zeta^*)\overline{\vp} \overline{\psi} 
+e^{\beta x_1}  R(\tilde\rho+\vp^*)\overline{\zeta}  \overline{\psi}  \right) 
\notag \\
&\quad - \div \left(\mu e^{\beta x_1} (\nabla \overline{\psi}) \overline{\psi}
+(\mu + \lambda) e^{\beta x_1} (\div \overline{\psi}) \overline{\psi} \right)
\notag \\
&\quad - \frac{\beta}{2} e^{\beta x_1}(\tilde{\rho} + \vp^*) (\tilde{u}_1 + U_1 + \psi^*_1)|\overline{\psi}|^2
-\beta e^{\beta x_1}  R(\tilde\theta+\Theta+\zeta^*)\overline{\psi}_1 \overline{\vp}
-\beta e^{\beta x_1}  R(\tilde\rho+\vp^*)\overline{\psi}_1 \overline\zeta
\notag \\
&\quad  + \mu e^{\beta x_1} |\nabla \overline{\psi}|^2  
+ (\mu + \lambda)  e^{\beta x_1} |\div \overline{\psi}|^2
\notag \\
& = e^{\beta x_1} \overline{\psi} \cdot \overline{g}
+e^{\beta x_1}  R\overline{\vp}\overline{\psi}\cdot\nabla (\tilde\theta+\Theta+\zeta^*)
+e^{\beta x_1}  R(\tilde\theta+\Theta+\zeta^*) \overline{\vp}\div\overline{\psi}
\notag \\
&\quad +e^{\beta x_1}  R (\tilde{\rho} + \vp^*)\overline\zeta\div\overline{\psi}
+e^{\beta x_1}  R \overline\zeta\overline{\psi}\cdot\nabla(\tilde{\rho} + \vp^*)
 - \mu \beta   e^{\beta x_1}  (\partial_1 \overline{\psi}) \cdot \overline{\psi} 
\notag \\
& \quad - (\mu + \lambda) \beta  e^{\beta x_1} (\div \overline{\psi}) \overline{\psi}_1.
\label{5.1-eq3}
\end{align}
Multiplying \eqref{5.1-eq1-3} by $e^{\beta x_1} \overline{\zeta}\tilde\theta^{-1}$, we obtain
\begin{align}
&\left(\frac{c_v}{2 \tilde\theta}  e^{\beta x_1} (\tilde{\rho} + \vp^*) |\overline{\zeta}|^2\right)_t 
+ \div \left( \frac{c_v}{2 \tilde\theta} e^{\beta x_1} |\overline{\zeta}|^2 (\tilde{\rho} + \vp^*) (\tilde{u} + U  + \psi^*)
-\kappa e^{\beta x_1} \frac{\overline{\zeta}}{ \tilde\theta}\nabla\overline\zeta\right)
\notag \\
&\quad - \frac{\beta c_v}{2 \tilde\theta} e^{\beta x_1}(\tilde{\rho} + \vp^*) (\tilde{u}_1 + U_1 + \psi^*_1)|\overline{\zeta}|^2
+\kappa e^{\beta x_1}\frac{|\nabla \overline{\zeta}|^2}{ \tilde\theta}
\notag
\\
&= e^{\beta x_1}  \frac{\overline{\zeta}}{ \tilde\theta} \overline{h}
-\frac{ c_v}{2} e^{\beta x_1}(\tilde{\rho} + \vp^*) (\tilde{u} + U + \psi^*)\cdot \frac{ \nabla\tilde\theta}{ \tilde\theta^2} |\overline{\zeta}|^2
-\beta\kappa e^{\beta x_1} \frac{\overline{\zeta}}{ \tilde\theta}\partial_1\overline\zeta
+\kappa e^{\beta x_1}\overline{\zeta}\frac{ \nabla\overline\zeta\cdot\nabla\tilde\theta}{ \tilde\theta^2}
\notag \\
&\quad-Re^{\beta x_1} (\tilde{\rho} + \vp^*)\frac{\tilde\theta+\Theta+\zeta^*}{ \tilde\theta} \overline{\zeta} \textrm{div} \overline\psi.
\label{5.1-eq4}
\end{align}
Summing up \eqref{5.1-eq2}--\eqref{5.1-eq4} yields that
\begin{multline}
\left(\frac{1}{2} e^{\beta x_1} \frac{R\tilde\theta}{\rt+\vp^*} \overline{\vp}^2
+\frac{1}{2}  e^{\beta x_1} (\tilde{\rho} + \vp^*) |\overline{\psi}|^2+
\frac{c_v}{2 \tilde\theta}  e^{\beta x_1} (\tilde{\rho} + \vp^*) |\overline{\zeta}|^2\right)_t 
+\div\left(\overline{G}_1+\overline{B}_1\right)
-\beta(\overline{G}_1)_1
\\
+ \mu e^{\beta x_1} |\nabla \overline{\psi}|^2  
+ (\mu + \lambda)  e^{\beta x_1} |\div \overline{\psi}|^2
 +\kappa e^{\beta x_1}\frac{|\nabla \overline{\zeta}|^2}{ \tilde\theta}
=\overline{R}_1+ \beta (\overline{B}_1)_1,
\label{5.1-eq44}
\end{multline}
where
\begin{align*}
\overline{G}_1:=& \frac{1}{2}  e^{\beta x_1} R\tilde\theta \overline{\vp}^2  \frac{\tilde{u} + U + \psi^*}{\rt+\vp^*}
+\frac{1}{2} e^{\beta x_1} \left(|\overline{\psi}|^2+c_v\frac{|\overline{\zeta}|^2}{ \tilde\theta} \right) (\tilde{\rho} + \vp^*) (\tilde{u} + U + \psi^*)
\notag \\
& +e^{\beta x_1}  R(\tilde\theta+\Theta+\zeta^*)\overline{\vp} \overline{\psi} 
+e^{\beta x_1}  R(\tilde\rho+\vp^*)\overline{\zeta} \overline{\psi}  ,
\\
\overline{B}_1:=&-\mu e^{\beta x_1} (\nabla \overline{\psi}) \overline{\psi}
-(\mu+ \lambda) e^{\beta x_1} (\div \overline{\psi}) \overline{\psi}
-\kappa e^{\beta x_1} \frac{\overline{\zeta}}{ \tilde\theta}\nabla\overline\zeta,
\\
\overline{R}_1:=& 
e^{\beta x_1} R\tilde\theta\frac{ \overline{\vp}}{\rt+\vp^*} \overline{f}
+\frac{1}{2}  e^{\beta x_1}  R \overline{\vp}^2 \frac{(\tilde{u} + U + \psi^*)\cdot \nabla\tilde\theta}{\rt+\vp^*}
+    e^{\beta x_1}R\tilde\theta \frac{\div (\tilde{u} + U + \psi^* )}{\rt+\vp^*} \overline{\vp}^2
\notag \\
&
+e^{\beta x_1} \overline{\psi} \cdot \overline{g}
+e^{\beta x_1}  R\overline{\vp}\overline{\psi}\cdot\nabla (\tilde\theta+\Theta+\zeta^*)
+e^{\beta x_1}  R(\Theta+\zeta^*)\overline{\vp}\div\overline{\psi} 
\notag \\
&
+e^{\beta x_1}  R \overline\zeta\overline{\psi}\cdot\nabla(\tilde{\rho} + \vp^*)
+ e^{\beta x_1}  \frac{\overline{\zeta}}{ \tilde\theta} \overline{h}
-\frac{ c_v}{2} e^{\beta x_1}(\tilde{\rho} + \vp^*) (\tilde{u} + U + \psi^*)\cdot\frac{ \nabla\tilde\theta}{ \tilde\theta^2} |\overline{\zeta}|^2
\notag \\
&+\kappa e^{\beta x_1}\overline{\zeta}\frac{ \nabla\overline\zeta\cdot\nabla\tilde\theta}{ \tilde\theta^2}
-Re^{\beta x_1} (\tilde{\rho} + \vp^*)\frac{\Theta+\zeta^*}{ \tilde\theta} \overline{\zeta} \textrm{div} \overline\psi.
\end{align*}

The second term on the left hand side of \eqref{5.1-eq44} is estimated from
below by using the divergence theorem, the fact $(u_b\cdot n) \geq c>0$, $ C\geq (\tilde{\rho} + \vp^*) \geq c>0$ and boundary conditions \eqref{pbc} as
\begin{equation}\label{5.1-eq5}
\int_\Omega \div(\overline{B}_1+\overline{G}_1) \,dx 
= \int_{\partial\Omega} \frac{1}{2} e^{\beta M(x')}R\tilde\theta_b\frac{\overline{\vp}^2}{\rt+\vp^*}\Big|_{\partial\Omega}  (u_b\cdot n) d \sigma
\gtrsim \|\overline{\vp}(t,M(\cdot),\cdot) \|_{L^2(\mathbb R^2)}^2.
\end{equation}
Next we derive the lower estimate of the third term on the
left hand side of \eqref{5.1-eq44}.
We compute the term $\overline{G}_1$ as
\begin{gather*}
\begin{aligned}
(\overline{G}_1)_1
=&
\frac{1}{2} e^{\beta x_1} R\theta_+ \rho_+^{-1} u_+\overline{\vp}^2
+\frac{1}{2} e^{\beta x_1}  \left(|\overline{\psi}|^2+c_v\theta_+^{-1}|\overline{\zeta}|^2 \right)  \rho_+u_+
\\
&+e^{\beta x_1} R\theta_+\overline{\vp}\overline{\psi}_1
+e^{\beta x_1} R\rho_+\overline{\zeta}\overline{\psi}_1
+ e^{\beta x_1} \overline{R}_{2},
\end{aligned}
\\
\begin{aligned}
\overline{R}_{2}
:=& \frac{1}{2} \left\{ R\tilde\theta \frac{\tilde{u} + U + \psi^*}{\rt+\vp^*}
- R\theta_+ \rho_+^{-1} u_+ \right\}\overline{\vp}^2
\\
&+\frac{1}{2}|\overline{\psi}|^2  \left\{(\tilde{\rho} + \vp^*) (\tilde{u}_1 + U_1 + \psi^*_1)-\rho_+u_+ \right\}
\\
&+\frac{c_v}{2}|\overline{\zeta}|^2   \left\{(\tilde{\rho} + \vp^*) (\tilde{u}_1 + U_1 + \psi^*_1)\tilde\theta^{-1}-\rho_+u_+\theta_+^{-1} \right\}
\\
&+R\left\{\tilde\theta+\Theta+\zeta^*-\theta_+\right\}\overline{\vp}\overline{\psi}_1
+R\left\{\tilde\rho+\vp^*-\rho_+\right\}\overline{\zeta}\overline{\psi}_1.
\end{aligned}
\notag
\end{gather*}
Thus, by using this, the third term on the left hand side of \eqref{5.1-eq44} is rewritten as
\begin{gather*}
-\beta (\overline{G}_1)_1
=
\beta e^{\beta x_1}
\Bigl(
F(\overline{\vp},\overline{\psi}_1,\overline\zeta)  - \overline{R}_{2}
\Bigr),
\\
\overline{F}_{1}(\overline{\vp},\overline{\psi}_1,\overline\zeta)
:=
\frac{R\theta_{+}|u_{+}|}{2\rho_{+}} \overline{\vp}^2
+ \frac{\rho_+ |u_+|}{2} \overline{\psi}_{1}^{2}
+\frac{c_{v}\rho_{+}|u_{+}|}{2\theta_{+}} \overline\zeta^{2}
-R\theta_{+}\overline\vp\overline\psi_{1}-R \rho_{+} \overline\zeta\overline\psi_{1},
\nonumber
\end{gather*}
where $\overline{\psi}'$ is the second and third components of $\overline{\psi}$ defined by 
$\overline{\psi}' := (\overline{\psi}_2,\overline{\psi}_3)$.
Owing to the supersonic condition \eqref{super1}, 
the quadratic form $\overline{F}_{1}(\overline{\vp},\overline{\psi}_1)$ becomes
positive definite.
On the other hand,
the remaining terms $\overline{R}_{2}$ satisfy
\begin{equation*}
|\overline{R}_{2}|
\lesssim |(\rt - \rho_+,\ut - u_+,\tilde\theta-\theta_+,U_1,\Theta)| |\overline{\Phi}|^2 + |(\vp^*,\psi^*
,\theta^*)| |\overline{\Phi}|^2
\lesssim \epsilon |\overline{\Phi}|^2.
\end{equation*}
Therefore we get the lower estimate of the integral of
$-\beta (\overline{G}_1)_1$ as 
\begin{equation}
\int_{\Omega} -\beta (\overline{G}_1)_1 \, dx
\geq
\beta
(c - C \epsilon)
\| \overline{\Phi} \|^2_{L^2_{e, \beta} (\Omega)}.
\label{5.1-eq6}
\end{equation}
The right hand side of \eqref{5.1-eq44}
is estimated by using \eqref{uniasp1} and the Schwarz inequality as
\begin{gather*}
|\overline{R}_1| \lesssim \epsilon e^{\beta x_1}|\overline{\Phi}|^2
+\epsilon e^{\beta x_1}|\overline{\Phi}||(\nabla\overline{\psi},\nabla\overline{\zeta})|
+ e^{\beta x_1}|(\psi^\#_t,\zeta^\#_t)|  |\overline{\vp}| |(\overline{\psi},\overline{\zeta})|,
\\
|\beta (\overline{B}_1)_1| \lesssim \beta (\nu e^{\beta x_1}|(\overline{\psi},\overline{\zeta})|^2
+\nu^{-1}e^{\beta x_1}|(\nabla\overline{\psi},\nabla\overline{\zeta})|^2),
\end{gather*}
where $\nu$ is an arbitrary positive constant.
Then the integrals are estimated by \eqref{uniasp1} and the Sobolev inequality as
\begin{gather}
\int_{\Omega}
|\overline{R}_1| \, dx
\lesssim  \epsilon \| (\overline{\Phi},\nabla\overline{\psi},\nabla\overline{\zeta} )\|^2_{L^2_{e,\beta}},
\label{5.1-eq7}
\\
\int_{\Omega}
|\beta (\overline{G}_1)_1|  \, dx
\lesssim
\beta \big(\nu \|(\overline{\psi},\overline{\zeta}) \|^2_{L^2_{e, \beta}} 
+ \nu^{-1}  \| (\nabla\overline{\psi},\nabla\overline{\zeta}) \|^2_{L^2_{e, \beta}} \big).
\label{5.1-eq8}
\end{gather}

Integrate \eqref{5.1-eq4} over $\Omega$,
substitute the estimates \eqref{5.1-eq5}--\eqref{5.1-eq8} into the resulting equality
and then let $\nu$, \footnote{Here it is enough to take the same $\beta$ as in the proof of Lemma \ref{lm1}}{$\beta$}, and $\epsilon$ suitably small to obtain
\begin{equation}\label{5.1-eq14}
\frac{1}{2} \frac{d}{dt} \int e^{\beta x_1} 
\left(  \frac{R\tilde\theta}{\rt+\vp^*} \overline{\vp}^2 + (\tilde{\rho} + \vp^*) |\overline{\psi}|^2+
\frac{c_v}{\tilde\theta} (\tilde{\rho} + \vp^*) |\overline{\zeta}|^2 \right) dx 
+ c \beta \|\overline{\Phi}\|_{L^2_{e, \beta}}^2  
\leq 0. 
\end{equation}
Integrating \eqref{5.1-eq14} over $[0,T^*]$ and using the periodicity of solutions
lead to $\overline{\Phi}=0$.
The proof is complete.
\end{proof}

\subsubsection{Existence}\label{S5.1.2}

For the construction of time-periodic solutions, 
we use the time-global solution $\Phi$ in Theorem \ref{global1}.
Here we see from following \cite[Appendix B]{SZ1}
that there exist initial data satisfying the conditions in Theorem \ref{global1}.
Now we define
\[
 \Phi^k(t,x):=\Phi(t+kT^*,x)
\quad \text{for $k=1,2,3,\ldots$.} 
\]

Let us first show the following lemma.

\begin{lemma}\label{5.2}
Let \eqref{outf} and \eqref{super1}.
For $\beta>0$ being in Theorem \ref{global1} and any $T^*>0$,
there exists $\sigma_0=\sigma_0(\Omega)>0$ and $C_0=C_0(\Omega)>0$ 
depending on $\Omega$ but independent of $k$ and $T^*$ such that
\begin{equation}\label{exiapes0}
\|(\Phi-\Phi^k)(t)\|_{\lteasp{\beta}}\leq C_0e^{-\sigma_0 t}
\quad \text{for $k=1,2,3,\ldots$.}
\end{equation}
\end{lemma}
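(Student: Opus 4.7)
The plan rests on the observation that both $\Phi$ and $\Phi^k$ solve the \emph{autonomous} initial--boundary value problem \eqref{eq-pv}--\eqref{pbc} (only with different initial data, namely $\Phi_0$ and $\Phi(kT^*,\cdot)$). Consequently, the difference $\overline{\Phi}^k := \Phi - \Phi^k = (\overline{\vp}^k, \overline{\psi}^k, \overline{\zeta}^k)$ satisfies a system of precisely the same structure as \eqref{5.1-eq1}, obtained by subtracting the two copies of \eqref{eq-pv1}--\eqref{eq-pv3} and rearranging into a linear-like equation for $\overline{\Phi}^k$ with coefficients depending on $(\tilde\rho+\vp, \tilde u + U + \psi, \tilde\theta+\Theta+\zeta)$ and source terms containing $\partial_t \Phi^k$ (the analogue of the $\psi^\#_t, \zeta^\#_t$ terms appearing in $\overline g, \overline h$ in the proof of Proposition \ref{5.1}).

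First I would verify the smallness hypothesis analogous to \eqref{uniasp1} for $\Phi$ and $\Phi^k$: by Theorem \ref{global1} (applied with $m=5$ so that Sobolev embedding controls $L^\infty$ norms of $\partial_t\Phi$ through $\|\partial_t\Phi\|_{H^3}$), we have
\[
\|\Phi(t)\|_{H^3} + \|\partial_t\vp(t)\|_{H^2} + \|(\partial_t\psi,\partial_t\zeta)(t)\|_{H^1} \lesssim \bigl(\|\Phi_0\|_{\lteasp{\beta}}+\|\Phi_0\|_{H^5}\bigr)e^{-\sigma t/2} + \sqrt{\delta},
\]
and the same bound for $\Phi^k$ (with $\Phi_0$ replaced by $\Phi(kT^*,\cdot)$, but with an improved decay factor $e^{-\sigma(t+kT^*)/2}$). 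Choosing the initial data and $\delta$ small enough, this places us in the regime where the energy identities \eqref{5.1-eq2}--\eqref{5.1-eq4} can be applied verbatim to $\overline\Phi^k$.

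Next I would derive the weighted $L^2$ energy identity following exactly the computation leading to \eqref{5.1-eq14}. The multiplier strategy is identical: multiply the equation for $\overline\vp^k$ by $e^{\beta x_1}R\tilde\theta\overline\vp^k/(\tilde\rho+\vp)$, the momentum equation by $e^{\beta x_1}\overline\psi^k$, and the energy equation by $e^{\beta x_1}\overline\zeta^k/\tilde\theta$. The outflow boundary $(u_b\cdot n)>0$ produces a nonnegative boundary term, the weight produces the $\beta$-coercive bulk term via the supersonic quadratic form $\overline F_1$, and the diffusive terms give $\|(\nabla\overline\psi^k,\nabla\overline\zeta^k)\|_{L^2_{e,\beta}}^2$. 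The source contributions from $\partial_t\Phi^k$ are estimated by Sobolev embedding using the smallness shown above, yielding
\[
\frac{d}{dt}\|\overline\Phi^k(t)\|_{\lteasp{\beta}}^2 + 2\sigma_0\|\overline\Phi^k(t)\|_{\lteasp{\beta}}^2 \leq 0
\]
for some $\sigma_0=\sigma_0(\Omega)>0$ independent of $k$ and $T^*$. Grönwall's inequality then gives $\|\overline\Phi^k(t)\|_{\lteasp{\beta}}\leq e^{-\sigma_0 t}\|\overline\Phi^k(0)\|_{\lteasp{\beta}}$, and the initial value is controlled uniformly in $k$ by $\|\overline\Phi^k(0)\|_{\lteasp{\beta}}\leq \|\Phi_0\|_{\lteasp{\beta}}+\|\Phi(kT^*)\|_{\lteasp{\beta}}\leq 2C_0$ via the uniform bound \eqref{bound1}.

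The main obstacle is bookkeeping rather than conceptual: one must confirm that every source term arising from the subtraction $\Phi-\Phi^k$ can be absorbed into either the coercive left-hand side (using $\beta$-smallness and $\epsilon$-Young) or the right-hand side via the smallness of $\|\Phi\|_{H^3}$, $\|\Phi^k\|_{H^3}$ and their time derivatives. In particular, the terms containing $\partial_t\Phi^k$, which have no analogue in the pure uniqueness argument at a fixed time but appear because $\Phi$ and $\Phi^k$ are evaluated at different times, must be handled by the uniform-in-$k$ bound $\|\partial_t\Phi^k(t)\|_{L^\infty}\lesssim\epsilon_0$ obtained from Theorem \ref{global1} at the level $m=5$; this is precisely the point at which the independence of $C_0$ and $\sigma_0$ from both $k$ and $T^*$ is secured.
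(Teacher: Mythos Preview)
Your proposal is correct and follows essentially the same approach as the paper's proof: subtract the two solutions, repeat the weighted $L^2$ energy computation of Proposition \ref{5.1} on the difference to obtain the differential inequality, and conclude exponential decay via Gr\"onwall with the initial value bounded uniformly in $k$ by \eqref{bound1}. One minor point: invoking Theorem \ref{global1} at the level $m=5$ is overkill---$m=3$ already yields $\|\partial_t(\psi^k,\zeta^k)\|_{H^1}$ via \eqref{bound1}, which is exactly what is used to absorb the $\psi^\#_t,\zeta^\#_t$-type source terms in \eqref{5.1-eq7}.
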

\begin{proof}
For any $k$, $k'$, let $\overline{\Phi} = (\overline{\vp}, \overline{\psi}, \overline{\zeta} )= \Phi^k - \Phi^{k'}$ satisfy the system   
\begin{subequations} \label{5.2-eq1}
\begin{gather}
\overline{\vp}_{t} + (\tilde{u} + U + \psi^k ) \cdot \nabla \overline{\vp}  + (\tilde{\rho} + \vp^k ) \div \overline{\psi}  
=f^{k,k'}, \label{5.2-eq1-1}
\\
(\tilde{\rho} + \vp^k) \{ \overline{\psi}_{t} + (\tilde{u} + U + \psi^k) \cdot \nabla \overline{\psi}  \} - L \overline{\psi}  + R(\tilde\theta+\Theta+\zeta^k)\nabla \overline{\vp}+R(\tilde{\rho} + \vp^k)\nabla\overline\zeta 
= g^{k,k'}, \label{5.2-eq1-2}
\\
c_v(\tilde{\rho} + \vp^k)\big( \overline\zeta_t+(\tilde{u} + U + \psi^k)\cdot\nabla \overline\zeta\big)+R(\tilde{\rho} + \vp^k)(\tilde\theta+\Theta+\zeta^k)\textrm{div} \overline\psi
-\kappa\Delta \overline\zeta= h^{k,k'}.
\label{5.2-eq1-3}
\end{gather}
The boundary condition for $(\overline{\psi}, \overline{\zeta} )$ 
follows from (\ref{pbc}) as
\begin{gather*}
\overline{\psi} (t,M(x'),x') = 0,\quad\overline{\zeta} (t,M(x'),x') = 0.
\end{gather*}
\end{subequations}
Here $f^{k,k'},g^{k,k'}$ and $h^{k,k'}$ are defined by 
\begin{align*}
f^{k,k'} &:=
- \nabla\tilde{\rho} \cdot \overline{\psi}  - \ut'_1 \overline{\vp}  - \overline{\vp} \div U - \overline{\psi} \cdot \nabla \vp^{k'} - \overline{\vp} \div \psi^{k'},
\\
g^{k,k'} &:= - \left(\{(\rt\!+\!\vp^k)\overline{\psi}+\overline{\vp} \psi^{k'} \}\!\cdot\! \nabla \right) (\tilde{u}\!+\!U) - \overline{\varphi} ((\tilde{u}\!+\!U) \cdot \nabla) (\tilde{u}\!+\!U) -R \overline\zeta\nabla\rt\!-R \overline\vp\nabla (\tilde{\theta}+\Theta)
\\
&\qquad - \overline{\vp}  \psi^{k'}_t - \{(\rt+\vp^k)(\ut+U+\psi^k)-(\rt+\vp^{k'})(\ut+U+\psi^{k'}) \}\cdot \nabla \psi^{k'}
\\ 
&\qquad - R \overline\zeta\nabla\vp^{k'}-R \overline\vp\nabla \zeta^{k'},
\\
h^{k,k'}&:= - c_v(\rt\!+\!\vp^k) (\overline\psi \cdot \nabla) (\tt+\Theta) - c_v\overline\vp\! (\psi^{k'} \cdot \nabla) (\tt+\Theta)- c_v\overline\vp ((\ut+U)\cdot \nabla) (\tt+\Theta)
\\ 
&\qquad - R\rt\overline\zeta\div(\ut+U)
-R\overline\vp(\tilde\theta+\Theta+\zeta^k)\div(\ut+ U)-R\vp^{k'}\overline\zeta\div(\ut+ U)
\\ 
&\qquad +4\mu\mathcal{D}(\overline\psi):\mathcal{D}(\ut+U)+2\lambda\div\overline\psi\div(\ut+U)-c_v\overline{\vp}  \zeta^{k'}_t
\\ 
&\qquad - c_v\{(\rt+\vp^k)(\ut+U+\psi^k)-(\rt+\vp^{k'})(\ut+U+\psi^{k'}) \}\cdot \nabla \zeta^{k'}
\\ 
&\qquad -R\{(\rt+\vp^k)(\tilde\theta+\Theta+\zeta^k)-(\rt+\vp^{k'})
(\tilde\theta+\Theta+\zeta^{k'}) \}\textrm{div} \psi^{k'}
\\ 
&\qquad +2\mu\left(\mathcal{D}( \psi^k)+\mathcal{D}(\psi^{k'})\right):\mathcal{D}(\overline\psi)+\lambda\left(\textrm{div} \psi^k+\div\psi^{k'}\right)\div(\overline\psi).
\end{align*}

Repeat exactly the proof of Proposition \ref{5.1} 
with $k$ in place of $*$, $k'$ in place of $\#$, 
and \eqref{uniasp1} in place of \eqref{bound1} with small initial data,
we obtain
\begin{equation*}
\frac{1}{2} \frac{d}{dt} \int e^{\beta x_1} \left(\frac{R\tilde\theta}{\rt+\vp^k} \overline{\vp}^2 + (\tilde{\rho} + \vp^k) |\overline{\psi}|^2+
\frac{c_v}{\tilde\theta} (\tilde{\rho} + \vp^k) |\overline{\zeta}|^2 \right) dx 
+ c \beta \|\overline{\Phi}\|_{L^2_{e, \beta}}^2  \leq 0 ,  \\
\end{equation*}
which implies \eqref{exiapes0} once we take $k' =0$. The proof is complete.
\end{proof}

Now we can construct the time-periodic solutions:
\begin{proposition}\label{5.3}
Let \eqref{outf} and \eqref{super1} hold, and $m = 3,4,5$.
For $\beta>0$ being in Theorem \ref{global1} and any $T^*>0$,
there exists a constant $\epsilon>0$ independent of $T^*$ such that 
if $\delta \leq \epsilon$, then the problem \eqref{eq-pv1}--\eqref{pbc}
has a time-periodic solution $\Phi^*\in {\mathcal X}^m_\beta(0,T^*)$
with a period $T^*>0$. Furthermore, it satisfies
\begin{equation}\label{apes3}
\sup_{t\in[0,T^*]}(\|\Phi^*(t)\|_{H^m}
+\|\partial_t \vp^*(t)\|_{H^{m-1}}
+\|(\partial_t \psi^*,\partial_t \zeta^* )(t)\|_{H^{m-2}})
\leq C_0\delta,
\end{equation}
where $C_0=C_0(\Omega)>0$ is a constant depending on $\Omega$ but independent of $T^*$.
\end{proposition}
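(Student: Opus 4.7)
The plan is to build the time-periodic solution $\Phi^*$ as the limit of the translates $\Phi^k(t,x) := \Phi(t+kT^*,x)$ of the time-global solution $\Phi$ provided by Theorem \ref{global1}. First I would collect uniform bounds: substituting $\tau = t + kT^*$ into \eqref{bound1} and \eqref{apes0}, for $k$ sufficiently large (so that the exponentially decaying initial-data contribution is absorbed into the $C_0\delta$ source term) one obtains the uniform-in-$k$ estimates
\[
\sup_{t \in [0,T^*]} \bigl( \|\Phi^k(t)\|_{H^m}^2 + \|\partial_t \Phi^k(t)\|_{H^{m-2}}^2 \bigr) + \int_0^{T^*} D_{m,\beta}(\tau)\, d\tau \lesssim \delta.
\]

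Next, I would combine Lemma \ref{5.2} with the translation identity $(\Phi^j - \Phi^k)(t,\cdot) = (\Phi - \Phi^{k-j})(t + jT^*,\cdot)$ for $j < k$, which yields
\[
\sup_{t \in [0,T^*]} \|(\Phi^j - \Phi^k)(t)\|_{\lteasp{\beta}} \leq C_0 e^{-\sigma_0 jT^*} \to 0 \quad \text{as } j \to \infty,
\]
so that $\{\Phi^k\}$ is Cauchy in $C([0,T^*]; \lteasp{\beta}(\Omega))$. Interpolation of the Gagliardo--Nirenberg type between this Cauchy property and the uniform $H^m$ bound promotes the convergence to $C([0,T^*]; H^{m-1}(\Omega))$; I denote the resulting limit by $\Phi^*$.

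Then I would verify that $\Phi^*$ satisfies all the properties claimed by the proposition. The $L^\infty(0,T^*; H^m)$-membership, the time-integrated $\lteasp{\beta}$-controls on $\nabla\psi^*, \nabla\zeta^*$, and the bound \eqref{apes3} follow from weak-$\ast$ and weak lower semicontinuity of the norms applied to the uniform bounds on $\Phi^k$. The estimate on $\partial_t \Phi^*$ is read off directly from \eqref{eq-pv1}--\eqref{eq-pv3}, which express the time derivative in terms of at most second spatial derivatives of $\Phi^*$. Strong convergence in $C([0,T^*]; H^{m-1})$ together with the Sobolev embeddings (using $m \geq 3$) is sufficient to pass to the limit in every nonlinear term of \eqref{eq-pv1}--\eqref{eq-pv3}, and the homogeneous boundary condition \eqref{pbc} survives via the trace theorem. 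Periodicity is immediate from the identity $\Phi^k(0,\cdot) = \Phi(kT^*,\cdot) = \Phi^{k-1}(T^*,\cdot)$, which in the $H^{m-1}$ limit reads $\Phi^*(0,\cdot) = \Phi^*(T^*,\cdot)$.

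The main obstacle I anticipate is the functional-analytic bookkeeping required to place $\Phi^*$ inside the composite space $\mathcal{X}^{\text{e}}_{m,\beta}(0,T^*) = X^{\text{e}}_{m-1,\beta}(0,T^*) \cap L^\infty(0,T^*; H^m)$: one must simultaneously track the weighted $\lteasp{\beta}$ norms of $\Phi^*$ and of its spatial gradients (both pointwise-in-time and time-integrated), and secure time-continuity of $\Phi^*$ into $H^{m-1}$ and $\lteasp{\beta}$ at once. Everything else is a systematic combination of the a priori machinery developed in Section \ref{sec3} with the exponential $\lteasp{\beta}$-contraction supplied by Lemma \ref{5.2}.
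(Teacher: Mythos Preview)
Your proposal is correct and follows the paper's strategy almost exactly: translate the global solution from Theorem~\ref{global1}, use Lemma~\ref{5.2} to get a Cauchy sequence in $C([0,T^*];\lteasp{\beta})$, upgrade via Gagliardo--Nirenberg interpolation against the uniform $H^m$ bound, pass to the limit in the equations, and read periodicity off from $\Phi^{k}(T^*,\cdot)=\Phi^{k+1}(0,\cdot)$.

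The one genuine difference is in how you obtain the sharp bound \eqref{apes3}. You exploit directly that the exponentially decaying term in \eqref{bound1} disappears as $k\to\infty$, so weak lower semicontinuity of $\|\cdot\|_{H^m}$ at each fixed $t$ already yields $\|\Phi^*(t)\|_{H^m}^2\le C_0\delta$. The paper instead only records the cruder bound $\|\Phi^*(t)\|_{H^m}\lesssim_\Omega \|\Phi_0\|_{\lteasp{\beta}}+\|\Phi_0\|_{H^m}+\delta$ (its display \eqref{exies8}), and then removes the $\Phi_0$-dependence by a second pass: it produces a special initial datum $\Phi_0^{\#}$ with $\|\Phi_0^{\#}\|_{\lteasp{\beta}}+\|\Phi_0^{\#}\|_{H^5}\lesssim\delta$, repeats the construction, and invokes the uniqueness result Proposition~\ref{5.1} to identify the two periodic solutions. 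Your route is shorter and avoids appealing to uniqueness at this stage; the paper's detour, on the other hand, makes the role of Proposition~\ref{5.1} explicit and confirms that the periodic solution is independent of the choice of $\Phi_0$, which is conceptually reassuring even if not strictly needed for \eqref{apes3}.
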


\begin{proof}
First of all, applying Theorem \ref{global1} to the initial-boundary value problem \eqref{eq-pv},
we obtain a time-global solution $\Phi$ to \eqref{eq-pv} with
\eqref{bound1} and \eqref{exiapes0}.

Recall that $\Phi^k(t,x):=\Phi(t+kT^*,x)$ for any $T^* > 0$ and $k=1,2,3,\ldots$. Let us first prove that $\{\Phi^k\}$ is a Cauchy sequence in the Banach space 
$C([0,T^*];H^{m-1}(\Omega)) \cap C^1([0,T^*];H^{m-3}(\Omega))$.
For $k>k'$, one can see from \eqref{exiapes0} that 
for $k>k'$, there holds
\begin{align*}
\sup_{t \in [0,T^*]} \|(\Phi^k-\Phi^{k'})(t) \|_{\lteasp{\beta}}
&= \sup_{t \in [0,T^*]} \|(\Phi(t+kT^*)-\Phi(t+k'T^*)\|_{\lteasp{\beta}}
\\
&= \sup_{t \in [k'T^*,(k'+1)T^*]} \|\Phi(t)-\Phi(t+(k-k')T^*)\|_{\lteasp{\beta}}
\\
& \lesssim_{\Omega} e^{-\sigma k'T^*}.
\end{align*}
This and \eqref{bound1} together with the Gagliardo-Nirenberg inequalities \eqref{GN1} leads to 
\begin{align*}
\sup_{t \in [0,T^*]} \|(\Phi^k-\Phi^{k'})(t) \|_{H^{m-1}}
& \lesssim \sup_{t \in [0,T^*]} \|(\Phi^k-\Phi^{k'})(t)\|_{H^m}^{1-1/m}
\|(\Phi^k-\Phi^{k'})(t)\|_{\lteasp{\beta}}^{1/m}
\notag \\
& \lesssim_{\Omega} e^{-\sigma k'T^*/m}. 
\end{align*}
So, what is left is to show that $\{\Phi^k\}$ is a Cauchy sequence 
in $C^1([0,T^*];H^{m-3}(\Omega))$.

We have already known from the proof of Lemma \ref{5.2} that $\Phi^k-\Phi^{k'}$ satisfies \eqref{5.2-eq1}. From this and \eqref{bound1}, we have
\[
|\partial_t (\Phi^k-\Phi^{k'})| \lesssim
|((\Phi^k-\Phi^{k'}),\nabla(\Phi^k-\Phi^{k'}),\nabla^2(\Phi^k-\Phi^{k'}))|
\]
which gives
\begin{equation*}
\|\partial_t (\Phi^k-\Phi^{k'})\| \lesssim_\Omega e^{-\sigma k'T^*/m}.
\end{equation*}
In the case $m=3$, this estimate is sufficient.
For the case $m=4,5$, this estimate and \eqref{bound1}
together with Gagliardo-Nirenberg inequalities leads to 
\begin{align}
{}&
\sup_{t \in [0,T^*]} \|\partial_t(\Phi^k-\Phi^{k'})(t) \|_{H^{m-3}}
\lesssim_{\Omega} e^{-\sigma k'T^*/\{m(m-3)\}}.
\notag
\end{align}
Hence, we see that  $\{\Phi^k\}$ is a Cauchy sequence 
and thus there exists a limit  $\Phi^*$ such that
\begin{equation}\label{converge1}
\Phi^k \to \Phi^* \quad \text{in} \ \
C([0,T];{\lteasp{\beta}}(\Omega)) \cap
\bigcap_{i=0}^1C^i([0,T^*];H^{m-1-2i}(\Omega)).
\end{equation}
It is straightforward to check that 
the limit $\Phi^*$ satisfies \eqref{eq-pv}.

Then we can check that $\Phi^* \in {\cal X}^m_\beta(0,T)$ as follows.
On the other hand, by a standard argument,
$\Phi^k(t)$ converges to $\Phi^*(t)$ weakly in $H^m(\Omega)$ 
for each $t \in [0,T^*]$ and also
\begin{equation}\label{exies8}
\sup_{t\in [0,T^*]}\|\Phi^*(t)\|_{H^m} 
\lesssim_\Omega \|\Phi_0\|_{\lteasp{\beta}}+\|\Phi_0\|_{H^m}+\delta
\end{equation}
follows from \eqref{bound1}. 
Hence, we conclude $\Phi^* \in L^\infty([0,T^*];H^{m}(\Omega))$.
It is also seen from the system \eqref{eq-pv} that
$\partial_t \vp^* \in L^\infty([0,T^*];H^{m-1}(\Omega))$,
$\partial_t (\psi^*,\zeta^*) \in L^\infty([0,T^*];H^{m-2}(\Omega))$,
\begin{gather}
\sup_{t\in [0,T^*]}\|\partial_t \vp^*(t)\|_{H^{m-1}} 
+\sup_{t\in [0,T^*]}\|(\partial_t \psi^*,\partial_t \zeta^*)(t)\|_{H^{m-2}} 
\lesssim_\Omega \|\Phi_0\|_{\lteasp{\beta}}+\|\Phi_0\|_{H^{m}}+\delta.
\label{exies9}
\end{gather}

Let us show that $\Phi^*$ is a time-periodic function with period $T^*>0$.
The sequences $\Phi^k(T^*,x)$ and $\Phi^{k+1}(0,x)$ 
converge to $\Phi^*(T^*,x)$ and $\Phi^*(0,x)$, respectively, 
as $k$ tends to infinity.
Notice that $\Phi^k(T^*,x)=\Phi^{k+1}(0,x)$ holds and so does $\Phi^*(T^*,x)=\Phi^*(0,x)$.
Hence, we have constructed a time-periodic solution $\Phi^*$ 
to the problem \eqref{eq-pv1}--\eqref{pbc} 
in the function space ${\mathcal X}^{\text{e}}_{m,\beta}(0,T^*)$
in which the uniqueness has been shown.
What is left is to prove the estimate \eqref{apes3}.
We can find  an initial data $\Phi_0^{\#} \in H^{5}$ which satisfies the compatibility condition \eqref{cmpa0} and
$\|\Phi_0^{\#}\|_{\lteasp{\beta}}+\|\Phi_0^{\#}\|_{H^5} \lesssim \delta$ in much the same way as in the proof of \cite[Lemma B.1]{SZ1}.
For the initial data $\Phi_0^{\#} $, we have another time-periodic solution by the above method.
However, Proposition \ref{5.1} together with estimates \eqref{exies8} and \eqref{exies9}
ensure that these periodic solutions are same.
Hence, \eqref{apes3} follows from plugging $\Phi_0=\Phi_0^{\#}$ into \eqref{exies8} and \eqref{exies9}.
The proof is complete.
\end{proof}

\subsection{Stationary solutions}\label{S5.2}

Now we show that the time-periodic solutions constructed in Subsection \ref{S5.1}
are time-independent, which gives us Theorem \ref{th4}.

\begin{proof}[Proof of Theorem \ref{th4}]
Proposition \ref{5.3} ensures the existence of
time-periodic solutions $\Phi^*$
of problem \eqref{eq-pv1}--\eqref{pbc} for any period $T^*$.
We remark that the smallness assumption for $\delta$ is independent of the period $T^*$.
Hence, one can have time-periodic solutions $\Phi^*$ with the period $T^*$ and
$\Phi^*_l$ with the period $T^*/2^l$ for $l \in \mathbb N$ 
under the same assumption for $\delta$.
Furthermore, $\Phi^*=\Phi^*_l$ follows from 
Proposition \ref{5.1}, since $\Phi^*$ and $\Phi^*_l$ are 
the time-periodic solutions with the period $T^*$ and satisfy \eqref{apes3}. 
Hence, we see that
\[
\Phi^*\left(0,x\right)=
\Phi^*\left(\frac{i}{2^l}T^*,x\right)
\quad \text{for $i=1,2,3,\ldots,2^l$ and $l=0,1,2,\ldots$.} 
\]
Because the set 
$\cup_{l \geq 0} \{{i}/{2^l} \ ; \ i=1,2,3,\ldots,2^l\}$
is dense in $[0,T^*]$,
we see from the continuity of $\Phi^*$
that $\Phi^*$ is independent of $t$.
Hence, $\Phi^s=\Phi^*$
is the desired solution to 
the stationary problem corresponding to the problem \eqref{eq-pv}.
\end{proof}

\subsection{Stability with exponential weight functions}\label{S5.3}

We prove the stability of stationary solutions, which gives us Theorem \ref{th5}.

\begin{proof}[Proof of Theorem \ref{th5}]
Theorem \ref{global1} and Lemma \ref{5.2} ensure that the initial--boundary value problem \eqref{eq-pv} has 
a unique time-global solution satisfying \eqref{bound1} and \eqref{exiapes0}
if $\|\Phi_0\|_{\lteasp{\beta}}+\|\Phi_0\|_{H^m}$ and $\delta$ are small enough.
So, it suffices to show that this time-global solution $\Phi$ converges 
to the stationary solution solution $\Phi^s$ exponentially fast 
as $t$ tends to infinity.
Passing the limit $k\to \infty$ in \eqref{exiapes0},
we have $\|(\Phi-\Phi^s)(t)\|_{\lteasp{\beta}} \lesssim_\Omega e^{-\sigma t}$
thanks to \eqref{converge1} and $\Phi^s=\Phi^*$.
This and \eqref{bound1} together with Gagliardo-Nirenberg inequalities \eqref{GN1} 
and Sobolev's inequality \eqref{sobolev2} lead to 
\begin{equation*}
\|(\Phi-\Phi^s)(t)\|_{L^{\infty}} \lesssim_\Omega e^{-\sigma t},
\end{equation*}
where $\sigma$ is a positive constant independent of $t$.
The proof is complete.
\end{proof}

\subsection{Corollary}

We discuss briefly the proof of Corollary \ref{cor2}.

\begin{proof}[Proof of Corollary \ref{cor2}]
From Corollary \ref{cor1}, we have an improved estimate \eqref{bound1} 
with constants $C_0=C_0(\beta)$ and $\sigma=\sigma(\beta)$ independent of $\Omega$. 
In the same way as in Subsections \ref{S5.1}--\ref{S5.3} with the improved estimate, 
we can conclude Corollary \ref{cor2}.
\end{proof}

\section{Stability with no weight function}\label{S6}

In this section we discuss Theorem \ref{th3}, which gives the stability of $(\rho^s, u^s, \theta^s)$ in $H^3$.
Here we do not assume $(\rho_0-\rho^s, u_0-u^s, \theta_0-\theta^s) \in \lteasp{\beta}$.

For $(\rt, \ut, \tt)$ in Proposition \ref{ex-st},
$U, \Theta$ in \eqref{ExBdry0}, and $\Phi^s$ in Theorem \ref{th4}, let us set
\begin{equation*}
(\rho^s, u^s, \theta^s)(x) : = (\rt, \ut, \tt)(\tilde{M}(x)) + (0,U, \Theta)(x)+ \Phi^s(x) .
\end{equation*}
Then it is obvious that $(\rho^s, u^s, \theta^s)$ satisfies \eqref{snse}.

We also introduce the perturbations
\begin{gather*}
 (\vp,\psi, \zeta)(t,x)
:=
(\rho, u, \theta)(t,x)
-
(\rho^s, u^s, \theta^s)( x),
\ \; \text{where} \ \;
\psi = (\psi_1,\psi_2,\psi_3). 
\end{gather*}
%
For notational convenience, we use norms $E_{m,0}$ and $D_{m,0}$,
which are defined in the same way as \eqref{Ekbeta-def} and \eqref{Dkbeta-def}, 
with $\Phi = ( \vp, \psi, \zeta)(t,x)$ being replaced by the functions defined right above. 
Furthermore, we also define 
\[
N (T) := \sup_{0\leq t \leq T} \|\Phi (t)\|_{H^3}.  
\]
We will see that $\Phi = ( \vp, \psi, \zeta)(t,x)$ satisfies the bound $N (T) \ll1$
by assuming the smallness of the initial data $( \vp, \psi, \zeta)(0,x)$.

Owing to (\ref{cnse}),
the perturbation $(\vp,\psi, \zeta)$ satisfies the system of
equations
\begin{subequations}
\label{nowe-eq-pv}
\begin{gather}
\vp_t + u \cdot \nabla \vp + \rho \div \psi
= f,
\label{nowe-eq-pv1}
\\
\rho \{ \psi_t + (u \cdot \nabla) \psi \}
- L \psi + R\theta\nabla \vp+R\rho\nabla\zeta
= g ,
\label{nowe-eq-pv2}
\\
c_v\rho\big(\zeta_t+{u}\cdot\nabla\zeta\big)+R\rho\theta\textrm{div}\psi
-\kappa\Delta\zeta=2\mu|\mathcal{D}(\psi)|^2+\lambda(\textrm{div}\psi)^2+
h.
\label{nowe-eq-pv3}
\end{gather}
The boundary and initial conditions for $(\vp,\psi,\zeta)$ 
follow from (\ref{ini1})-(\ref{bc3}), and (\ref{bb1}) as
\begin{gather}
\psi(t,M(x'),x') = 0,~~\zeta(t,M(x'),x') = 0,
\label{nowe-pbc}
\\
(\vp,\psi,\zeta)(0,x)
=(\vp_0, \psi_0, \zeta_0)(x)
:= (\rho_0, u_0,\theta_0)(x) - (\rho^s, u^s, \theta^s)(x).
\label{nowe-pic}
\end{gather}
\end{subequations}

Here $L \psi$, $f$,  $g$, and $h$ are defined by
\begin{align*}
L \psi &:= \mu \Delta \psi + (\mu + \lambda) \nabla \div \psi,
\\
f &:= - \psi \cdot\nabla \rho^s
-\vp \div u^s    ,
\\
g&:=- \rho \psi \cdot \nabla u^s - \vp u^s \cdot \nabla u^s 
-R \zeta\nabla {\rho}^s-R\vp\nabla\theta^s,
\\
h&:= - c_v\rho (\psi \cdot \nabla) \theta^s - c_v\vp (u^s\cdot \nabla) \theta^s- R\rho^s\zeta\div u^s
\\
&\qquad-R\vp\theta^s\div u^s
+4\mu\mathcal{D}(\psi):\mathcal{D} (u^s) +2\lambda\div\psi\div u^s,
\end{align*}

In order to prove Theorem \ref{th3}, it suffices to show Proposition \ref{nowe-apriori1} below.
Indeed, the global solvability follows from the continuation argument used in \cite{m-n83}.
Furthermore, the decay property also can be obtained 
in much the same way as in Section 5 of \cite{kg06}.

\begin{proposition}\label{nowe-apriori1}
Let \eqref{outf} and \eqref{super1}  hold.
Suppose that $\Phi \in X_3 (0,T)$
be a solution to the initial--boundary value problem \eqref{nowe-eq-pv}
for some positive constant $T$.
Then there exists a positive constant 
$\ep_0=\ep_0(\Omega)$ depending on $\Omega$ such that if $N (T) + \dels \le \ep_0$, 
then the following estimate holds:
\begin{equation}\label{nowe-apes1}
E_{3,0}(t)+\int_0^t D_{3,0}(\tau)\,d\tau
\leq C_0\hs{3}{\Phi_0}^2
\quad \text{for $t \in [0,T]$} .
\end{equation}
Here $C_0(\Omega)$ is a positive constant 
depending on $\Omega$ but independent of $t$.
\end{proposition}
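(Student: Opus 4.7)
The plan is to transcribe the proof of Proposition~\ref{apriori1} to the weightless setting. Three features simplify the analysis here: the spatial weight $e^{\beta x_1}$ is absent (so $\beta=0$); there is no exponential-in-time decay target (so $\sigma=0$); and the reformulated system \eqref{nowe-eq-pv} has no inhomogeneous forcing $F,G,H$, because $(\rho^s,u^s,\theta^s)$ is itself the exact stationary solution rather than the profile $(\rt,\ut,\tt)\circ\tilde M$ plus a boundary-data extension. Consequently, every intermediate estimate of Section~\ref{sec3} loses its $\delta e^{\sigma\tau}$ and $\delta^{-1}\|(F,G,H)\|^2$ contributions, and each $(N_\beta(T)+\delta+\sigma)$ coefficient collapses to $(N(T)+\delta)$.

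For the $L^2$ estimate I would apply the energy form $\rho\mathcal{E} := R\rho\theta^s\eta(\rho^s/\rho)+\tfrac{\rho}{2}|\psi|^2+c_v\rho\theta^s\eta(\theta/\theta^s)$ and integrate unweighted over $\Omega$. The outflow condition \eqref{outf} combined with \eqref{nowe-pbc} produces the positive boundary trace $\|\vp|_{\partial\Omega}\|^2_{L^2(\mathbb R^2)}$, while viscosity yields $\|(\nabla\psi,\nabla\zeta)\|^2$. Writing $\nabla(\rho^s,u^s,\theta^s)=\nabla(\rt,\ut,\tt)\circ\tilde M+\nabla(U,\Theta,\Phi^s)$, the profile part is bounded by $\delta e^{-\alpha x_1}$ by \eqref{stdc1} and handled via Hardy's inequality \eqref{hardy}, and the $(U,\Theta,\Phi^s)$ part is small in $L^3\cap L^\infty$ by \eqref{ExBdry0}, \eqref{ExBd0}, and Theorem~\ref{th4}; together these absorb the remainder at cost of $(N(T)+\delta)(\|\nabla\Phi\|^2+\|\vp|_{\partial\Omega}\|^2)$. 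Next I would repeat Lemmas~\ref{lm2}--\ref{lm3} to control $\|\pd_t\Phi\|$ and $\|(\pd_t\nabla\psi,\pd_t\nabla\zeta)\|$; Lemmas~\ref{lm2-hat} and \ref{lm4-hat} for tangential and normal spatial derivatives via the flattened coordinates and the combined-equation trick; and Lemmas~\ref{lm1-C}--\ref{lm2-C} together with Lemma~\ref{lm1-E} for the Cattabriga and elliptic bounds on $\nabla^{k+2}\psi$, $\nabla^{k+2}\zeta$, and $\nabla^{k+1}\vp$. Running the layered induction of Subsection~\ref{ss-comp-apriori} then yields
\[
\frac{d}{dt}E_{3,0}(t)+c\,D_{3,0}(t)\leq C(N(T)+\delta)\,D_{3,0}(t),
\]
and absorbing the right-hand side for $N(T)+\delta$ small and integrating over $[0,t]$ gives \eqref{nowe-apes1}.

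The main obstacle is that without the spatial weight the $L^2$ estimate no longer dissipates $\|\Phi\|^2$ itself; only $\|(\nabla\psi,\nabla\zeta)\|^2$ and the boundary trace $\|\vp|_{\partial\Omega}\|^2$ are dissipated at that level. Every remainder of the form $\int|\nabla(\text{stationary})||\Phi|^2\,dx$ must therefore be converted---via Hardy's inequality against the profile's exponential decay, or via the boundary vanishing of $\psi,\zeta$ combined with the compact support of $\nabla U,\nabla\Theta$---into a multiple of $\|\nabla\Phi\|^2+\|\vp|_{\partial\Omega}\|^2$. This precludes closing the estimates one derivative-level at a time and forces the same simultaneous chaining of $m=0,1,2,3$ already developed for Proposition~\ref{apriori1}, since $\|\nabla\vp\|^2$ enters the dissipation only through $D_{m,0}$ for $m\geq 1$. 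Apart from this bookkeeping, the proof is essentially a careful transcription of Section~\ref{sec3} under the three simplifications noted above.
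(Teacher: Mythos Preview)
Your proposal is correct and matches the paper's approach essentially line for line: the paper's proof of Proposition~\ref{nowe-apriori1} explicitly states that one replaces the weighted ingredients \eqref{ea0}, \eqref{ec0}, \eqref{ed0}, \eqref{eh0-E}--\eqref{ej0-Ez}, \eqref{em0-comp} by their unweighted analogues \eqref{nowe-ea0}, \eqref{nowe-ec0}, \eqref{nowe-ed0}, \eqref{5eh0-E}--\eqref{5ej0-Ez}, \eqref{nowe-em0-comp} and reruns the combination scheme of Subsection~\ref{ss-comp-apriori}. Your identification of the key subtlety---that without the weight the $L^2$ estimate no longer dissipates $\|\Phi\|^2$, so each $\int|\nabla(\rho^s,u^s,\theta^s)||\Phi|^2\,dx$ must be converted to a multiple of $\|\nabla\Phi\|^2+\|\vp|_{\partial\Omega}\|^2$ via Hardy's inequality for the profile part and an integration by parts plus the $L^3$-smallness of $\Phi^s$ for the correction part---is exactly the content of Lemma~\ref{nowe-lm1}.
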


We remark that the essential difference between the problems \eqref{eq-pv} and \eqref{nowe-eq-pv} 
is whether the inhomogeneous terms $F$ and $G$ appear or not.
Therefore the proof of Proposition \ref{nowe-apriori1} is very similar 
to that of Proposition \ref{apriori1}.
In the remainder of this section, we sketch the proof of Proposition \ref{nowe-apriori1},
which is given by making use of $ \rho  =\rt +  \varphi^s  +  \varphi$, $u  = \ut + \psi^s  +  \psi  + U $, 
$\theta  = \tt + \zeta^s  +  \zeta  + \Theta $, 
and Theorem \ref{th4}.

\subsection{$L^2$ estimate} \label{ss-nowe-L2}

This subsection is devoted to derive the 
estimate of  the perturbation $(\vp,\psi,\zeta)$ in $L^2(\Omega)$.
To do this, we introduce an energy form $\cale$ as in Subsection \ref{ss-L2}
\[
\rho\cale := R\rho\theta^s\eta \left(\frac{\rho^s}{\rho}\right) + \frac{\rho}{2} |\psi|^2
+ c_v\rho\theta^s\eta \left(\frac{\theta}{\theta^s}\right),
\quad
\eta(r) := r - \ln r-1.
\]
Under the smallness assumption on $N_\beta (T)$, 
we have $\li{\Phi(t)} \ll 1$ by Sobolev's inequality \eqref{sobolev2}.
Hence, the energy form $\cale$ is equivalent to the 
square of the perturbation $(\vp,\psi, \zeta)$:
\begin{equation}
c (\vp^2 + |\psi|^2+\zeta^2)
\le
\cale
\le
C (\vp^2 + |\psi|^2+\zeta^2).
\label{nowe-sqr}
\end{equation}

Moreover, using $N_\beta(T)+\dels \ll 1$, we can derive the following uniform bounds of solutions:
\begin{equation}
0 < c \le (\rho,\theta)(t,x) \le C,
\quad
|u(t,x)| \le C .
\label{nowe-bdd}
\end{equation}
We obtain the energy inequality in $L^2$ framework, as stated in the following lemma analogous to Lemma \ref{lm1}:

\begin{lemma}
\label{nowe-lm1}
Under the same conditions as in Proposition \ref{nowe-apriori1}, 
it holds that
\begin{equation}
  \|\Phi(t)\|^2
+ \int_0^t  D_{0,0}(\tau) \, d \tau
\lesssim  \|\Phi_0\|^2
+  \dels \int_0^t   \lt{\nabla \vp(\tau)}^2 \, d \tau
\label{nowe-ea0}
\end{equation}
for $t \in [0,T]$.
\end{lemma}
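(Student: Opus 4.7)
The plan is to mirror the proof of Lemma \ref{lm1}, with the weight $w = e^{\beta x_1}e^{\sigma t}$ replaced by $w \equiv 1$ and the planar-plus-extension background $(\tilde\rho\circ\tilde M,\tilde u\circ\tilde M,\tilde\theta\circ\tilde M)+(0,U,\Theta)$ replaced by the exact stationary solution $(\rho^s,u^s,\theta^s)$. Because $(\rho^s,u^s,\theta^s)$ solves \eqref{snse} exactly, the analogues of the inhomogeneous terms $F,G,H$ vanish, so no additive $\delta\int_0^t d\tau$ contribution appears on the right-hand side of \eqref{nowe-ea0}. Note also that $D_{0,0}$ contains neither a $\|\Phi\|^2$ dissipation (the $\beta\|\Phi\|_{\lteasp{\beta}}^2$ term of $D_{0,\beta}$ is absent at $\beta=0$) nor a $\|\nabla\vp\|^2$ dissipation, which is precisely why the $\delta\int_0^t\|\nabla\vp(\tau)\|^2\,d\tau$ remainder is allowed on the right-hand side of \eqref{nowe-ea0}.

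I will first derive, exactly as in the derivation of \eqref{ea1} given in Appendix \ref{Appenx0} but with the new background, the pointwise identity
\[
(\rho\cale)_t - \div(G_1+B_1) + \mu|\nabla\psi|^2 + (\mu+\lambda)(\div\psi)^2 + \frac{\kappa}{\theta}|\nabla\zeta|^2 = R_{11},
\]
where $R_{11}$ now contains only terms quadratic in $(\vp,\psi,\zeta)$ (plus cubic corrections) multiplied by derivatives of $(\rho^s,u^s,\theta^s)$; no $F,G,H$ appear. Integrating over $\Omega$ and using $\psi|_{\partial\Omega}=\zeta|_{\partial\Omega}=0$ together with the outflow condition \eqref{outf}, the divergence theorem yields $B_1|_{\partial\Omega}=0$ and
\[
-\int_{\Omega}\div(G_1+B_1)\,dx \;=\; \int_{\partial\Omega} \rho\cale\,(u_b\cdot n)\,d\sigma \;\gtrsim\; \|\vp(t,M(\cdot),\cdot)\|_{L^2(\mathbb R^2)}^2,
\]
since $\cale\gtrsim\vp^2$ on $\partial\Omega$ by \eqref{nowe-sqr}. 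The Laplacian dissipation provides $\|\nabla\psi\|^2+\|\nabla\zeta\|^2$. Time integration converts $\int_\Omega(\rho\cale)_t\,dx$ into $\|\Phi(t)\|^2 - \|\Phi_0\|^2$ up to equivalence via \eqref{nowe-sqr}. The supersonic condition \eqref{super1}, crucial in Lemma \ref{lm1} for positivity of $F_1(\vp,\psi_1,\zeta)$, is not used here because without the weight there is no $\nabla w\cdot G_1$ term to extract a quadratic form from.

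The main obstacle is handling $R_{11}$ in the absence of the weight-derivative dissipation $\beta\|\Phi\|_{\lteasp\beta}^2$. I will split
\[
\rho^s = \tilde\rho\circ\tilde M + \varphi^s,\quad u^s = \tilde u\circ\tilde M + U + \psi^s,\quad \theta^s = \tilde\theta\circ\tilde M + \Theta + \zeta^s.
\]
For the first summands, the pointwise bound $|\nabla(\tilde\rho\circ\tilde M, \tilde u\circ\tilde M+U, \tilde\theta\circ\tilde M+\Theta)|\lesssim \delta e^{-\alpha x_1/2}$ follows from \eqref{stdc1}, \eqref{ExBdry0}, \eqref{ExBd0} and the $\tilde M$-chain rule with $M\in H^9$. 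Hardy's inequality \eqref{hardy} then gives
\[
\int e^{-\alpha x_1}|\psi|^2\,dx \lesssim \|\nabla\psi\|^2,\qquad \int e^{-\alpha x_1}|\zeta|^2\,dx \lesssim \|\nabla\zeta\|^2,
\]
both of which are absorbed into $D_{0,0}$, while for $\vp$, which does not vanish on $\partial\Omega$,
\[
\int e^{-\alpha x_1}|\vp|^2\,dx \lesssim \|\vp(t,M(\cdot),\cdot)\|_{L^2(\mathbb R^2)}^2 + \|\nabla\vp\|^2.
\]
The first piece is absorbed into $D_{0,0}$, and the second contributes exactly the $\delta\int_0^t\|\nabla\vp(\tau)\|^2\,d\tau$ tail on the right-hand side of \eqref{nowe-ea0}. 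For the $\Phi^s$ contribution, H\"older and Sobolev embeddings together with $\|\Phi^s\|_{H^3}\lesssim\delta$ (Theorem \ref{th4}) and $\|\Phi\|_{L^\infty}\lesssim N(T)$ give bounds proportional to $(N(T)+\delta)\bigl(\|\Phi\|^2+\|\nabla\Phi\|^2+\|\vp|_{\partial\Omega}\|^2_{L^2_{x'}}\bigr)$, absorbed after taking $N(T)+\delta$ small.

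Finally, $\|\tfrac{d}{dt}\vp\|^2$, the remaining component of $D_{0,0}$, is read off from \eqref{nowe-eq-pv1} as $\tfrac{d}{dt}\vp = -\rho\div\psi + f$; taking $L^2$-norms and applying the same decomposition/Hardy argument to $f = -\psi\cdot\nabla\rho^s - \vp\div u^s$ yields
\[
\Bigl\|\tfrac{d}{dt}\vp\Bigr\|^2 \lesssim \|\nabla\psi\|^2 + \delta\bigl(\|\nabla\vp\|^2 + \|\vp(t,M(\cdot),\cdot)\|^2_{L^2(\mathbb R^2)}\bigr).
\]
Integrating the energy identity in time, substituting all these bounds, and absorbing the small contributions into the left-hand side produces \eqref{nowe-ea0}.
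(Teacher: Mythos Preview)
Your overall strategy matches the paper's: use the energy form $\cale$ built on the stationary background $(\rho^s,u^s,\theta^s)$, derive the analogue of \eqref{ea1} (which the paper labels \eqref{sea1}), integrate over $\Omega$ without any weight, pick up the boundary dissipation $\|\vp|_{\partial\Omega}\|_{L^2_{x'}}^2$ from the outflow condition, and finally read off $\|\tfrac{d}{dt}\vp\|^2$ from \eqref{nowe-eq-pv1}. Your observation that the supersonic condition is not invoked here (there being no $\nabla w\cdot G_1$ term) is also correct.

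The gap is in your treatment of the $\Phi^s$–part of $R_1$. You claim that H\"older plus Sobolev plus $\|\Phi^s\|_{H^3}\lesssim\delta$ yields a bound $(N(T)+\delta)\bigl(\|\Phi\|^2+\|\nabla\Phi\|^2+\|\vp|_{\partial\Omega}\|_{L^2_{x'}}^2\bigr)$ and that this is ``absorbed after taking $N(T)+\delta$ small''. But the factor $\|\Phi\|^2$ cannot be absorbed: $D_{0,0}$ contains no $\|\Phi\|^2$ term (precisely because $\beta=0$), and attempting Gronwall on $\tfrac{d}{dt}\!\int\!\rho\cale\lesssim\delta\|\Phi\|^2+\cdots$ would produce $e^{C\delta t}$ growth, destroying the uniform-in-$t$ estimate \eqref{nowe-ea0}. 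Concretely, a typical $\Phi^s$–term such as $\int_\Omega \rho(\psi\cdot\nabla)\psi^s\cdot\psi\,dx$ cannot be bounded by $\delta\|\nabla\psi\|^2$ via direct H\"older, since $\nabla\psi^s$ is only known to lie in $L^p$ for $p\ge2$ (no exponential decay), forcing $\|\psi\|_{L^q}$ with $q\le4$ and hence an unavoidable $\|\psi\|^2$ contribution.

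The paper's remedy is an integration by parts that shifts the derivative off $\Phi^s$: for the typical term,
\[
\int_\Omega \rho(\psi\cdot\nabla)\psi^s\cdot\psi\,dx
= -\int_\Omega(\nabla\rho\cdot\psi)\,\psi^s\cdot\psi\,dx
  -\int_\Omega(\rho\,\div\psi)\,\psi^s\cdot\psi\,dx
  -\int_\Omega\rho\,\psi^s\cdot(\nabla\psi\cdot\psi)\,dx,
\]
followed by $\|\psi^s\|_{L^3}\lesssim\delta$ and the critical embedding \eqref{sobolev1}, $\|\psi\|_{L^6}\lesssim\|\nabla\psi\|$, to obtain $\lesssim (N(T)+\delta)\|\nabla\Phi\|^2$ with no $\|\Phi\|^2$ remainder. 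The same device (with a harmless boundary term $\lesssim\delta\|\vp|_{\partial\Omega}\|_{L^2_{x'}}^2$ when $\vp$ is involved) handles the remaining $\Phi^s$–contributions. Once you incorporate this step, your argument closes exactly as in the paper.
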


\begin{proof}
By a computation similar to the derivation of \eqref{ea1}, we see that
the energy form $\cale$ satisfies
\begin{equation}
(\rho \cale)_t
-\div (G_1 + B_1)
+ \mu |\nabla \psi|^2
+ (\mu + \lambda) (\div \psi)^2+\frac{\kappa}{\theta}|\nabla \zeta|^2
=
R_{1},
\label{sea1}
\end{equation}
where 
\begin{align*}
G_1
&:=
-\rho u \cale
-R\theta^s \vp\psi-R \rho \zeta\psi,
\nonumber
\\
B_1
&:=
\mu \nabla \psi \cdot \psi
+ (\mu + \lambda) \psi \div \psi
+\kappa\frac{\zeta}{\theta}\nabla\zeta, 
\nonumber
\\
R_{1}
&:=\!\!
\frac{R\theta^{s}\vp}{\rho}f+\psi \cdot g+\frac{\zeta}{\theta}h
+R\vp \psi \cdot \nabla\theta^{s} + R\zeta \psi \cdot \nabla \rho^{s}
\\
& \quad + R\rho\eta\left(\frac{\rho^{s}}{\rho}\right)u\cdot \nabla\theta^{s}
-  R\theta^{s}\frac{\vp^2}{\rho\rho^{s}}u\cdot \nabla\rho^{s}
 +c_v\rho\eta\left(\frac{\theta}{\theta^{s}}\right)u\cdot \nabla\theta^{s}
 \\
& \quad  
+c_v\rho\frac{\theta^{s}}{\theta} \zeta^{2} u\cdot \nabla \frac{1}{\theta^{s}}
+\kappa\frac{\zeta}{\theta^2}\nabla\theta\cdot\nabla\zeta
+\frac{\zeta}{\theta}(2\mu|\mathcal{D}(\psi)|^2+\lambda(\textrm{div}\psi)^2).
\end{align*}
For more details of the derivation, see Appendix \ref{Appenx0}.

We integrate \eqref{sea1} over $\Omega$. The second term on the left hand side is estimated from below by
using the divergence theorem, (\ref{nowe-sqr}), (\ref{nowe-bdd}), the boundary conditions \eqref{outf} and \eqref{nowe-pbc}
as
\begin{equation}
- \int_{\Omega}  \div \bigl\{ G_1 + B_1 \bigr\} \, dx  
=  \int_{\pd \Omega}
( \rho  \cale)(u_b\cdot n)  \, d \sigma \,
  \gtrsim
 \|\vp|_{\partial \Omega} \|^2_{L^2_{x'}}. 
\label{nowe-ea3}
\end{equation}
Furthermore, we claim that  the integral of $R_{1}$ is estimated as
\begin{equation} \label{nowe-ea5}
\left|\int_\Omega R_{1} dx \right| \lesssim ( N(T)+\delta) ( \| \nabla \Phi \|^2 + \|\vp|_{\partial \Omega} \|^2_{L^2_{x'}}) . 
\end{equation}
To verify the claim, we estimate only a typical term $\rho (\psi \cdot \nabla) (\psi^s+ \tilde{u}+U) \cdot \psi$ being in $\psi \cdot g$ in $R_{1}$, since the other terms can be treated similarly.
We decompose  
\begin{gather*}
\int_\Omega \rho (\psi \cdot \nabla) (\psi^s+ \tilde{u}+U)  \cdot \psi  dx =
\int_\Omega \rho (\psi \cdot \nabla) \psi^s \cdot \psi  dx 
+ \int_\Omega \rho (\psi \cdot \nabla) (\tilde{u}+U)  \cdot \psi  dx.
\end{gather*}
The last term on the right hand side can be handled in a very similar way as the one for the integral of $R_{11}$ in Lemma \ref{lm1}.
We estimate another term by integrating by parts with the boundary condition \eqref{nowe-pbc} as
\begin{equation*}
\begin{split}
\left| \int_\Omega  \rho (\psi \cdot \nabla) \psi^s \cdot \psi  dx \right| 
& =  \Big|   - \int_\Omega (\nabla \rho \cdot \psi ) \psi^s \cdot \psi dx - \int_\Omega (\rho \div \psi ) \psi^s \cdot \psi  dx - \int_\Omega  \rho \psi^s \cdot (\nabla \psi \cdot  \psi)  dx   \Big|   \\
& \lesssim   \delta\| e^{-\alpha x_1/2}\psi \|^2 + \| \nabla \varphi^{s} \|_{L^{3}} \|\psi^s \|_{L^3} \| \psi \|_{L^6}^{2} + \| \nabla \Phi \| \|\psi^s \|_{L^3} \| \psi \|_{L^6} \\
& \lesssim  ( N(T)+\delta)  \| \nabla \Phi \|^2,
\end{split}
\end{equation*}
where we have used \eqref{stdc1}, Theorem \ref{th4}, Hardy's inequality \eqref{hardy}, and Sobolev's inequality \eqref{sobolev1} in deriving the above inequalities.

We integrate (\ref{sea1}) over $(0,t) \times \Omega$ and substitute the estimates (\ref{nowe-ea3}) and (\ref{nowe-ea5}) into the resulting equality. Then we let $\ep$ and $N (T)+\dels$ be suitably small.
Furthermore, using \eqref{stdc1}, \eqref{ExBdry0}, \eqref{nowe-eq-pv1}, Theorem \ref{th4}, \eqref{hardy}, and \eqref{sobolev1}, we obtain
\begin{equation*}
\begin{split}
\left\| \frac{d}{dt} \vp  \right\|^2 
 = \| \rho \div \psi + \vp \div u^s + \nabla \rho^s \cdot \psi \|^2  
\lesssim  \|\nabla \psi\|^2 + \delta\|\nabla \vp\|^2 + \delta \|\vp|_{\partial \Omega} \|^2_{L^2_{x'}}  .
\end{split}
\end{equation*}
These computations yield the desired inequality.
\end{proof}

\subsection{Time-derivative estimates} \label{ss-nowe-time-deriv}

In this section we derive time-derivative estimates.
To this end, by applying the differential operator $\partial_t^k$ for $k=0,1$ to 
(\ref{nowe-eq-pv1}) and (\ref{nowe-eq-pv2}), we have two equations:

\begin{gather}
\partial_t^k \vp_t
+ u \cdot \nabla \partial_t^k \vp
+ \rho \div \partial_t^k \psi
= f_{0,k},
\label{nowe-ec1}
\\
\rho \{
\partial_t^k \psi_t
+ (u \cdot \nabla) \partial_t^k \psi
\}
- L(\partial_t^k \psi)
+ R\theta\nabla \partial_t^k \vp+R\rho\nabla \partial_t^k\zeta
= g_{0,k},
\label{nowe-ec2}\\
c_v\rho\big(\partial_t^k\zeta_t+{u}\cdot\nabla\partial_t^k\zeta\big)+R\rho\theta\textrm{div}\partial_t^k\psi
-\kappa\Delta\partial_t^k\zeta=h_{0,k},
\label{nowe-ec3}
\end{gather}
where
\begin{align*}
f_{0,k}
&:=
\partial_t^k f 
- [\partial_t^k, u] \nabla \vp
- [\partial_t^k, \rho] \div \psi,
\nonumber
\\
g_{0,k}
&:= \partial_t^k g 
- [\partial_t^k, \rho] \psi_t
- [\partial_t^k, \rho u] \nabla \psi
- [\partial_t^k, R\theta] \nabla \vp
- [\partial_t^k, R\rho] \nabla \zeta,
\nonumber
\\
h_{0,k}
&:= \partial_t^k \left(2\mu|\mathcal{D}(\psi)|^2+\lambda(\textrm{div}\psi)^2+h\right)
- [\partial_t^k, c_v\rho] \zeta_t
- [\partial_t^k, c_v\rho u] \nabla \zeta
- [\partial_t^k, R\rho\theta] \div \psi.
\end{align*}
Here $[T,u]v := T(uv) - u T v$ is a commutator.


We note that the equations \eqref{nowe-ec1}--\eqref{nowe-ec3} are parallel to \eqref{ec1}--\eqref{ec3}, 
and the essential difference is whether or not there are the inhomogeneous terms $F$, $G$, and $H$.
Firstly, carrying out estimates as in the proof of Lemma \ref{lm2} with $\sigma=0$,
we have the following lemma on $\pd_t \Phi$.

\begin{lemma}
\label{nowe-lm2}
Under the same conditions as in Proposition \ref{nowe-apriori1}, 
it holds that
\begin{equation} \label{nowe-ec0}
 \|\pd_t \Phi(t)\|^2
+ \int_0^t  
\|(\pd_t \nabla \psi, \pd_t \nabla \zeta)(\tau)\|^2 \, d \tau
\lesssim  \|\Phi_0\|_{H^3}^2
+ (  N (T)+\dels) \int_0^t  D_{3,0}(\tau) \, d \tau   
\end{equation}
for $t \in [0,T]$.
\end{lemma}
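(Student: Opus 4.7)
The plan is to mimic the derivation of Lemma \ref{lm2}, specialised to the case $\sigma=0$ and without the exponential weight $e^{\beta x_1}$, using that equations \eqref{nowe-ec1}--\eqref{nowe-ec3} differ from \eqref{ec1}--\eqref{ec3} only in that the inhomogeneous source terms $F$, $G$, $H$ have disappeared (their role being now played implicitly through $\rho^s$, $u^s$, $\theta^s$, which by Theorem~\ref{th4} have small $H^m$ norm).

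First I would multiply \eqref{nowe-ec1} with $k=1$ by $\frac{R}{\rho}\partial_t\vp$ and use $\rho_t=-\div(\rho u)$ to obtain an analogue of \eqref{ec31}. Then I would multiply \eqref{nowe-ec2} with $k=1$ by $\frac{\partial_t\psi}{\theta}$ to get the analogue of \eqref{ec4}, producing the good dissipation $\frac{\mu}{\theta}|\nabla \partial_t\psi|^2 + \frac{\mu+\lambda}{\theta}|\div\partial_t\psi|^2$ together with the coupling $R\partial_t\psi\cdot\nabla\partial_t\vp + \frac{R\rho}{\theta}\partial_t\psi\cdot\nabla\partial_t\zeta$. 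Multiplying \eqref{nowe-ec3} with $k=1$ by $\frac{\partial_t\zeta}{\theta^2}$ gives the analogue of \eqref{ec41}, contributing the dissipation $\kappa\frac{|\nabla\partial_t\zeta|^2}{\theta^2}$ and a coupling term $\frac{R\rho}{\theta}\div(\partial_t\psi)\partial_t\zeta$ which cancels a portion of the coupling in the momentum identity. Adding the three identities and integrating over $\Omega$ yields the analogue of \eqref{ec5}, with a left-hand side dominating $\frac{d}{dt}\|\partial_t\Phi\|^2 + \|(\nabla\partial_t\psi,\nabla\partial_t\zeta)\|^2$.

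The boundary flux $\int_\Omega \div B_3\,dx = \int_{\partial\Omega}\frac{R}{2\rho}|\partial_t\vp|^2(u_b\cdot n)\,d\sigma \geq 0$ is handled as in \eqref{ec6}, using $\partial_t\psi=\partial_t\zeta=0$ on $\partial\Omega$ from \eqref{nowe-pbc} and the outflow condition \eqref{outf}. The remaining right-hand side terms — a commutator-type $R_{21}$, a $\partial_t\theta$-term $R_{22}$, and a $\nabla\theta$-term $R_{23}$ — are all controlled by $(N(T)+\delta)D_{3,0}$ using Sobolev's inequality and the analogues of \eqref{PhiH1}, \eqref{PhiSup}, which in the present setting take the simpler form $\|\partial_t\vp\|_{H^1}\lesssim\sqrt{D_{2,0}}$ and $\|\partial_t\vp\|_{L^\infty}+\|\partial_t\Phi\|_{H^1}\lesssim N(T)+\delta$ thanks to \eqref{nowe-eq-pv1} together with $\|\nabla(\rho^s,u^s,\theta^s)\|_{H^2}\lesssim\delta$ from Theorem~\ref{th4}, \eqref{stdc1}, and \eqref{ExBdry0}, Hardy's inequality \eqref{hardy} being used to absorb the weighted-in-$x_1$ coefficients arising from $\nabla\tilde{\rho}$, $\nabla\tilde{u}$, $\nabla\tilde{\theta}$. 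Finally, integrating in time over $(0,t)$ and choosing $\ep+N(T)+\delta$ sufficiently small gives \eqref{nowe-ec0}; no $\delta\int_0^t d\tau$ term appears on the right because the inhomogeneous sources $F$, $G$, $H$ are absent.

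The only non-routine point, and the main obstacle I expect, is the handling of the terms in which $\nabla\rho^s$, $\nabla u^s$, $\nabla\theta^s$ multiply perturbation quantities of borderline regularity: unlike in Lemma~\ref{lm2}, we no longer have an exponential decay factor $e^{-\alpha x_1}$ to pair with Hardy's inequality, so we must instead appeal to $\|\rho^s-\rho_+\|_{H^m}+\|u^s-(u_+,0,0)\|_{H^m}+\|\theta^s-\theta_+\|_{H^m}\lesssim\delta$ from Theorem~\ref{th4} together with the decomposition $\rho^s=\tilde\rho\circ\tilde M+\varphi^s$, $u^s=\tilde u\circ\tilde M+U+\psi^s$, $\theta^s=\tilde\theta\circ\tilde M+\Theta+\zeta^s$ and split each term accordingly: the $(\tilde\rho,\tilde u,\tilde\theta)$-parts are controlled via Hardy as before, while the $(\varphi^s,\psi^s,\zeta^s,U,\Theta)$-parts are controlled by Sobolev embeddings because their $H^m$ norms are already of size $\delta$. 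This is exactly the mechanism illustrated in the proof of Lemma~\ref{nowe-lm1} for the term $\rho(\psi\cdot\nabla)\psi^s\cdot\psi$, and it applies mutatis mutandis here.
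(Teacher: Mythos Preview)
Your proposal is correct and follows essentially the same approach as the paper, which simply states that one carries out the estimates of Lemma~\ref{lm2} with $\sigma=0$ (noting that the only structural difference is the absence of the inhomogeneous terms $F$, $G$, $H$). Your explicit identification of the mechanism for handling the $\nabla\rho^s$, $\nabla u^s$, $\nabla\theta^s$ contributions via the decomposition into $(\tilde\rho,\tilde u,\tilde\theta)$-parts (Hardy) and $(\varphi^s,\psi^s,\zeta^s,U,\Theta)$-parts (Sobolev with Theorem~\ref{th4}) is exactly what the paper's terse remark ``making use of $\rho=\tilde\rho+\varphi^s+\varphi$, $u=\tilde u+\psi^s+\psi+U$, $\theta=\tilde\theta+\zeta^s+\zeta+\Theta$, and Theorem~\ref{th4}'' at the end of Section~\ref{S6}'s introduction is pointing to.
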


Secondly, we estimate $\pd_t^k \nabla\psi$ for $k=0,1$ in the same way as in the proof of Lemma \ref{lm3}.


\begin{lemma}
\label{nowe-lm3}
Under the same conditions as in Proposition \ref{nowe-apriori1}, 
it holds that
\begin{multline}
 \lt{(\pd_t^k\nabla\psi,\pd_t^k\nabla\zeta)(t)}^2
+ \int_0^t  \lt{ (\pd_t^{k+1} \psi, \pd_t^{k+1}\zeta)(\tau)}^2 \, d \tau 
\\
\lesssim 
 \hs{3}{\Phi_0}^2
+ \ep \calh_{k} (t)
+ \ep^{-1} \calp_{k} (t)
+ ( N (t)+\dels ) \int_0^t  D_{3,0}(\tau)^2 \, d \tau
\label{nowe-ed0}
\end{multline}
for $t \in [0,T], \ep\in(0,1),$ and $k=0,1$.
Here, $\calh_{k}(t)$ and $\calp_{k}(t)$ are defined by
\begin{align*}
\calh_{k} (t)
& :=
  \lt{\pd_t^k \nabla \vp(t)}^2
+ \int_0^t   \lt{\pd_t^k \nabla \vp(\tau)}^2 \, d \tau,
\\
\calp_{k} (t)
& :=
 \lt{ (\pd_t^k\psi,\pd_t^k\zeta)(t)}^2
+ \int_0^t   \lt{ (\pd_t^k\nabla\psi,\pd_t^k\nabla\zeta)(\tau)}^2 \, d \tau.
\end{align*}
\end{lemma}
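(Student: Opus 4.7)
The plan is to mirror the derivation of Lemma \ref{lm3} in the unweighted setting, removing the $e^{\sigma t}$ factor and noting that the inhomogeneous terms $F,G,H$ are absent in \eqref{nowe-ec1}--\eqref{nowe-ec3}. First I would multiply \eqref{nowe-ec2} by $\partial_t^k\psi_t$ and integrate in $x$; the viscous term, after integration by parts (using the boundary condition $\partial_t^k\psi|_{\partial\Omega}=0$ from \eqref{nowe-pbc}), yields a time derivative of $\frac{\mu}{2}|\nabla\partial_t^k\psi|^2 + \frac{\mu+\lambda}{2}|\div\partial_t^k\psi|^2$ plus a boundary flux that vanishes. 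The cross term $R\theta\nabla\partial_t^k\vp\cdot\partial_t^k\psi_t$ is rewritten as $(R\theta\nabla\partial_t^k\vp\cdot\partial_t^k\psi)_t$ minus error terms in which $\partial_t^k\vp_t$ is eliminated by invoking \eqref{nowe-ec1} (so that $\partial_t^k\vp_t=-u\cdot\nabla\partial_t^k\vp-\rho\div\partial_t^k\psi+f_{0,k}$). Multiplying \eqref{nowe-ec3} by $\partial_t^k\zeta_t$ and adding the two identities produces a single differential equality whose left-hand side dominates $\frac{d}{dt}E_3 + c\,\|(\partial_t^{k+1}\psi,\partial_t^{k+1}\zeta)\|^2$, with $E_3$ equivalent to $\|(\partial_t^k\nabla\psi,\partial_t^k\nabla\zeta)\|^2$ up to the sign-indefinite cross term $R\theta\nabla\partial_t^k\vp\cdot\partial_t^k\psi$.

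Integrating in time on $[0,t]$, the boundary integrals of the transport fluxes drop out because $\partial_t^k\psi=\partial_t^k\zeta=0$ on $\partial\Omega$. The indefinite cross term on both the current-time slice and the resulting $\rho\div\partial_t^k\psi \cdot \partial_t^k\vp$ production are handled by Schwarz's inequality with a parameter $\ep$, which generates exactly the contribution $\ep\calh_k(t)+\ep^{-1}\calp_k(t)$ appearing on the right-hand side of \eqref{nowe-ed0}. The remaining terms fall into two categories: commutator/product terms coming from $f_{0,k}$, $g_{0,k}$, $h_{0,k}$, and terms proportional to derivatives of the coefficients $\rho$, $\theta$, $u$, $\rho^s$, $u^s$, $\theta^s$.

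Each remainder is bounded pointwise in the same fashion as the analogue $R_3$ in Lemma \ref{lm3}, namely by $(N(T)+\dels)|(\nabla\Phi,\partial_t\Phi,\nabla^2\psi,\nabla^2\zeta,\partial_{tt}\psi,\partial_{tt}\zeta)|^2$ plus slowly decaying coefficients times $|\Phi|^2$. Using the unweighted analogues of \eqref{PhiH1} and \eqref{PhiSup}, i.e., $\|\partial_t\vp\|_{L^\infty}+\|\partial_t\Phi\|_{H^1}\lesssim N(T)+\dels$, which follow from \eqref{nowe-eq-pv1} together with \eqref{stdc1}, Theorem \ref{th4}, and Hardy's inequality \eqref{hardy}, together with Sobolev embeddings, these remainders are absorbed into $(N(T)+\dels)\int_0^t D_{3,0}(\tau)\,d\tau$. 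In the case $k=1$ the highest-order term $|\partial_t\theta\,\nabla\partial_t\vp\cdot\partial_t\psi|$ is handled exactly as in Lemma \ref{lm3}, i.e.\ via $\|\nabla\partial_t\vp\|_{L^2}(\|\partial_t\psi\|_{L^4}^2+\|\partial_t\zeta\|_{L^4}^2)\lesssim (N(T)+\dels)\|(\partial_t\psi,\partial_t\zeta)\|_{H^1}^2$.

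The main obstacle is, as in Lemma \ref{lm3}, the sign-indefinite coupling between $\nabla\partial_t^k\vp$ and $\partial_t^k\psi$ (and analogously between $\nabla\partial_t^k\zeta$ and $\partial_t^k\psi$); these couplings force the $\ep$/$\ep^{-1}$ split that produces $\ep\calh_k+\ep^{-1}\calp_k$ on the right, and they are the reason why \eqref{nowe-ed0} by itself is not closed. In the final assembly of the a priori estimate (done in the analogue of Subsection \ref{ss-comp-apriori} for Proposition \ref{nowe-apriori1}), $\calh_k$ will be controlled via the spatial-derivative estimates and Cattabriga/elliptic inequalities, and the factor $\ep$ will be taken suitably small; the proof of \eqref{nowe-ed0} itself is otherwise a direct, if lengthy, energy computation that is structurally identical to that of Lemma \ref{lm3} but with the simplifications $\sigma=0$ and $F=G=H=0$.
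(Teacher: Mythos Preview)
Your proposal is correct and follows essentially the same approach as the paper, which simply states that \eqref{nowe-ed0} is obtained ``in the same way as in the proof of Lemma \ref{lm3}'' with $\sigma=0$ and with the inhomogeneous terms $F,G,H$ absent. Your outline of the energy identity, the handling of the cross term $R\theta\nabla\partial_t^k\vp\cdot\partial_t^k\psi_t$ via \eqref{nowe-ec1}, the $\ep$-split that produces $\ep\calh_k+\ep^{-1}\calp_k$, and the treatment of the $k=1$ term $\partial_t\theta\,\nabla\partial_t\vp\cdot\partial_t\psi$ all match the paper's derivation of \eqref{ed0}.
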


\subsection{Spatial-derivative estimates}  \label{ss-nowe-spatial-deriv}

We do the same change of variables as in Subsection \ref{Sptial-deriv}
and also define
\begin{gather*}
\hat{\vp} (t,y) := \vp(t,\Gamma(y)), \quad \hat{\psi} (t,y) : = \psi (t,\Gamma (y)), 
\quad \hat{\zeta} (t,y) : = \zeta (t,\Gamma (y)), 
\\ \hat{\rho} (t,y) := \rho(t,\Gamma(y)), \quad \hat{u} (t,y) := u(t,\Gamma(y)), \quad \hat{\theta} (t,y) := \theta(t,\Gamma(y)).
\end{gather*}
for $y \in \mathbb{R}_3^+ : = \{ (y_1, y_2, y_3) \in \mathbb{R}^3 : y_1 >0 \}$,
where $\Gamma$ is defined in \eqref{CV1} and $\Gamma(\mathbb R^3_+)=\Omega$. 
Furthermore, $\hat{\nabla}$, $\hat{\cal D}$, $\hat{\div}$, $\hat{\Delta}$, 
$\hat{\frac{d}{dt}}$ denote the same differential operators defined in 
\eqref{CV3}--\eqref{CV6}. 

From \eqref{eq-pv}, we obtain the equation for $(\hat{\vp}, \hat{\psi}, \hat\zeta)$
\begin{subequations}
\label{nowe-eq-pv-hat}
\begin{gather}
 \hat{\vp}_t + \hat{u} \cdot \hat{\nabla} \hat{\vp} + \hat{\rho} \hat{\div} \hat{\psi}
= \hat{f} ,
\label{nowe-eq-pv1-hat}
\\
\hat{\rho} \{ \hat{\psi}_t  + (\hat{u} \cdot \hat{\nabla}) \hat{\psi} \}
- \hat{L} \hat{\psi} + R\hat\theta\hat{\nabla} \hat\vp+R\hat\rho\hat{\nabla}\hat\zeta 
= \hat{g},
\label{nowe-eq-pv2-hat}
\\
c_v\hat\rho\big(\hat\zeta_t+\hat{u}\cdot\hat{\nabla}\hat\zeta\big)+R\hat\rho\hat\theta\hat{\textrm{div}}\hat\psi
-\kappa\hat{\Delta}\hat\zeta=2\mu|\hat{\mathcal{D}}(\hat\psi)|^2+\lambda(\hat{\textrm{div}}\hat\psi)^2+\hat h,
\label{nowe-eq-pv3-hat}
\end{gather}
and the initial and boundary conditions
\begin{gather}
(\hat{\vp},\hat{\psi}, \hat\zeta)(0,y)
=(\hat{\vp}_0, \hat{\psi}_0, \hat\zeta_0)(y)
= (\rho_0, u_0, \theta_0)(\Gamma(y)) - (\rho^s, u^s, \theta^s)(\Gamma(y)),
\label{nowe-pic-hat}
\\
\hat{\psi}(t,0,y') = 0, \quad \hat{\zeta}(t,0,y') = 0.
\label{nowe-pbc-hat}
\end{gather}
\end{subequations}
Here $\hat{L} \hat{\psi}$, $\hat{f}$, $\hat{g}$,  and $\hat{h}$ are defined by
\begin{gather*}
\hat{L} \hat{\psi}(t,y) := \mu \hat{\Delta} \hat{\psi} (t,y) + (\mu + \lambda) \hat{\nabla} \hat{\div} \hat{\psi} (t,y),
\\
\hat{f} (t,y) :=f(t,\Gamma(y)),
\quad
\hat{g} (t,y) := g(t,\Gamma(y)),
\quad
\hat{h} (t,y) :=h(t,\Gamma(y)).
\end{gather*}

Since the problem \eqref{nowe-eq-pv-hat} is parallel to the problem \eqref{eq-pv-hat},
we can derive the estimate on the tangential spatial derivatives
by the same method as in the proof of Lemma \ref{lm2-hat} with $\sigma=0$.

\begin{lemma}
\label{nowe-lm2-hat}
Under the same conditions as in Proposition \ref{nowe-apriori1}, 
it holds,
\begin{align*}
{}& \|\nabla^l_{y'} \hat{\Phi}(t)\|_{L^2(\mathbb R^3_+)}^2
+ \int_0^t 
\left(\| (\nabla\nabla^l_{y'}  \hat{\psi}, \nabla\nabla^l_{y'}  \hat{\zeta})(\tau)\|_{L^2(\mathbb R^3_+)}^2
+ \left\| \nabla^l_{y'}  \hat{\frac{d}{dt}} \hat{\vp}  (\tau) \right\|_{L^2(\mathbb R^3_+)}^2 
 \right) \, d \tau 
\notag\\
&\lesssim  \|\Phi_0\|_{H^3}^2
+ \epsilon \int^t_0  (\| \nabla \vp(\tau) \|^2_{H^{l-1}} + \|  (\nabla \psi, \nabla \zeta)(\tau) \|^2_{H^{l}} ) d \tau 
+ \epsilon^{-1} \int^t_0  \|  (\nabla \psi, \nabla \zeta) \|^2_{H^{l-1}} d \tau 
\notag\\
& \quad + (N (T)+\dels ) \int_0^t  D_{3,0}(\tau) \, d \tau
\end{align*}
for $t \in [0,T]$, $\epsilon \in (0,1)$,  and $l = 1, 2, 3$.
\end{lemma}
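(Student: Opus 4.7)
The plan is to mirror the proof of Lemma \ref{lm2-hat}, dropping the time weight $e^{\sigma t}$ (so we set $\sigma=0$) and replacing the roles of $(\rt,\ut,\tt)\circ\tilde{M}+(0,U,\Theta)$ together with the inhomogeneous terms $(F,G,H)$ by the full stationary solution $(\rho^s,u^s,\theta^s)$, for which we have the $H^m$ control from Theorem \ref{th4}. Concretely, I would first apply $\nabla^l_{y'}$ to the flattened system \eqref{nowe-eq-pv1-hat}--\eqref{nowe-eq-pv3-hat} to obtain equations of the form
\begin{align*}
&\nabla^l_{y'}\hat{\vp}_t+\hat{u}\cdot\hat{\nabla}\nabla^l_{y'}\hat{\vp}+\hat{\rho}\hat{\div}\nabla^l_{y'}\hat{\psi}=\hat{f}_{l,0},\\
&\hat{\rho}\{\nabla^l_{y'}\hat{\psi}_t+(\hat{u}\cdot\hat{\nabla})\nabla^l_{y'}\hat{\psi}\}-\hat{L}(\nabla^l_{y'}\hat{\psi})+R\hat\theta\hat{\nabla}\nabla^l_{y'}\hat\vp+R\hat\rho\hat{\nabla}\nabla^l_{y'}\hat\zeta=\hat{g}_{l,0},\\
&c_v\hat\rho(\nabla^l_{y'}\hat\zeta_t+(\hat{u}\cdot\hat{\nabla})\nabla^l_{y'}\hat\zeta)+R\hat\rho\hat\theta\,\hat{\div}\,\nabla^l_{y'}\hat\psi-\kappa\hat{\Delta}\nabla^l_{y'}\hat\zeta=\hat{h}_{l,0},
\end{align*}
with $\hat{f}_{l,0},\hat{g}_{l,0},\hat{h}_{l,0}$ being commutator-type remainders analogous to \eqref{fl0-hat}--\eqref{hl0-hat}, except that the separation $(\rt,\ut,\tt)(y_1)+$(boundary extension) is replaced by $\hat{\rho^s},\hat{u^s},\hat{\theta^s}$ depending fully on $y$.

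The energy step will multiply the three equations by $R\hat\rho^{-1}\nabla^l_{y'}\hat\vp$, $\hat{\theta}^{-1}\nabla^l_{y'}\hat\psi$, and $\hat{\theta}^{-2}\nabla^l_{y'}\hat\zeta$ respectively and sum, producing a time derivative of a positive-definite quadratic form plus a divergence plus the dissipation $\mu|\hat{\nabla}\nabla^l_{y'}\hat\psi|^2+(\mu+\lambda)|\hat{\div}\nabla^l_{y'}\hat\psi|^2+\kappa\hat\theta^{-2}|\hat\nabla\nabla^l_{y'}\hat\zeta|^2$. The boundary term from integrating the divergence will be nonnegative thanks to \eqref{outf} and \eqref{nowe-pbc-hat} exactly as in \eqref{ec6-hat}. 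The dissipation controls $\|\nabla\nabla^l_{y'}\hat\psi\|^2+\|\nabla\nabla^l_{y'}\hat\zeta\|^2$ in the same fashion as \eqref{ec10-hat}. To recover the remaining $\|\nabla^l_{y'}\hat{\frac{d}{dt}}\hat{\vp}\|^2$ on the left-hand side, I would apply $\nabla^l_{y'}$ to \eqref{nowe-eq-pv1-hat} and take the $L^2$-norm, bounding $\|\nabla^l_{y'}\hat{\frac{d}{dt}}\hat{\vp}\|^2\lesssim\|\nabla\nabla_{y'}^l\hat\psi\|^2+\|\nabla_y\hat\psi\|_{H^{l-1}}^2+(N(T)+\delta)D_{3,0}$, matching the $\sigma=0$ analogue of \eqref{ec9-hat}.

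The main obstacle is controlling the commutator-type right-hand side in the no-weight setting, with $\hat{\rho^s},\hat{u^s},\hat{\theta^s}$ depending on all of $y$ rather than only $y_1$. The decomposition used in Lemma \ref{lm2-hat} exploited $(\rt',\ut',\tt')(y_1)$; here the analogous bound must be produced from $\|\Phi^s\|_{H^5}\lesssim\delta$ (Theorem \ref{th4}) combined with Sobolev embeddings. Using $\hat\rho=\hat{\rho^s}+\hat\vp$, $\hat u=\hat{u^s}+\hat\psi$, $\hat\theta=\hat{\theta^s}+\hat\zeta$ and Lemma \ref{CommEst} as in the derivation of \eqref{ec8-hat}, one obtains
\[
\|(\nabla^l_{y'}\hat f,\nabla^l_{y'}\hat g,\nabla^l_{y'}\hat h)\|_{L^2(\mathbb R^3_+)}
\lesssim (N(T)+\delta)D_{3,0}^{1/2},
\]
where the $\rho^s,u^s,\theta^s$-contributions are absorbed by $\delta$ via $\|\Phi^s\|_{H^5}\lesssim\delta$ (and $\hat{M}$-dependent factors already tracked in $\lesssim$). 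The absence of $(F,G,H)$ removes the $\delta^{-1}\|(F,G,H)\|^2$ residue that appeared in \eqref{ec7-hat}; consequently, the forcing constant in the final bound contains no stand-alone $\delta\int_0^t d\tau$ term.

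Finally, I would integrate the resulting differential inequality on $(0,t)\times\mathbb{R}^3_+$, estimate $\hat R_3$ using Schwarz's inequality with an $\epsilon$ splitting (to separate the contributions needing to be absorbed by the dissipation later in the completion-of-a-priori-estimate step from those that go into lower-order norms), pass back to $x$-variables, and pick up the decay of commutator terms from $N(T)+\delta\ll 1$. The output is precisely the claimed estimate, uniformly in $t\in[0,T]$ and $\epsilon\in(0,1)$, and it slots into the inductive Cattabriga/elliptic completion as in Subsections \ref{ss-CattabrigaEst}--\ref{ss-comp-apriori}.
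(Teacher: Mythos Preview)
Your proposal is correct and follows essentially the same approach as the paper, which simply states that the estimate is obtained ``by the same method as in the proof of Lemma \ref{lm2-hat} with $\sigma=0$,'' using the decomposition $\rho=\rt+\varphi^s+\varphi$, $u=\ut+\psi^s+\psi+U$, $\theta=\tt+\zeta^s+\zeta+\Theta$ together with Theorem \ref{th4}. One minor point: Theorem \ref{th4} gives $\|\Phi^s\|_{H^5}^2\lesssim\delta$, so $\|\Phi^s\|_{H^5}\lesssim\sqrt{\delta}$ rather than $\lesssim\delta$, but this does not affect your argument since only smallness is needed for absorption.
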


We also derive the estimates on the normal spatial derivative
by following the proof of Lemma \ref{lm4-hat}. 

\begin{lemma}
\label{nowe-lm4-hat}
Suppose that the same conditions as in Proposition \ref{nowe-apriori1} hold. Define the index $\bm{a} = (a_1, a_2, a_3)$ with $a_1, a_2, a_3 \geq 0$ and $|\bm{a}| := a_1 + a_2 + a_3$. Let $\partial^{\bm{a}} := \partial^{a_1}_{y_1} \partial^{a_2}_{y_2} \partial^{a_3}_{y_3}$. Then it holds that 
\begin{align*}
&  \| \partial^{\bm{a}} \partial_{y_1} \hat{\vp} (t) \|_{L^2(\mathbb R^3_+)}^2 
+ \int^t_0 \left( \|\partial^{\bm{a}} \mathfrak{D} \hat{\vp} (\tau) \|_{L^2(\mathbb R^3_+)}^2 
+ \left\| \partial^{\bm{a}} \partial_{y_1} \hat{\frac{d}{dt}} \hat{\vp}(\tau) \right\|_{L^2(\mathbb R^3_+)}^2  \right) \, d\tau 
\notag\\
&  \lesssim \|\vp_0\|_{H^3}^2 
+ \int_0^t  \left(
\left\|\partial^{\bm{a}} \nabla_{y'} \hat{\frac{d}{dt}} \hat{\vp}(\tau) \right\|_{L^2(\mathbb R^3_+)}^2
+ \|\partial^{\bm{a}} \nabla_y \nabla_{y'} \hat{\psi}(\tau) \|_{L^2(\mathbb R^3_+)}^2 \right)\, d \tau
\notag\\
&\quad + \int_0^t  \left(|{\bm{a}}|\|\nabla {\vp} (\tau)\|^2_{H^{|{\bm{a}}| -1}}
+|{\bm{a}}| \left\|\nabla {\frac{d}{dt}} {\vp}(\tau) \right\|_{H^{|{\bm{a}}|-1}}^2
+\|{\psi}_t (\tau)\|^2_{H^{|{\bm{a}}|}} 
+\| {(\nabla \psi, \nabla\zeta)} (\tau)\|^2_{H^{|{\bm{a}}|}} \right)\, d \tau
\notag\\
& \quad 
+ (N (T)+\dels ) \int_0^t  D_{3, 0}(\tau) \, d \tau  
\end{align*}
for $t \in [0, T]$ and $0\leq |\bm{a}| \leq 2$.
\end{lemma}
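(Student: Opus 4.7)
The plan is to mirror the derivation of Lemma~\ref{lm4-hat}, with two adaptations: set $\sigma=0$, and replace the background $(\rt,\ut,\tt)(\tilde M)+(0,U,\Theta)$ used there by $(\rho^s,u^s,\theta^s)=(\rt,\ut,\tt)(\tilde M)+(0,U,\Theta)+\Phi^s$, so that the inhomogeneous terms $\hat F,\hat G,\hat H$ drop out of the hat-system \eqref{nowe-eq-pv-hat}. All the algebraic cancellations that motivated the definitions \eqref{Aj-def} of $\mathcal{A}_j$, $\tilde{\mathcal{A}}_1$, and $\mathfrak{D}$ depend only on $\mu,\lambda,\nabla M$ and so remain valid.

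First I would apply $\partial_1$ to \eqref{nowe-eq-pv1-hat} and multiply by $\mu_1$, then take the inner product of \eqref{nowe-eq-pv2-hat} with $(\hat\rho\mathcal{A}_1,\hat\rho\mathcal{A}_2,\hat\rho\mathcal{A}_3)^{tr}$, exactly as in \eqref{ee1-hat}--\eqref{ee2-hat}, and add the two identities. This yields the analog of \eqref{ee3-hat} without the $\hat F,\hat G$ contributions, with the second normal-derivative terms $I+II+III$ satisfying the identity \eqref{ee4-hat} verbatim. Multiplying by $\tilde{\mathcal{A}}_1$ and using $\tilde{\mathcal{A}}_1\partial_1=\mathfrak{D}-\mathcal{A}_2\partial_2-\mathcal{A}_3\partial_3$ produces the analog of \eqref{ee5-hat} in which the principal structure is
\[
\mu_1\mathfrak{D}\,\hat{\tfrac{d}{dt}}\hat\vp+\tilde{\mathcal{A}}_1R\hat\rho\hat\theta\,\mathfrak{D}\hat\vp=(\text{lower-order and nonlinear terms}).
\]
Applying $\partial^{\bm a}$ with $0\le|\bm a|\le 2$ and taking the $L^2(\mathbb R^3_+)$ inner product of the result with $\partial^{\bm a}\mathfrak{D}\hat\vp+\partial^{\bm a}\mathfrak{D}\hat{\tfrac{d}{dt}}\hat\vp$ yields the direct analog of \eqref{ee7-hat}.

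Next I would carry out the four estimates that matched \eqref{ee8-hat}--\eqref{ee13-hat}. The boundary term has the same sign: using the outflow condition \eqref{outf} together with $\hat\psi|_{y_1=0}=\hat\zeta|_{y_1=0}=0$ from \eqref{nowe-pbc-hat}, the convective flux gives a nonnegative boundary integral and is discarded. The good dissipation for $\hat\vp$ comes from $\tilde{\mathcal{A}}_1 R\hat\rho\hat\theta|\partial^{\bm a}\mathfrak{D}\hat\vp|^2$ combined with $\mu_1|\partial^{\bm a}\mathfrak{D}\hat{\frac{d}{dt}}\hat\vp|^2$, which control $\|\partial^{\bm a}\partial_1\hat\vp\|^2$ and $\|\partial^{\bm a}\partial_1\hat{\frac{d}{dt}}\hat\vp\|^2$ up to lower-order tangential derivatives via the identity $\partial_1=\tilde{\mathcal{A}}_1^{-1}(\mathfrak{D}-\mathcal{A}_2\partial_2-\mathcal{A}_3\partial_3)$, as in \eqref{ee10-hat}. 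Commutator, nonlinear, and source terms are estimated by Lemma~\ref{CommEst}, Sobolev embeddings, and Young's inequality, producing a bound of the shape \eqref{ee11-hat} but without the $\epsilon^{-1}\delta$ term.

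The main obstacle is that $\Phi^s$, unlike the planar background, lacks the exponential decay \eqref{stdc1}, so the Hardy-type estimate used in \eqref{ee13-hat} to absorb $\delta$-times-perturbation terms via weighted norms cannot be reused. Instead, I will exploit only the smallness $\|\Phi^s\|_{H^5}\lesssim\delta$ from Theorem~\ref{th4}: every factor involving derivatives of $(\rho^s,u^s,\theta^s)$ that is not of the form $(\rt,\ut,\tt)(y_1)$, $U(\Gamma(y))$, or $\Theta(\Gamma(y))$ will be estimated in $L^3$ or $L^\infty$ with cost $\lesssim\delta$, paired against $L^6$ or $L^2$ norms of $(\hat\vp,\hat\psi,\hat\zeta)$ and their derivatives. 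The resulting terms fit into $(N(T)+\delta)D_{3,0}$ once integrated. The $(\rt,\ut,\tt)$ and $(U,\Theta)$ pieces are handled exactly as in \eqref{ee12-hat}--\eqref{ee13-hat}, now with \eqref{stdc1} replaced by the $H^5$ bound on $\Phi^s$ for the additional $\Phi^s$-contributions. Since we no longer carry a time weight, we simply integrate the resulting differential inequality on $[0,t]$ and rewrite the right-hand side in the $x$-variable after a change of coordinates, producing the stated estimate.
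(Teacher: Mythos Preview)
Your proposal is correct and follows exactly the approach the paper indicates: the paper states Lemma~\ref{nowe-lm4-hat} without proof, merely remarking that one derives it ``by following the proof of Lemma~\ref{lm4-hat},'' which is precisely what you outline (set $\sigma=0$, drop $\hat F,\hat G,\hat H$, and handle the $\Phi^s$-contributions via the $H^5$ smallness from Theorem~\ref{th4} rather than the exponential decay used in \eqref{ee13-hat}). Your identification of the one genuine modification---replacing the Hardy-based estimate by Sobolev pairings with $\|\Phi^s\|_{H^5}\lesssim\delta$---matches how the paper treats the analogous issue in the proof of Lemma~\ref{nowe-lm1}.
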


\subsection{Cattabriga estimates}  \label{ss-nowe-CattabrigaEst}

As in Subsection \ref{ss-CattabrigaEst}, we apply the Cattabriga estimate in Lemma \ref{CattabrigaEst}, which has crucial dependence on $\Omega$. 
Firstly we have Lemma \ref{nowe-lm1-C}, which is parallel to Lemma \ref{lm1-C}. This follows from applying the Cattabriga estimate \eqref{Cattabriga} to 
a boundary value problem of the Stokes equation:
\begin{align}
 \rho_+\div\psi = V , \quad
 - \mu \Delta \psi + R\theta_+\nabla \vp 
 = W,\quad
 \psi|_{\partial \Omega} =0 , \quad \lim_{|x| \rightarrow \infty} |\psi| =0,\label{nowe-ef7-C} 
 \end{align}
where this problem is just a rewrite of \eqref{nowe-eq-pv}, and $V$ and $W$ are defined as
\begin{align*}
V : =& f  - \frac{d}{dt}\vp - (\rho-\rho_+)\div \psi , \\
W : =& - \rho \{ \psi_t + ( u \cdot \nabla ) \psi \} + (\mu + \lambda) \rho_+^{-1} \nabla V + g  -R (\theta-\theta_+) \nabla \vp- R\rho\nabla \zeta.
\end{align*}

\begin{lemma} \label{nowe-lm1-C}
Under the same assumption as in Proposition \ref{nowe-apriori1},
it holds, for $k=0, 1, 2$, 
\begin{equation}
\| \nabla^{k+2} \psi \|^2 + \| \nabla^{k+1} \vp \|^2  \lesssim_\Omega  \| \psi_t \|^2_{H^{k}} 
+ \|  (\nabla \psi, \nabla \zeta) \|^2_{H^{k}} 
+ \left\| \frac{d}{dt} \vp \right\|^2_{H^{k+1}} 
 + (N (T) + \delta ) D_{3, 0} . 
\label{nowe-ef0-C}
\end{equation}
\end{lemma}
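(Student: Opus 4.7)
The plan is to mirror the proof of Lemma \ref{lm1-C} very closely, exploiting the fact that the Stokes boundary value problem \eqref{nowe-ef7-C} has already been formulated right before the statement. The first step is to apply the Cattabriga estimate \eqref{Cattabriga} to \eqref{nowe-ef7-C}, which yields
\begin{equation*}
\| \nabla^{k+2} \psi \|^2 + \| \nabla^{k+1} \vp \|^2
\lesssim_{\Omega} \| V \|^2_{H^{k+1}} + \| W \|^2_{H^k} + \| \nabla \psi \|^2
\end{equation*}
for $k=0,1,2$. The entire task then reduces to bounding $\|V\|_{H^{k+1}}^2$ and $\|W\|_{H^k}^2$ by the right hand side of \eqref{nowe-ef0-C}.

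For $V = f - \tfrac{d}{dt}\vp - (\rho-\rho_+)\div\psi$, the middle term contributes $\|\tfrac{d}{dt}\vp\|_{H^{k+1}}^2$ directly, which is one of the terms permitted on the right. For the nonlinear factor $(\rho-\rho_+)\div\psi$, I would decompose $\rho-\rho_+ = (\rt\circ\tilde M - \rho_+) + \varphi^s + \varphi$; the first two summands are small in $H^{k+2}$ uniformly in $x$ (by the exponential decay \eqref{stdc1} and the stationary estimate of Theorem \ref{th4}, both of which give $O(\delta)$ in sufficiently high Sobolev norms), while $\varphi$ supplies the $N(T)$ factor. A standard Moser-type product estimate then yields an $(N(T)+\delta)D_{3,0}$ contribution. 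The term $f = -\psi\cdot\nabla\rho^s - \vp\,\div u^s$ is handled analogously, because $\rho^s - \rho_+$ and $u^s - (u_+,0,0)$ are controlled in $H^5$ with norm $\lesssim_\Omega \delta$ (combining Proposition \ref{ex-st}, \eqref{ExBdry0}, \eqref{ExBd0} and Theorem \ref{th4}), so only $\psi$ and $\vp$ need to be handled with the available norms in $D_{3,0}$.

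For $W$, rewritten as $W = -\rho\{\psi_t + (u\cdot\nabla)\psi\} + (\mu+\lambda)\rho_+^{-1}\nabla V + g - R(\theta-\theta_+)\nabla\vp - R\rho\nabla\zeta$, the $\rho\psi_t$ piece produces the $\|\psi_t\|_{H^k}^2$ term; the convection $\rho(u\cdot\nabla)\psi$ is controlled using $N(T)+\delta \ll 1$ and Sobolev embedding to obtain a $(N(T)+\delta)D_{3,0}$ bound, with leftover linear $\|\nabla\psi\|_{H^k}^2$ inside the permitted norm; the $\nabla V$ term is absorbed into $\|V\|_{H^{k+1}}^2$ already treated; the remaining contributions $g$, $R(\theta-\theta_+)\nabla\vp$, $R\rho\nabla\zeta$ are treated by the same decomposition $\theta-\theta_+ = (\tt\circ\tilde M-\theta_+)+\Theta+\zeta^s+\zeta$ and $\rho = \rho^s+\varphi$, splitting each product into a ``small stationary coefficient $\times$ perturbation'' piece and a ``perturbation $\times$ perturbation'' piece, both estimated by the right hand side of \eqref{nowe-ef0-C}.

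The main obstacle, and essentially the only delicate point, is keeping track of how many derivatives land on the stationary background versus on the perturbation. Since the Cattabriga step requires $k+2 \leq 4$ spatial derivatives of the source data, and Theorem \ref{th4} gives $\Phi^s \in H^5$ with norm $\lesssim_\Omega \delta$, there is always just enough regularity of $(\rho^s, u^s, \theta^s)$ to absorb the derivatives from $f$ and $g$ while leaving the perturbation in a norm dominated by $D_{3,0}$. Once the bookkeeping is done and $N(T)+\delta$ is chosen small enough to absorb the $\|\nabla\psi\|^2$ term appearing after Cattabriga into the leading left-hand side at the a priori estimate stage, \eqref{nowe-ef0-C} follows.
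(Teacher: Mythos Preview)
Your proposal is correct and follows essentially the same approach as the paper: apply the Cattabriga estimate \eqref{Cattabriga} to the Stokes problem \eqref{nowe-ef7-C} and then bound $\|V\|_{H^{k+1}}^2$ and $\|W\|_{H^k}^2$ term by term using the decomposition of $(\rho^s,u^s,\theta^s)$ together with Theorem \ref{th4}, \eqref{stdc1}, Sobolev and product estimates. The only minor imprecision is your last remark about absorbing $\|\nabla\psi\|^2$ into the left-hand side: at the level of this lemma that term is simply part of the permitted $\|(\nabla\psi,\nabla\zeta)\|_{H^k}^2$ on the right, and no absorption is needed here.
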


As in Subsection \ref{ss-CattabrigaEst}, 
we show similar estimates for $(\hat{\vp},\hat{\psi})(t,y)$, where $y \in \mathbb R^3_+$.
We use the same differential operator $\check{\partial}_{y_j}$ as defined in \eqref{checkOp}. 
Furthermore, $\check{\nabla}^l_{y'}$ means the totality of all $l$-times tangential derivatives 
$\check{\partial}_{y_j}$ only for $j=2,3$.
Applying $\check{\nabla}^l_{y'}$ to \eqref{nowe-ef7-C}, 
we obtain a boundary value problem of the Stokes equation: 
\begin{equation}\label{nowe-checkEq}
\rho_+\div \check{\nabla}^l_{y'} \psi = \check{V}, \ \
- \mu \Delta \check{\nabla}^l_{y'} \psi +R\theta_+\nabla \check{\nabla}^l_{y'} \vp = \check{W}, \ \  
\check{\nabla}^l_{y'} \psi|_{\partial \Omega} =0 , \ \ \lim_{|x| \rightarrow \infty} | \check{\nabla}^l_{y'} \psi| =0,
\end{equation}
where
\begin{align*}
\check{V} : = 
& \check{\nabla}^l_{y'} f - \check{\nabla}^l_{y'} \frac{d}{dt}\vp - \check{\nabla}^l_{y'} \Big((\rho-\rho_+)\div \psi \Big) - \rho_{+}[\check{\nabla}^l_{y'},\div] \psi, \\
\check{W} : = 
&- \check{\nabla}^l_{y'}  \Big(\rho \{ \psi_t + ( u \cdot \nabla ) \psi \} \Big) + (\mu + \lambda) \check{\nabla}^l_{y'}  \nabla \div \psi + \check{\nabla}^l_{y'} g \\
& - \check{\nabla}^l_{y'} \Big( R (\theta-\theta_+) \nabla \vp + R\rho\nabla \zeta\Big) 
+ \mu [\check{\nabla}^l_{y'},\Delta]\psi 
- R \theta_+ [\check{\nabla}^l_{y'}, \nabla]\vp.
\end{align*}
Then we have the following estimate parallel to Lemma \ref{lm2-C}. This can be shown by an application of the Cattabriga estimate \eqref{Cattabriga} to the problem \eqref{nowe-checkEq}.
    
\begin{lemma} \label{nowe-lm2-C}
Under the same assumption as in Proposition \ref{nowe-apriori1},
it holds, for $k=0, 1$, $k+l =  1,  2$, 
\begin{align}
&\| \nabla^{k+2}  \nabla^l_{y'} \hat{\psi} \|_{L^2(\mathbb R^3_+)}^2 + \|\nabla^{k+1} \nabla^l_{y'} \hat{\vp} \|_{L^2(\mathbb R^3_+)}^2 
\notag\\
& \lesssim_\Omega 
\left\|{\nabla}^l_{y'} \hat{\frac{d}{dt}} \hat{\vp} \right\|^2_{H^{k+1} (\mathbb R^3_+)} 
+ \| \psi_t \|^2_{H^{k+l}} 
+ \|  (\nabla \psi, \nabla \zeta) \|^2_{H^{k+l}} 
+ \| \nabla \vp \|^2_{H^{k+l-1}}
 + (N (T) + \delta ) D_{3,0} . 
\label{nowe-eg0-C}
\end{align}
\end{lemma}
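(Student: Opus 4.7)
The plan is to mimic the argument used for Lemma \ref{lm2-C}, but now with no exponential weight and with the inhomogeneous data $F,G,H$ absent. First I would apply the Cattabriga estimate \eqref{Cattabriga} from Lemma \ref{CattabrigaEst} to the boundary-value problem \eqref{nowe-checkEq}, yielding
\[
\| \nabla^{k+2} \check{\nabla}^l_{y'} \psi \|^2 + \|\nabla^{k+1} \check{\nabla}^l_{y'} \vp \|^2
\lesssim_\Omega \|\check V\|_{H^{k+1}}^2 + \|\check W\|_{H^k}^2 + \|\nabla\check{\nabla}^l_{y'} \psi\|^2.
\]
The last term is already controlled by $\|(\nabla\psi,\nabla\zeta)\|_{H^{k+l}}^2$, so the task reduces to bounding $\|\check V\|_{H^{k+1}}$ and $\|\check W\|_{H^k}$ by the right-hand side of \eqref{nowe-eg0-C}.

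For the terms in $\check V$ and $\check W$ that are linear in $\Phi$ or $\partial_t\Phi$ (such as $\check{\nabla}^l_{y'}\frac{d}{dt}\vp$, $\check{\nabla}^l_{y'}(\rho\psi_t)$, $\check{\nabla}^l_{y'}\nabla\mathrm{div}\,\psi$, $\check{\nabla}^l_{y'}((\theta-\theta_+)\nabla\vp)$, $\check{\nabla}^l_{y'}(\rho\nabla\zeta)$) I would treat them exactly as in Lemma \ref{nowe-lm1-C}: expand derivatives via the product rule, distribute them across the smooth factor (which is bounded together with enough of its derivatives thanks to \eqref{stdc1} and Theorem \ref{th4}), and absorb the result into $\|\psi_t\|_{H^{k+l}}$, $\|(\nabla\psi,\nabla\zeta)\|_{H^{k+l}}$, $\|\nabla\vp\|_{H^{k+l-1}}$, or $\|\nabla^l_{y'}\hat{\frac{d}{dt}}\hat\vp\|_{H^{k+1}(\mathbb R^3_+)}$ once the change of variables is performed. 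The three commutator terms $\rho_+[\check\nabla_{y'}^l,\mathrm{div}]\psi$, $\mu[\check\nabla_{y'}^l,\Delta]\psi$, and $R\theta_+[\check\nabla_{y'}^l,\nabla]\vp$ have total derivative order at most $k+l+1$ on $\psi$ or $k+l$ on $\vp$ but with one derivative acting instead on the coefficients $A^{-1}(x')$, so by Sobolev embedding (with $\|M\|_{H^9}$ controlling the required norms of $A^{-1}$) they too fit into the same list of norms.

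The only essentially new contributions are those coming from the nonlinear source terms $\check\nabla_{y'}^l f$ and $\check\nabla_{y'}^l g$, which contain products of $\Phi$ with $\nabla \rho^s$, $\nabla u^s$, $\nabla\theta^s$. I would handle these exactly as in the estimate \eqref{ee13-hat} for $\hat f,\hat g$: write $\rho^s = \tilde\rho\circ\tilde M + \vp^s$, $u^s = \tilde u\circ\tilde M + U + \psi^s$, $\theta^s = \tilde\theta\circ\tilde M + \Theta + \zeta^s$, so that one factor is either exponentially decaying (via \eqref{stdc1} combined with Hardy's inequality \eqref{hardy}) or bounded in $H^7$ (via Theorem \ref{th4} and \eqref{ExBdry0}--\eqref{ExBd0}). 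Using Sobolev's inequalities \eqref{sobolev0}--\eqref{sobolev2} and H\"older, each such term is bounded by $(N(T)+\delta)\,D_{3,0}^{1/2}$, and after squaring this gives the $(N(T)+\delta)D_{3,0}$ contribution on the right-hand side of \eqref{nowe-eg0-C}. The main obstacle I anticipate is the careful bookkeeping of the commutators $[\check\nabla^l_{y'},\Delta]$ and $[\check\nabla^l_{y'},\nabla]$, which produce terms involving up to second-order derivatives of $M$; one must check that these are indeed bounded by the norms of $\nabla\vp$ and $(\nabla\psi,\nabla\zeta)$ listed on the right and that the resulting constants depend only on $\|M\|_{H^9}$ and on $\Omega$ through the Cattabriga constant.

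Finally, as in the proof of Lemma \ref{lm2-C}, I would pass from the $x$-coordinates on $\Omega$ back to the $y$-coordinates on $\mathbb{R}_+^3$ in the left-hand side using $\check\partial_{y_j}\vp(t,\Gamma(y))=\partial_{y_j}\hat\vp(t,y)$, the analogous identity for $\psi$, and the fact that the Jacobian of $\Gamma$ is bounded above and below. This yields \eqref{nowe-eg0-C} and completes the proof.
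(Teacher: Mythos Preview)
Your proposal is correct and follows essentially the same approach as the paper: apply the Cattabriga estimate \eqref{Cattabriga} to the Stokes problem \eqref{nowe-checkEq}, bound $\|\check V\|_{H^{k+1}}$ and $\|\check W\|_{H^k}$ term by term as in Lemma \ref{nowe-lm1-C}, and then change coordinates back to $\mathbb R^3_+$. One small imprecision: Theorem \ref{th4} gives $\Phi^s\in H^m$ only for $m\le 5$, not $H^7$; the $H^7$ bound applies to $U$ and $\Theta$ via \eqref{ExBdry3} and \eqref{ExBd3}, but $H^5$ regularity of $\Phi^s$ is enough here.
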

%

\subsection{Elliptic estimates} \label{ss-nowe-EllipticEst}
Similarly as in Subsection \ref{EllipticEst}, we apply the elliptic estimate (Lemmas \ref{ellipticEst} and \ref{ellipticEst2}) to rewrite some terms for the time-derivatives into terms for the spatial-derivatives. The next lemma follows from regarding \eqref{nowe-eq-pv} and its time derivative as elliptic boundary value problems, 
and applying Lemmas \ref{ellipticEst} and \ref{ellipticEst2}.

\begin{lemma} \label{nowe-lm1-E}
Under the same assumption as in Proposition \ref{nowe-apriori1},
it holds that  
\begin{align}
\| \nabla^{k+2} \psi \| & \lesssim  \| \psi_t \|_{H^{k}} + E_{k+1,0}, \quad k=0,1,
\label{5eh0-E} \\
\| \nabla^{k+2} \zeta \| & \lesssim  \| \zeta_t \|_{H^{k}} + E_{k+1,0}, \quad k=0,1,
\label{5eh0-Ez} \\
\| \nabla^{k+2} \zeta \| & \lesssim  \| \zeta_t \|_{H^{k}} + D_{k,0}, \quad k=0,1,2,
\label{5eh0-Ezz} \\
\| \nabla^{2}   \psi_t \| & \lesssim  \|\psi_{tt} \| +D_{2,0},
\label{5ej0-E} \\
\| \nabla^{2}  \zeta_t \| & \lesssim  \|\zeta_{tt} \| +D_{2,0}.
\label{5ej0-Ez}
\end{align}
\end{lemma}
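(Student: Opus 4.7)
The plan is to mirror the proof of Lemma \ref{lm1-E} verbatim, since the only structural difference between the weighted problem \eqref{eq-pv} and the no-weight problem \eqref{nowe-eq-pv} is the absence of the inhomogeneous source terms $F$, $G$, $H$; the elliptic step itself is identical. First I would view \eqref{nowe-eq-pv2} and \eqref{nowe-eq-pv3} as elliptic boundary value problems by transferring the transport and lower-order pressure-type terms to the right-hand side:
\begin{gather*}
-\mu \Delta \psi - (\mu+\lambda)\nabla \div \psi = -\rho\psi_t + \widetilde{G},\quad \psi|_{\partial\Omega}=0,\quad \lim_{|x|\to\infty}\psi=0,\\
-\kappa \Delta \zeta = -c_v\rho\zeta_t + \widetilde{H},\quad \zeta|_{\partial\Omega}=0,\quad \lim_{|x|\to\infty}\zeta=0,
\end{gather*}
where $\widetilde{G} := -R\theta\nabla\vp - \rho(u\cdot\nabla)\psi - R\rho\nabla\zeta + g$ and $\widetilde{H} := -c_v\rho u\cdot\nabla\zeta - R\rho\theta\div\psi + 2\mu|\mathcal{D}(\psi)|^2 + \lambda(\div\psi)^2 + h$.

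For \eqref{5eh0-E} and \eqref{5eh0-Ez}, I would apply Lemma \ref{ellipticEst} to each problem, which yields
\[
\|\nabla^{k+2}\psi\| \lesssim \|\psi_t\|_{H^k} + \|\widetilde{G}\|_{H^k} + \|\psi\|,\qquad \|\nabla^{k+2}\zeta\| \lesssim \|\zeta_t\|_{H^k} + \|\widetilde{H}\|_{H^k} + \|\zeta\|,
\]
for $k=0,1$. Every summand in $\widetilde{G}$ and $\widetilde{H}$ is either a product of a stationary quantity (whose norms are $O(\delta)$ via Proposition \ref{ex-st}, Theorem \ref{th4}, \eqref{ExBdry0}) with a derivative of $\Phi$, or a quadratic expression in $\Phi$ and its first derivatives. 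Combining Hardy's inequality \eqref{hardy}, the Sobolev embeddings \eqref{sobolev0}--\eqref{sobolev2}, and the smallness $N(T)+\delta\ll 1$, these terms are bounded by $E_{k+1,0}$. The refined estimate \eqref{5eh0-Ezz} is obtained in the same way but invoking Lemma \ref{ellipticEst2} for $k=0,1,2$; the gain at the endpoint $k=2$ is that the boundary/trace and derivative norms appearing in $D_{k,0}$ are exactly what the stronger elliptic estimate requires to control $\|\widetilde{H}\|_{H^2}$.

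For \eqref{5ej0-E} and \eqref{5ej0-Ez}, I would differentiate both elliptic problems in $t$ and apply Lemma \ref{ellipticEst} with $k=0$, obtaining
\[
\|\nabla^2\psi_t\| \lesssim \|\psi_{tt}\| + \|\widetilde{G}_t\| + \|\psi_t\|,\qquad \|\nabla^2\zeta_t\| \lesssim \|\zeta_{tt}\| + \|\widetilde{H}_t\| + \|\zeta_t\|,
\]
and then bound $\|\widetilde{G}_t\|$ and $\|\widetilde{H}_t\|$ by $D_{2,0}$. The main obstacle is exactly this last bound: differentiating the quadratic terms $|\mathcal{D}(\psi)|^2$ and $(\div\psi)^2$, and the transport terms $\rho u\cdot\nabla(\cdot)$, produces cubic expressions such as $\mathcal{D}(\psi):\mathcal{D}(\psi_t)$, $\vp_t\nabla\zeta$, $\psi_t\cdot\nabla\vp$, and $\partial_t\theta\,\div\psi$. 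These must be estimated by placing the first-order spatial derivatives of $\Phi$ in $L^\infty$ (using $\|\Phi\|_{H^3}$ via \eqref{sobolev2}) and the remaining factor in $L^2$, together with the analogue of \eqref{PhiSup} that says $\|\partial_t\Phi\|_{H^1}\lesssim N(T)+\delta$; the precise accounting, together with the fact that $D_{2,0}$ already contains $\|(\psi_t,\zeta_t)\|_{H^1}^2$ and $\|\nabla\Phi\|_{H^1}^2$, is where all the bookkeeping occurs, and is the only nontrivial part of the argument. Once these source bounds are in hand, collecting the inequalities completes the proof.
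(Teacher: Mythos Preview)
Your proposal is correct and follows precisely the route the paper takes: the paper merely states that the lemma ``follows from regarding \eqref{nowe-eq-pv} and its time derivative as elliptic boundary value problems, and applying Lemmas \ref{ellipticEst} and \ref{ellipticEst2},'' which is exactly your plan of mirroring the proof of Lemma \ref{lm1-E} with the inhomogeneous terms $F,G,H$ removed. Your identification of the source-term bookkeeping (estimating $\|\widetilde G_t\|,\|\widetilde H_t\|$ via Sobolev and the analogue of \eqref{PhiSup}) as the only nontrivial step is accurate.
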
 

\subsection{Completion of the a priori estimate} 

In this subsection, we complete the derivation of the a priori estimate.
First, by following the proofs of Lemmas \ref{lm1-comp} and \ref{lm3-comp}
with the aid of Lemmas \ref{nowe-lm2-hat}--\ref{nowe-lm1-E}, we have the next lemma.

\begin{lemma} \label{nowe-lm3-comp}
Under the same assumption as in Proposition \ref{nowe-apriori1},
it holds, for $p=0,  1,  2$, 
\begin{align}
&   \| \nabla^{p+1}  \vp (t) \|^2 + \int^t_0    \|( \nabla^{p+1} \vp, \nabla^{p+2} \psi, \nabla^{p+2} \zeta )(\tau)\|^2   \, d \tau  + \int^t_0  \left\| \nabla^{p+1} \frac{d}{dt} \vp (\tau) \right\|^2 \, d \tau   
\notag\\
& \lesssim_\Omega \| \Phi_0\|^2_{H^3} +   \| \vp (t) \|_{H^p}^2 + \int^t_0  \|(\psi_t,\zeta_t)(\tau) \|^2_{H^p} \, d \tau  
+ \int^t_0   D_{p, 0} (\tau) \, d \tau   
\notag\\
& \qquad + (N (T) + \delta ) \int^t_0  D_{3, 0} (\tau) \, d \tau . 
\label{nowe-em0-comp}
\end{align}
\end{lemma}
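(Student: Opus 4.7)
The plan is to mimic the derivation of Lemma \ref{lm3-comp} with the time-weight $e^{\sigma t}$ removed and with the no-weight analogs of all the ingredients (Lemmas \ref{nowe-lm2-hat}, \ref{nowe-lm4-hat}, \ref{nowe-lm1-C}, \ref{nowe-lm2-C}, \ref{nowe-lm1-E}) substituted for their exponentially weighted counterparts. The essential simplification compared to Section \ref{sec3} is that no $F$, $G$, $H$ inhomogeneous terms appear here, so the $\delta \int_0^t e^{\sigma\tau}\,d\tau$ contribution disappears; otherwise the bookkeeping is identical.

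First I would establish the analog of Lemma \ref{lm1-comp}: for every multi-index $\bm{a}=(a_1,a_2,a_3)$ with $|\bm{a}|=p+1$,
\begin{equation*}
\| \partial^{\bm{a}}_y \hat{\vp}(t)\|^2_{L^2(\mathbb{R}^3_+)} + \int_0^t \left\|\partial^{\bm{a}}_y \hat{\tfrac{d}{dt}}\hat{\vp}(\tau)\right\|^2_{L^2(\mathbb{R}^3_+)} d\tau \lesssim_\Omega \tilde{\mathcal{R}}_{p,\epsilon},
\end{equation*}
where
\begin{equation*}
\tilde{\mathcal{R}}_{p,\epsilon} := \|\Phi_0\|_{H^3}^2 + \int_0^t \|\psi_t(\tau)\|_{H^p}^2\,d\tau + \epsilon^{-1}\!\int_0^t D_{p,0}(\tau)\,d\tau + \epsilon\!\int_0^t\!\left(\|\nabla\vp\|_{H^p}^2 + \|(\nabla\psi,\nabla\zeta)\|_{H^{p+1}}^2\right)d\tau + (N(T)+\delta)\int_0^t D_{3,0}(\tau)\,d\tau.
\end{equation*}
As in the proof of Lemma \ref{lm1-comp}, the argument is an induction on $a_1$ from $0$ up to $p+1$. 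The base case $a_1=0$ (purely tangential) follows from adding Lemma \ref{nowe-lm2-hat} with $l=p+1$ and Lemma \ref{nowe-lm4-hat} with $\bm{a}=(0,a_2,a_3)$, $a_2+a_3=p$. For the inductive step, assuming the bound for $a_1 \le q$, I use Lemma \ref{nowe-lm2-C} with $l=p+1-q$ and $k=q-1$ (integrated in $t$) to control $\|\nabla^{q+1}\nabla_{y'}^{p+1-q}\hat{\psi}\|^2$ and $\|\nabla^q\nabla_{y'}^{p+1-q}\hat{\vp}\|^2$; the highest-order term $\|\nabla_{y'}^{p+1-q}\hat{\tfrac{d}{dt}}\hat{\vp}\|_{H^q}^2$ on its right-hand side is absorbed by the inductive hypothesis. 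Feeding these into Lemma \ref{nowe-lm4-hat} with $a_1=q$, $a_2+a_3=p-q$ closes the step.

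Second, I integrate \eqref{nowe-ef0-C} with $k=p$ and \eqref{5eh0-Ezz} with $k=p$ in time and add them to the intermediate estimate above, after performing the change of variables $y\mapsto x$ on its left-hand side to produce the term $\|\nabla^{p+1}\vp(t)\|^2 + \int_0^t\|\nabla^{p+1}\tfrac{d}{dt}\vp\|^2\,d\tau$. The Cattabriga estimate supplies $\int_0^t\|\nabla^{p+2}\psi\|^2\,d\tau$ and $\int_0^t\|\nabla^{p+1}\vp\|^2\,d\tau$, while \eqref{5eh0-Ezz} supplies $\int_0^t\|\nabla^{p+2}\zeta\|^2\,d\tau$. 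Finally, choosing $\epsilon$ sufficiently small lets the $\epsilon\int_0^t(\|\nabla\vp\|_{H^p}^2 + \|(\nabla\psi,\nabla\zeta)\|_{H^{p+1}}^2)\,d\tau$ term be absorbed into the dissipation on the left, yielding \eqref{nowe-em0-comp}.

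The only point requiring care is the induction in the first step: one must ensure at each stage that the normal-derivative term produced on the right of Cattabriga has already been controlled, which is exactly why the induction is carried out in order of increasing $a_1$. Since the no-weight setting is strictly simpler than that of Lemma \ref{lm1-comp} (no $\sigma$ or $e^{\sigma\tau}$ to track, no $\delta$ source terms from $F,G,H$), no new analytical difficulty arises.
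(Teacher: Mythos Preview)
Your proposal is correct and follows essentially the same approach as the paper, which simply instructs the reader to repeat the proofs of Lemmas \ref{lm1-comp} and \ref{lm3-comp} using the no-weight ingredients (Lemmas \ref{nowe-lm2-hat}--\ref{nowe-lm1-E}); you have spelled this out accurately, including the induction on the number of normal derivatives and the closing step via the Cattabriga and elliptic estimates. The only minor omission is the term $\|\vp(t)\|_{H^p}^2$ on the right of \eqref{nowe-em0-comp}, which enters when you pass from the $y$-coordinate estimate back to $x$-coordinates (the change of variables produces lower-order remainder terms); this is harmless and consistent with how the paper handles it in \eqref{el0-comp}.
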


Now we can complete the a priori estimate.

\begin{proof}[Proof of Proposition \ref{nowe-apriori1}]
The a priori estimate can be shown in the same way as in the proof of Proposition \ref{apriori1}
in Subsection \ref{ss-comp-apriori}.
Indeed the proof is complete just by 
replacing \eqref{ea0}, \eqref{ec0}, \eqref{ed0}, \eqref{eh0-E}--\eqref{ej0-Ez}, and \eqref{em0-comp}
by \eqref{nowe-ea0}, \eqref{nowe-ec0}, \eqref{nowe-ed0}, \eqref{5eh0-E}--\eqref{5ej0-Ez}, and \eqref{nowe-em0-comp}, respectively.
\end{proof}

We discuss briefly the proof of Corollary \ref{cor}.

\begin{proof}[Proof of Corollary \ref{cor}]
If $\|M\|_{H^{9}(\mathbb R^2)} \leq \eta_0$ holds for $\eta_0$ being in Lemma \ref{CattabrigaEst},
we can replace $\lesssim_{\Omega}$ by $\lesssim$ 
in the inequalities \eqref{nowe-ef0-C}, \eqref{nowe-eg0-C}, and \eqref{nowe-em0-comp}.
Then following the proof of Proposition \ref{nowe-apriori1} with these improved inequalities, 
we conclude Corollary \ref{cor}.
\end{proof}

\begin{appendix}

\section{The equations of the energy forms}\label{Appenx0}

In this section, we derive the equations of the energy forms i.e. \eqref{ea1} and \eqref{sea1}. 
Let us first treat \eqref{ea1}. A direct computation yields
\begin{gather}\label{eform10}
(\rho \cale)_t + \div (\rho u \cale)  = \cale \{\rho_t + \div (\rho u) \}  + I_1 + I_2 +I_3, 
\end{gather}
where
\begin{align*}
I_{1} &= \rho \psi \cdot \psi_t + \rho u \cdot (\nabla \psi) \psi , \quad
\\
I_{2} &=  \frac{R\tt\vp}{\rho} (\vp_t + u \cdot \nabla \vp)
+R\rho\eta(\frac{\rt}{\rho}) u \cdot \nabla \tt
-\frac{R\tt\vp^2}{\rho\rt}  u \cdot \nabla \rt, 
\\
I_{3} &= c_v\rho\frac{\zeta}{\theta} (\zeta_t + u \cdot \nabla \zeta)
+c_v\rho\eta(\frac{\theta}{\tt+\Theta}) u \cdot \nabla ({\tt+\Theta})
+c_v\rho({\tt+\Theta})\frac{\zeta}{\theta}  u \cdot \nabla \frac{1}{\tt+\Theta}.
\end{align*}
Note that the first term on the right hand side is zero owing to $\eqref{ns1}_1$.
Let us rewrite terms $I_{1}$, $I_{2}$, and $I_{3}$ as follows.
Use \eqref{eq-pv2}  to obtain
\begin{equation*}
\begin{split}
I_1 
& = L \psi \cdot \psi - R\theta\psi\cdot \nabla \vp  - R\rho \psi \cdot \nabla \zeta+ \psi\cdot(g+G). \\
\end{split}
\end{equation*}
Notice that
\begin{gather*}
L \psi \cdot \psi = \div \left\{\mu \nabla \psi \cdot \psi+ (\mu + \lambda) \psi \div \psi \right\}
- \mu |\nabla \psi|^{2} - (\mu + \lambda) (\div \psi)^{2}.
\end{gather*}
Using \eqref{eq-pv1}, we arrive at
\begin{equation*}
\begin{split}
I_2 
&=\frac{R\tt\vp}{\rho} (-\rho\div \psi+f+F)
+R\rho\eta(\frac{\rt}{\rho}) u \cdot \nabla \tt
-\frac{R\tt\vp^2}{\rho\rt}  u \cdot \nabla \rt.
\\
\end{split}
\end{equation*}
Furthermore, by \eqref{eq-pv3}, it is seen that
\begin{equation*}
\begin{split}
I_3 & =\frac{\zeta}{\theta} \Big(-R\rho\theta\textrm{div}\psi
+\kappa\Delta\zeta+2\mu|\mathcal{D}(\psi)|^2+\lambda(\textrm{div}\psi)^2+h+H\Big)
\\
&\quad +c_v\rho\eta(\frac{\theta}{\tt+\Theta}) u \cdot \nabla ({\tt+\Theta})
+c_v\rho({\tt+\Theta})\frac{\zeta}{\theta}  u \cdot \nabla \frac{1}{\tt+\Theta}. \\
\end{split}
\end{equation*}
Notice that
\begin{gather*}
\frac{\kappa}{\theta} (\Delta\zeta) \zeta  = \div \left(  \kappa\frac{\zeta}{\theta}\nabla\zeta \right)
- \frac{\kappa}{\theta} |\nabla \zeta |^{2} + \kappa\frac{\zeta}{\theta^2}\nabla\theta\cdot\nabla\zeta.
\end{gather*}
We use the following terms from $I_1, I_2$ and $I_3$ to get the cancellation
\begin{align*}
&- R\theta\psi\cdot \nabla \vp  - R\rho \psi \cdot \nabla \zeta - R\tt\vp\div\psi  - R\rho \zeta\div\psi
\\
&=  - R(\zeta+\Theta)\psi\cdot \nabla \vp - R\tt\div(\psi \vp ) - R\rho \div(\psi\zeta )
\\
&=  - R\Theta\psi\cdot \nabla \vp - R\div(\tt\psi \vp ) +R\vp \psi \cdot \nabla \tt 
- R \div(\rho\psi\zeta )+ R\zeta\psi\cdot \nabla \rt .
\end{align*}
Plugging the results above into \eqref{eform10},  we conclude \eqref{ea1}.
One can also have \eqref{sea1} just by
replacing $(\rt,\ut,\tt,U,\Theta,\nabla \tilde{M})$ by $(\rho^{s},u^{s},\theta^s,0,0,0)$ in the computation above. 

\section{Basic inequalities and estimates}\label{BasciIneq}

This section provides some basic inequalities and estimates that are frequently used throughout the paper. 
The following lemmas cover the case $M\equiv 0$, that is, $\Omega=\mathbb R^3_+$. 
For the proofs of Lemmas \ref{HardyEst}--\ref{ellipticEst}, see \cite[Appendix A]{SZ1}.

\begin{lemma}\textbf{(Hardy's Inequality)}\label{HardyEst}
Let $\alpha>0$. For $f \in H^1(\Omega)$, it holds that
\begin{equation}
\int_{\Omega} e^{-\alpha x_1}|f(x)|^2 \, d x
\lesssim
\lt{\nabla f}^2
+ \|{f(M(\cdot),\cdot)}\|_{L^2(\mathbb R^2)}^2.
\label{hardy}
\end{equation}
\end{lemma}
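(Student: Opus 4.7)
The plan is to reduce the weighted inequality on the perturbed half-space $\Omega$ to a one-dimensional Hardy-type estimate on each normal fiber, and then integrate back in the tangential variable $x'$. First I would fix $x' \in \mathbb R^2$ and introduce the change of variable $s := x_1 - M(x')$, which maps the fiber $\{x_1 > M(x')\}$ onto the half-line $\{s > 0\}$. Setting $g(s) := f(s + M(x'), x')$, the task reduces to the one-dimensional estimate
\begin{equation*}
\int_0^\infty e^{-\alpha s} |g(s)|^2 \, ds
\lesssim |g(0)|^2 + \int_0^\infty e^{-\alpha s} |g'(s)|^2 \, ds.
\end{equation*}

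To establish this, I would write $e^{-\alpha s} = -\alpha^{-1} \tfrac{d}{ds}(e^{-\alpha s})$ and integrate by parts once. The boundary term at $s=\infty$ vanishes because $e^{-\alpha s}|g|^2 \to 0$ in an integrable sense, the boundary term at $s=0$ produces $\alpha^{-1}|g(0)|^2$, and the remaining interior term is $2\alpha^{-1}\int_0^\infty e^{-\alpha s} g(s) g'(s)\,ds$, which by the Cauchy--Schwarz inequality is bounded by $\tfrac{1}{2}\int_0^\infty e^{-\alpha s}|g|^2\,ds + 2\alpha^{-2}\int_0^\infty e^{-\alpha s}|g'|^2\,ds$. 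Absorbing the first summand into the left-hand side yields the desired one-dimensional bound.

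Next I would multiply this fiberwise estimate by $e^{-\alpha M(x')}$, observe that $e^{-\alpha M(x')} e^{-\alpha s} = e^{-\alpha x_1}$, integrate in $x' \in \mathbb R^2$, and invoke Fubini. The bulk becomes $\int_\Omega e^{-\alpha x_1}|f|^2\,dx$ on the left and $\int_\Omega e^{-\alpha x_1}|\partial_{x_1}f|^2\,dx$ on the right, while the boundary term becomes $\int_{\mathbb R^2} e^{-\alpha M(x')}|f(M(x'),x')|^2\,dx'$. Since $M \in H^k(\mathbb R^2)$ for every $k$, Sobolev embedding gives $M \in L^\infty(\mathbb R^2)$, so $e^{-\alpha M(x')}$ is bounded above and below by positive constants and can be absorbed into the implicit constant. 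Bounding $|\partial_{x_1}f| \le |\nabla f|$ then yields \eqref{hardy}.

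The only mild obstacle is the curved boundary $\{x_1 = M(x')\}$, which is handled transparently by the fiberwise change of variables; the boundedness of $M$ (coming from $M \in H^k(\mathbb R^2)$ with $k \ge 2$) is exactly what prevents the weight $e^{-\alpha M(x')}$ from degenerating. No estimate on the geometry of $\partial\Omega$ beyond this pointwise bound is required.
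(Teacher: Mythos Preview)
Your proof is correct. The paper does not give its own argument for this lemma but simply refers to \cite[Appendix~A]{SZ1}; the fiberwise reduction and one-dimensional integration-by-parts you carry out is the standard route and is almost certainly what is done there as well. The only small technical point worth adding is that the integration by parts and the evaluation $g(0)=f(M(x'),x')$ should first be justified for smooth $f$ with bounded support and then extended to $H^1(\Omega)$ by density together with the trace theorem, but this is routine.
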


\begin{lemma}\textbf{(Sobolev's Inequalities)}
For $f \in H^1(\Omega)$ and $g \in H^2(\Omega)$, it holds that
\begin{align}
\|f\|_{L^p(\Omega)} &\lesssim \|f\|_{H^1(\Omega)}, \quad 2\leq p <6,
\label{sobolev0}\\
\|f\|_{L^6(\Omega)} &\lesssim \|\nabla f\|_{L^2(\Omega)},
\label{sobolev1}\\
\|g\|_{L^\infty(\Omega)} &\lesssim \|g\|_{H^2(\Omega)}.
\label{sobolev2}
\end{align}
\end{lemma}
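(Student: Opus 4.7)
The plan is to reduce each of the three inequalities to the corresponding classical Sobolev embedding on $\mathbb{R}^3$ (or on the half-space $\mathbb{R}^3_+$) by flattening the boundary of $\Omega$ with the diffeomorphism $\hat{\Gamma}: \Omega \to \mathbb{R}^3_+$, $x \mapsto y = (x_1 - M(x'),x')$ from \eqref{CV11}, and then appealing to an extension to all of $\mathbb{R}^3$. Writing $\hat{f}(y) := f(\Gamma(y))$ and $\hat{g}(y) := g(\Gamma(y))$, the Jacobian of $\hat{\Gamma}$ is lower triangular with unit diagonal, so $L^p$-norms are preserved: $\|\hat{f}\|_{L^p(\mathbb{R}^3_+)} = \|f\|_{L^p(\Omega)}$ for all $p\in[1,\infty]$. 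Gradients transform according to $\nabla_x f = A\,\nabla_y \hat{f}$ as in \eqref{CV3}, where $A = A(y')$ depends only on $\nabla M$. Since $M \in \cap_k H^k(\mathbb{R}^2)$, the embedding $H^9(\mathbb{R}^2) \hookrightarrow W^{k,\infty}(\mathbb{R}^2)$ yields uniform bounds on $A, A^{-1}$ and their derivatives in terms of $\|M\|_{H^9(\mathbb{R}^2)}$, which (by the notational convention in Subsection~\ref{ss-Notation}) is absorbed into $\lesssim$. Hence the norms $\|\nabla f\|_{L^2(\Omega)}$ and $\|\nabla_y\hat{f}\|_{L^2(\mathbb{R}^3_+)}$ are comparable, and similarly for the second-order derivatives needed for \eqref{sobolev2}.

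Next, I would extend $\hat{f}$ (resp.\ $\hat{g}$) to a function on $\mathbb{R}^3$. For \eqref{sobolev0} and \eqref{sobolev1}, the even reflection $\tilde{f}(y_1,y') := \hat{f}(|y_1|,y')$ is in $H^1(\mathbb{R}^3)$ with $\|\tilde{f}\|_{L^2(\mathbb{R}^3)}^2 = 2\|\hat{f}\|_{L^2(\mathbb{R}^3_+)}^2$ and $\|\nabla_y\tilde{f}\|_{L^2(\mathbb{R}^3)}^2 = 2\|\nabla_y\hat{f}\|_{L^2(\mathbb{R}^3_+)}^2$, because the even reflection preserves the trace at $\{y_1=0\}$ and produces an $L^2$ distributional derivative. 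For \eqref{sobolev2} this even reflection is insufficient (the normal derivative would generally jump), so I would use a Seeley-type higher-order extension guaranteeing $\|\tilde{g}\|_{H^2(\mathbb{R}^3)} \lesssim \|\hat{g}\|_{H^2(\mathbb{R}^3_+)}$. Applying on $\mathbb{R}^3$ the classical embeddings
\[
H^1(\mathbb{R}^3) \hookrightarrow L^p(\mathbb{R}^3) \ (2\le p <6),\quad \|\cdot\|_{L^6(\mathbb{R}^3)} \lesssim \|\nabla\cdot\|_{L^2(\mathbb{R}^3)},\quad H^2(\mathbb{R}^3)\hookrightarrow L^\infty(\mathbb{R}^3),
\]
and pulling everything back by $\Gamma$, gives precisely \eqref{sobolev0}, \eqref{sobolev1}, \eqref{sobolev2} on $\Omega$.

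I do not expect a genuine obstacle here: the three embeddings are classical on $\mathbb{R}^3$ (and on $\mathbb{R}^3_+$), and the domain $\Omega$ is smoothly diffeomorphic to $\mathbb{R}^3_+$ with a globally Lipschitz change of variables whose constants are controlled by $\|M\|_{H^9(\mathbb{R}^2)}$. The only bookkeeping is to check that the chain rule constants from $\nabla_x \leftrightarrow \nabla_y$ and the extension operator norms remain uniformly bounded by a quantity depending only on $\|M\|_{H^9(\mathbb{R}^2)}$, so that no additional information about $\Omega$ enters and the inequalities are $\lesssim$ rather than $\lesssim_\Omega$. This is consistent with the way these inequalities are used throughout Sections~\ref{sec3}--\ref{S6}.
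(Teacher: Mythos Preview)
Your approach is correct and standard. The paper itself does not supply a proof of this lemma: it simply refers the reader to \cite[Appendix~A]{SZ1} for the proofs of Lemmas~\ref{HardyEst}--\ref{ellipticEst}. Your argument---flatten the boundary via $\hat{\Gamma}$, observe that the Jacobian is unit-determinant and that $A,A^{-1}$ and their derivatives are bounded in terms of $\|M\|_{H^9(\mathbb{R}^2)}$, then extend to $\mathbb{R}^3$ and invoke the classical embeddings---is exactly the kind of reduction one expects, and it yields constants depending only on $\|M\|_{H^9(\mathbb{R}^2)}$, consistent with the $\lesssim$ convention of Subsection~\ref{ss-Notation}.
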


\begin{lemma}
\textbf{(Gagliardo-Nirenberg Inequality)}
Let $k=2,3,4 \cdots$.
For $f \in H^{k} (\Omega)$, there holds that
\begin{equation} \label{GN1}
\|f\|_{H^{k-1}(\Omega)} \lesssim \|f\|_{H^{k}(\Omega)}^{1-1/k}\|f\|_{L^2(\Omega)}^{1/k}.
\end{equation}
\end{lemma}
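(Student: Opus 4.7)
The plan is to reduce the estimate on the curved half-space $\Omega$ to the classical Gagliardo--Nirenberg interpolation on $\mathbb{R}^3$ by means of an extension operator, and then sum the resulting one-order interpolations over $j=0,1,\dots,k-1$.

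First I would construct a bounded extension operator $E:H^k(\Omega)\to H^k(\mathbb{R}^3)$ with $\|Ef\|_{H^j(\mathbb{R}^3)}\lesssim \|f\|_{H^j(\Omega)}$ for every $0\le j\le k$, the implicit constant depending only on $\|M\|_{H^9}$ (consistent with the conventions of Subsection \ref{ss-Notation}). Since $\partial\Omega$ is the graph $x_1=M(x')$ with $M\in H^9(\mathbb{R}^2)$, the change of variables $\Gamma$ in \eqref{CV1} flattens $\partial\Omega$ to $\partial\mathbb{R}^3_+$ and (together with $\hat{\Gamma}$) induces a bounded isomorphism $H^j(\Omega)\cong H^j(\mathbb{R}^3_+)$ for $0\le j\le k\le 9$. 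On $\mathbb{R}^3_+$ I would apply the standard higher-order Seeley/Calder\'on reflection $\tilde g(y_1,y')=\sum_{\ell=1}^{k}c_\ell\, g(-\ell y_1,y')$, where the constants $c_\ell$ are chosen so that derivatives of order $\le k-1$ match across $y_1=0$, followed by a smooth cut-off and pull-back via $\hat{\Gamma}$, obtaining $E$.

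Next I would apply the classical Gagliardo--Nirenberg interpolation on $\mathbb{R}^3$ to $g:=Ef$: for every integer $j$ with $0\le j\le k$,
\begin{equation*}
\|\nabla^j g\|_{L^2(\mathbb{R}^3)}\lesssim \|g\|_{L^2(\mathbb{R}^3)}^{1-j/k}\,\|\nabla^k g\|_{L^2(\mathbb{R}^3)}^{j/k}.
\end{equation*}
For $0\le j\le k-1$ one has $1-j/k\ge 1/k$, so using the trivial bound $\|g\|_{L^2}\le \|g\|_{H^k}$ on the factor $\|g\|_{L^2}^{1-j/k-1/k}$ and $\|\nabla^k g\|_{L^2}\le \|g\|_{H^k}$, the right-hand side is dominated by $\|g\|_{L^2}^{1/k}\|g\|_{H^k}^{(k-1)/k}$. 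Squaring and summing over $j=0,\dots,k-1$, and finally restricting from $\mathbb{R}^3$ back to $\Omega$, I would obtain
\begin{equation*}
\|f\|_{H^{k-1}(\Omega)}\le \|Ef\|_{H^{k-1}(\mathbb{R}^3)}\lesssim \|Ef\|_{L^2(\mathbb{R}^3)}^{1/k}\|Ef\|_{H^k(\mathbb{R}^3)}^{1-1/k}\lesssim \|f\|_{L^2(\Omega)}^{1/k}\|f\|_{H^k(\Omega)}^{1-1/k},
\end{equation*}
which is precisely \eqref{GN1}.

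The main obstacle is the construction and $H^k$-boundedness of $E$ when the regularity of $\partial\Omega$ is only that of an $H^9$-graph. The Seeley reflection on the flat half-space is elementary, but transferring it through $\Gamma$ and $\hat{\Gamma}$ for $k$ up to $9$ requires controlling commutators with the Jacobian $A(y')$ of \eqref{CV2}, which in turn costs powers of $\|M\|_{H^9}$; this is exactly the kind of product/commutator estimate already used in Subsection \ref{Sptial-deriv} (cf.\ Lemma \ref{CommEst}), so it fits within the conventions of the paper. Once $E$ is in place, the rest is classical Fourier-analytic interpolation on $\mathbb{R}^3$.
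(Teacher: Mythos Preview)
Your argument is correct: reduce to $\mathbb{R}^3$ via a Seeley-type extension through the flattening map $\Gamma$, apply the classical interpolation $\|\nabla^j g\|_{L^2}\lesssim\|g\|_{L^2}^{1-j/k}\|\nabla^k g\|_{L^2}^{j/k}$, and absorb the excess $L^2$ powers into $\|g\|_{H^k}$ before summing over $j\le k-1$. The only point worth being explicit about is that the \emph{same} extension operator $E$ must be bounded simultaneously on $L^2$ and on $H^k$; you state this, and it indeed holds for the Seeley reflection (the Jacobian of $\Gamma$ is identically~$1$, so $L^2$ is preserved, and the $H^k$ bound for $k\le 7$ or so follows from $H^9(\mathbb{R}^2)\hookrightarrow C^7(\mathbb{R}^2)$ applied to $M$).

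As for comparison with the paper: the paper does not actually prove this lemma. The sentence preceding Lemma~\ref{HardyEst} reads ``For the proofs of Lemmas \ref{HardyEst}--\ref{ellipticEst}, see \cite[Appendix~A]{SZ1},'' and the Gagliardo--Nirenberg lemma falls within that range. The reference \cite{SZ1} treats the isentropic problem on the same perturbed half-space, so its proof almost certainly also proceeds by flattening and extension (or by working directly on $\mathbb{R}^3_+$ after~$\Gamma$), i.e.\ along the lines you propose.
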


\begin{lemma} \label{CommEst}
\textbf{(Commutator Estimate)}
Let $k=0,1,2,\cdots$.
For $f, g, \nabla f \in H^{k} (\Omega) \cap L^\infty (\Omega)$, we have
\begin{equation*} 
\|[\nabla^{k+1},f]g\|_{L^2(\Omega)}\lesssim  \|\nabla f\|_{L^\infty(\Omega)}\|g\|_{H^{k}(\Omega)}+\|\nabla f\|_{H^{k}(\Omega)}\|g\|_{L^\infty(\Omega)} , 
\end{equation*}
\begin{equation*} 
\|[\nabla^{k+1},\nabla M]g\|_{L^2(\Omega)} \lesssim \|g\|_{H^{k}(\Omega)}.
\end{equation*} 
\end{lemma}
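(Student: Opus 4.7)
The plan is to establish both bounds by expanding each commutator via the Leibniz rule and then controlling the resulting bilinear terms by Gagliardo--Nirenberg interpolation inequalities, in the classical Moser/Kato--Ponce style.

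For the first inequality, I would start from the identity
\[
[\nabla^{k+1},f]g
\;=\; \nabla^{k+1}(fg) - f\,\nabla^{k+1}g
\;=\; \sum_{j=1}^{k+1} \binom{k+1}{j}\, \nabla^{j} f \cdot \nabla^{k+1-j} g,
\]
where the $j=0$ term cancels, so every summand carries at least one derivative on $f$. For each $1\le j\le k+1$ I would apply H\"older's inequality with $p_j=2(k+1)/j$ and $q_j=2(k+1)/(k+1-j)$, giving
\[
\|\nabla^{j}f\cdot \nabla^{k+1-j}g\|_{L^2(\Omega)}
\le \|\nabla^{j}f\|_{L^{p_j}(\Omega)}\, \|\nabla^{k+1-j}g\|_{L^{q_j}(\Omega)}.
\]
Writing $\nabla^{j}f = \nabla^{j-1}(\nabla f)$, the Gagliardo--Nirenberg inequality on $\Omega$ (which holds because $\Omega$ is a smooth half-space with bounded extension operator) yields
\[
\|\nabla^{j-1}(\nabla f)\|_{L^{p_j}} \lesssim \|\nabla f\|_{L^{\infty}}^{1-(j-1)/k}\, \|\nabla f\|_{H^{k}}^{(j-1)/k},
\]
\[
\|\nabla^{k+1-j}g\|_{L^{q_j}} \lesssim \|g\|_{L^{\infty}}^{(j-1)/k}\, \|g\|_{H^{k}}^{1-(j-1)/k}.
\]
Taking the product, Young's inequality collapses the interpolation exponents and produces exactly the right-hand side $\|\nabla f\|_{L^\infty}\|g\|_{H^k} + \|\nabla f\|_{H^k}\|g\|_{L^\infty}$. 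Summing over $j$ completes the estimate.

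For the second inequality, the argument is parallel but simpler because $\nabla M$ is a fixed function depending only on $x' = (x_2,x_3)$. Leibniz expansion gives
\[
[\nabla^{k+1},\nabla M]\,g \;=\; \sum_{j=1}^{k+1} \binom{k+1}{j}\, \nabla^{j}(\nabla M) \cdot \nabla^{k+1-j}g,
\]
and only tangential derivatives $\partial_{x'}$ act nontrivially on $\nabla M$. Since $M\in \cap_{k\ge 1} H^k(\mathbb{R}^2)$, Sobolev embedding gives $\|\nabla^{j}(\nabla M)\|_{L^\infty(\mathbb R^2)}\lesssim 1$ for all $j\le k+1$, so each term is controlled pointwise in $x_1$ by $|\nabla^{k+1-j}g|$ and hence in $L^2(\Omega)$ by $\|g\|_{H^k}$.

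The main obstacle is purely bookkeeping: one must verify that the Gagliardo--Nirenberg interpolation is valid on the (possibly curved) perturbed half-space $\Omega$ with constants depending only on $\|M\|_{H^9(\mathbb{R}^2)}$. This follows from a standard flattening/extension argument using the change of variables $\Gamma$ from \eqref{CV1} to reduce to $\mathbb{R}^3_+$ and then to $\mathbb{R}^3$ by even reflection. Once that is in place, the Moser-type computation above is entirely routine and yields both stated bounds.
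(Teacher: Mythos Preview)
Your proposal is correct and follows the standard Moser/Kato--Ponce route (Leibniz expansion + H\"older + Gagliardo--Nirenberg interpolation), which is also what the paper invokes: the paper does not give its own proof of this lemma but defers it to \cite[Appendix~A]{SZ1}, where the same argument is carried out. One small arithmetic point: your H\"older exponents $p_j=2(k+1)/j$, $q_j=2(k+1)/(k+1-j)$ do not quite line up with the Gagliardo--Nirenberg interpolation between $L^\infty$ and $H^k$ for $\nabla f$ and $g$; the cleaner choice is $p_j=2k/(j-1)$, $q_j=2k/(k+1-j)$ for $2\le j\le k$, with the endpoints $j=1$ and $j=k+1$ handled directly by $\|\nabla f\|_{L^\infty}\|\nabla^k g\|_{L^2}$ and $\|\nabla^{k+1}f\|_{L^2}\|g\|_{L^\infty}$, respectively. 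This is cosmetic and does not affect the validity of the argument.
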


\begin{lemma} \label{CattabrigaEst}
\textbf{(Cattabriga Estimate)}
Consider the following Stokes system 
\begin{equation*} 
\bar{\rho} \div u = h , \
- \hat{\mu} \Delta u + \hat{p} \nabla p = g \ \text{on $\Omega$}, \quad
u|_{\partial \Omega} =0, \quad
\ \lim_{|x| \rightarrow \infty}u =0
\end{equation*}
with some constants $\bar{\rho}>0$, $\hat{\mu}>0$, and $\hat{p}>0$. For $k= 0, 1, \cdots, 4$ and 
$(h,g)\in H^{k+1}(\Omega)\times H^{k}(\Omega)$, 
if $(u,p) \in H^{k+2}(\Omega)\times H^{k+1}(\Omega)$ is a solution to the Stokes system, 
then it holds that
\begin{equation}\label{Cattabriga}
\| \nabla^{k+2} u \|_{L^2(\Omega)}^2 + \|\nabla^{k+1} p \|_{L^2(\Omega)}^2 
\leq C_0 (\|h \|^2_{H^{k+1}(\Omega)} + \| g \|^2_{H^k(\Omega)} + \|\nabla u \|_{L^2(\Omega)}^2),
\end{equation}
where $C_0=C_0(\Omega)$ is a positive constant depending on $\Omega$.
Furthermore, there exists a positive constant $\eta_0$ such that 
if $\|M\|_{H^{9}(\mathbb R^2)} \leq \eta_0$, 
then \eqref{Cattabriga} holds with $C_0$ independent of $\Omega$.
\end{lemma}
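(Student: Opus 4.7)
The plan is to reduce to the classical Cattabriga estimate on the flat half-space $\mathbb{R}^3_+$ via the change of variables $\hat{\Gamma}$ from \eqref{CV11}, and then absorb the perturbation terms produced by the curved boundary. First, I would invoke the standard Cattabriga--Solonnikov estimate for the Stokes system on $\mathbb{R}^3_+$ with Dirichlet boundary condition and decay at infinity: for $(v,q)$ solving $\bar{\rho}\div v = \tilde{h}$ and $-\hat{\mu}\Delta v + \hat{p}\nabla q = \tilde{g}$ on $\mathbb{R}^3_+$ with $v|_{y_1=0}=0$, one has
\begin{equation*}
\|\nabla^{k+2} v\|_{L^2(\mathbb{R}^3_+)}^2 + \|\nabla^{k+1} q\|_{L^2(\mathbb{R}^3_+)}^2 \lesssim \|\tilde{h}\|^2_{H^{k+1}(\mathbb{R}^3_+)} + \|\tilde{g}\|^2_{H^k(\mathbb{R}^3_+)} + \|\nabla v\|^2_{L^2(\mathbb{R}^3_+)},
\end{equation*}
with an \emph{absolute} constant. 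This is classical; see e.g.~Galdi's monograph or Solonnikov's work on the Stokes system in a half-space.

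Second, I would transfer the perturbed-domain problem to the flat half-space. Setting $\hat{u}(y):=u(\Gamma(y))$, $\hat{p}(y):=p(\Gamma(y))$, $\hat{h}(y):=h(\Gamma(y))$, $\hat{g}(y):=g(\Gamma(y))$ and using \eqref{CV3}--\eqref{CV5}, the system becomes $\bar{\rho}\hat{\div}\hat{u}=\hat{h}$, $-\hat{\mu}\hat{\Delta}\hat{u}+\hat{p}\hat{\nabla}\hat{p}=\hat{g}$ on $\mathbb{R}^3_+$ with $\hat{u}|_{y_1=0}=0$. Splitting the transformed operators as $\hat{\div}=\div + \mathcal{R}_1$, $\hat{\Delta}=\Delta + \mathcal{R}_2$, $\hat{\nabla}=\nabla+\mathcal{R}_3$, where $\mathcal{R}_1,\mathcal{R}_2,\mathcal{R}_3$ are linear differential operators whose coefficients depend on $\nabla M$ and $\nabla^2 M$ and vanish when $M\equiv 0$, I would recast the system as a flat Stokes problem with forcing $\hat{h}-\bar{\rho}\mathcal{R}_1\hat{u}$ and $\hat{g}+\hat{\mu}\mathcal{R}_2\hat{u}-\hat{p}\mathcal{R}_3\hat{p}$, apply the flat Cattabriga estimate above, and then estimate the extra terms in $H^{k+1}$ and $H^k$ using the Leibniz rule, the commutator estimate in Lemma \ref{CommEst}, and Sobolev embedding. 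This yields
\begin{equation*}
\|\nabla^{k+2}\hat{u}\|^2 + \|\nabla^{k+1}\hat{p}\|^2 \lesssim \|\hat{h}\|^2_{H^{k+1}} + \|\hat{g}\|^2_{H^k} + \|\nabla\hat{u}\|^2 + P(\|M\|_{H^9})\bigl(\|\nabla^{k+2}\hat{u}\|^2 + \|\nabla^{k+1}\hat{p}\|^2\bigr) + C(\Omega)\|\nabla\hat{u}\|^2,
\end{equation*}
with $P$ continuous and $P(0)=0$.

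Third, I would conclude by a dichotomy. When $\|M\|_{H^9}\le\eta_0$ is small enough that $P(\|M\|_{H^9})<1/2$, the worst perturbation terms absorb into the left-hand side, yielding the estimate with a constant independent of $\Omega$. For general $M$, no absorption is possible and the constant depends on $\|M\|_{H^9}$, giving $C_0=C_0(\Omega)$. Finally I would pull back to $\Omega$ using the fact that $\Gamma$ has Jacobian $1$ and the norms $\|\nabla^j\cdot\|_{L^2(\Omega)}$ and $\|\nabla^j(\cdot\circ\Gamma)\|_{L^2(\mathbb{R}^3_+)}$ are comparable with constants controlled by $\|M\|_{H^9}$.

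The main technical obstacle is the highest-order perturbation contributions in $\mathcal{R}_2\hat{u}$ and $\mathcal{R}_3\hat{p}$, which at top order look like $(\nabla M)\,\nabla^{k+2}\hat{u}$ and $(\nabla M)\,\nabla^{k+1}\hat{p}$. Estimating these in $H^{k+1}(\mathbb{R}^3_+)$ and $H^k(\mathbb{R}^3_+)$ respectively forces control of derivatives of $M$ up to order $\nabla^{k+2}\nabla M$ for $k=4$, i.e., up to $\nabla^7 M$ in $L^\infty$ or $\nabla^{7}M$ in a Sobolev space that embeds into $L^\infty$. This is exactly why the hypothesis $M\in H^9(\mathbb{R}^2)$ is needed: it gives $\nabla M\in H^8(\mathbb{R}^2)\hookrightarrow C^6(\mathbb{R}^2)$, providing the uniform bounds and the product and commutator estimates required to package all perturbation contributions in the form $P(\|M\|_{H^9})(\|\nabla^{k+2}\hat{u}\|^2+\|\nabla^{k+1}\hat{p}\|^2)$ with $P$ vanishing at $0$, which is what enables the absorption in the small-$M$ regime.
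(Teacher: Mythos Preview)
The paper does not give its own proof of this lemma; it cites \cite[Appendix A]{SZ1}. Your outline for the small-$M$ regime---flatten via $\hat{\Gamma}$, apply the flat half-space Cattabriga estimate, and absorb the perturbation terms using $P(\|M\|_{H^9})\to 0$---is the standard approach and is correct.

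Your handling of the general (large) $M$ case, however, has a genuine gap. You write that ``for general $M$, no absorption is possible and the constant depends on $\|M\|_{H^9}$, giving $C_0=C_0(\Omega)$.'' But if $P(\|M\|_{H^9})\ge 1$ the inequality you derived is vacuous: the right-hand side contains a term $P(\|M\|_{H^9})\bigl(\|\nabla^{k+2}\hat{u}\|^2+\|\nabla^{k+1}\hat{p}\|^2\bigr)$ of the same order as the left-hand side, and no enlargement of the constant repairs this. The global flattening produces perturbation coefficients of size $|\nabla M|$, which need not be pointwise small.

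The usual remedy is localization: cover $\overline{\Omega}$ by a partition of unity into interior patches and boundary patches, the latter chosen small enough that after local flattening the oscillation of $\nabla M$ on each patch is below the absorption threshold (this uses uniform continuity of $\nabla M$, not smallness). Apply interior Stokes regularity on interior patches and the flat half-space Cattabriga estimate on each boundary patch, then sum. The cutoffs generate lower-order commutators that go into $\|\nabla u\|_{L^2}^2$ and the data norms. The final constant then depends on the number and geometry of the patches---information about $\Omega$ beyond $\|M\|_{H^9}$---which is precisely why the lemma distinguishes $C_0=C_0(\Omega)$ in general from an $\Omega$-independent constant only when $\|M\|_{H^9}\le\eta_0$.
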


\begin{lemma} \label{ellipticEst}
\textbf{(Elliptic Estimate)}
Consider the following elliptic system 
\begin{equation*} 
\begin{split}
 - \hat{\mu} \Delta u - (\hat{\mu} + \hat{\lambda})   \nabla \div u = f \ \text{on $\Omega$}, \quad  
 u|_{\partial \Omega} =0 , \quad \lim_{|x| \rightarrow \infty} u =0 
\end{split}
\end{equation*}
with some constants $\hat{\mu} > 0$ and $\hat{\mu} + \hat{\lambda} \geq 0$. For $k= 0,1,2$ and $f \in H^{k}(\Omega)$,
if $u \in H^{k+2}(\Omega)$ is a solution to the elliptic system, then it holds that
\begin{equation*}
\| \nabla^{k+2} u \|_{L^2(\Omega)}  \lesssim \|f \|_{H^{k}(\Omega)} + \|  u \|_{L^2(\Omega)}  .  
\end{equation*}
\end{lemma}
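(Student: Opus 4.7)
The plan is to establish the elliptic estimate by combining interior and boundary regularity arguments, using the flattening transformation $\Gamma$ from \eqref{CV1} to reduce the boundary analysis to the half-space $\mathbb R^3_+$. The structural input is that $M \in \cap_k H^k(\mathbb R^2)$, so the coefficients introduced by the change of variables are as regular as needed.

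First I would verify that the operator $\mathcal L u := -\hat\mu \Delta u - (\hat\mu + \hat\lambda) \nabla \div u$ is elliptic in the sense of Agmon--Douglis--Nirenberg: its principal symbol is $\hat\mu |\xi|^2 I + (\hat\mu + \hat\lambda)\, \xi \otimes \xi$, whose eigenvalues are $\hat\mu|\xi|^2$ (multiplicity two) and $(2\hat\mu+\hat\lambda)|\xi|^2$, and both are strictly positive for $\xi \neq 0$ under the assumptions $\hat\mu > 0$ and $\hat\mu+\hat\lambda \geq 0$. The Lopatinskii--Shapiro complementing condition for Dirichlet data is classical for the Lamé operator, so this reduces the lemma to a standard regularity theorem whose constant I need to track through the two coordinate systems.

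For the interior part I would take a cutoff $\chi$ supported away from $\partial\Omega$ and apply $\chi$ to the equation, so that $\chi u$ solves a Lamé system on $\mathbb R^3$ with source $\chi f$ plus commutator terms of order $\leq k+1$. Fourier analysis on $\mathbb R^3$ then gives $\|\nabla^{k+2}(\chi u)\|_{L^2} \lesssim \|f\|_{H^k} + \|u\|_{H^{k+1}}$, and an induction on $k$ together with Gagliardo--Nirenberg interpolation absorbs the intermediate norm into $\|f\|_{H^k} + \|u\|_{L^2}$. For the boundary part, I would apply $y = \hat\Gamma(x)$ from \eqref{CV11} to flatten $\partial\Omega$; the transformed operator $\hat{\mathcal L}$ has leading part $-\hat\mu \hat\Delta \hat u - (\hat\mu+\hat\lambda)\hat\nabla \hat\div \hat u$ with coefficients built from $\nabla M, \nabla^2 M$, and ellipticity is preserved since $\Gamma$ is a diffeomorphism with $\hat A_1 > 0$. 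Tangential regularity then follows from difference quotients in the $y'$-directions, which commute with the Dirichlet condition.

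The step I expect to be the main obstacle is recovering the normal derivatives $\partial_{y_1}^{j}\hat u$ for $j \geq 2$ from the equation itself. The strategy is exactly the algebraic manipulation used to obtain \eqref{ee4-hat} in Section \ref{Sptial-deriv}: one contracts the second equation of $\hat{\mathcal L}\hat u = \hat f$ against the vector $(\hat{\mathcal A}_1, \hat{\mathcal A}_2, \hat{\mathcal A}_3)$ from \eqref{Aj-def} to extract $\partial_{y_1}^2 \hat u$ with an invertible coefficient $\tilde{\mathcal A}_1 > 0$, expressed in terms of $\hat f$, tangential derivatives of $\hat u$, and mixed derivatives of lower normal order. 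Iterating this identity $\lfloor (k+2)/2 \rfloor$ times, each step losing one normal derivative in favor of tangential and lower-order terms already controlled, yields $\|\partial_{y_1}^{k+2} \hat u\|_{L^2(\mathbb R^3_+)} \lesssim \|\hat f\|_{H^k(\mathbb R^3_+)} + \|\hat u\|_{H^{k+1}(\mathbb R^3_+)}$.

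Finally I would assemble the pieces via a smooth partition of unity subordinate to a cover of $\overline\Omega$ by one interior patch and one collar neighborhood of $\partial\Omega$, transfer the flattened estimate back to $x$-coordinates using the smoothness of $\Gamma$, and handle the unboundedness of $\Omega$ using the decay $\lim_{|x|\to\infty} u = 0$ together with $u \in H^{k+2}(\Omega)$ (so that tail contributions are negligible after exhausting $\Omega$ by bounded subdomains). A final application of \eqref{GN1} between $\|\nabla^{k+2} u\|_{L^2}$ and $\|u\|_{L^2}$ absorbs the intermediate $H^{k+1}$-norm of $u$ into the left-hand side and into $\|u\|_{L^2}$, producing the claimed inequality $\|\nabla^{k+2} u\|_{L^2(\Omega)} \lesssim \|f\|_{H^k(\Omega)} + \|u\|_{L^2(\Omega)}$, with the constant depending on $\Omega$ through $\|M\|_{H^9(\mathbb R^2)}$.
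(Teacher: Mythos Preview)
The paper does not actually supply a proof of this lemma: immediately before the statement it says ``For the proofs of Lemmas \ref{HardyEst}--\ref{ellipticEst}, see \cite[Appendix A]{SZ1}.'' So there is no in-paper argument to compare against, only a citation. Your outline is a correct and standard elliptic-regularity argument for the Lam\'e system on a graph domain: verify Agmon--Douglis--Nirenberg ellipticity and the complementing condition, get interior regularity by cutoff and Fourier analysis, flatten the boundary via $\hat\Gamma$, obtain tangential derivatives by difference quotients, recover normal derivatives from the equation, and close by interpolation. This is presumably close in spirit to what is carried out in \cite{SZ1}.

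One small correction: the vector $(\mathcal{A}_1,\mathcal{A}_2,\mathcal{A}_3)$ in \eqref{Aj-def} was engineered specifically to cancel the second-order normal derivatives when the continuity equation \eqref{ee1-hat} is combined with the momentum equation \eqref{ee2-hat}; it is tied to that coupled hyperbolic--parabolic structure and is not directly the right object for the pure elliptic Lam\'e system. For your lemma the correct step is simpler: after flattening, the coefficient of $\partial_{y_1}^2 \hat u$ in $\hat{\mathcal L}\hat u$ is the matrix $\hat\mu(1+|\nabla M|^2)I + (\hat\mu+\hat\lambda)\,v\otimes v$ with $v=(1,-\partial_2 M,-\partial_3 M)^{\mathrm{tr}}$, whose eigenvalues $\hat\mu(1+|\nabla M|^2)$ and $(2\hat\mu+\hat\lambda)(1+|\nabla M|^2)$ are both positive under the hypotheses $\hat\mu>0$, $\hat\mu+\hat\lambda\geq 0$. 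So you simply invert this matrix to solve for $\partial_{y_1}^2\hat u$ in terms of $\hat f$ and derivatives already controlled, then iterate. With that adjustment your scheme goes through as written.
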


\begin{lemma} \label{ellipticEst2}
\textbf{(Elliptic Estimate II)}
Consider the following elliptic equation
\begin{equation*} 
\begin{split}
 - \hat{\kappa} \Delta u  = f \ \text{on $\Omega$}, \quad  
 u|_{\partial \Omega} =0 , \quad \lim_{|x| \rightarrow \infty} u =0 
\end{split}
\end{equation*}
with some constants $\hat{\kappa} > 0$. 
For $k= 0,1,2$ and $f \in H^{k}(\Omega)$,
if $u \in H^{k+2}(\Omega)$ is a solution to the elliptic equation, then it holds that
\begin{equation}\label{Elliptic2}
\| \nabla^{k+2} u \|_{L^2(\Omega)}  \lesssim \| f \|_{H^{k}(\Omega)} + \|\nabla u\|_{L^{2}(\Omega)} .  
\end{equation}
\end{lemma}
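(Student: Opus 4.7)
The strategy is to reduce the problem to the standard Poisson equation on the half-space $\mathbb R^3_+$ by flattening the boundary, and then invoke classical elliptic regularity. I would proceed in three stages, mirroring the structure of the proof of Lemma \ref{ellipticEst} (as stated in \cite[Appendix A]{SZ1}).

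First, I apply the change of variables $\Gamma$ from \eqref{CV1} and set $\hat u(y) := u(\Gamma(y))$, $\hat f(y) := f(\Gamma(y))$. Since $\Gamma(\mathbb R^3_+) = \Omega$ and $\Gamma$ maps $\{y_1 = 0\}$ bijectively to $\partial \Omega$, the boundary condition $u|_{\partial\Omega} = 0$ becomes $\hat u|_{y_1 = 0} = 0$, and $\hat u \to 0$ as $|y| \to \infty$. The equation $-\hat{\kappa} \Delta u = f$ transforms, via \eqref{CV5} and the matrix $A$ in \eqref{CV2}, into
\[
-\hat{\kappa}\, \Delta_y \hat u = \hat f + \hat{\kappa}\, \mathcal{R}(\nabla M, \nabla^2 M, \nabla_y \hat u, \nabla_y^2 \hat u),
\]
where $\mathcal{R}$ collects the lower-order terms arising from the difference between $\hat{\Delta}$ and the flat Laplacian $\Delta_y$. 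The coefficients in $\mathcal{R}$ depend on $\nabla M, \nabla^2 M$, which are smooth and bounded by the assumption $M \in \cap_k H^k(\mathbb R^2)$.

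Second, I apply classical elliptic regularity for the Dirichlet Laplacian on $\mathbb R^3_+$ (which follows, e.g., by odd extension across $\{y_1 = 0\}$ and Calderón--Zygmund estimates, or by tangential difference quotients combined with the equation to recover normal derivatives): for $k = 0, 1, 2$,
\[
\|\nabla_y^{k+2} \hat u\|_{L^2(\mathbb R^3_+)} \lesssim \|\hat f + \hat{\kappa}\mathcal{R}\|_{H^k(\mathbb R^3_+)} + \|\nabla_y \hat u\|_{L^2(\mathbb R^3_+)}.
\]
The term $\|\hat{\kappa}\mathcal{R}\|_{H^k}$ can be absorbed into the left-hand side provided the coefficients multiplying $\nabla_y^2 \hat u$ are bounded in $W^{k,\infty}$—which holds uniformly in terms of $\|M\|_{H^9}$ via Sobolev embedding—and provided the lower-order pieces are controlled by $\|\nabla_y \hat u\|_{H^k}$. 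A standard interpolation using Gagliardo--Nirenberg \eqref{GN1} then yields
\[
\|\nabla_y^{k+2} \hat u\|_{L^2(\mathbb R^3_+)} \lesssim \|\hat f\|_{H^k(\mathbb R^3_+)} + \|\nabla_y \hat u\|_{L^2(\mathbb R^3_+)}.
\]

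Third, I transform back to the $x$-variable: since $\Gamma$ is a smooth bi-Lipschitz diffeomorphism whose Jacobian is $1$, the $H^k$ norms of $\hat u, \hat f$ on $\mathbb R^3_+$ are comparable to the $H^k$ norms of $u, f$ on $\Omega$, up to constants depending on $\|M\|_{H^9}$. This yields \eqref{Elliptic2}.

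The main obstacle is controlling the perturbation $\mathcal{R}$ in $H^k$ for $k=2$, since this requires products of $\nabla^2 M$ with second derivatives of $\hat u$ to be handled with care; however, because $M \in H^9(\mathbb R^2)$, Sobolev embedding and the standard product estimates in Sobolev spaces suffice, exactly as in the proof of Lemma \ref{ellipticEst}. No new ideas are needed beyond those already used for the Lamé system in \cite[Appendix A]{SZ1}.
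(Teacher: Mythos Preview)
Your argument has a genuine gap at the absorption step. After flattening, the perturbation $\mathcal{R}$ contains the highest-order terms $|\nabla M|^2\,\partial_{y_1}^2\hat u - 2\sum_{j=2}^3 (\partial_{y_j}M)\,\partial_{y_1}\partial_{y_j}\hat u$, whose coefficients are bounded but \emph{not small}: the lemma is stated for arbitrary $M\in\bigcap_k H^k(\mathbb R^2)$, not only for the almost-flat regime of Corollary~\ref{cor}. Boundedness of the coefficients in $W^{k,\infty}$ lets you write
\[
\|\mathcal R\|_{H^k(\mathbb R^3_+)} \leq C(\|M\|_{H^9})\,\|\nabla_y^{k+2}\hat u\|_{L^2(\mathbb R^3_+)} + \text{lower order},
\]
but this constant need not be $<1$, so you cannot absorb it into the left-hand side as claimed. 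Interpolation does not help either, since the top-order piece of $\mathcal R$ matches the order of the quantity you are trying to bound.

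The paper avoids this by keeping the transformed operator $\hat\Delta=\sum_k(\sum_j A_{kj}\partial_{y_j})^2$ intact rather than splitting off the flat $\Delta_y$. For tangential $l=2,3$ one applies $\partial_{y_l}$ to $-\hat\Delta\hat u=\hat f$, multiplies by $\partial_{y_l}\hat u$, and integrates by parts; the good term produced is $\sum_k\|\sum_j A_{kj}\partial_{y_j}\partial_{y_l}\hat u\|_{L^2}^2$, which, since $A$ is invertible, already controls $\|\nabla_y\partial_{y_l}\hat u\|_{L^2}^2$ for general $M$. All error terms (commutators and the $\hat f$-term after one integration by parts) are now products of a \emph{second}-order factor with a \emph{first}-order or zeroth-order factor and can be handled by Young's inequality with a small parameter $\nu$. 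The remaining pure normal derivative $\partial_{y_1}^2\hat u$ is then read off from the equation itself, using that the coefficient $1+|\nabla M|^2$ of $\partial_{y_1}^2$ in $\hat\Delta$ is bounded below by $1$; this closes the estimate without any smallness assumption on $\nabla M$.
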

\begin{proof}
Without loss of generality, we can suppose that $\hat{\kappa} =1$. 
We recall the change of valuables \eqref{CV1} and the notation \eqref{CV3}--\eqref{CV6}.
The elliptic equation can be written in $y$-coordinate as  
\begin{equation} \label{au2}
-\hat{\Delta} \hat{u} = \hat f \ \text{on $\mathbb R^{3}_{+}$}, \quad
 \hat{u}|_{\partial \mathbb R^{3}_{+}} =0 , \quad \lim_{|y| \rightarrow \infty} \hat{u} =0 .   
\end{equation} 
Let us consider only the case $k=0$, since the other cases $k=1, 2$ can be shown in a similar way with higher derivatives applied. Applying $\partial_{y_{l}}$ for $l=2,3$ to the first equation in \eqref{au2}, multiplying it by $\partial_{y_{l}} \hat{u} (y)$, and integrating the result over $\mathbb R^{3}_{+}$, we obtain
\begin{align*}
& \sum_{k=1}^3  \int_{\mathbb{R}^{3}_{+}}\left| \left( \sum_{j=1}^3 A_{kj} \partial_{y_j} \right) \partial_{y_{l}}  \hat{u} (y)  \right|^{2} dy 
\\
&= - \int_{\mathbb{R}^{3}_{+}} \hat f(y) \partial_{y_{l}}^{2} \hat{u} (y) dy 
+ \int_{\mathbb{R}^{3}_{+}} ( [\hat{\Delta},  \partial_{y_{l}}]\hat{u} ) \partial_{y_{l}} \hat{u} (y) dy
\\
&\quad -  \sum_{k=1}^3  \int_{\mathbb{R}^{3}_{+}} \left\{ \left( \sum_{j=1}^3 A_{kj} \partial_{y_j} \right) \partial_{y_{l}} \hat{u} (y) \right\} \left\{\left( \sum_{j=1}^3 \partial_{y_j}A_{kj}  \right) \partial_{y_{l}}   \hat{u} (y)  \right\}dy
\\
& \leq \nu \| \nabla^{2}_{y} \hat{u} \|^{2}_{L^{2}(\mathbb R^{3}_{+})} +C \nu^{-1} \|(\hat{f},\nabla_{y} \hat{u})\|^{2}_{L^{2}(\mathbb R^{3}_{+})},
\end{align*}
where we have used Schwarz's inequality in deriving the last inequality, and $\nu$ is a positive constant to be determined later.
Summing up the results for $l=1,2$ gives
\begin{gather*}
\sum_{|\bm{b}|= 2, \ b_1\neq2}  \| \partial^{\bm{b}}_{y}  \hat{u} \|^{2}_{L^{2}(\mathbb R^{3}_{+})} \leq \nu \| \nabla^{2}_{y} \hat{u} \|^{2}_{L^{2}(\mathbb R^{3}_{+})} +C \nu^{-1} \|(f,\nabla_{y} \hat{u})\|^{2}_{L^{2}(\mathbb R^{3}_{+})}.
\end{gather*}
On the other hand, the normal derivative $\partial_{y_{1}y_{1}}  \hat{u} $ can be estimated by using the first equation in \eqref{au2} as
\begin{gather*}
\|\partial_{y_{1}y_{1}}  \hat{u} \|^{2}_{L^{2}(\mathbb R^{3}_{+})} \lesssim \sum_{|\bm{b}|= 2, \ b_1\neq2}  \| \partial^{\bm{b}}_{y}  \hat{u} \|^{2}_{L^{2}(\mathbb R^{3}_{+})} +  \|\hat{f} \|^{2}_{L^{2}(\mathbb R^{3}_{+})}.
\end{gather*}
Using these, letting $\nu\ll 1$, and performing the change of variables $y \rightarrow x$, we conclude \eqref{Elliptic2}.
\end{proof}

\end{appendix}





\providecommand{\bysame}{\leavevmode\hbox to3em{\hrulefill}\thinspace}
\providecommand{\MR}{\relax\ifhmode\unskip\space\fi MR }

\end{document}